\newcommand{\otherfile}[1]{\cite[#1]{gubler_rabinoff_jell:harmonic_trop}}
\title{Dolbeault Cohomology of Graphs and Berkovich Curves}
\author[W.~Gubler]{Walter Gubler}
\address{W. Gubler, Mathematik, Universit{\"a}t 
	Regensburg, 93040 Regensburg, Germany}
\email{walter.gubler@mathematik.uni-regensburg.de}
\author[P.~Jell]{Philipp Jell}
\address{P. Jell, Mathematik, Universit{\"a}t 
	Regensburg, 93040 Regensburg, Germany}
\email{philipp.jell@mathematik.uni-regensburg.de}
\author[J.~Rabinoff]{Joseph Rabinoff}
\address{J. Rabinoff, Department of Mathematics, Trinity College of Arts and Sciences, Duke University, Durham, NC 27708, USA}
\email{jdr@math.duke.edu}
\def\rS{\mathrm{S}}
\newcommand\secref[1]{\ref{#1}}
 \thanks{W.~Gubler and P.~Jell 
	were supported by the collaborative research 
	center SFB 1085 \emph{Higher Invariants - Interactions between Arithmetic Geometry and Global Analysis} funded by the Deutsche Forschungsgemeinschaft.}
\begin{document}
	
\begin{abstract}
  We introduce real-valued $(p,q)$-forms on weighted metric graphs with boundary similar to Lagerberg forms on polyhedral spaces. We compute the Dolbeault cohomology and prove Poincar\'e duality. Using Thuillier's thesis, the skeleton of a strictly semistable formal curve is canonically a weighted metric graph with boundary. We use that and our companion paper on weakly smooth forms to compute the Dolbeault cohomology for weakly smooth forms on any non-Archimedean compact rig-smooth analytic curve $X$, and prove Poincar\'e duality when $X$ is proper. 
\end{abstract}

\keywords{{Berkovich spaces, tropicalization,  harmonic functions, tropical Dolbeault cohomology}} 
\subjclass{{Primary 05C22; Secondary 14T25, 32P05, 32U05}}

\maketitle
\setcounter{tocdepth}{1}


\section{Introduction}
	
 It is well-known that compact metric graphs are a discrete analogue of compact Riemann surfaces. In both theories, we have a genus, a Laplacian, harmonic functions, a Riemann--Roch theorem, differential forms, and a Jacobian. In the following, we consider a \defi{weighted metric graph $\Sigma$ with boundary}, i.e.~a compact metric multigraph $\Sigma$ with no loop edges where each edge $e$ has a weight $w(e)\in \Z_{>0}$, and where the boundary is any subset $\partial \Sigma$ of the vertices; see~\secref{sec:weighted metric graphs} for more details.  Such graphs arise naturally as skeletons of compact non-Archimedean curves; the second half of this paper will be devoted to applications to non-Archimedean curves.  With these applications in mind, we have to allow non-connected graphs with isolated vertices in $\Sigma$, but for simplicity of the exposition we assume throughout the introduction that $\Sigma$ {\bf\emph{is connected and that $\Sigma$ has an edge}}. 
	
\subsection{Harmonic functions and maps}
A function $f\colon \Sigma \to \R$ is called \defi{harmonic} if $f$ is linear on each edge, and if for any vertex $v \not\in \partial \Sigma$, we have
\[ \sum w(e)\,\frac{\d f}{\d t_e}(v) = 0, \]
where the sum is taken over all outgoing edges $e$ at $v$ and where $\frac{\d f}{\d t_e}(v)$ denotes the outgoing slope at $v$ along $e$. See~\secref{sec:harmonic functions} for more details. 
	
We say that a map $\phi\colon \Sigma' \to \Sigma$ of metric weighted graphs with boundary is \defi{harmonic} if $\varphi$ satisfies the following properties:
\begin{enumerate}
\item After possible subdivisions, every edge of $\Sigma'$ is mapped either linearly onto an edge or onto a vertex of $\Sigma$.
\item If $\varphi$ is non-constant locally at $v' \in \Sigma'$ with $\varphi(v') \in \partial \Sigma$, then $v' \in \partial \Sigma'$.
\item Locally, $\varphi$ pulls back harmonic functions on $\Sigma$ to harmonic functions on $\Sigma'$. 
\end{enumerate}
We explore harmonic maps in~\secref{sec:harmonic-morphisms}, generalizing results from~\cite{abbr14:lifting_harmonic_morphism_I} given in the context of unweighted metric graphs without boundary. Examples of harmonic maps arise from subgraphs (see \secref{sec:subgraphs}), from \emph{modifications} obtained by attaching finitely many trees (see~\secref{sec:modifications}) and from quotients by finite groups acting by harmonic maps on $\Sigma$ (see \secref{sec:quotients}).
	
\subsection{Smooth forms on graphs and Lagerberg forms}
Lagerberg~\cite{lagerberg12:super_currents} has introduced the sheaf $\cA^{p,q}\coloneqq\cA^p \otimes_{C^\infty} \cA^q$ of $(p,q)$-forms on $\R^n$, where on the right we use the sheaf $\cA^\bullet$ of usual smooth differential forms on $\R^n$. We call such $(p,q)$-forms  \defi{smooth Lagerberg forms}. The sheaf $\cA^{\bullet,\bullet}$ is a bigraded differential sheaf  of $\R$-algebras with an alternating product $\wedge$ and natural differentials $\d'\colon\cA^{p,q} \to \cA^{p+1,q}$ and $\d''\colon\cA^{p,q} \to\cA^{p,q+1}$ which are analogues of the differential operators $\partial$ and $\bar{\partial}$ in complex analysis. For an open subset $U$ of $\R$ with coordinate function $x$, we have the following direct description:
\begin{enumerate}
\item The elements of $\cA^{0,0}(U)$ are the smooth real functions on $U$.
\item The elements of $\cA^{1,0}(U)$ have the form $f \d'x$ with $f \in \cA^{0,0}(U)$.
\item The elements of $\cA^{0,1}(U)$ have the form $f \d''x$ with $f \in \cA^{0,0}(U)$.
\item The elements of $\cA^{1,1}(U)$ have the form $f \d'x \wedge \d''x$ with $f \in \cA^{0,0}(U)$.
\end{enumerate}

Lagerberg forms are closely related to tropical geometry. 
We use now a similar construction to introduce smooth $(p,q)$-forms on the weighed metric graph $\Sigma$ with boundary.

\begin{prop} \label{prop:intrinsic approach to forms}
  There is a unique  sheaf $\cA^{\bullet,\bullet}$ of bigraded  $\R$-algebras on $\Sigma$ with an alternating product and differentials $\d',\d''$, whose elements are called \defi{smooth forms}, with the following properties:
  \begin{enumerate}
  \item \label{item:forms and subdivison}
    The sheaf $\cA^{\bullet,\bullet}$ is insensitive with to subdivisions of $\Sigma$.
  \item \label{item:forms and edge}
    If $U$ is an open subset of an edge, then $\cA^{p,q}(U)$ agrees with the smooth Lagerberg forms on $U$ identified with an open subset of $\R$. 
  \item \label{item:forms and restriction}
    A smooth $(p,q)$-form is completely determined by its restriction to all open edges.
  \item \label{item:forms and generation}
    The harmonic functions and the elements of $\cA^{0,0}$ locally generate  $\cA^{\bullet,\bullet}$ as a bigraded differential sheaf of $\R$-algebras.
  \end{enumerate}
\end{prop}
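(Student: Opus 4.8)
The plan is to build $\cA^{\bullet,\bullet}$ by hand, using harmonic functions on a small neighbourhood of a vertex as a ``tropical chart'', in the spirit of the construction of smooth forms on a Berkovich space from its tropicalizations. Off the vertices there is nothing to choose: on an open subset of an open edge property~\ref{item:forms and edge} forces $\cA^{p,q}$ to be the sheaf of smooth Lagerberg forms, so the entire issue is local at a vertex and concerns how these edge data may interact there. So fix a vertex $v$ and a small star neighbourhood $U$ of $v$ with edges $e_1,\dots,e_n$, where $e_i$ carries the arclength coordinate $t_{e_i}\ge0$ based at $v$. The harmonic functions on $U$ modulo constants form a real vector space of some dimension $N$, with $N=n$ if $v\in\partial\Sigma$ and $N=n-1$ if not; a choice of basis $h_1,\dots,h_N$ gives a map $\Phi=(h_1,\dots,h_N)\colon U\to\R^N$, which is an embedding whenever $v\in\partial\Sigma$, and also whenever $v\notin\partial\Sigma$ and $n\ge 3$, its image being then a star of $n$ rays from $\Phi(v)$ whose directions are balanced by the weights $w(e_i)$ in the interior case. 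I would then let $\cA^{p,q}(U)$ consist of those families of Lagerberg forms on the open edges of $U$ that near $v$ are the restriction along $\Phi$ of a smooth Lagerberg $(p,q)$-form on a neighbourhood of $\Phi(U)$ in $\R^N$, and glue this with the Lagerberg forms on open edges. The glueing is the real content: it records the compatibility conditions imposed at $v$ --- continuity in bidegree $(0,0)$, and in all other bidegrees the linear conditions that cut out the pulled-back forms, the first-order ones being the harmonicity relation $\sum_i w(e_i)\frac{\d f}{\d t_{e_i}}(v)=0$ in bidegree $(0,0)$ when $v\notin\partial\Sigma$ and a weighted balancing of the coefficients at $v$ in bidegrees $(1,0)$ and $(0,1)$, with further higher-order compatibilities as well. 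The vertices at which $\Phi$ is not an embedding --- interior vertices of valence $\le 2$ --- are treated directly: valence $\le1$ imposes nothing, a valence-$2$ vertex with equal weights is dealt with by un-subdividing, and in the unequal-weight case $\Phi$ folds the two edges onto a segment at different speeds.

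Granting the well-definedness of this local recipe, properties~\ref{item:forms and subdivison}--\ref{item:forms and generation} should follow. Property~\ref{item:forms and edge} holds by construction. Property~\ref{item:forms and restriction} holds because a smooth Lagerberg form on a half-open edge is determined by its restriction to the corresponding open edge, the value of a $(0,0)$-form at $v$ being recovered by continuity. For property~\ref{item:forms and generation}, the smooth Lagerberg forms on $\R^N$ are generated, as a bigraded differential $\R$-algebra, by the smooth functions (i.e.\ by $\cA^{0,0}_{\R^N}$) --- since $\d'$ and $\d''$ of the coordinate functions span the $(1,0)$- and $(0,1)$-forms --- the coordinates pull back along $\Phi$ to $h_1,\dots,h_N$, and $\cA^{0,0}(U)$ is generated by smooth functions of these $h_i$; so the harmonic functions and $\cA^{0,0}$ generate $\cA^{\bullet,\bullet}$. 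Finally, for property~\ref{item:forms and subdivison}, subdividing an edge inserts a valence-$2$ vertex with equal weights on its two incident edges, whose harmonic chart is an embedding onto a segment, and the forms pulled back there are exactly the Lagerberg forms coming from the ambient edge, so the sheaf is unchanged.

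For uniqueness, let $\mathcal{B}^{\bullet,\bullet}$ be another sheaf satisfying \ref{item:forms and subdivison}--\ref{item:forms and generation}. By \ref{item:forms and edge} it agrees with $\cA^{\bullet,\bullet}$ on every open subset of an edge, and by \ref{item:forms and generation} $\mathcal{B}^{\bullet,\bullet}$ is the sheaf of bigraded differential $\R$-algebras generated by the harmonic functions and $\mathcal{B}^{0,0}$, and likewise $\cA^{\bullet,\bullet}$ is generated by the harmonic functions and $\cA^{0,0}$. Hence it is enough to prove $\mathcal{B}^{0,0}(U)=\cA^{0,0}(U)$ on a small star neighbourhood $U$ of each vertex $v$. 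Now the harmonic functions on $U$ lie in $\mathcal{B}^{0,0}(U)$ because they are among the generators, $\mathcal{B}^{0,0}$ is a sheaf of $\R$-algebras whose sections over each open edge of $U$ are all smooth functions of arclength, and the injectivity in \ref{item:forms and restriction} together with insensitivity to subdivision \ref{item:forms and subdivison} forces the image of $\mathcal{B}^{0,0}(U)$ in the product of the $\cA^{0,0}$ of the open edges of $U$ to be precisely $\cA^{0,0}(U)$. Once $\mathcal{B}^{0,0}=\cA^{0,0}$, property~\ref{item:forms and generation} for both sheaves yields $\mathcal{B}^{p,q}=\cA^{p,q}$ in every bidegree.

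The main obstacle is twofold. First, the well-definedness of the local construction: two bases of harmonic functions differ by a linear automorphism of $\R^N$, under which smooth Lagerberg forms are invariant, but where $\Phi$ fails to be injective one must check directly that the restricted forms, and hence the conditions they impose at $v$, depend only on $U$. Second, and more delicate, is the step in the uniqueness argument that identifies the image of $\mathcal{B}^{0,0}(U)$: the bare facts that $\mathcal{B}^{0,0}$ is a sheaf of $\R$-algebras, contains the harmonic functions, and restricts to $C^\infty$ on open edges do not in themselves suffice, and it is precisely here that subdivision-insensitivity has to be used --- to rule out, for instance, a function with an unwanted corner at a valence-$2$ equal-weight vertex, and to force the higher-order matching conditions at $v$. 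For both of these I would rely on the tropical-chart formalism and the pullback computations developed in our companion paper on weakly smooth forms rather than reproving them here.
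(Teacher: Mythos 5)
Your construction runs the paper's logic in reverse: we \emph{define} smooth forms by explicit compatibility conditions at the vertices (Definitions~\ref{def:smooth.on.graph}--\ref{def:diff.11.form.graph} in \secref{sec:smooth forms}) and then prove, in Proposition~\ref{prop:local.pullbacks.graphs}, that every such form is locally the pullback of a Lagerberg form under a $(\Z,\Gamma)$-harmonic tropicalization; you instead \emph{define} the sheaf as local pullbacks along a fixed basis of harmonic functions near each vertex. The two definitions do agree (any harmonic tropicalization factors through your chart $\Phi$ by an affine map, so by functoriality of Lagerberg pullbacks the resulting sheaves coincide), and your route makes property~(\ref{item:forms and generation}) and the algebra structure essentially automatic, which is a genuine economy. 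What it does \emph{not} buy you is the extension Lemma~\ref{lem:extend.smooth}: without it you cannot show that your sheaf has \emph{enough} sections at a vertex of valence $\geq 3$ --- that every family of edge-germs satisfying continuity and the single weighted balancing relation actually arises from a smooth function on $\R^N$. That surjectivity is what produces partitions of unity, the explicit description of $\cA^{p,q}$ used in the integration theory and the cohomology computations, and --- crucially --- the uniqueness half of the Proposition. So the hard analytic content is deferred, not avoided.

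The concrete gap is the one you flag yourself: in the uniqueness argument, the step identifying the image of $\mathcal{B}^{0,0}(U)$ inside the product of the edge restrictions with $\cA^{0,0}(U)$ does not follow from (\ref{item:forms and subdivison})--(\ref{item:forms and restriction}) plus containing the harmonic functions. Subdivision-insensitivity controls the sheaf only at interior points of edges (valence-$2$ equal-weight vertices); it says nothing about which matching conditions are imposed at a vertex of valence $\geq 3$, where a priori $\mathcal{B}^{0,0}$ could impose strictly stronger conditions than $\cA^{0,0}$ (e.g.\ higher-order relations among the one-sided derivatives) while still being a sheaf of $\R$-algebras containing the harmonic functions and restricting to $C^\infty$ on open edges. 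Closing this requires exactly the statement of Proposition~\ref{prop:local.pullbacks.graphs} together with the surjectivity coming from Lemma~\ref{lem:extend.smooth}, both of which are proved in Section~\ref{Sec:lagerberg-forms} of the present paper (not in the companion paper on weakly smooth forms, which concerns Berkovich spaces). Two small corrections: at an interior valence-$2$ vertex the balancing relation $w(e_1)a_1+w(e_2)a_2=0$ forces the two slopes to have opposite signs, so $\Phi$ is an embedding onto a segment with $\Phi(v)$ in its interior (this is just the unweighting), not a fold; and the only interior vertices where $\Phi$ fails to embed are leaves, where $N=0$ and the pullback condition correctly forces local constancy.
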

	
In \secref{sec:smooth forms}, we will explicitly construct global $(p,q)$-forms on $\Sigma$,  giving rise to a sheaf $\cA^{\bullet,\bullet}$ of bigraded  $\R$-algebras with an alternating product and differentials $\d',\d''$ such that properties (\ref{item:forms and subdivison}), (\ref{item:forms and edge}), (\ref{item:forms and restriction}) are satisfied and such that harmonic functions are smooth. We will show at the end of \secref{sec:harmonic-morphisms} that the smooth forms admit functorial pull-backs along \emph{harmonic maps} of weighted metric graphs with boundary. In Section~\ref{Sec:lagerberg-forms}, we will study harmonic tropicalization maps $h \colon \Sigma \to \R^n$ and give a canonical lift of any smooth Lagerberg form on $\R^n$ to a smooth form on $\Sigma$. The main result (Proposition~\ref{prop:local.pullbacks.graphs}) is that any smooth form on $\Sigma$ is locally given by such a lift. This yields property (\ref{item:forms and generation}) in Proposition~\ref{prop:intrinsic approach to forms}.

\subsection{Dolbeault cohomology on graphs} \label{sec:Dolbeault cohomology on graphs}
In~\secref{sec:integration}, we give a theory of integration for smooth $(1,1)$ forms on $\Sigma$ such that the formula of Stokes holds. Roughly speaking, this is obtained from the classical theory replacing $\d'x \wedge \d''x$ on an edge $e$ by $\d x$ to obtain a standard smooth $1$-form on the corresponding closed interval of $\R$.   As for Riemann surfaces, we define the \defi{Dolbeault cohomology groups} for $p \in \{0,1\}$ by 
	
\[\begin{split}
    H^{p,0}(\Sigma,\del\Sigma) &= \mathrm{\phantom{co}ker}\bigl( \d''\colon\cA^{p,0}(\Sigma,\del\Sigma)\To\cA^{p,1}(\Sigma,\del\Sigma) \bigr) \\
    H^{p,1}(\Sigma,\del\Sigma) &= \coker\bigl( \d''\colon\cA^{p,0}(\Sigma,\del\Sigma)\To\cA^{p,1}(\Sigma,\del\Sigma) \bigr)
  \end{split}\]
using the sheaf of smooth forms from Proposition~\ref{prop:intrinsic approach to forms}. We set $h^{i,j} \coloneqq \dim H^{i,j}(\Sigma,\del\Sigma)$.

\begin{prop}\label{prop:introprop:dolbeault.graphs.intro}
  Let  $g$ be the genus of $\Sigma$. 
  \begin{enumerate}
  \item We have $h^{0,0}= 1$.
  \item We have $h^{1,1}=1$ if $\partial \Sigma=\emptyset$, and $h^{1,1}=0$ if $\partial \Sigma \neq\emptyset$.
  \item We have $h^{1,0}=h^{0,1}=g$ if $\partial \Sigma=\emptyset$, and $h^{1,0}-\#\del\Sigma+1=h^{0,1}=g$ if  $\partial \Sigma \neq\emptyset$.
  \end{enumerate}
\end{prop}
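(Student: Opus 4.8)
The plan is to exhibit each two-term complex $\cA^{p,\bullet}\colon\cA^{p,0}\xrightarrow{\d''}\cA^{p,1}$ as, locally on $\Sigma$, a resolution of its kernel sheaf, to identify $H^{p,q}(\Sigma,\del\Sigma)$ with sheaf cohomology on $\Sigma$, and then to finish with an Euler-characteristic count on the graph. First I would prove a local $\d''$-Poincar\'e lemma: $\d''\colon\cA^{p,0}\to\cA^{p,1}$ is an epimorphism of sheaves. On an open subinterval of an edge this is the classical one-variable statement for Lagerberg forms, $f\,\d''x=\d''\!\bigl(\int f\,\d x\bigr)$. At a vertex one uses the explicit description of smooth forms from Proposition~\ref{prop:intrinsic approach to forms} and Section~\secref{sec:smooth forms} (in particular Proposition~\ref{prop:local.pullbacks.graphs}): at an interior vertex a $\d''$-primitive exists inside smooth forms precisely because a $(0,1)$- or $(1,1)$-form carries there exactly the weighted balancing on its values that antidifferentiation requires. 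Writing $\mathcal{F}_p\coloneqq\ker(\d''\colon\cA^{p,0}\to\cA^{p,1})$, the complex $\cA^{p,\bullet}$ thus resolves $\mathcal{F}_p$; since the sheaves $\cA^{p,q}$ are soft (they admit partitions of unity), $H^{p,q}(\Sigma,\del\Sigma)\cong H^q(\Sigma,\mathcal{F}_p)$.

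For $p=0$ the one-variable lemma identifies $\mathcal{F}_0$ with the locally constant sheaf $\underline{\R}$; hence $h^{0,0}=\dim H^0(\Sigma,\R)=1$ by connectedness, which is (1), and $h^{0,1}=\dim H^1(\Sigma,\R)=g$ since the genus of a connected graph is its first Betti number, which gives the equality $h^{0,1}=g$ in (3). For $p=1$, the restriction of $\mathcal{F}_1$ to the union $\Sigma^\circ$ of the open edges is the rank-one local system $\mathcal{L}$ spanned edgewise by $\d'(\text{length coordinate})$, while near a vertex $v$ a section of $\mathcal{F}_1$ is an edgewise-constant $(1,0)$-form whose outgoing values $(a_e)_{e\ni v}$ satisfy $\sum_{e\ni v}w(e)\,a_e=0$ if $v\notin\del\Sigma$ and are unconstrained if $v\in\del\Sigma$. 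Thus there is a short exact sequence $0\to\mathcal{F}_1\to j_*\mathcal{L}\to\mathcal{Q}\to 0$, where $j\colon\Sigma^\circ\hookrightarrow\Sigma$ (the first map is injective by Proposition~\ref{prop:intrinsic approach to forms}(\ref{item:forms and restriction})) and $\mathcal{Q}$ is the skyscraper sheaf with stalk $\R$ at each $v\notin\del\Sigma$ recording the balancing defect. Since a small punctured neighborhood of a vertex meets $\Sigma^\circ$ in a disjoint union of intervals, $R^{>0}j_*\mathcal{L}=0$, and as $\mathcal{L}$ is trivial on each edge we get $H^0(\Sigma,j_*\mathcal{L})=\R^{\#E}$ and $H^{>0}(\Sigma,j_*\mathcal{L})=0$; so the long exact sequence collapses to $0\to H^{1,0}(\Sigma,\del\Sigma)\to\R^{\#E}\xrightarrow{\rho}\R^{\#(V\setminus\del\Sigma)}\to H^{1,1}(\Sigma,\del\Sigma)\to 0$, in which $\rho$ is, up to the positive weights, the boundary map $C_1(\Sigma)\to C_0(\Sigma,\del\Sigma)$ of the chain complex of $\Sigma$ relative to its boundary.

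It remains to read off $\ker\rho$ and $\coker\rho$; since the weights are positive these dimensions agree with the unweighted ones, and they are controlled by the transpose of $\rho$, the relative coboundary map. An element of its kernel is a vertex function constant on the connected graph $\Sigma$ and vanishing on $\del\Sigma$. If $\del\Sigma\ne\emptyset$ it must vanish, so $\rho$ is onto, $h^{1,1}=0$, and $h^{1,0}=\#E-\#V+\#\del\Sigma=g-1+\#\del\Sigma$, that is, $h^{1,0}-\#\del\Sigma+1=g$; this is the remainder of (2) and (3). If $\del\Sigma=\emptyset$ the kernel is the line of constants, so $\coker\rho$ is $1$-dimensional and $\dim\ker\rho=\#E-\#V+1=g$, giving $h^{1,0}=g$ and $\dim H^{1,1}(\Sigma,\del\Sigma)=1$. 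To present $H^{1,1}$ canonically I would invoke the integration theory of Section~\secref{sec:integration}: Stokes gives $\int_\Sigma\d''\eta=0$ when $\del\Sigma=\emptyset$, so integration descends to a map $\int_\Sigma\colon H^{1,1}(\Sigma,\del\Sigma)\to\R$, which factors the surjection onto $\coker\rho$ and is nonzero on a strictly positive $(1,1)$-form (here one uses that $\Sigma$ has an edge); hence it is an isomorphism, completing (2).

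I expect the main obstacle to be the vertex analysis underlying the $p=1$ case: proving that the only constraint on a $\d''$-closed smooth $(1,0)$-form at an interior vertex is the single weighted balancing identity $\sum w(e)\,a_e=0$ — with no further, higher-order conditions — together with the matching surjectivity of $\d''$ on stalks. This is exactly where the definition of smooth forms on a \emph{weighted} metric graph (through liftings of Lagerberg forms along harmonic tropicalizations, Section~\secref{Sec:lagerberg-forms}) enters, and once it is in place the dichotomy in the proposition reduces to the linear-algebraic dichotomy "the transpose of $\rho$ has a one-dimensional kernel of constants" versus "the transpose of $\rho$ is injective", which accounts for the case split and the extra term $\#\del\Sigma-1$.
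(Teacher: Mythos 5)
Your proposal is correct, but it takes a genuinely different route from the paper. The paper computes each $h^{p,q}$ by hand: $h^{1,1}$ by explicitly antidifferentiating $(f_e)$ edge by edge and reducing the choice of integration constants $C_e$ to a linear system in the oriented incidence matrix $B$ (solved via the graph Laplacian $BB^T$); $h^{1,0}$ as the kernel of $B$ relative to $\Span\{(v)\colon v\in\del\Sigma\}$; and $h^{0,1}$ by period integrals over the cycles attached to a spanning tree. You instead package the local analysis once, as a fibrewise $\d''$-Poincar\'e lemma, use softness of $\cA^{p,q}$ (partitions of unity, Remark~\ref{rem:partition of unity}) to identify $H^{p,q}$ with $H^q(\Sigma,\mathcal F_p)$, and then compute $\mathcal F_0=\underline\R$ and $\mathcal F_1$ via the sequence $0\to\mathcal F_1\to j_*\mathcal L\to\mathcal Q\to0$; the resulting four-term exact sequence exhibits $H^{1,0}$ and $H^{1,1}$ as the kernel and cokernel of the (weighted) relative incidence map, so both approaches bottom out in the same linear algebra. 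The vertex analysis you flag as the main obstacle does go through: for edgewise-constant $(1,0)$-forms the higher-order smoothness conditions of Definition~\ref{def:diff.form.graph} at valency-$1$ and valency-$2$ vertices are vacuous, so the only constraint at every interior vertex is the single weighted balancing identity, and the local $\d''$-surjectivity at a vertex is checked exactly as in the paper's antiderivative construction. What your route buys is uniformity and conceptual clarity (all four groups from one resolution, $h^{0,1}=g$ immediately from $H^1(\Sigma,\underline\R)$); what the paper's route buys is explicit representatives --- the bump forms $\omega_1,\dots,\omega_g$ dual to a spanning tree and the identity $\d''\cA^{1,0}(\Sigma)=\ker\int_\Sigma$ --- which are reused verbatim in the proofs of Poincar\'e duality (Proposition~\ref{prop:poincare.duality.graphs}), the modification lemma (Lemma~\ref{lem:modif.isom.cohom}), and the quotient computation (Proposition~\ref{prop:quotient.graph.cohom}), so if you adopt the sheaf-theoretic proof you would still need to extract those representatives separately.
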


This will be shown in Proposition~\ref{prop:dolbeault.graphs} by using standard methods of graphs and the theory of integration from \secref{sec:integration}. In \secref{sec:poincare-duality-graphs}, we will show that \defi{Poincar\'e duality} holds for Dolbeault cohomology on a weighted metric graph without boundary. 
For a harmonic morphism $h\colon \Sigma' \to \Sigma$ of weighted metric graphs, we get a functorial pull-back $h^*\colon H^{p,q}(\Sigma,\partial \Sigma)\to H^{p,q}(\Sigma',\partial \Sigma')$ induced by the pull-back of smooth $(p,q)$-forms. This is an isomorphism if $h$ is a modification (Lemma~\ref{lem:modif.isom.cohom}). If $\Sigma$ is the quotient of $\Sigma'$ by a finite group $G$ acting harmonically, then we will show in Proposition~\ref{prop:quotient.graph.cohom} that $H^{p,q}(\Sigma,\partial \Sigma) \simeq H^{p,q}(\Sigma',\partial \Sigma')^G$.
	
\subsection{Forms on Berkovich spaces}
	
Let $k$ be a non-trivially valued non-Archimedean field. In complex algebraic geometry, transcendental methods are often very useful. To initiate a non-Archimedean analogue, Chambert-Loir and Ducros~\cite{chambert_ducros12:forms_courants} have introduced a notion of smooth real-valued forms on a Berkovich analytic space. Their construction roughly proceeds as follows.  Let $X$ be a  non-Archimedean analytic space over $k$, and let $f\colon W\to\bG_m^{n,\an}$ be a morphism on a compact  analytic domain $W$ of $X$.  The associated \defi{smooth tropicalization map} is the composition $h = \trop\circ f$, where $\trop\colon\bG_m^{n,\an}\to\R^n$ is the \emph{tropicalization map} $(x_1,\ldots,x_n) \mapsto (-\log|x_1|,\ldots,-\log|x_n|)$.  A \defi{smooth $(p,q)$-form} on $X$ is then  locally given on a compact analytic domain $W$ of $X$ by pullback of a smooth  Lagerberg form on $\R^n$ of bidegree $(p,q)$  via a smooth tropicalization map~$h\colon W \to \R^n$.
	
To a certain extent, these forms behave similarly as in complex geometry: Chambert-Loir and Ducros~\cite{chambert_ducros12:forms_courants} show that there is a theory of integration with the Theorem of Stokes, and give a Poincar\'e--Lelong formula and  a local approach to the Monge--Amp\`ere operator~\cite[6.9]{chambert_ducros12:forms_courants} using curvature. However, this theory has certain deficiencies, which are already visible in dimension~$1$. It is shown in~\cite{jell19:tropical_hodge_numbers} that for certain non-trivially valued algebraically closed non-Archimedean ground fields $k$, the  Dolbeault cohomology group $H^{1,1}(X)$ may be infinite dimensional, and Poincar\'e duality may fail, even when $X$ is a smooth, proper curve.  More precisely, if the residue field of $k$ is $\C$, then both finite dimensionality and Poincar\'e duality hold precisely if $X$ is a Mumford curve.  See also~\cite{jell_wanner18:poincare_duality}.
	
The fundamental reason for the phenomena listed above is that \emph{harmonic functions are not smooth} in the theory of Chambert-Loir--Ducros.  This goes back to a result of Thuillier~\cite{thuillier05:thesis}, as noticed in Wanner's master thesis~\cite{wanner16:harmonic_functions}. For example, let $k$ be a non-trivially valued algebraically closed non-Archimedean field whose residue field $\td k$ is not algebraic over a finite field. Consider an elliptic curve $E$ over $k$ with good reduction over the ring of integers $k^\circ$.  Choose points $P,Q\in E(k)$ with distinct reductions $\td P,\td Q$ in the special fiber $E_s$ of the smooth model of $E$, and suppose that $[\td Q]-[\td P]$ is not a torsion element of $\Pic^0(E_s)$.  We may identify the residue discs $\red\inv(\td P),\red\inv(\td Q)$ over $\td P,\td Q$, respectively, with copies of the open unit disc $B = \{x\colon|x|<1\}$, such that $P$ and $Q$ are identified with $0\in B(k)$.  Define $h\colon E^\an\setminus\{P,Q\}\to\R$ by
\[ h(x) =
  \begin{cases}
    -\log|x| & \text{if $x\in\red\inv(\td P)$,}\\
    \log|x| & \text{if $x\in\red\inv(\td Q)$,}\\
    0 & \text{otherwise.}
  \end{cases}
\]
This is a harmonic function on $E^\an$ (in the sense of Thuillier~\cite{thuillier05:thesis}) because the outgoing slopes of $h$ at all points of $E^\an$ sum to zero.  This sum is only interesting at the unique point $\zeta$ of $E^\an$ reducing to the generic point of $E_s$, at which the slope in the direction of $P$ is $+1$, and the slope in the direction of $Q$ is $-1$.

However, this function is not smooth.  To see that, let $\sH$ denote the sheaf of harmonic functions and $\sF$ the subsheaf of smooth harmonic functions. As explained in~\cite[5.2]{wanner16:harmonic_functions}, it follows from~\cite[Lemme~2.3.22]{thuillier05:thesis}  that the quotient $\sH_\zeta/\sF_\zeta$ is canonically isomorphic to $\Pic^0(E_s)\tensor\R$.  The image of the germ of $h$ at $\zeta$ is equal to the class of $[\td Q]-[\td P]$, which is nonzero by hypothesis.  See~\otherfile{Section~\ref*{I-abelian threefold}} for a more detailed example.

In~\cite{gubler_rabinoff_jell:harmonic_trop}, we modify the construction of~\cite{chambert_ducros12:forms_courants} in order to force harmonic functions to be smooth; the resulting forms are called \defi{weakly smooth}. This effectively means that one is allowed to pull back smooth Lagerberg forms via harmonic maps $h\colon W\to\R^n$ on compact strictly analytic domains $W$ of $X$, as in the smooth case above. Every smooth form on $X$  is weakly smooth, and we show in~\cite{gubler_rabinoff_jell:harmonic_trop} that the weakly smooth forms retain essentially all of the properties of smooth forms.
	
\subsection{Results on Curves} \label{sec:results on curves}
The goal of the current paper is to verify that weakly smooth forms have good cohomological properties for arbitrary compact (rig-)smooth curves.  The following theorem shows that the dimensions of the Dolbeault cohomology groups are as expected from the theory of compact Riemann surfaces.
	
\begin{oneoffthm*}[theorem]{Theorem~\ref{thm:dolbeault.cohom.curve}}[Dolbeault Cohomology]
  Let $k$ be a non-Archimedean field with a nontrivial valuation.  
  Let $X$ be a connected, compact, rig-smooth curve, and let $g = h^1_{\sing}(|X|)$ be the first Betti number of its underlying topological space.  Let $H^{i,j}(X)$ denote the Dolbeault cohomology groups of $X$ with respect to its weakly smooth forms, and let $h^{i,j} = \dim_\R H^{i,j}(X)$.  Then we have:
  \begin{center}
    \begin{tikzpicture}
      \node (a) at (0,0) {If $\del X=\emptyset$:};
      \node[below=5mm] (b) at (a) {$h^{1,1} = 1$};
      \node[below left=1mm] (c) at (b.south west) {$h^{1,0} = g$};
      \node[below right=1mm] (d) at (b.south east) {$h^{0,1} = g$};
      \node[below right=1mm] (e) at (c.south east) {$h^{0,0} = 1$};
      \coordinate (x) at ($(a.south east)!(current bounding box.north west)!(a.south west)$);
      \coordinate (y) at ($(a.south east)!(current bounding box.north east)!(a.south west)$);
      \draw[thick] (x) -- (y);
    \end{tikzpicture}\qquad
    \begin{tikzpicture}
      \node (a) at (0,0) {If $\del X\neq\emptyset$:};
      \node[below=5mm] (b) at (a) {$h^{1,1} = 0$};
      \node[below left=1mm] (c) at (b.south west) {$h^{1,0} = g+\#\del X-1$};
      \node[below right=1mm] (d) at (b.south east) {$h^{0,1} = g\phantom{+\#\del X-1}$};
      \node[below right=1mm] (e) at (c.south east) {$h^{0,0} = 1$};
      \coordinate (x) at ($(a.south east)!(current bounding box.north west)!(a.south west)$);
      \coordinate (y) at ($(a.south east)!(current bounding box.north east)!(a.south west)$);
      \draw[thick] (x) -- (y);
    \end{tikzpicture}
  \end{center}
\end{oneoffthm*}

Here the boundary $\del X$ is defined by Berkovich~\cite[3.1]{berkovic90:analytic_geometry}; it is empty precisely when $X$ is proper (equivalently, projective).

When $X$ is a proper curve over $k$, there is an integration map $\int_X$ on the space of weakly smooth $(1,1)$-forms on $X$; by Stokes' theorem, this gives rise to pairings
\begin{equation}\label{eq:poincare.pairing.curves.intro}
  H^{0,0}(X)\times H^{1,1}(X)\To\R, \qquad
  H^{1,0}(X)\times H^{0,1}(X)\To\R.
\end{equation}
The following theorem is a version of Poincar\'e duality in this setting.

\begin{oneoffthm*}[theorem]{Theorem~\ref{thm:poincare.duality}}[Poincar\'e Duality]
  If $X$ is a smooth, proper curve over $k$,  then the pairings~\eqref{eq:poincare.pairing.curves.intro} are perfect.
\end{oneoffthm*}

\subsection{Differential Forms on Skeletons}
To compute the Dolbeault cohomology of a compact rig-smooth curve $X$ over the non-trivially valued non-Archimedean field $k$, we will give an alternative characterization of its weakly smooth forms using the theory of skeletons from~\cite{berkovic99:locally_contractible_I} and~\cite{thuillier05:thesis}.  To a semistable model $\fX$ of $X$ over the valuation ring $k^\circ$, one associates a canonical metric graph $\Sigma_\fX$,  called its \defi{skeleton}.  There is a canonical embedding $\Sigma_\fX\inject X$ and a deformation retraction $\tau_\fX\colon X\to\Sigma_\fX$.  
We will see that $\Sigma$ is canonically a weighted metric graph with boundary equal to $\del X$, which we will study in Section~\ref{Sec:curves-skeletons}. In Section~\ref{Sec:forms-curves}, we give a procedure for pulling back smooth $(p,q)$-forms on  $\Sigma_\fX$ under the retraction $\tau_\fX$.  We will show that the resulting forms are weakly smooth forms in the sense of~\cite{gubler_rabinoff_jell:harmonic_trop}. In fact, we have the following  converse after potentially passing to a finite extension of $k$.

\begin{oneoffthm*}[theorem]{Theorem~\ref{thm:pullback-form-skeleton}}
  Let $X$ be a compact rig-smooth curve over $k$, let $p,q\in\{0,1\}$, and let $\eta\in\cA^{p,q}(X)$ be a weakly smooth form.  There exists a finite Galois extension $k'/k$, a strictly semistable model $\fX'$ of $X' = X\tensor_k{k'}$, and a smooth form $\omega\in\cA^{p,q}(\Sigma_{\fX'},\del\Sigma_{\fX'})$ on the graph $\Sigma_{\fX'}$, such that $\eta'=\tau_{\fX'}^*\omega$, where $\eta'$ is the pullback of $\eta$ to $X'$.
\end{oneoffthm*}

The definition of weakly smooth forms involves a pullback from $\R^n$ via harmonic maps $h\colon W\to\R^n$ on strictly analytic domains $W$ of $X$.  The above theorem allows us to work with forms pulled back from a skeleton, which is an \emph{intrinsic} part of the structure theory of $X$.  This is only possible due to the semistable reduction theorem; in higher dimensions, the definition via harmonic maps is much more general. 

The following theorem relates the Dolbeault cohomologies.

\begin{oneoffthm*}[theorem]{Theorem~\ref{thm:pullback-skeleton-isom}}
  Let $X$ be a compact rig-smooth curve with a strictly semistable model $\fX$.  Suppose that $\fX_s$ contains no connected component that is both smooth and proper.  Then the pullback from the skeleton induces isomorphisms
  \[ \tau_\fX^*\colon H^{p,q}(\Sigma_\fX,\del\Sigma_\fX) \isom H^{p,q}(X) \]
  for all $p,q\in\{0,1\}$.
\end{oneoffthm*}

This together with the semistable reduction theorem and Proposition~\ref{prop:introprop:dolbeault.graphs.intro} lead to the description of the Dolbeault cohomology of $X$ given in \secref{sec:results on curves}.
Much of the technical difficulty in this paper is in proving that all of our constructions are functorial with respect to morphisms of curves and graphs and extension of the ground field.  This allows us to relate forms on different skeletons of the same curve, and to compute forms after a Galois extension of the ground field.

\subsection{Acknowledgments}
We are very grateful to Robert Bryant for providing us with an argument for Lemma~\ref{lem:extend.smooth}, which is a key fact needed to relate Lagerberg forms with differential forms on graphs.


\section{Differential Forms on Graphs}
\label{Sec:forms-graphs}

In this section we first fix our notions regarding weighted metric graphs with boundary, then define smooth forms on graphs and compute the associated Dolbeault cohomology groups.

\subsection{Weighted Metric Graphs With Boundary}  \label{sec:weighted metric graphs}
Our definitions in this section are essentially the same as Thuillier's~\cite{thuillier05:thesis}, except that we use graph-theoretic language.

\begin{defn}\label{def:metric.graph}
  A \defi{metric graph} is a (finite) multigraph $\Sigma$, with no loop edges, endowed with a parameterization $t_e\colon [0,\ell(e)]\isom e$ for every oriented edge $e$, with $0$ mapping to the tail of $e$ and $\ell(e)$ to the head.  The quantity $\ell(e) > 0$ is the \defi{length} of $e$.  If $\bar e$ is the edge $e$ with the opposite orientation, then we require $t_{\bar e}(x) = t_e(\ell(e)-x)$; in particular, $\ell(e)=\ell(\bar e)$.
\end{defn}

Note that we do not assume a metric graph to be connected, and that isolated vertices are allowed.  Our graphs will have edge weights, arising from the degree of the singular points of a semistable model of a curve.

\begin{defn}\label{def:weighted.graph}
  A \defi{weight} on a metric graph $\Sigma$ is a function $e\mapsto w(e)\in\Z_{>0}$ on the edges of $\Sigma$, satisfying $w(e)=w(\bar e)$.  A \defi{weighted metric graph} is a metric graph equipped with a weight.  The edge weights are said to be \defi{trivial} if $w(e) = 1$ for all edges.

  If $\Sigma$ is a weighted metric graph, its \defi{unweighting} is the weighted metric graph $\Sigma_0$ with the same underlying graph as $\Sigma$, with trivial edge weights, and with edge parameterizations $t_{0,e}\colon[0,\ell(e)/w(e)]\isom e$ defined by $t_{0,e}(x) = t_e(w(e)x)$.  The unweighting is equipped with a canonical map $\nu\colon\Sigma_0\to\Sigma$ that is the identity on underlying graphs.
\end{defn}

In particular, an edge $e$ of $\Sigma$ has length $\ell_0(e) = \ell(e)/w(e)$ in $\Sigma_0$.

\begin{rem}\label{rem:only.use.unweighting}
  The entire theory of forms on graphs will only depend on the lengths and weights of weighted metric graphs via the quotient $\ell_0(e) = \ell(e)/w(e)$, so we will always be able to pass to unweightings in the proofs.  In particular, it is logically redundant to specify both edge lengths and edge weights for purely graph-theoretic considerations.  However, it is necessary to retain both in order to make the connection with non-Archimedean geometry.  For instance, the slope of a linear function on an edge depends on the edge length but not the weight, and harmonic functions coming from non-Archimedean geometry have integer slopes.
\end{rem}

Our metric graphs will have a boundary, corresponding to the boundary of a Berkovich curve.

\begin{defn}\label{def:graph.boundary}
  A \defi{metric graph with boundary} is a metric graph $\Sigma$ along with a distinguished set $\del\Sigma\subset V(\Sigma)$.  The set $\del\Sigma$ is the \defi{boundary} of $\Sigma$, and $\Sigma\setminus\del\Sigma$ is the \defi{interior}.  If $(\Sigma,\del\Sigma)$ is a weighted metric graph with boundary, then we regard its unweighting $\Sigma_0$ as a metric graph with boundary $\del\Sigma_0=\del\Sigma$.
\end{defn}

Subdividing a metric graph will correspond to blowing up smooth points on the special fiber of a semistable model of a curve.

\begin{defn}\label{def:subdivision}
  A \defi{subdivision} of a weighted metric graph $\Sigma$ is the weighted metric graph $\Sigma'$ obtained by adding a finite number of vertices to the interiors of the edges of $\Sigma$.  The weights, edge parameterizations, and edge lengths of $\Sigma'$ are defined in the evident manner.
\end{defn}

The choice of vertices will play little role in this section, so we will often identify a weighted metric graph $\Sigma$ with its geometric realization, and we will pass freely to subdivisions of $\Sigma$.

\begin{notn}\label{notn:graph.notation}
We will use the following notations concerning graphs.\\[1mm]
\null\begin{tabular}{rl}
  \hbox to 1.5cm{\hfil$\Sigma$} & A weighted metric graph with no loop edges. \\
  $E(\Sigma)$ & The set of oriented edges of $\Sigma$. \\
  $E^+(\Sigma)$ & The set of unoriented edges of $\Sigma$. \\
  $V(\Sigma)$ & The set of vertices of $\Sigma$. \\
  $\bar e$ & The edge $e$ with the opposite orientation. \\
  $\ell(e)$ & The length of the edge $e$. \\
  $e^+$ & The head vertex of an edge $e$. \\
  $e^-$ & The tail vertex of an edge $e$. \\
  $t_e$ & A fixed parameterization $[0,\ell(e)]\to e$, with $0\mapsto e^-$ and $\ell(e)\mapsto e^+$. \\
  $w(e)$ & The weight of the edge $e$. \\
  $\Sigma_0$ & The unweighting of $\Sigma$. \\
  $\ell_0(e)$ & $= \ell(e)/w(e)$, the length of $e$ in $\Sigma_0$. \\
  $\nu$ & $\colon\Sigma_0\to\Sigma$, the identity on underlying graphs. \\
  $\del\Sigma$ & A subset of $V(\Sigma)$, called the \defi{boundary}. \\
\end{tabular}
\end{notn}

\subsection{Smooth forms: definitions}  \label{sec:smooth forms}
Fix a weighted metric graph with boundary $(\Sigma,\del\Sigma)$.
For an edge $e\subset\Sigma$, we say that a function $f\colon e\to\R$ is \emph{smooth} if $f\circ t_e\colon[0,\ell(e)]\to\R$ is smooth (smooth on one side at the endpoints).  We write $\d f/\d t_e(v)$ for the derivative of $f\circ t_e$ at $x = t_e\inv(v)$.  If $v\in\Sigma$ is a vertex of valency $2$ with outgoing edges $e_1,e_2$, and $f_i\colon e_i\to\R$ are smooth functions for $i=1,2$, we say that $(f_1,f_2)$ is \defi{smooth at $v$} if
\[ w(e_1)^n\,\frac{\d^n f_1}{\d t_{e_1}^n}(v) = w(e_2)^n\,(-1)^n \frac{\d^n f_2}{\d t_{e_2}^n}(v) \]
for all $n\geq 0$.  This means that $f_1,f_2$ glue to give a smooth function on $e_1\cup e_2$ in the obvious sense, where $e_1,e_2$ are regarded as edges \emph{in the unweighting}.

\begin{defn}\label{def:smooth.on.graph}
  A \defi{smooth function} on $(\Sigma,\del\Sigma)$ is a continuous function $f\colon\Sigma\to\R$ which is smooth when restricted to each edge, and that satisfies the following compatibility conditions at a vertex $v$ in $\Sigma\setminus\del\Sigma$.
  \begin{enumerate}
  \item If $v$ has valency $1$, then $f$ is constant in a neighborhood of $v$.
  \item If $v$ has valency $2$ with outgoing edges $e_1,e_2$, then $(f|_{e_1},f|_{e_2})$ is smooth at $v$.
  \item If $v$ has valency greater than $2$, then we require
    \[ \sum_{v=e^-}w(e)\,\frac{\d f}{\d t_e}(v) = 0. \]
    (The sum is taken over all outgoing edges at $v$.)
  \end{enumerate}
  The $\R$-algebra of smooth functions on $(\Sigma,\del\Sigma)$ is denoted $\cA^{0,0}(\Sigma,\del\Sigma)$, or by $\cA^{0,0}(\Sigma)$ if $\del\Sigma=\emptyset$.
\end{defn}

\begin{rem}\  \label{rem:partition of unity}

  \begin{enumerate}
  \item There are no conditions on $f$ at vertices in $\del\Sigma$ (other than continuity), or at isolated vertices.
  \item Condition~(3) says that the weighted outgoing slopes of $f$ at $v$ sum to zero.
  \item It is clear that smoothness can be checked in a neighborhood of a point, and that it is insensitive to subdivisions of $\Sigma$.
  \item The same argument as in Analysis shows that there is a smooth partition of unity subordinated to a given open covering of $\Sigma$.
\end{enumerate}
\end{rem}

\begin{defn}\label{def:diff.form.graph}
  A \defi{smooth $(1,0)$-form} on $(\Sigma,\del\Sigma)$ is the data $\omega = (f_e\,\d't_e)_{e\in E(\Sigma)}$, where $f_e\colon e\to\R$ is a smooth function and $\d't_e$ is a formal symbol.  We require $f_{\bar e} = -f_e$, and the following compatibility conditions at a vertex $v$ in $\Sigma\setminus\del\Sigma$.
  \begin{enumerate}
  \item If $v$ has valency $1$ with outgoing edge $e$, then $f_e$ is zero in a neighborhood of $v$.
  \item If $v$ has valency $2$ with outgoing edges $e_1,e_2$, then $(w(e_1)f_{e_1},-w(e_2)f_{e_2})$ is smooth at $v$.
  \item If $v$ has valency greater than $2$, then we require
    \[ \sum_{v=e^-} w(e)\,f_e(v) = 0. \]
  \end{enumerate}
  The space of smooth $(1,0)$-forms on $\Sigma$ is denoted by $\cA^{1,0}(\Sigma,\del\Sigma)$, or by $\cA^{1,0}(\Sigma)$ if $\del\Sigma=\emptyset$.  This is naturally an $\cA^{0,0}(\Sigma,\del\Sigma)$-module by $g\cdot(f_e\,\d't_e) = (gf_e\,\d't_e)$.  
  There is a derivation
  \[ \d'\colon\cA^{0,0}(\Sigma,\del\Sigma)\To\cA^{1,0}(\Sigma,\del\Sigma) \qquad
  \d'f = \left(\frac{\d f}{\d t_e}\,\d't_e \right). \]

  We define the space $\cA^{0,1}(\Sigma,\del\Sigma)$ (resp.\ $\cA^{0,1}(\Sigma)$ if $\del\Sigma=\emptyset$) of smooth $(0,1)$-forms and the derivation $\d'':\cA^{0,0}(\Sigma,\del\Sigma)\to\cA^{0,1}(\Sigma,\del\Sigma)$ in exactly the same way, replacing the symbol $\d't_e$ with $\d''t_e$.
\end{defn}

\begin{rem*}\

  \begin{enumerate}
  \item There are no conditions on $(f_e\,\d't_e)$ or $(f_e\,\d''t_e)$ at vertices in $\del\Sigma$.
  \item The condition $f_{\bar e} = -f_e$ should be interpreted as saying that $\d' t_{\bar e} = -\d' t_e$, which is reasonable as $t_{\bar e}(x) = t_e(\ell(e)-x)$.
  \item The spaces $\cA^{1,0}(\Sigma,\del\Sigma)$ and $\cA^{0,1}(\Sigma,\del\Sigma)$ are insensitive to subdivisions of $\Sigma$.
  \end{enumerate}
\end{rem*}

\begin{defn}\label{def:diff.11.form.graph}
  A \defi{smooth $(1,1)$-form} on $(\Sigma,\del\Sigma)$ is the data $\omega = (f_e\,\d' t_e\d''t_e)_{e\in E(\Sigma)}$, where $f_e\colon e\to\R$ is a smooth function and $\d't_e\d''t_e$ is a formal symbol.  We require that $f_{\bar e} = f_e$, and the following compatibility conditions at a vertex $v\in\Sigma\setminus\del\Sigma$.
  \begin{enumerate}
  \item If $v$ has valency $1$ with outgoing edge $e$, then $f_e$ is zero in a neighborhood of $v$.
  \item If $v\in\Sigma\setminus\del\Sigma$ is a vertex of valency $2$ with outgoing edges $e_1,e_2$, then the pair $(w(e_1)^2f_{e_1},w(e_2)^2f_{e_2})$ is smooth at $v$.
  \end{enumerate}
  The space of smooth $(1,1)$-forms on $\Sigma$ is denoted $\cA^{1,1}(\Sigma,\del\Sigma)$, or by $\cA^{1,1}(\Sigma)$ if $\del\Sigma=\emptyset$.  This is naturally an $\cA^{0,0}(\Sigma,\del\Sigma)$-module by $g\cdot(f_e\,\d't_e\d''t_e) = (gf_e\,\d't_e\d''t_e)$.  There is a wedge product
  \[ \wedge\colon\cA^{1,0}(\Sigma,\del\Sigma)\times\cA^{0,1}(\Sigma,\del\Sigma)\to\cA^{1,1}(\Sigma,\del\Sigma) \]
  defined by
  \[ (f_e\,\d't_e)\wedge(g_e\,\d''t_e) = (f_eg_e\,\d't_e\d''t_e), \]
  and there are natural maps
  \[ \d''\colon\cA^{1,0}(\Sigma,\del\Sigma)\To\cA^{1,1}(\Sigma,\del\Sigma)\qquad
    \d'\colon\cA^{0,1}(\Sigma,\del\Sigma)\To\cA^{1,1}(\Sigma,\del\Sigma)
  \]
  defined by
  \[ \d''(f_e\d't_e) = \left( -\frac{\d f_e}{\d t_e}\,\d't_e\d''t_e \right)
    \qquad
    \d'(f_e\d''t_e) = \left( \frac{\d f_e}{\d t_e}\,\d't_e\d''t_e \right),
  \]
  respectively.
\end{defn}

\begin{rem*}\

  \begin{enumerate}
  \item There are no conditions on $(f_e\,\d't_e\d''t_e)$ at vertices in $\del\Sigma$ or at vertices of valency greater than $2$.
  \item The condition $f_{\bar e} = f_e$ is consistent with the convention that $\d't_{\bar e}=-\d't_e$ and $\d''t_{\bar e}=-\d''t_e$.
  \item The space $\cA^{1,1}(\Sigma,\del\Sigma)$ is insensitive to subdivisions of $\Sigma$.
  \item The \defi{Lagerberg involution} is the involution of bigraded differential algebras
\[ J\colon\cA^{p,q}(\Sigma,\del\Sigma)\isom\cA^{q,p}(\Sigma,\del\Sigma) \]
defined by exchanging $\d'$ with $\d''$.  (Note that $J = 1$ on $\cA^{0,0}$ and $J = -1$ on $\cA^{1,1}$.)
  \end{enumerate}
\end{rem*}

With the product structures defined in Definitions~\ref{def:smooth.on.graph}, \ref{def:diff.form.graph}, and~\ref{def:diff.11.form.graph}, we get a uniquely defined wedge product on $\cA^{\bullet,\bullet}(\Sigma,\del\Sigma)=\Dsum\cA^{p,q}(\Sigma,\del\Sigma)$ making $\cA^{\bullet,\bullet}(\Sigma,\del\Sigma)$ into an alternating bigraded algebra.  The differential $\d'$ satisfies the Leibniz rule
\[ \d'(\omega\wedge\eta) = \d'\omega\wedge\eta + (-1)^p\omega\wedge\d'\eta \]
for $\omega\in\cA^{p,q}(\Sigma,\del\Sigma)$ and $\eta\in\cA^{r,s}(\Sigma,\del\Sigma)$, and likewise for $\d''$.

\begin{rem}
  Removing the isolated vertices from a graph does not affect its $(p,q)$-forms for $p+q\geq 1$.  In particular, if $\Sigma = \{x\}$ then $\cA^{1,0}(\Sigma,\del\Sigma)=\cA^{0,1}(\Sigma,\del\Sigma)=\cA^{1,1}(\Sigma,\del\Sigma)=0$.
\end{rem}

The definition of smooth forms only depends on the quotients $\ell_0(e) = \ell(e)/w(e)$; this is made explicit by the following lemma.

\begin{lem}\label{lem:unweighting.forms}
  Let $(\Sigma,\del\Sigma)$ be a weighted metric graph with boundary, and let $\Sigma_0$ be its unweighting.  Define $\nu^*\colon\cA^{p,q}(\Sigma,\del\Sigma)\to\cA^{p,q}(\Sigma_0,\del\Sigma_0)$ as follows:
  \begin{align*}
    \nu^*(f) &= f & f&\in\cA^{0,0}(\Sigma,\del\Sigma) \\
    \nu^*(f_e\,\d't_{e}) &= (w(e)f_e\,\d't_{0,e}) &
                                                    (f_e\,\d't_e)&\in\cA^{1,0}(\Sigma,\del\Sigma) \\
    \nu^*(f_e\,\d''t_e) &= (w(e)f_e\,\d''t_{0,e}) &
                                                    (f_e\,\d''t_{e})&\in\cA^{0,1}(\Sigma,\del\Sigma) \\
    \nu^*(f_e\,\d't_{e}\d''t_{e}) &= (w(e)^2f_e\,\d't_{0,e}\d''t_{0,e})  &
                                                                           (f_e\,\d't_{e}\d''t_{e})&\in\cA^{1,1}(\Sigma,\del\Sigma).
  \end{align*}
  Then $\nu^*$ is an isomorphism of differential bigraded algebras.
\end{lem}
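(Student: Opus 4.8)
The plan is to verify directly that $\nu^*$ is a well-defined map on each bidegree, is compatible with the algebra and differential structure, and is bijective; since all the data in sight is defined edge-by-edge subject to vertex conditions, this amounts to a local check at vertices together with a routine chain-rule computation on the interior of edges. First I would record the basic relation between the two parameterizations: since $t_{0,e}(x) = t_e(w(e)x)$, for a function $g$ on $e$ we have $\frac{\d (g\circ t_{0,e})}{\d x}(x) = w(e)\,\frac{\d (g\circ t_e)}{\d t_e}(t_e(w(e)x))$, and more generally the $n$-th derivative picks up a factor $w(e)^n$. This immediately shows that $\nu^*(f) = f$ is smooth on $\Sigma_0$ when $f$ is smooth on $\Sigma$ (the valency-$2$ and valency-$>2$ conditions transform into the corresponding conditions for $\Sigma_0$ with trivial weights, using exactly the $w(e)^n$ and $w(e)$ factors appearing in Definition~\ref{def:smooth.on.graph}), and likewise checks that $\nu^*$ lands in the correct space in each of the four bidegrees. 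For the $(1,0)$ case, for instance, $\nu^*(f_e\,\d't_e) = (w(e)f_e\,\d't_{0,e})$ satisfies $\overline{\nu^*\omega}_e = -(\nu^*\omega)_e$ because $w(\bar e) = w(e)$ and $f_{\bar e} = -f_e$; the valency-$2$ smoothness condition for $(w(e_1)^2 f_{e_1}, -w(e_2)^2 f_{e_2})$ is precisely the one required on $\Sigma_0$ for the pair $(w(e_1)f_{e_1}, -w(e_2)f_{e_2})$ of coefficients of $\d't_{0,e_i}$; and the valency-$>2$ condition $\sum w(e) f_e(v) = 0$ becomes $\sum (w(e)f_e)(v) = 0$, which is the condition with trivial weights. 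The $(0,1)$ and $(1,1)$ cases are identical mutatis mutandis, and at boundary vertices and isolated vertices there is nothing to check.

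Next I would check that $\nu^*$ respects the structures. Multiplicativity is immediate from the definitions: on each edge $\nu^*((g)(f_e\,\d't_e)) = w(e)\,g f_e\,\d't_{0,e} = g\cdot\nu^*(f_e\,\d't_e)$, and similarly $\nu^*((f_e\d't_e)\wedge(g_e\d''t_e)) = w(e)^2 f_e g_e\,\d't_{0,e}\d''t_{0,e} = \nu^*(f_e\d't_e)\wedge\nu^*(g_e\d''t_e)$, so the bookkeeping of weight powers matches the bigrading exactly. Compatibility with $\d'$ follows from the derivative formula above: $\nu^*(\d'f)_e = w(e)\,\frac{\d f}{\d t_e}\,\d't_{0,e} = \frac{\d f}{\d t_{0,e}}\,\d't_{0,e} = (\d'\nu^*f)_e$, using that the factor $w(e)$ converts the $t_e$-derivative into the $t_{0,e}$-derivative; the computation for $\d'\colon\cA^{0,1}\to\cA^{1,1}$ and $\d''\colon\cA^{1,0}\to\cA^{1,1}$ is the same, with the extra weight factor on the $(1,1)$ side absorbing the one produced by the derivative. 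Finally, $\nu^*$ is visibly bijective in each bidegree: its inverse is given by the same formulas with $w(e)$, $w(e)^2$ replaced by $w(e)^{-1}$, $w(e)^{-2}$ and $t_{0,e}$ replaced by $t_e$, and one checks this lands in $\cA^{p,q}(\Sigma,\del\Sigma)$ by running the identical local-vertex verification in reverse. Hence $\nu^*$ is an isomorphism of differential bigraded algebras.

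The only point requiring genuine (if still elementary) care — the ``main obstacle'' such as it is — is the valency-$2$ smoothness condition, since that is the one place where the two parameterizations of an edge and all derivative orders interact simultaneously. One must observe that the phrase ``$(g_1,g_2)$ is smooth at $v$'' in Definition~\ref{sec:smooth forms} was defined so that it means precisely that $g_1,g_2$ glue to a smooth function when $e_1,e_2$ are viewed as edges \emph{in the unweighting}; consequently the condition ``$(w(e_1)^k f_{e_1}, (-1)^k w(e_2)^k f_{e_2})$ is smooth at $v$'' appearing in the definition of smooth $(p,q)$-forms on $\Sigma$ translates, under $f_e\mapsto w(e)^k f_e$ (with $k = 0,1,1,2$ for $(0,0),(1,0),(0,1),(1,1)$), into exactly the unweighted smoothness condition on $\Sigma_0$ for the coefficient functions of $\d'^{\otimes p}t_{0,e}\,\d''^{\otimes q}t_{0,e}$. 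Spelling this out once carefully handles all bidegrees uniformly; everything else is formal.
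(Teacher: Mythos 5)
Your proof is correct and takes the same route as the paper, which simply observes that everything reduces to the identity $\frac{\d f}{\d t_{0,e}} = w(e)\,\frac{\d f}{\d t_e}$ together with the fact that the weighted notion of ``smooth at a valency-$2$ vertex'' was defined precisely to mean trivial-weight smoothness on the unweighting; your closing paragraph isolates exactly this point. (One local slip: in your $(1,0)$ discussion the relevant pair is $(w(e_1)f_{e_1},-w(e_2)f_{e_2})$, not $(w(e_1)^2f_{e_1},-w(e_2)^2f_{e_2})$ --- the uniform statement with $k=p+q$ that you give at the end is the correct one and supersedes it.)
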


The claim follows easily from $\frac{\d f}{\d t_{0,e}}=w(e)\frac{\d f}{\d t_e}$, using the definition of the unweighting.  In Section~\ref{Sec:harmonicity} we will interpret $\nu^*$ as the pullback with respect to the map $\nu:\Sigma_0 \to \Sigma$ that is the identity on the underlying graphs: see Remark~\ref{rem:unweighting.and.harmonic}.

\begin{rem} \label{rem:comparison to polyhedral spaces}
	 The $(p,q)$-forms on a weighted metric graph $\Sigma$ with boundary are similar to the Lagerberg forms (also called superforms) on polyhedral spaces  introduced in~\cite{jell_shaw_smacka19:superforms}. A metric graph is  a compact polyhedral space of dimension $\leq 1$. The difference is that the boundary points for $\Sigma$ are connected by edges of finite lengths, while the boundary points of one-dimensional polyhedral spaces  are at the end of edges of infinite length; they play the role of an interior edge of $\Sigma$ of valency $1$. Our definitions are inspired by applications to skeletons of non-archimedean analytic curves starting in Section~\ref{Sec:curves-skeletons}. 
\end{rem}

\subsection{Integration}\label{sec:integration}
Now we define the integral of a smooth $(1,1)$-form and prove Stokes' theorem.

\begin{defn}\label{def:integration.11}
  For $\omega = (f_e\,\d't_e\d''t_e)\in\cA^{1,1}(\Sigma,\del\Sigma)$ we define
  \[ \int_\Sigma\omega = \frac 12\sum_{e\in E(\Sigma)}w(e)\int_0^{\ell(e)} f_e\circ t_e(x)\,\d x. \]
\end{defn}

\noindent
Here the sum is taken over \emph{oriented} edges.  Since $f_e = f_{\bar e}$ we have
\[ \int_0^{\ell(e)}f_e\circ t_e(x)\,\d x = \int_0^{\ell(e)}f_{\bar e}\circ t_{\bar e}(x)\,\d x,\]
so alternatively, one can drop the factor of $1/2$ by summing over each edge once with a choice of orientation.

\begin{defn}\label{def:integration.10}
  For $\eta' = (f_e\,\d't_e)\in\cA^{1,0}(\Sigma,\del\Sigma)$ and
  $\eta'' = (g_e\,\d''t_e)\in\cA^{0,1}(\Sigma,\del\Sigma)$
  we define
  \[ \int_{\del\Sigma}\eta' = \sum_{v\in\del\Sigma}\sum_{v=e^-}w(e)\,f_e(v)
    \qquad
    \int_{\del\Sigma}\eta'' = \sum_{v\in\del\Sigma}\sum_{v=e^+}w(e)\,g_e(v),
  \]
  where the second sum in $\int_{\del\Sigma}\eta'$ is taken over all outgoing edges at $v$, and the second sum in $\int_{\del\Sigma}\eta''$ is taken over all incoming edges at $v$.
\end{defn}

For a vertex $v\notin\del\Sigma$ we have $\sum_{v=e^-}w(e)\,f_e(v) = 0 = \sum_{v=e^+}w(e)\,f_e(v)$, so we may equivalently define
\begin{equation}\label{eq:bdy.int.2}
  \begin{split}
    \int_{\del\Sigma}\eta' &= \sum_{v\in V(\Sigma)}\sum_{v=e^-}w(e)\,f_e(v)
    = \sum_{e\in E(\Sigma)} w(e)\,f_e(e^-) \\
    \int_{\del\Sigma}\eta'' &= \sum_{v\in V(\Sigma)}\sum_{v=e^+}w(e)\,g_e(v)
    = \sum_{e\in E(\Sigma)} w(e)\,f_e(e^+).
  \end{split}
\end{equation}

The following lemma shows that integrals are not changed by unweighting.  We will see later that it is a special case of Lemma~\ref{lem:pullback.integrate}.

\begin{lem}\label{lem:integral.unweighting}
  Let $(\Sigma,\del\Sigma)$ be a weighted metric graph with boundary, and let $\Sigma_0$ be the unweighting.  Then
  \[ \int_{\Sigma_0}\nu^*\omega = \int_\Sigma\omega \qquad
    \int_{\del\Sigma_0}\nu^*\eta' = \int_{\del\Sigma}\eta' \qquad
    \int_{\del\Sigma_0}\nu^*\eta'' = \int_{\del\Sigma}\eta''
  \]
  for all $\omega\in\cA^{1,1}(\Sigma,\del\Sigma),~\eta'\in\cA^{1,0}(\Sigma,\del\Sigma),$ and $\eta''\in\cA^{0,1}(\Sigma,\del\Sigma)$.
\end{lem}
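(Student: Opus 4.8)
The proof is a direct computation comparing the two sides of each identity term by term over the edges of $\Sigma$, using only the definition of the unweighting together with the explicit formulas for $\nu^*$ from Lemma~\ref{lem:unweighting.forms} and for the integrals in Definitions~\ref{def:integration.11} and~\ref{def:integration.10}. The key numerical facts are that $\ell_0(e)=\ell(e)/w(e)$ and that the edge parameterization of the unweighting satisfies $t_{0,e}(x)=t_e(w(e)x)$, so that a substitution $x\mapsto w(e)x$ converts an integral over $[0,\ell_0(e)]$ with parameter $t_{0,e}$ into an integral over $[0,\ell(e)]$ with parameter $t_e$, picking up a Jacobian factor of $1/w(e)$.

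First I would treat the $(1,1)$-case. Write $\omega=(f_e\,\d't_e\d''t_e)$, so that $\nu^*\omega=(w(e)^2f_e\,\d't_{0,e}\d''t_{0,e})$ by Lemma~\ref{lem:unweighting.forms}. By Definition~\ref{def:integration.11} applied to $\Sigma_0$ (whose edge weights are trivial and whose edge lengths are $\ell_0(e)$),
\[ \int_{\Sigma_0}\nu^*\omega = \frac12\sum_{e\in E(\Sigma)} \int_0^{\ell_0(e)} w(e)^2 f_e\circ t_{0,e}(x)\,\d x. \]
Substituting $x = u/w(e)$, using $t_{0,e}(u/w(e)) = t_e(u)$ and $\ell_0(e)w(e)=\ell(e)$, the integral becomes $\int_0^{\ell(e)} w(e)^2 f_e\circ t_e(u)\,\frac{\d u}{w(e)} = w(e)\int_0^{\ell(e)} f_e\circ t_e(u)\,\d u$, and summing over $e$ recovers $\int_\Sigma\omega$ exactly.

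For the two boundary identities, note that $\del\Sigma_0=\del\Sigma$ as vertex sets and that $\nu$ is the identity on underlying graphs, so the only change is in the formal $(1,0)$- and $(0,1)$-parts of the forms and in the edge weights appearing in Definition~\ref{def:integration.10}. For $\eta'=(f_e\,\d't_e)\in\cA^{1,0}(\Sigma,\del\Sigma)$ we have $\nu^*\eta'=(w(e)f_e\,\d't_{0,e})$, so
\[ \int_{\del\Sigma_0}\nu^*\eta' = \sum_{v\in\del\Sigma}\sum_{v=e^-} 1\cdot (w(e)f_e)(v) = \sum_{v\in\del\Sigma}\sum_{v=e^-} w(e)\,f_e(v) = \int_{\del\Sigma}\eta', \]
and the $(0,1)$-case is identical after replacing outgoing edges by incoming ones. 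There is essentially no obstacle here: the statement was flagged in the text as a special case of the later Lemma~\ref{lem:pullback.integrate}, and the only point requiring any care is keeping track of the three separate sources of weight factors (the $w(e)^2$ or $w(e)$ in $\nu^*$, the Jacobian $1/w(e)$ from the substitution in the $(1,1)$-case, and the trivial weights of $\Sigma_0$) so that they combine to give precisely the weights in the definitions of $\int_\Sigma$ and $\int_{\del\Sigma}$ on $\Sigma$.
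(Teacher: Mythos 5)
Your proof is correct and follows essentially the same route as the paper: the $(1,1)$-case is the substitution $x\mapsto w(e)x$ applied edge by edge, combining the $w(e)^2$ from $\nu^*$ with the Jacobian $1/w(e)$ to recover the weight $w(e)$ in $\int_\Sigma$, and the boundary cases reduce to the observation that the weight factor in $\nu^*$ exactly replaces the weight in Definition~\ref{def:integration.10}. The paper's proof is the same computation, merely leaving the $(1,0)$- and $(0,1)$-cases as ``immediate.''
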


\begin{proof}
  Let $\omega = (f_e\,\d't_e\d''t_e)\in\cA^{1,1}(\Sigma,\del\Sigma)$, so $\nu^*\omega = (w(e)^2f_e\,\d't_{0,e}\d''t_{0,e})$.  For an edge $e$ of $\Sigma$, we have
  \[\begin{split}
      \int_0^{\ell(e)/w(e)}w(e)^2 f_e\circ t_{0,e}(x)\,\d x
      &= w(e)\int_0^{\ell(e)/w(e)} f_e\circ t_e(w(e)x)\,\d(w(e)x) \\
      &= w(e)\int_0^{\ell(e)} f_e\circ t_e(x)\,\d x,
    \end{split}\]
  which proves that $\int_{\Sigma_0}\nu^*\omega = \int_\Sigma\omega$.  The proofs for $\eta'$ and $\eta''$ are immediate.
\end{proof}

Now we prove Stokes' Theorem.

\begin{thm}[Stokes' Theorem]\label{thm:stokes}
  For $\omega\in\cA^{1,0}(\Sigma,\del\Sigma)$ and $\eta\in\cA^{0,1}(\Sigma,\del\Sigma)$
  we have
  \[ \int_\Sigma\d''\omega = \int_{\del\Sigma}\omega \sptxt{and}
    \int_\Sigma\d'\eta = \int_{\del\Sigma}\eta.
  \]
\end{thm}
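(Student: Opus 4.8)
The plan is to reduce to the unweighted case and then compute edge by edge. First, by Lemma~\ref{lem:integral.unweighting} and Lemma~\ref{lem:unweighting.forms}, both sides of each identity are unchanged under replacing $(\Sigma,\del\Sigma)$ by its unweighting $\Sigma_0$ and $\omega$ by $\nu^*\omega$, since $\nu^*$ commutes with $\d''$ (and $\d'$) as it is a morphism of differential bigraded algebras. Hence it suffices to prove the theorem when all weights are trivial, so $w(e)=1$ everywhere and the integers $\ell_0(e)=\ell(e)$ are the only relevant data. This simplifies the bookkeeping considerably: the smoothness conditions at valency-$2$ vertices become honest $C^\infty$-gluing, and the sums over outgoing edges carry no weight factors.

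Next, write $\omega=(f_e\,\d't_e)$ and unwind the definitions: $\d''\omega = \bigl(-\tfrac{\d f_e}{\d t_e}\,\d't_e\d''t_e\bigr)$, so by Definition~\ref{def:integration.11},
\[
  \int_\Sigma\d''\omega
  = \frac12\sum_{e\in E(\Sigma)}\int_0^{\ell(e)}-\frac{\d(f_e\circ t_e)}{\d x}(x)\,\d x
  = \frac12\sum_{e\in E(\Sigma)}\bigl(f_e(e^-)-f_e(e^+)\bigr),
\]
by the fundamental theorem of calculus applied on each interval $[0,\ell(e)]$ (valid because $f_e\circ t_e$ is $C^\infty$ up to the endpoints). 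Now use $f_{\bar e}=-f_e$ together with $e^+ = (\bar e)^-$: the term $-f_e(e^+)$ from the oriented edge $e$ equals $+f_{\bar e}((\bar e)^-)$, so summing over \emph{oriented} edges the two contributions coincide and the factor $\tfrac12$ disappears, giving $\int_\Sigma\d''\omega = \sum_{e\in E(\Sigma)} f_e(e^-)$. By~\eqref{eq:bdy.int.2} (with trivial weights), this is exactly $\int_{\del\Sigma}\omega$. The computation for $\d'\eta$ with $\eta=(g_e\,\d''t_e)$ is identical but without the sign: $\d'\eta = \bigl(\tfrac{\d g_e}{\d t_e}\,\d't_e\d''t_e\bigr)$ gives $\int_\Sigma\d'\eta = \tfrac12\sum_e\bigl(g_e(e^+)-g_e(e^-)\bigr)$, and since $g_{\bar e}=-g_e$ the same folding yields $\sum_{e\in E(\Sigma)} g_e(e^+)$, which is $\int_{\del\Sigma}\eta$ by the second line of~\eqref{eq:bdy.int.2}.

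The one point requiring a moment's care — and the only genuine subtlety — is that the fundamental theorem of calculus must be applied to a \emph{single} function $f_e\circ t_e$ on each closed interval, but a ``$(1,0)$-form on $\Sigma$'' is a priori only a collection of functions on edges subject to compatibility at interior vertices. The resolution is precisely the smoothness-at-vertices axioms from Definition~\ref{def:diff.form.graph}: at a valency-$2$ interior vertex the data glues $C^\infty$ across the vertex (after passing to the unweighting), so subdividing or not subdividing an edge is irrelevant, and at valency-$1$ interior vertices $f_e$ vanishes near $v$ so that endpoint contributes $0$ to both sides. Thus no boundary terms are lost or double-counted at interior vertices, and the only surviving endpoint contributions are those at $v\in\del\Sigma$ (and at isolated/valency-anything vertices where no condition is imposed, but those are automatically accounted for in~\eqref{eq:bdy.int.2}). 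Once this is observed, the proof is the bookkeeping above.
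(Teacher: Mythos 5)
Your proposal is correct and follows essentially the same route as the paper: reduce to trivial edge weights via Lemma~\ref{lem:integral.unweighting}, apply the fundamental theorem of calculus edge by edge, and fold the sum over oriented edges using $f_{\bar e}=-f_e$ together with~\eqref{eq:bdy.int.2}. (The only blemish is the word ``integers'' where you mean ``lengths'' $\ell_0(e)=\ell(e)$.)
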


\begin{proof}
  We only prove the first equality, as the second is the same.  By Lemma~\ref{lem:integral.unweighting}, we may assume that $\Sigma$ has trivial edge weights.  We have
  \[\begin{split}
    \int_\Sigma\d''\omega &= \frac12\sum_e \int_0^{\ell(e)}-\frac{\d(f_e\circ t_e)}{\d x}\,\d x
    = -\frac12\sum_e \bigl( f_e(e^+) - f_e(e^-) \bigr) \\
    &= \frac12\sum_e \bigl( f_{\bar e}(\bar e{}^-) + f_e(e^-) \bigr)
    = \sum_e f_e(e^-)
    = \int_{\del\Sigma}\omega,
  \end{split}\]
  where the last equality holds by~\eqref{eq:bdy.int.2}.
\end{proof}

\subsection{Dolbeault cohomology of graphs}
In this subsection we define and compute the Dolbeault cohomology groups of a weighted metric graph with boundary $(\Sigma,\del\Sigma)$.

\begin{defn}\label{def:dolbeault.graphs}
  The \defi{Dolbeault cohomology groups} of $(\Sigma,\del\Sigma)$ with respect to $\d''$ are
  \[\begin{split}
      H^{0,0}(\Sigma,\del\Sigma) &= \mathrm{\phantom{co}ker}\bigl( \d''\colon\cA^{0,0}(\Sigma,\del\Sigma)\To\cA^{0,1}(\Sigma,\del\Sigma) \bigr) \\
      H^{1,0}(\Sigma,\del\Sigma) &= \mathrm{\phantom{co}ker}\bigl( \d''\colon\cA^{1,0}(\Sigma,\del\Sigma)\To\cA^{1,1}(\Sigma,\del\Sigma) \bigr) \\
      H^{0,1}(\Sigma,\del\Sigma) &= \coker\bigl( \d''\colon\cA^{0,0}(\Sigma,\del\Sigma)\To\cA^{0,1}(\Sigma,\del\Sigma) \bigr) \\
      H^{1,1}(\Sigma,\del\Sigma) &= \coker\bigl( \d''\colon\cA^{1,0}(\Sigma,\del\Sigma)\To\cA^{1,1}(\Sigma,\del\Sigma) \bigr).
    \end{split}\]
  We also write $H^{i,j}(\Sigma)$ if $\del\Sigma=\emptyset$, and we write $h^{i,j}(\cdots) = \dim H^{i,j}(\cdots)$.
\end{defn}

The \emph{genus} of a connected (weighted metric) graph $\Sigma$ is the dimension of the first singular cohomology group $H^1_{\sing}(\Sigma,\R)$; this is equal to $\#E^+(\Sigma)-\#V(\Sigma)+1$.  The main result of this section is as follows.

\begin{prop}\label{prop:dolbeault.graphs}
  Let $(\Sigma,\del\Sigma)$ be a connected weighted metric graph with boundary, and let $g$ be the genus of $\Sigma$.  Suppose that $\Sigma$ does not consist of a single vertex.  Then $h^{i,j} = h^{i,j}(\Sigma,\del\Sigma)$ are equal to:
  \begin{center}
    \begin{tikzpicture}
      \node (a) at (0,0) {If $\del\Sigma=\emptyset$:};
      \node[below=5mm] (b) at (a) {$h^{1,1} = 1$};
      \node[below left=1mm] (c) at (b.south west) {$h^{1,0} = g$};
      \node[below right=1mm] (d) at (b.south east) {$h^{0,1} = g$};
      \node[below right=1mm] (e) at (c.south east) {$h^{0,0} = 1$};
      \coordinate (x) at ($(a.south east)!(current bounding box.north west)!(a.south west)$);
      \coordinate (y) at ($(a.south east)!(current bounding box.north east)!(a.south west)$);
      \draw[thick] (x) -- (y);
    \end{tikzpicture}\qquad
    \begin{tikzpicture}
      \node (a) at (0,0) {If $\del\Sigma\neq\emptyset$:};
      \node[below=5mm] (b) at (a) {$h^{1,1} = 0$};
      \node[below left=1mm] (c) at (b.south west) {$h^{1,0} = g+\#\del\Sigma-1$};
      \node[below right=1mm] (d) at (b.south east) {$h^{0,1} = g\phantom{+\#\del\Sigma-1}$};
      \node[below right=1mm] (e) at (c.south east) {$h^{0,0} = 1$};
      \coordinate (x) at ($(a.south east)!(current bounding box.north west)!(a.south west)$);
      \coordinate (y) at ($(a.south east)!(current bounding box.north east)!(a.south west)$);
      \draw[thick] (x) -- (y);
    \end{tikzpicture}
  \end{center}
  More precisely, if $\del\Sigma=\emptyset$ then $\d''\cA^{1,0}(\Sigma) = \ker\int_\Sigma$.
\end{prop}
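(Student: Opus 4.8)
The plan is to compute each of the four cohomology groups directly, reducing by Lemma~\ref{lem:unweighting.forms} to the case of trivial edge weights so that edges are honest intervals in $\R$ and the forms on each edge are ordinary smooth Lagerberg forms. First I would handle $H^{0,0}$: a smooth function $f$ with $\d''f=0$ has vanishing slope on every edge, hence is locally constant; since $\Sigma$ is connected, $h^{0,0}=1$. The map $\d''\colon\cA^{1,0}\to\cA^{1,1}$ sends $(f_e\,\d't_e)$ to $(-\tfrac{\d f_e}{\d t_e}\,\d't_e\d''t_e)$, so $H^{1,0}=\ker(\d''\colon\cA^{1,0}\to\cA^{1,1})$ consists of those $(1,0)$-forms whose coefficient functions are constant on each edge (and which satisfy the vertex compatibility conditions of Definition~\ref{def:diff.form.graph}). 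I would identify this space with a space of edge-constants satisfying a Kirchhoff-type condition at interior vertices of valency $\ge 3$ and a vanishing condition at interior vertices of valency $1$, and with no condition at boundary vertices. A dimension count—one parameter per edge, minus one relation per constraint, keeping track of connectedness—gives $h^{1,0}=g$ when $\del\Sigma=\emptyset$ and $h^{1,0}=g+\#\del\Sigma-1$ otherwise. (Concretely: the constraints at interior vertices of valency $\ge 2$ cut the edge-constants down to the cycle space plus the boundary degrees of freedom, and continuity/valency-$2$ smoothness forces the constant to propagate across $2$-valent interior vertices, which is why subdivision is harmless.)

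Next I would compute $H^{0,1}=\coker(\d''\colon\cA^{0,0}\to\cA^{0,1})$ and $H^{1,1}=\coker(\d''\colon\cA^{1,0}\to\cA^{1,1})$. For $H^{1,1}$ the key input is Stokes' theorem (Theorem~\ref{thm:stokes}): if $\del\Sigma=\emptyset$ then $\int_\Sigma\d''\omega=\int_{\del\Sigma}\omega=0$, so $\d''\cA^{1,0}(\Sigma)\subseteq\ker\int_\Sigma$; I would then show the reverse inclusion, namely that every $(1,1)$-form with total integral zero is $\d''$ of some $(1,0)$-form. This is the heart of the ``more precisely'' assertion. One builds a primitive edge by edge: on each edge, choose an antiderivative (up to a constant) of the coefficient; the freedom in the integration constants, together with the global constraint $\int_\Sigma\omega=0$, is exactly enough to satisfy the vertex compatibility conditions for a $(1,0)$-form. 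This is a linear-algebra argument on the incidence structure of the graph, essentially a surjectivity statement for a combinatorial Laplacian-type map, and it is where I expect the bookkeeping to be most delicate—one must check that the valency-$1$, valency-$2$, and valency-$\ge 3$ conditions of Definition~\ref{def:diff.form.graph} can all be met simultaneously, and handle isolated vertices (which contribute nothing by the Remark following Lemma~\ref{lem:unweighting.forms}). When $\del\Sigma\neq\emptyset$ there is no integral obstruction (the boundary conditions on $(1,0)$-forms are vacuous at $\del\Sigma$), and the same construction shows $\d''$ is surjective, so $h^{1,1}=0$.

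For $H^{0,1}$, I would apply the Lagerberg involution $J\colon\cA^{p,q}\isom\cA^{q,p}$ from the Remark after Definition~\ref{def:diff.11.form.graph}, which intertwines $\d'$ and $\d''$. However, the relevant map here is still the $\d''$ differential $\cA^{0,0}\to\cA^{0,1}$, so instead I would argue directly and in parallel with the $H^{1,0}$ computation: $\coker(\d''\colon\cA^{0,0}\to\cA^{0,1})$ is dual, via the pairing $\int_\Sigma(\eta\wedge\cdot)$ between $\cA^{0,1}$ and $\cA^{1,0}$ combined with Stokes, to $\ker(\d'\colon\cA^{1,0}\to\cA^{1,1})$, which by the involution has the same dimension as $\ker(\d''\colon\cA^{1,0}\to\cA^{1,1})=H^{1,0}$—but with the roles of the vertex conditions matching those of genuine cohomology rather than the shifted ones; carrying this out carefully yields $h^{0,1}=g$ in both cases. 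Alternatively, and perhaps more cleanly, I would note that $\d''\colon\cA^{0,0}\to\cA^{0,1}$ has image equal to the exact $(0,1)$-forms and compute the cokernel as the ``de Rham–type'' $H^1$ of the graph directly: the cocycle condition is automatic in bidegree $(0,1)$ (there is no further differential landing usefully), so $H^{0,1}$ is $\cA^{0,1}$ modulo exact forms, which one shows is $\R^{\#E^+}$ modulo the $\#V$ conditions coming from $\d''$ of functions, adjusted by the one-dimensional kernel and the boundary—giving $g$. I would then assemble the four numbers and read off the two tables, noting $h^{1,0}-h^{0,1}=\#\del\Sigma-1$ when $\del\Sigma\neq\emptyset$ as a consistency check. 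The main obstacle throughout is the surjectivity of $\d''$ onto $\ker\int_\Sigma$ (resp.\ onto all of $\cA^{1,1}$ in the boundary case): this requires solving the vertex compatibility conditions, and I would organize it as a rank computation for the boundary map of an appropriate chain complex attached to $\Sigma$.
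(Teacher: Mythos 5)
Your plan matches the paper's proof essentially step for step: constant functions for $h^{0,0}$, the oriented-incidence-matrix dimension count for $h^{1,0}$, Stokes' theorem plus an edge-by-edge primitive whose integration constants are found by exactly the combinatorial-Laplacian surjectivity argument you anticipate (for a connected graph $\ker(B^{T})=\ker(BB^{T})$ is spanned by $\mathbf 1$, and $\int_\Sigma\omega=0$ is precisely the condition that the target vector be orthogonal to $\mathbf 1$) for $h^{1,1}$, and period integrals for $h^{0,1}$. The only divergence is cosmetic: for $H^{0,1}$ the paper integrates over a basis of loops attached to a spanning tree rather than over all edges modulo the vertex coboundary, and your first, duality-based suggestion for $H^{0,1}$ should be discarded (Poincar\'e duality is deduced \emph{from} this proposition later), but you correctly fall back on the direct computation.
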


\begin{rem}
  If $\Sigma = \{x\}$ is a single vertex then Proposition~\ref{prop:dolbeault.graphs} remains true (with $g=0$), except that $h^{1,1}=0$ in both cases.
\end{rem}

In order to compute the numbers $h^{p,q}$, by Lemmas~\ref{lem:unweighting.forms} and~\ref{lem:integral.unweighting} we may and do assume that all edge weights are equal to $1$.

\subsubsection{Computation of $h^{0,0}$:}\label{sec:computation-h0-0}
Since $H^{0,0}(\Sigma,\del\Sigma)$ consists of all constant functions $f\colon\Sigma\to\R$, it has dimension one.

\subsubsection{Computation of $h^{1,1}$:}\label{sec:computation-h1-1} To compute $h^{1,1}(\Sigma,\del\Sigma)$, first we assume that $\del\Sigma = \emptyset$.  By Stokes' theorem we have $\d''\cA^{1,0}(\Sigma) \subset \ker\int_\Sigma$, and  $\int_\Sigma\colon\cA^{1,1}(\Sigma)\to\R$ is surjective (integrate a bump function on an edge).  Hence the equality $\d''\cA^{1,0}(\Sigma) = \ker\int_\Sigma$ follows from $h^{1,1}(\Sigma)=1$.  Hence we must show that if $\int_\Sigma\omega = 0$ then $\omega = d''\eta$ for some $\eta\in\cA^{1,0}(\Sigma)$.

  Let $\omega = (f_e\,\d't_e\d''t_e)\in\cA^{1,1}(\Sigma)$, and suppose that $\int_\Sigma\omega = 0$.  For each edge $e$ define $g_e\colon e\to\R$ by $g_e\circ t_e(x) = -\int_0^x f_e\circ t_e(t)\,\d t + C_e$, where $C_e$ is a constant that has yet to be determined.  If $\eta = (g_e\,\d't_e)$ is an element of $\cA^{1,0}(\Sigma)$ then $\d''\eta = \omega$ by construction, so we must choose the constants $C_e$ such that $(g_e\,\d't_e)\in\cA^{1,0}(\Sigma)$.  For an edge $e$ we let
  $\mu_e = \int_0^{\ell(e)}f_e\circ t_e(x)\,\d x,$ so $\mu_{\bar e} = \mu_e$ and $\sum_e \mu_e = 2\int_\Sigma\omega = 0$.
  One checks that
  \[ g_e + g_{\bar e} = -\mu_e + C_e + C_{\bar e}, \]
  so the condition $g_{\bar e} = -g_e$ is equivalent to $C_e + C_{\bar e} = \mu_e$.  If $v$ is a vertex with outgoing edge $e$ then $g_e(v) = C_e$, so the $C_e$ must also satisfy $\sum_{v=e^-}C_e = 0$ for each vertex $v$.  In particular, if $v$ has valency one then $C_e$ must equal zero, so $g_e$ is zero in a neighborhood of $v$ because $f_e$ is so.  Moreover, if $v$ has valency two, then smoothness of $f$ at $v$ readily implies that $g$ is smooth at $v$ once the above sum condition is satisfied at $v$.

  We have now reduced the question to a linear algebra problem.  Choosing an orientation of each edge, we want numbers $C_e$ for each (unoriented) edge satisfying
  \[ \sum_{v=e^-} C_e + \sum_{v=e^+}(\mu_e-C_e) = 0 \]
  or equivalently,
  \[ \sum_{v=e^-} C_e - \sum_{v=e^+} C_e = -\sum_{v=e^+} \mu_e \]
  at each vertex $v$.  Let $\R V(\Sigma)$ and $\R E^+(\Sigma)$ be the free vector spaces on the vertices and unoriented edges of $V$, respectively.  Let $B$ be the oriented incidence matrix of $\Sigma$: this is the matrix of the linear transformation $B\colon\R E^+(\Sigma)\to\R V(\Sigma)$ defined by $Be = (e^+) - (e^-)$.  In terms of this matrix, we want a vector $c = (C_e)\in\R E^+(\Sigma)$ such that $Bc$ is the vector $m = \sum_v\sum_{v=e^+} \mu_e\,(v)$.

  The Laplacian matrix of the graph underlying $\Sigma$ is $BB^T$.  Since $\Sigma$ is connected, it is known that $\dim\ker(BB^T)=1$.  It follows that $\ker(B^T) = \ker(BB^T)$ has dimension $1$, and clearly $\mathbf 1 = \sum_v (v)\in\ker(B^T)$, so $\ker(B^T)$ is spanned by $\mathbf 1$.  The kernel of $B^T$ is the orthogonal complement of the image of $B$ (with respect to the usual dot product), so it suffices to show that $m\cdot\mathbf1 = 0$.  This is true because
  \[ m\cdot\mathbf1 = \sum_v\sum_{v=e^+}\mu_e = \sum_e \mu_e = 0. \]
  This concludes the proof that $h^{1,1}(\Sigma) = 1$.

  Now suppose that $\del\Sigma\neq\emptyset$.  The above linear algebra argument shows that  $\ker\int_\Sigma$ is contained in $\d''\cA^{1,0}(\Sigma,\del\Sigma)$, so it remains to show that there exists $\eta\in\cA^{1,0}(\Sigma,\del\Sigma)$ such that $\int_\Sigma\d''\eta\neq 0$, or by Stokes' theorem, such that $\int_{\del\Sigma}\eta\neq 0$.  Choose a vertex $v_0\in\del\Sigma$ and an edge $e_0$ such that $e_0^-=v_0$, and define $\eta=(f_e\,\d't_e)$ as follows.  For all edges $e\neq e_0$ we let $f_e = 0$, and we define $f_{e_0}$ to be a function on $e_0$ such that $f_{e_0}(v_0)=1$ and $f_{e_0}$ is zero on a neighborhood of $e_0^+$.  Then $\eta\in\cA^{1,0}(\Sigma,\del\Sigma)$ and $\int_{\del\Sigma}\eta = 1$, as required.

\subsubsection{Computation of $h^{1,0}$:}\label{sec:computation-h1-0} The kernel of $\d''\colon\cA^{1,0}(\Sigma,\del\Sigma)\to\cA^{1,1}(\Sigma,\del\Sigma)$ consists of all smooth $(1,0)$-forms that are \emph{constant} on edges.  Choose an orientation of the edges of $\Sigma$.  If $B$ is the oriented incidence matrix of $\Sigma$, as in~\secref{sec:computation-h1-1}, then
  \[ H^{1,0}(\Sigma,\del\Sigma) = \left\{(c_e\,\d't_e)\colon B\sum c_e(e)\in\Span\{(v)\colon v\in\del\Sigma\}\right\}. \]
  We observed above that the image of $B$ is $\mathbf1^\perp =\bigl\{\sum d_v(v)\colon\sum d_v=0\bigr\}$; in particular, the rank of $B$ is $\#V(\Sigma)-1$.  If $\del\Sigma=\emptyset$ then $h^{1,0}(\Sigma) = \dim\ker(B) = \#E^+(\Sigma)-\rank(B) = \#E^+(\Sigma)-\#V(\Sigma)+1 = g$.  Suppose then that $\del\Sigma\neq\emptyset$.  Let $W = \Span\{(v)\colon v\in\del\Sigma\}\subset\R V(\Sigma)$.    Since $W$ is not contained in $\mathbf1^\perp$, we have $W+\mathbf1^\perp = \R V(\Sigma)$, so $\dim(W\cap\mathbf1^\perp) = \#\del\Sigma-1$.  It follows that
  \[ h^{1,0}(\Sigma,\del\Sigma) = \dim(W\cap\mathbf1^\perp) + \dim\ker(B)
    = h^{1,0}(\Sigma) + \#\del\Sigma - 1 = g + \#\del\Sigma - 1, \]
  as desired.

\subsubsection{Computation of $h^{0,1}$:} \label{sec:computation-h0-1}
Let $\gamma$ be a path in $\Sigma$: this is a union of edges $e_1,\ldots,e_n$ such that $e_i^+=e_{i+1}^-$ for all $i=1,\ldots,n-1$.  For $\omega=(f_e\,\d'' t_e)\in\cA^{0,1}(\Sigma,\del\Sigma)$ we define
\begin{equation}\label{eq:integral.along.path}
  \int_\gamma\omega = \sum_{i=1}^n \int_0^{\ell(e_i)}f_{e_i}\circ t_{e_i}(x)\,\d x.
\end{equation}
  Choose a maximal spanning tree $T$ inside $\Sigma$, and choose a base vertex $v_0\in T$.  Then $\Sigma\setminus T$ is the union of $g$ edges $e_1,\ldots,e_g$, and for each $i$ there is a unique simple closed path $\gamma_i\subset T\cup e_i$ starting and ending at $v_0$ and passing through $e_i$.  Consider the homomorphism $I\colon\cA^{0,1}(\Sigma,\del\Sigma)\to\R^g$ defined by
  \[ I(\omega) = \left( \int_{\gamma_1}\omega,\;\ldots,\;\int_{\gamma_g}\omega \right). \]
  First we claim that $I$ is surjective.  For $i\in\{1,\ldots,g\}$ define $\omega_i = (f_e\,\d''t_e)$ as follows.  Set $f_e=0$ for $e\neq e_i$.  Choose a smooth bump function $g_i$ on $\R$ with support in $[0,\ell(e_i)]$ and integral equal to $1$, and set $f_{e_i} = g_i\circ t_{e_i}\inv$.  Then $\omega_i\in\cA^{0,1}(\Sigma,\del\Sigma)$ and $I(\omega_i)$ is the $i$th unit coordinate vector.  This proves surjectivity.

  To prove $h^{0,1}(\Sigma,\del\Sigma)=g$, it suffices to show $\ker(I) = \d''\cA^{0,0}(\Sigma,\del\Sigma)$.  We have the inclusion $\d''\cA^{0,0}(\Sigma,\del\Sigma)\subset\ker(I)$ by the fundamental theorem of calculus and continuity of $f$.  Now let $\omega=(f_e\,\d''t_e)\in\ker(I)$, and define $g\colon\Sigma\to\R$ as follows.  For $v\in V(\Sigma)$ choose any path $\gamma$ from $v_0$ to $v$, and set $g(v) = \int_\gamma\omega$.  This is well-defined because $\{\gamma_1,\ldots,\gamma_n\}$ generates $\pi_1(\Sigma,v_0)$.  We define $g$ on an edge $e$ by $g\circ t_e(x)=g(e^-) + \int_0^x f_e\circ t_e(t)\,\d t$.  Then $g\in\cA^{0,0}(\Sigma,\del\Sigma)$ and $\d''g = \omega$, as desired.\qed

\subsection{Poincar\'e Duality}\label{sec:poincare-duality-graphs}
In this subsection we observe that the Dolbeault cohomology of a metric graph without boundary satisfies Poincar\'e duality.  The wedge product followed by integration defines bilinear pairings
\begin{equation}\label{eq:poincare.pairing.0}
 \cA^{0,0}(\Sigma)\times\cA^{1,1}(\Sigma)\To\R, \qquad
  \cA^{1,0}(\Sigma)\times\cA^{0,1}(\Sigma)\To\R.
\end{equation}
By Stokes' theorem, these descend to pairings
\begin{equation}\label{eq:poincare.pairing}
  H^{0,0}(\Sigma)\times H^{1,1}(\Sigma)\To\R, \qquad
  H^{1,0}(\Sigma)\times H^{0,1}(\Sigma)\To\R.
\end{equation}
Poincar\'e duality is the assertion that these pairings are perfect.

\begin{prop}\label{prop:poincare.duality.graphs}
  Let $\Sigma$ be a weighted metric graph such that $\del\Sigma=\emptyset$, and suppose that $\Sigma$ has no isolated vertices.  Then the pairings~\eqref{eq:poincare.pairing} are perfect.
\end{prop}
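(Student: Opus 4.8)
The plan is to verify perfectness of each of the two pairings in~\eqref{eq:poincare.pairing} separately, using the explicit cohomology computations from Proposition~\ref{prop:dolbeault.graphs} together with Stokes' Theorem~\ref{thm:stokes}. Since $\del\Sigma = \emptyset$ and $\Sigma$ has no isolated vertices, we may assume $\Sigma$ is a disjoint union of connected graphs each having at least one edge; because cohomology and the pairings split as direct sums over connected components, it suffices to treat the case where $\Sigma$ is connected of genus $g$. Here $h^{0,0} = h^{1,1} = 1$ and $h^{1,0} = h^{0,1} = g$, so in each case the two sides of the pairing have equal (finite) dimension; thus perfectness is equivalent to nondegeneracy on one side, i.e.\ to injectivity of the induced map $H^{0,0}(\Sigma) \to H^{1,1}(\Sigma)^\vee$ (resp.\ $H^{1,0}(\Sigma) \to H^{0,1}(\Sigma)^\vee$).

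For the first pairing: $H^{0,0}(\Sigma)$ is spanned by the constant function $1$, and $\langle 1, \omega\rangle = \int_\Sigma \omega$. By the last sentence of Proposition~\ref{prop:dolbeault.graphs}, $\d''\cA^{1,0}(\Sigma) = \ker\int_\Sigma$, so $\int_\Sigma$ descends to an isomorphism $H^{1,1}(\Sigma) \isom \R$; in particular the pairing of $1$ with the generator of $H^{1,1}(\Sigma)$ is nonzero, so the pairing is perfect. For the second pairing, the Lagerberg involution $J$ (see the remark after Definition~\ref{def:diff.11.form.graph}) gives an isomorphism $\cA^{1,0}(\Sigma) \isom \cA^{0,1}(\Sigma)$ intertwining $\d''$ and $\d'$, hence an isomorphism $H^{1,0}(\Sigma) \isom H^{0,1}(\Sigma)$, and $\langle \omega, J\omega\rangle = \int_\Sigma \omega \wedge J\omega$; I would like to show this is nonzero for $\omega \notin \d''\cA^{0,0}$. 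Writing $\omega = (c_e\,\d't_e)$ with $c_e$ constant on edges (a representative of its class, by the computation of $h^{1,0}$), one has $\omega \wedge J\omega = (c_e^2\,\d't_e\d''t_e)$, so $\int_\Sigma \omega\wedge J\omega = \tfrac12\sum_e w(e)\ell(e)\,c_e^2 \geq 0$, with equality iff all $c_e = 0$, i.e.\ iff $\omega = 0$ as a closed form with constant coefficients. This shows the pairing restricted to the subspace of ``harmonic'' representatives is positive definite; combined with $\dim H^{1,0} = \dim H^{0,1} = g$, it yields perfectness.

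The one genuine subtlety is that to conclude perfectness of the second pairing from positivity on the constant-coefficient representatives, I must check that $(c_e\,\d't_e) \in \d''\cA^{0,0}(\Sigma)$ with $c_e$ constant forces $c_e = 0$ for all $e$ — equivalently, that the constant-coefficient closed $(1,0)$-forms inject into $H^{1,0}(\Sigma)$. This follows from the computation in~\secref{sec:computation-h0-1} (with the roles of $\d'$ and $\d''$ swapped via $J$): the space of constant-coefficient $(1,0)$-forms is $\ker(B) = \ker(\d''|_{\cA^{1,0}})$, and the image $\d''\cA^{0,0}(\Sigma)$ meets it only in $0$ because a smooth function whose $\d''$-derivative is constant on each edge and which is continuous and satisfies the vertex conditions, with constant slopes summing around each vertex, has all slopes zero when $\del\Sigma = \emptyset$ — indeed integrating around the cycles $\gamma_i$ shows such a form pairs to zero against the $I$-coordinates only if its class in $H^{1,0}$ vanishes, and conversely its coefficients are recovered from those pairings. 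So the map from constant-coefficient forms to $H^{1,0}(\Sigma)$ is an isomorphism of $g$-dimensional spaces, and positive-definiteness on that model of $H^{1,0}(\Sigma)$ finishes the argument.

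Thus the main obstacle is not any hard estimate but rather bookkeeping: making the identification of $H^{1,0}(\Sigma)$ and $H^{0,1}(\Sigma)$ with concrete $g$-dimensional spaces of constant-coefficient forms precise and compatible, so that the wedge-integration pairing becomes the manifestly positive-definite quadratic form $\tfrac12\sum_e w(e)\ell(e)\,c_e^2$. Once that is in place, together with $h^{0,0} = h^{1,1} = 1$ and the identification $\int_\Sigma\colon H^{1,1}(\Sigma) \isom \R$, both pairings in~\eqref{eq:poincare.pairing} are perfect, and the proposition follows.
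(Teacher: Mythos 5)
Your proof is correct, and for the second pairing it takes a genuinely different route from the paper. The paper proves nondegeneracy of $H^{1,0}(\Sigma)\times H^{0,1}(\Sigma)\to\R$ on the right, by pairing the explicit basis $\omega_1,\ldots,\omega_g$ of $H^{0,1}(\Sigma)$ (bump forms on the edges outside a spanning tree, from~\secref{sec:computation-h0-1}) against the constant ``cycle forms'' $\eta$ attached to the loops $\gamma_i$, so that the pairing matrix is essentially the identity. You instead prove nondegeneracy on the left via the Lagerberg involution: for $0\neq\omega=(c_e\,\d't_e)\in H^{1,0}(\Sigma)$ you compute $\int_\Sigma\omega\wedge J\omega=\tfrac12\sum_e w(e)\ell(e)c_e^2>0$. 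This is the graph analogue of the Hodge-theoretic proof for Riemann surfaces; it is more conceptual and avoids choosing a spanning tree, while the paper's argument produces explicit dual bases. Two small points of bookkeeping in your write-up. First, $H^{1,0}(\Sigma)$ is defined as the \emph{kernel} of $\d''$ on $\cA^{1,0}(\Sigma)$, not a quotient, so a class \emph{is} a constant-coefficient form and no choice of representative is involved; the only exactness question lives on the $H^{0,1}$ side, namely whether $J\omega$ could lie in $\d''\cA^{0,0}(\Sigma)$. Second, your entire third paragraph is unnecessary: by Stokes' theorem (as in~\eqref{eq:poincare.pairing}) the pairing already descends to cohomology, so the strict inequality $\int_\Sigma\omega\wedge J\omega>0$ by itself shows that $\omega$ pairs nontrivially with the class $[J\omega]\in H^{0,1}(\Sigma)$ --- in particular $[J\omega]\neq 0$ comes for free, and you need not separately argue that a harmonic function on a compact connected boundaryless graph is constant (though that statement, which is what your paragraph amounts to, is true by the maximum principle). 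With that streamlining, your argument is complete: equality of dimensions from Proposition~\ref{prop:dolbeault.graphs} plus one-sided nondegeneracy gives perfectness of both pairings.
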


\begin{proof}
  Since the cohomology of a disjoint union is a direct sum, we may assume that $\Sigma$ is connected.  By Lemmas~\ref{lem:unweighting.forms} and~\ref{lem:integral.unweighting}, we may assume that $\Sigma$ has trivial edge weights.

  Since $h^{0,0}(\Sigma)=h^{1,1}(\Sigma)=1$, we only need to show that the pairing $H^{0,0}(\Sigma)\times H^{1,1}(\Sigma)\to\R$ is nonzero.  This is clear by integrating the constant function against a bump function on an edge.

  For the pairing $H^{1,0}(\Sigma)\times H^{0,1}(\Sigma)\to\R$, we have $h^{1,0}(\Sigma)=h^{0,1}(\Sigma)=g$ by Proposition~\ref{prop:dolbeault.graphs}, so it is enough to show nondegeneracy on the right.  We use the notation in~\secref{sec:computation-h0-1}, where we constructed forms $\omega_1,\ldots,\omega_g\in\cA^{0,1}(\Sigma)$ whose classes modulo $\d''\cA^{0,0}(\Sigma)$ are a basis of $H^{0,1}(\Sigma)$.  Let $\omega = a_1\omega_1+\cdots+a_g\omega_g\in\cA^{0,1}(\Sigma)$ represent an arbitrary element of $H^{0,1}(\Sigma)$, and assume that $\int_\Sigma\eta\wedge\omega = 0$ for all $\eta\in H^{1,0}(\Sigma)$.  Suppose that the path $\gamma_1$ of~\secref{sec:computation-h0-1} traverses the edges $f_1,f_2,\ldots,f_n$, with $f_i^+=f_{i+1}^-$ for $i < n$ and $f_n^+=f_1^-$.  Define $\eta = (c_e\,\d't_e)\in H^{1,0}(\Sigma)$ to be the cycle associated to $\gamma_1$: that is,
  \[ c_e =
    \begin{cases}
      1 & e\in\{f_1,f_2,\ldots,f_n\} \\
      -1 & \bar e\in\{f_1,f_2,\ldots,f_n\} \\
      0 & \text{otherwise.}
    \end{cases} \]
  Then $\int_\Sigma\eta\wedge\omega = a_1$, since the support of $\omega_i$ does not intersect the path $\gamma_1$ for $i > 1$.  It follows that $a_1=0$, and a similar argument shows $a_i=0$ for all $i$, i.e., $\omega=0$.
\end{proof}


\section{Harmonicity}\label{Sec:harmonicity}

In this section we define harmonic functions on graphs and harmonic maps between graphs, and we will see (Remark~\ref{rem:harmonic functions vs morphisms}) that the former is a special case of the latter.  This is the correct context in which to discuss functoriality of smooth forms on graphs.  We then present the classes of harmonic morphisms we will encounter in the sequel, namely subgraphs, modifications, and quotients.  We finish by computing the integral of the pullback of a form.

\subsection{Harmonic Functions} \label{sec:harmonic functions}
We introduce here harmonic functions.

\begin{defn}\label{def:harmonic.func.graph}
  A continuous function $f\colon\Sigma\to\R$ is \defi{linear} on an edge $e$ if it has the form $f\circ t_e(x) = ax + b$ for $a,b\in\R$; in this case, its \emph{slope} along the (oriented) edge $e$ is the number $a$, and is denoted $\d f/\d t_e$.

  A \defi{harmonic function} on $(\Sigma,\del\Sigma)$ is a smooth function that is linear on the edges.  For a subring $R\subset\R$, we say that a function $f$ is \defi{$R$-harmonic} provided that the slopes of $f$ are contained in $R$; for an  $R$-submodule $\Gamma\subset\R$, we say that a function $f$ is \defi{$(R,\Gamma)$-harmonic} if it is $R$-harmonic and takes values in $\Gamma$ on the vertices of $\Sigma$.
\end{defn}

\begin{rem*}\

  \begin{enumerate}
  \item The notion of harmonicity depends on the boundary $\del\Sigma$.
  \item A harmonic function is constant on an edge adjacent to an interior leaf.
  \item A function $h\colon\Sigma\to\R$ is harmonic if and only if $\nu^*h\colon\Sigma_0\to\R$ is harmonic: see Lemma~\ref{lem:unweighting.forms}.
  \item Our notion of harmonic function coincides with Thuillier's~\cite[1.2.1]{thuillier05:thesis}; however, our $\del\Sigma$ is denoted $\Gamma$ in \textit{loc.\ cit.}, whereas Thuillier's $\del S$ is just the set of leaves.
  \end{enumerate}
\end{rem*}

\begin{rem}
  A smooth function $f\in\cA^{0,0}(\Sigma,\del\Sigma)$ is harmonic if and only if $\d'\d''f = 0$ (equivalently, $\d''\d'f = 0$).  Compare~\otherfile{Proposition~\ref*{I-weakly smooth harmonic functions}}.
\end{rem}

\subsection{Harmonic Maps}\label{sec:harmonic-morphisms}
Now we discuss harmonic maps of graphs and pullbacks of forms.  The harmonicity condition is necessary for smooth forms to pull back to smooth forms.  See~\cite[Section~2]{abbr14:lifting_harmonic_morphism_I} for a discussion of harmonic maps in the context of unweighted metric graphs without boundary.

\begin{defn}\label{def:morphism}
  Let $(\Sigma',\del\Sigma'),(\Sigma,\del\Sigma)$ be weighted metric graphs with boundary.  A \defi{piecewise linear map} from $\Sigma'$ to $\Sigma$ is a continuous map  $\phi\colon\Sigma'\to\Sigma$ of underlying topological spaces such that, after potentially subdividing $\Sigma$ and $\Sigma'$:
  \begin{enumerate}
  \item The vertices of $\Sigma'$ map to vertices of $\Sigma$.
  \item The edges of $\Sigma'$ map to edges (homeomorphically) or to vertices of $\Sigma$.
  \item If $\phi(v')\in\del\Sigma$ and $\phi$ is not constant in any neighborhood of $v'$, then $v'\in\del\Sigma'$.
  \item $\phi$ is linear on edges, in the sense that if $e'$ is an edge of $\Sigma'$ and $e$ is an edge of $\Sigma$ with $\phi(e') = e$ (respecting orientations), then $\phi\circ t_{e'}(x) = t_{e}(\ell(e)/\ell(e')\,x)$.
  \end{enumerate}
  The number $\ell(e)/\ell(e')$ is called the \defi{expansion factor} of $\phi$ along $e'$ and is denoted $d_{e'}(\phi)$.  We take $d_{e'}(\phi)=0$ if $e'$ maps to a vertex.  For a subring $R\subset\R$, if all the expansion factors are contained in $R$, then $\varphi$ is called \emph{piecewise $R$-linear}.
\end{defn}

Note that a piecewise linear map $\phi\colon\Sigma'\to\Sigma$ induces a piecewise linear map on the unweightings $\phi_0\colon\Sigma'_0\to\Sigma_0$, with $d_{e'}(\phi_0) = \frac{w(e')}{w(e)}d_{e'}(\phi)$.

\begin{defn}\label{def:harmonic.morphism}
  Let $\phi\colon\Sigma'\to\Sigma$ be a piecewise linear map of weighted metric graphs with boundary, let $v'\in\Sigma'$ be a point, and let $v = \phi(v')$.  Choose subdivisions of $\Sigma'$ and $\Sigma$ such that $v'$ is a vertex.  We say that $\phi$ is \defi{harmonic at $v'$} provided that, for every edge $e$ beginning at~$v$, the number
  \begin{equation}\label{eq:dvprimephi}
   d_{v'}(\phi) \coloneq \sum_{\substack{(e')^- = v'\\e'\mapsto e}}
    \frac{w(e')}{w(e)} d_{e'}(\phi)
    = \frac{\ell(e)}{w(e)}\sum_{\substack{(e')^- = v'\\e'\mapsto e}}\frac{w(e')}{\ell(e')}
    = \ell_0(e)\sum_{\substack{(e')^- = v'\\e'\mapsto e}}\frac 1{\ell_0(e')}
  \end{equation}
  is independent of the choice of $e$.  We set $d_{v'}(\phi) = 0$ if every edge adjacent to $v'$ is contracted (e.g.\ if $v'$ is isolated).  If $\phi$ is harmonic at $v'$ then we call $d_{v'}(\phi)$ the \defi{local degree of $\phi$ at $v'$}.

  We say that $\phi$ is \defi{harmonic} provided that it is harmonic at every \emph{interior} vertex $v'\in\Sigma'\setminus\del\Sigma'$ (equivalently, every interior point $v'\in\Sigma'\setminus\del\Sigma'$).  More generally, for a subring $R\subset\R$, an \defi{$R$-harmonic map} is a harmonic piecewise $R$-linear map.
\end{defn}

\begin{rem}\ \label{rem:remark for harmonic maps}

  \begin{enumerate}
  \item The property of being harmonic is insensitive to subdivision of the source and target.
  \item The numbers $d_{v'}(\phi)$ need not be integers.
  \item Harmonicity only depends on the quantities $\ell_0(e') = \ell(e')/w(e')$ and $\ell_0(e) = \ell(e)/w(e)$, hence can be checked on unweightings: that is, a piecewise linear map $\phi\colon\Sigma'\to\Sigma$ is harmonic if and only if the associated map $\phi_0\colon\Sigma_0'\to\Sigma_0$ is harmonic, and $d_{v'}(\phi) = d_{v'}(\phi_0)$ for all $v'\in\Sigma'\setminus\del\Sigma'$.
  \item As a special case of~(3), let $\nu:\Sigma_0 \to \Sigma$ be the canonical piecewise linear map that is the identity on underlying graphs.  Then $\nu$ is harmonic and $d_{v'}(\nu) = 1$ for all $v'\in\Sigma_0$.
  \item A constant map and the identity map are $\Z$-harmonic.
  \end{enumerate}
\end{rem}

\begin{defn}\label{def:harmonic.degree}
  Let $\phi\colon\Sigma'\to\Sigma$ be a piecewise linear map and let $e\subset\Sigma$ be an edge.  The \defi{degree of $\phi$ above $e$} is the number
  \begin{equation}\label{eq:degree.edge.sum}
    d_e(\phi) = \sum_{e'\mapsto e}\frac{w(e')}{w(e)}d_{e'}(\phi) = \ell_0(e)\sum_{e'\mapsto e}\frac 1{\ell_0(e')}.
  \end{equation}
  If $E(\Sigma)\neq\emptyset$ and $e\mapsto d_e(\phi)$ is constant, we call $d(\phi) = d_e(\phi)$ the \defi{degree} of $\phi$.
\end{defn}

\begin{rem}\  \label{rem:remark for degree of harmonic maps}

  \begin{enumerate}
  \item The unweighting $\nu:\Sigma_0 \to \Sigma$ has degree $1$ if $\Sigma$ has an edge.
  \item If $\phi$ is harmonic and $v\notin\phi(\del\Sigma')$ then
    \[ d_e(\phi) = \sum_{v'\mapsto v}d_{v'}(\phi) \]
    for any edge $e$ adjacent to $v$.
  \item It follows from~(2) that if $\phi$ is harmonic then $e\mapsto d_e(\phi)$ is locally constant on $\Sigma\setminus\phi(\del\Sigma')$.  In particular, if $\Sigma$ is connected and has an edge and if $\del\Sigma'=\emptyset$, then $\phi$ has a degree; if $d(\phi) > 0$ then $\phi$ is surjective.  See~\cite[Section~2]{abbr14:lifting_harmonic_morphism_I}.
  \item If $\Sigma$ has an edge and $\phi\colon\Sigma'\to\Sigma$ is constant then $d(\phi)=0$.
  \end{enumerate}
\end{rem}

\begin{rem} \label{rem:harmonic functions vs morphisms}
  If we endow the real line with the structure of an infinite metric graph with trivial weights and empty boundary, then a piecewise linear map $\varphi:\Sigma \to \R$ is harmonic if and only if $\varphi$ is a harmonic function in the sense of Definition~\ref{def:harmonic.func.graph}.
\end{rem}

\begin{prop} \label{prop:characterization of harmonic morphisms}
  Let $\phi\colon\Sigma'\to\Sigma$ be a piecewise linear map of weighted metric graphs with boundary.  Then $\phi$ is harmonic at an interior vertex $v'\in\Sigma'$ if and only if $\phi$ locally pulls back harmonic functions at $v = \phi(v')$ to harmonic functions at $v'$.
\end{prop}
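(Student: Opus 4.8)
The plan is to reduce the statement to a local computation around the vertex $v'$ and to express both conditions—harmonicity of $\phi$ at $v'$ and the pullback property—in terms of the same finite system of linear equations on slopes. First I would fix subdivisions of $\Sigma$ and $\Sigma'$ so that $v'$ is a vertex and $v=\phi(v')$ is a vertex, and so that every edge of $\Sigma'$ adjacent to $v'$ either contracts to $v$ or maps linearly onto an edge of $\Sigma$ beginning at $v$. By Remark~\ref{rem:remark for harmonic maps}(3) and the fact that harmonicity of a function is checked on the unweighting (Remark after Definition~\ref{def:harmonic.func.graph}), I may pass to unweightings and assume all weights are $1$; then the expansion factors are $d_{e'}(\phi)=\ell(e)/\ell(e')$ and $d_{v'}(\phi)=\sum_{e'\mapsto e}d_{e'}(\phi)$ for each edge $e$ at $v$, where the sum is over edges $e'$ at $v'$ mapping to $e$.

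Next I would set up the local picture for pullbacks of harmonic functions. Let $e_1,\dots,e_m$ be the edges of $\Sigma$ beginning at $v$. A harmonic function $f$ defined near $v$ is determined (up to the additive constant $f(v)$) by a tuple of outgoing slopes $a_1,\dots,a_m$ along $e_1,\dots,e_m$, subject to exactly one constraint: if $v\notin\del\Sigma$, the balancing condition $\sum_i a_i=0$ (in the unweighted case); if $v\in\del\Sigma$, no constraint. The pullback $\phi^*f$ is linear on each edge $e'$ at $v'$, with outgoing slope along $e'$ equal to $d_{e'}(\phi)\,a_{j}$ where $e'\mapsto e_j$ (and slope $0$ if $e'$ contracts). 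Now $\phi^*f$ is harmonic at $v'$ if and only if it satisfies the balancing condition at $v'$, i.e. $\sum_{e'}(\text{slope of }\phi^*f\text{ along }e')=0$, which rearranges to
\[ \sum_{j=1}^m a_j \Bigl(\sum_{e'\mapsto e_j} d_{e'}(\phi)\Bigr) = \sum_{j=1}^m a_j\,d_{v',j}(\phi) = 0, \]
where I write $d_{v',j}(\phi):=\sum_{e'\mapsto e_j}d_{e'}(\phi)$ for the $j$-th ``partial local degree''. (If $v'\in\del\Sigma'$ there is nothing to check, but condition (3) of Definition~\ref{def:morphism} guarantees that if $v\in\del\Sigma$ then $v'\in\del\Sigma'$, so the interesting case has $v\notin\del\Sigma$.)

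The statement then becomes a linear-algebra equivalence: $\phi$ is harmonic at $v'$, i.e. $d_{v',1}(\phi)=\cdots=d_{v',m}(\phi)$, if and only if the linear form $(a_j)\mapsto\sum_j a_j d_{v',j}(\phi)$ vanishes on every tuple $(a_j)$ arising as the slopes of a harmonic function near $v$. When $v\notin\del\Sigma$, the space of admissible slope tuples is exactly the hyperplane $\{\sum_j a_j=0\}$ (any such tuple is realized by a piecewise linear, hence harmonic, function near $v$, using a bump to make it constant away from $v$), and a linear form vanishes on that hyperplane precisely when it is a scalar multiple of $\sum_j a_j$—that is, precisely when all the $d_{v',j}(\phi)$ are equal. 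When $v\in\del\Sigma$ the admissible slope tuples fill all of $\R^m$, forcing all $d_{v',j}(\phi)=0$; but as noted condition (3) puts us in the boundary case for $v'$ as well, where harmonicity at $v'$ is vacuous, so the equivalence holds trivially (one should also note the degenerate case where every edge at $v'$ contracts, handled by the convention $d_{v'}(\phi)=0$). I would also handle the isolated/contracted edge cases explicitly, but these are immediate from the conventions.

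I expect the main obstacle to be the bookkeeping around subdivisions and degenerate configurations—edges of $\Sigma'$ at $v'$ that contract, the empty-edge and isolated-vertex cases, and making the ``realize any balanced slope tuple by a genuine harmonic function near $v$'' step precise (one must only exhibit a piecewise linear function with prescribed slopes on the $e_i$ that is constant outside a small neighborhood, which is automatically smooth and harmonic on that neighborhood). None of these is deep, but care is needed to phrase them so that the single clean linear-algebra equivalence above applies uniformly. Once the realization step is in place, the equivalence ``linear form kills the hyperplane $\sum a_j=0$ $\iff$ all coefficients equal'' finishes the proof.
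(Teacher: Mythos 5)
Your argument is correct, and it is exactly the computation the paper has in mind: the paper in fact leaves this proof to the reader (pointing to \cite[Proposition~2.10]{abbr14:lifting_harmonic_morphism_I} and \cite[Proposition~2.6]{baker_norine09:harmonic_morphism_hyperell_graphs}), and those references run the same reduction of both conditions to the statement that the linear form $(a_j)\mapsto\sum_j a_j\bigl(\sum_{e'\mapsto e_j}d_{e'}(\phi)\bigr)$ annihilates the admissible slope tuples, which for an interior $v$ form the hyperplane $\sum_j a_j=0$. Two small points of wording are worth tightening. First, in the boundary case: condition~(3) of Definition~\ref{def:morphism} does not give $v'\in\del\Sigma'$ when $v\in\del\Sigma$ --- since $v'$ is interior by hypothesis, its contrapositive forces $\phi$ to be \emph{constant} near $v'$, so every adjacent edge is contracted and both sides of the equivalence hold by the convention $d_{v'}(\phi)=0$; your parenthetical covers this, but the sentence claiming $v'\in\del\Sigma'$ is not what the definition says. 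Second, the ``bump/constant outside a small neighborhood'' device is unnecessary and, taken literally, produces a function that fails the balancing condition at the kink vertices; the clean statement is simply that the function linear with slope $a_j$ on each leg of a small star neighborhood $U$ of $v$ is harmonic on $(U,\del U)$ because the legs' far endpoints lie in $\del U$, and local harmonicity is all the proposition requires.
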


The proof is left to the reader. See~\cite[Proposition~2.10]{abbr14:lifting_harmonic_morphism_I} and~\cite[Proposition~2.6]{baker_norine09:harmonic_morphism_hyperell_graphs} for similar statements.

\begin{rem}\label{rem:harmonic.isomorphism}
  Suppose that $\phi\colon\Sigma'\to\Sigma$ is a \defi{harmonic isomorphism}, i.e., that $\phi$ is a harmonic map that admits a harmonic inverse $\psi\colon\Sigma\to\Sigma'$.  Then $\phi$ induces an isomorphism of underlying graphs with respect to suitable subdivisions of $\Sigma$ and $\Sigma'$.  If $v'\in\Sigma'$ is a non-isolated vertex, then $\phi$ is not constant in any neighborhood of $v'$, so $\phi(v')\in\del\Sigma$ if and only if $v'\in\del\Sigma'$.  Since $e\mapsto d_e(\phi)$ is locally constant on $\Sigma\setminus\phi(\del\Sigma')$ by Remark~\ref{rem:remark for degree of harmonic maps}(3), we can describe $\phi$ as follows: for each connected component $C$ of $\Sigma\setminus\del\Sigma$, there is a positive real number $\alpha(C)$ such that $\ell_0(e)=d_e(\phi)\ell_0(e')=\alpha(C) \ell_0(e')$ for each edge $e$ of $C$ and $e' \coloneq \psi(e)$.  In particular, if $\Sigma$ and $\Sigma'$ have trivial edge weights, then $\phi$ scales the length of each edge of $C$ by $\alpha(C)$.

  A harmonic isomorphism need not take isolated interior vertices to interior vertices: for instance, if $x$ is an interior isolated vertex of $\Sigma$, then the identity map is a harmonic isomorphism $(\Sigma,\del\Sigma)\to(\Sigma,\del\Sigma\cup\{x\})$.

  Conversely, let $\varphi:\Sigma' \to \Sigma$ be a bijective piecewise linear map  of weighted metric graphs with boundary which maps the set of non-isolated boundary points of $\Sigma'$ onto the set of non-isolated boundary points of $\Sigma$.  Then $\phi$ is a harmonic isomorphism if for each connected component $C$ of $\Sigma\setminus\del\Sigma$, there is a positive real number $\alpha(C)$ such that $\ell_0(e)=\alpha(C) \ell_0(e')$ for each edge $e'$ of $\phi\inv(C)$ and $e \coloneq \phi(e')$.
\end{rem}

Harmonicity can be checked after pulling back by a surjective harmonic morphism:

\begin{lem} \label{lem:converse.to.harmonic.composition}
  Let $\Sigma,\Sigma''$ be weighted metric graphs with boundary, let $\phi\colon\Sigma\to\Sigma''$ be any continuous map, and let  $\psi\colon\Sigma'\to\Sigma$ be a surjective harmonic map with $\psi(\del \Sigma') \subset \del \Sigma$.  If $\phi \circ \psi$ is harmonic, then $\phi$ is harmonic.
\end{lem}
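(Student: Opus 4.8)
The plan is to verify the harmonicity of $\phi$ at an arbitrary interior vertex $v \in \Sigma \setminus \del\Sigma$ directly from the definition, by transporting the harmonicity of $\phi \circ \psi$ to $v$ through a suitable preimage under $\psi$. First I would fix such a $v$, and subdivide $\Sigma$, $\Sigma'$, and $\Sigma''$ compatibly so that $v$ is a vertex, $\psi$ and $\phi$ both send vertices to vertices and edges to edges-or-vertices, and so that $\phi \circ \psi$ is likewise adapted. Since $\psi$ is surjective, I may choose a point $v' \in \psi^{-1}(v)$; because $\psi(\del\Sigma') \subset \del\Sigma$ and $v \notin \del\Sigma$, the point $v'$ lies in $\Sigma' \setminus \del\Sigma'$, so $\psi$ is harmonic at $v'$ and $\phi \circ \psi$ is harmonic at $v'$. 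Write $w = \phi(v) = (\phi\circ\psi)(v')$.

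The core of the argument is a local degree computation: I want to show $d_v(\phi)$, the common value of $\ell_0(e)\sum_{(\tilde e)^- = v,\ \tilde e \mapsto e} 1/\ell_0(\tilde e)$ over edges $e$ of $\Sigma''$ starting at $w$, is well-defined, i.e.\ independent of $e$. The key observation is that every edge $\tilde e$ of $\Sigma$ beginning at $v$ has at least one preimage edge $e'$ of $\Sigma'$ beginning at $v'$ with $\psi(e') = \tilde e$: this uses that $\psi$ is harmonic at $v'$, so its local degree $d_{v'}(\psi) = \ell_0(\tilde e)\sum_{(e')^- = v',\ e' \mapsto \tilde e} 1/\ell_0(e')$ is a \emph{positive} number independent of $\tilde e$ (positivity because $\psi$ is surjective, hence not locally constant at a point mapping to a vertex with an adjacent edge — more carefully, because $d_{v'}(\psi) > 0$ follows from surjectivity of $\psi$ near $v$). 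Hence the sum is nonempty for each $\tilde e$. Now for an edge $e$ of $\Sigma''$ at $w$, I compute $d_{v'}(\phi\circ\psi)$ using the composition: the edges of $\Sigma'$ at $v'$ mapping (under $\phi\circ\psi$) to $e$ are exactly those mapping under $\psi$ to some edge $\tilde e$ of $\Sigma$ at $v$ with $\phi(\tilde e) = e$, and one has the multiplicativity of expansion factors $d_{e'}(\phi\circ\psi) = d_{e'}(\psi)\,d_{\psi(e')}(\phi)$ together with the weight relation. Grouping the sum $\sum_{(e')^- = v',\ e' \mapsto e}\frac{1}{\ell_0(e')}$ by the intermediate edge $\tilde e = \psi(e')$, and using that $\sum_{(e')^- = v',\ e' \mapsto \tilde e} 1/\ell_0(e') = d_{v'}(\psi)/\ell_0(\tilde e)$, I obtain
\[
  d_{v'}(\phi\circ\psi) = \ell_0(e)\sum_{\substack{(e')^- = v'\\ e' \mapsto e}}\frac{1}{\ell_0(e')}
  = d_{v'}(\psi)\cdot \ell_0(e)\sum_{\substack{\tilde e^- = v\\ \tilde e \mapsto e}}\frac{1}{\ell_0(\tilde e)}.
\]
The left-hand side is independent of $e$ since $\phi\circ\psi$ is harmonic at $v'$, and $d_{v'}(\psi) > 0$ is a fixed positive constant, so the quantity $\ell_0(e)\sum_{\tilde e^- = v,\ \tilde e \mapsto e} 1/\ell_0(\tilde e)$ is independent of $e$ as well; that is precisely harmonicity of $\phi$ at $v$. (The degenerate case where every edge at $v'$ is contracted by $\psi$ cannot occur since $\psi$ is harmonic with positive local degree at $v'$; the case where every edge at $v$ is contracted by $\phi$ is handled by the convention $d_v(\phi) = 0$.)

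The main obstacle I anticipate is the bookkeeping around positivity and nonvanishing of $d_{v'}(\psi)$: one must be careful that surjectivity of $\psi$, combined with harmonicity at the interior point $v'$, genuinely forces $d_{v'}(\psi) > 0$ rather than $\psi$ contracting all edges near $v'$. The cleanest way to see this is that if $v$ has an adjacent edge $\tilde e$ in $\Sigma$ (which we may arrange by subdividing, as $\Sigma$ has the edge structure it inherits — or else $v$ is isolated in $\Sigma$ and there is nothing to check), then surjectivity of $\psi$ guarantees $\tilde e$ is in the image, and a small neighborhood of $\tilde e$ near $v$ is covered by $\psi$ from a neighborhood of $v'$, forcing at least one non-contracted edge at $v'$ above $\tilde e$; by the harmonicity of $\psi$ at $v'$ this makes $d_{v'}(\psi)$ a positive number independent of the chosen edge. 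Once that point is pinned down, the rest is the displayed factorization, which is the same multiplicativity computation already implicit in Definition~\ref{def:harmonic.morphism} and in the functoriality of expansion factors.
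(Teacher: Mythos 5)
Your local-degree computation is essentially the one in the paper: grouping the edges at $v'$ lying over an edge $e$ of $\Sigma''$ according to the intermediate edge $\tilde e=\psi(e')$ gives $d_{v'}(\phi\circ\psi)=d_{v'}(\psi)\,d_v(\phi)$, and dividing by $d_{v'}(\psi)\neq 0$ yields harmonicity of $\phi$ at $v$. But there is a genuine gap before that point: $\phi$ is only assumed to be \emph{continuous}, so before one can speak of expansion factors or local degrees at all one must prove that $\phi$ is a piecewise linear map of weighted metric graphs with boundary in the sense of Definition~\ref{def:morphism}. Your opening step (``subdivide so that $\phi$ sends edges to edges-or-vertices'') presupposes exactly this, and no subdivision will linearize an arbitrary continuous map. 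The paper derives piecewise linearity from surjectivity of $\psi$: after subdivision each edge $e$ of $\Sigma$ is the homeomorphic image of some edge $e'$ of $\Sigma'$, so $\phi|_e=(\phi\circ\psi)|_{e'}\circ(\psi|_{e'})^{-1}$ is linear (or constant), with expansion factor $d_{e'}(\phi\circ\psi)/d_{e'}(\psi)$. One must also verify the boundary condition (3) of Definition~\ref{def:morphism} --- if $\phi(v)\in\del\Sigma''$ and $\phi$ is not locally constant at $v$, then $v\in\del\Sigma$ --- which is part of what ``$\phi$ is harmonic'' asserts; this again uses surjectivity of $\psi$, the boundary condition for $\phi\circ\psi$, and the hypothesis $\psi(\del\Sigma')\subset\del\Sigma$, and is entirely absent from your proposal.

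A second, smaller flaw: you fix an \emph{arbitrary} $v'\in\psi^{-1}(v)$ and then try to argue $d_{v'}(\psi)>0$. That is false in general: if, say, $\psi$ crushes a tree attached at $v$ down to the point $v$, then a leaf of that tree is a preimage of $v$ with local degree $0$, and a neighborhood of $v$ in $\Sigma$ is covered by the union of the images of neighborhoods of \emph{all} preimages, not by the image of a neighborhood of your chosen one. The quantifiers must go the other way, as in the paper: use surjectivity to \emph{choose} a preimage $v'$ at which $\psi$ is not locally constant (for instance the tail of an edge $e'$ mapping onto an edge $\tilde e$ adjacent to $v$; such an $e'$ exists because interior points of $\tilde e$ can only be hit by edges mapping onto $\tilde e$). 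With that choice $d_{v'}(\psi)>0$ and your displayed factorization goes through.
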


\begin{proof}
  First we show that $\phi$ is piecewise linear.  Let $e$ be any edge of $\Sigma$. Since $\psi$ is surjective, we may assume after subdivision that there is an edge $e'$ of $\Sigma'$ mapping by isomorphically onto $e$. Again after subdivision, this edge is either mapped by $\phi \circ \psi$ homeomorphically onto an edge $e''$ of $\Sigma''$ or is crushed to a vertex of $\Sigma''$.  In the first case, it follows that $\varphi$ maps $e$ homeomorphically onto $e''$, and in the second, $e$ is crushed to a vertex of $\Sigma''$.  If $e$ maps homeomorphically onto $e''$, then $\phi$ is linear on $e$ with expansion factor $d_{e'}(\phi\circ\psi)/d_{e'}(\psi)$.

  It remains to prove condition~(3) of Definition~\ref{def:morphism}. Let $v\in\Sigma$ with $\phi(v) \in \del \Sigma''$. We assume that $\varphi$ is not constant in any neighborhood of $v$. By surjectivity of $\psi$, there is $v' \in \Sigma'$ such that $\phi \circ \psi$ is not constant in any neighborhood of $v'$. Since $\phi \circ \psi$ is a piecewise linear map of weighted metric graphs with boundary, we deduce from $\phi(v)=\phi \circ \psi(v')\in \del \Sigma$ that $v' \in \del \Sigma'$. Using $\psi(\del \Sigma') \subset \del \Sigma$, we get $v=\psi(v')\in \del \Sigma$.

  Now we proceed to harmonicity: we need to check the independence of local degrees for $\phi \circ \psi$ as required in Definition~\ref{def:harmonic.morphism}.  By passing to unweightings, we may assume all edge weights are trivial.  Let $v$ be a vertex of $\Sigma \setminus \del \Sigma$, let $v'' \coloneq \phi(v)$ be the corresponding vertex of $\Sigma''$, and pick any edge $e''$ of $\Sigma''$ with $e''^- = v''$. We may assume that $v$ is not isolated. Then by our assumptions, we can find a vertex $v' \in \Sigma' \setminus \del \Sigma'$ with $\psi(v')=v$ such that $\psi$ is not constant in any neighborhood of $v'$.
  We compute:
  \[\begin{split}
      d_{v'}(\phi \circ \psi) &= \sum_{\substack{e'^- = v'\\e'\mapsto e''}}d_{e'}(\phi \circ \psi)
      = \sum_{\substack{e^- = v\\e\mapsto e''}}  \sum_{\substack{e'^- = v'\\e'\mapsto e}}d_{e}(\phi)d_{e'}(\psi) \\
      &= \sum_{\substack{e^- = v\\e\mapsto e''}} d_{e}(\phi)
      \sum_{\substack{e'^- = v'\\e'\mapsto e}}d_{e'}(\psi)
      = d_{v'}(\psi)\sum_{\substack{e^- = v\\e\mapsto e''}} d_{e}(\phi) = d_{v'}(\psi) d_{v}(\phi).
    \end{split}\]
  Since $\psi$ is not constant in any neighborhood of $v'$, we have $d_{v'}(\psi)\neq 0$, and hence $d_{v}(\varphi)$ is independent of the choice of the edge $e''$.
\end{proof}

\begin{lem}\label{lem:pullback.harmonic}
  Let $\phi\colon\Sigma'\to\Sigma$ be a harmonic map of weighted metric graphs with boundary.  Define
  \begin{align*}
    \phi^*f &= f\circ\phi  &  f&\in\cA^{0,0}(\Sigma,\del\Sigma) \\
    \phi^*(f_e\,\d't_e) &= (f_{\phi(e')}\circ\phi\,d_{e'}(\phi)\,\d't_{e'}) &  (f_e\,\d't_e)&\in\cA^{1,0}(\Sigma,\del\Sigma) \\
    \phi^*(f_e\,\d''t_e) &= (f_{\phi(e')}\circ\phi\,d_{e'}(\phi)\,\d''t_{e'}) &  (f_e\,\d''t_e)&\in\cA^{0,1}(\Sigma,\del\Sigma) \\
    \phi^*(f_e\,\d't_e\d''t_e) &= (f_{\phi(e')}\circ\phi\,d_{e'}(\phi)^2\,\d't_{e'}\d''t_{e'}) &  (f_e\,\d't_e\d''t_e)&\in\cA^{1,1}(\Sigma,\del\Sigma).
  \end{align*}
  (If $\phi(e')$ is a vertex of $\Sigma$ then we take $\phi\circ f_{\phi(e')}\,d_{e'}(\phi)$ to be zero.)  Then $\phi^*$ is a homomorphism of bigraded differential algebras $\cA^{\bullet,\bullet}(\Sigma,\del\Sigma)\to\cA^{\bullet,\bullet}(\Sigma',\del\Sigma')$.
\end{lem}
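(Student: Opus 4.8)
The plan is to verify directly that $\phi^*$ is well-defined on forms, compatible with the algebra structure, and commutes with $\d'$ and $\d''$. By Remark~\ref{rem:remark for harmonic maps}(3) and Lemma~\ref{lem:unweighting.forms}, I would first reduce to the case where both $\Sigma$ and $\Sigma'$ have trivial edge weights; under unweighting, the expansion factor $d_{e'}(\phi)$ is replaced by $d_{e'}(\phi_0) = (w(e')/w(e))d_{e'}(\phi)$, and the formulas in the statement are exactly the ones that intertwine $\nu^*$ on the two sides. So after unweighting, the defining formulas simplify to $\phi^*(f_e\,\d't_e) = (d_{e'}(\phi)\,f_{\phi(e')}\circ\phi\,\d't_{e'})$, etc.

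**Well-definedness.** First I would check that $\phi^*\omega$ satisfies the symmetry condition under reversing orientation of $e'$: since $\phi(\bar e') = \overline{\phi(e')}$, $d_{\bar e'}(\phi) = d_{e'}(\phi)$, and $f_{\bar e} = \pm f_e$ with the same sign as in the original form, the required sign for $\phi^*\omega$ follows. Next, the vertex compatibility conditions. At an interior vertex $v'$ of valency $1$ mapping to a point $v$: if $v$ is in the interior, $f_e$ is zero (resp. $f_{\phi(e')}$ is zero) near $v$ by the corresponding condition for $\omega$, so $\phi^*\omega$ vanishes near $v'$; if $v\in\del\Sigma$, then harmonicity condition~(3) of Definition~\ref{def:morphism} forces $\phi$ to be constant near $v'$ (as $v'\notin\del\Sigma'$), so again $d_{e'}(\phi)=0$ and $\phi^*\omega$ vanishes near $v'$. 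At a valency-$2$ interior vertex $v'$ with edges $e_1', e_2'$: either both map to the same edge $e$ of $\Sigma$ (possibly with different expansion factors, after subdivision one can arrange equal factors or handle the general case by noting smoothness of $f_e$ at interior points of $e$), in which case the glued function pulls back to a glued function, or one or both are contracted, handled as in the valency-$1$ case; the key point is that smoothness of $f$ or $(f_e)$ at the image vertex, together with the chain rule $\frac{\d}{\d t_{e'}}(g\circ\phi) = d_{e'}(\phi)\,\frac{\d g}{\d t_e}$, gives smoothness of the pullback at $v'$. At a valency-$>2$ interior vertex $v'$: here I would use harmonicity of $\phi$ at $v'$, exactly as in the computation in the proof of Lemma~\ref{lem:converse.to.harmonic.composition} — grouping outgoing edges $e'$ at $v'$ by their image edge $e$ at $v = \phi(v')$, the sum $\sum_{(e')^-=v'} f_{\phi(e')}(v')\,d_{e'}(\phi)$ regroups as $\sum_{e^-=v} f_e(v)\bigl(\sum_{e'\mapsto e} d_{e'}(\phi)\bigr)$; for the $(1,0)$-condition the inner sum is $d_{v'}(\phi)$ by~\eqref{eq:dvprimephi} and the outer sum is then $d_{v'}(\phi)\sum_{e^-=v} f_e(v) = 0$, while for the harmonic-function (i.e. $(0,0)$) case the analogous regrouping gives $d_{v'}(\phi)$ times the vanishing weighted-slope sum for $f$; the valency condition on the source may be different from that on the target (valency can drop under $\phi$), but then one reduces to the valency-$2$ or valency-$1$ cases by subdividing, and the conditions for those cases have already been checked.

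**Algebra and differential structure.** Once well-definedness holds, multiplicativity $\phi^*(\omega\wedge\eta) = \phi^*\omega\wedge\phi^*\eta$ is immediate from the formulas: on an edge $e'$ the wedge of $(1,0)$ and $(0,1)$ pieces multiplies the two factors $d_{e'}(\phi)$ to give $d_{e'}(\phi)^2$, matching the $(1,1)$-formula, and the $(0,0)$ case is just $\,(gh)\circ\phi = (g\circ\phi)(h\circ\phi)$. Commutation with $\d'$ and $\d''$ follows from the chain rule on each edge: for $f\in\cA^{0,0}$, $\d'(\phi^*f) = \bigl(\frac{\d(f\circ\phi)}{\d t_{e'}}\,\d't_{e'}\bigr) = \bigl(d_{e'}(\phi)\,\frac{\d f}{\d t_e}\circ\phi\,\d't_{e'}\bigr) = \phi^*(\d'f)$, and similarly $\d''(\phi^*(f_e\,\d't_e)) = \bigl(-d_{e'}(\phi)\,\frac{\d(f_{\phi(e')}\circ\phi)}{\d t_{e'}}\,\d't_{e'}\d''t_{e'}\bigr)$ — wait, more carefully, $\d''(\phi^*(f_e\d't_e)) = \d''\bigl(f_{\phi(e')}\circ\phi\,d_{e'}(\phi)\,\d't_{e'}\bigr) = \bigl(-d_{e'}(\phi)\frac{\d(f_{\phi(e')}\circ\phi)}{\d t_{e'}}\,\d't_{e'}\d''t_{e'}\bigr) = \bigl(-d_{e'}(\phi)^2 \frac{\d f_e}{\d t_e}\circ\phi\,\d't_{e'}\d''t_{e'}\bigr) = \phi^*\bigl(-\frac{\d f_e}{\d t_e}\,\d't_e\d''t_e\bigr) = \phi^*(\d''(f_e\d't_e))$, and the $\d'$ on $(0,1)$-forms is analogous.

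**Main obstacle.** I expect the only real subtlety is the well-definedness at interior vertices of $\Sigma'$ whose valency drops under $\phi$ or which are contracted, and in particular keeping track of the boundary condition~(3) of Definition~\ref{def:morphism} to ensure that $f_e$ or $f_{\phi(e')}$ genuinely vanishes near the relevant source vertices when the image lands in $\del\Sigma$; the valency-$>2$ case also requires invoking harmonicity of $\phi$ at $v'$ precisely in the way the local-degree identity~\eqref{eq:dvprimephi} is set up. All the analytic content is the chain rule on a single edge, so once the combinatorial bookkeeping at vertices is done carefully the rest is formal.
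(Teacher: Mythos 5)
Your overall strategy --- reduce to unweightings via $\nu^*$, verify the vertex conditions of Definitions~\ref{def:smooth.on.graph}--\ref{def:diff.11.form.graph} for the pullback, then check compatibility with $\wedge$, $\d'$, $\d''$ by the chain rule on each edge --- is exactly the paper's, and your treatment of the valency-$1$ case, the valency-$>2$ case (regrouping by image edge and factoring out $d_{v'}(\phi)$), and the differential/algebra structure is sound; indeed you spell out the $\d'$, $\d''$ compatibility that the paper leaves to the reader.

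The gap is in the valency-$2$ case. You enumerate only the subcases ``both edges map to the same edge'' and ``one or both contracted,'' and for the remaining, generic subcase you rely on ``smoothness of $f$ at the image vertex plus the chain rule.'' That is not enough. If $e_1',e_2'$ map to \emph{distinct} edges $e_1,e_2$ at $v=\phi(v')$, then $\frac{\d^n(f\circ\phi)}{\d t_{e_i'}^n}(v') = d_{e_i'}(\phi)^n\,\frac{\d^n f}{\d t_{e_i}^n}(v)$, so the alternating-sign condition for smoothness of the pair at $v'$ only follows after cancelling $d_{e_1'}(\phi)^n$ against $d_{e_2'}(\phi)^n$ --- i.e.\ you must invoke harmonicity at $v'$, which in this configuration says exactly $d_{e_1'}(\phi)=d_{e_2'}(\phi)$. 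Moreover, in the subcase where $e_1'$ and $e_2'$ both map to the \emph{same} edge $e$, the claim that ``the glued function pulls back to a glued function'' fails: both pullbacks scale the $n$-th derivative of $f$ at $v$ by positive factors, so the required identity $\frac{\d^n g}{\d t_{e_1'}^n}(v')=(-1)^n\frac{\d^n g}{\d t_{e_2'}^n}(v')$ would force the odd derivatives of $f$ to vanish at $v$, which smoothness of $f$ does not give. The correct argument (the paper's) is that harmonicity at $v'$ forces $v$ to have valency one --- the local degree computed with any other edge at $v$ would be $0$ while the one computed with $e$ is positive --- so $v$ is an interior leaf, $f$ is constant (resp.\ the coefficient functions vanish) near $v$, and the pullback is trivially smooth at $v'$. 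Both repairs use harmonicity at $v'$ in exactly the way you reserved for the valency-$>2$ case; with them inserted the proof is complete.
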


\begin{proof}
  Let $\nu\colon\Sigma_0\to\Sigma$ and $\nu'\colon\Sigma_0'\to\Sigma'$ be the unweightings, and let $\phi_0\colon\Sigma_0'\to\Sigma_0$ be the associated map.  We have defined isomorphisms $\nu^*\colon\cA^{p,q}(\Sigma,\del\Sigma)\isom\cA^{p,q}(\Sigma_0,\del\Sigma_0)$ and $\nu'^*\colon\cA^{p,q}(\Sigma',\del\Sigma')\isom\cA^{p,q}(\Sigma_0',\del\Sigma_0')$ in Lemma~\ref{lem:unweighting.forms}.
It is immediate from the definitions that $\nu'^*\phi^* = \phi_0^*\nu^*$, so we may pass to unweightings to assume all edge weights are trivial.

  First we need to show that $\phi^*\cA^{p,q}(\Sigma,\del\Sigma)\subset\cA^{p,q}(\Sigma',\del'\Sigma')$, i.e., that the pullback of a smooth form is smooth.  We treat only the case of smooth functions, as the other cases are similar.  Let $f\in\cA^{0,0}(\Sigma,\del\Sigma)$ and let $g = f\circ\phi$.   We verify the conditions of Definition~\ref{def:smooth.on.graph}.  Clearly $g$ is continuous and smooth on edges.  Let $v'\in\Sigma'\setminus\del\Sigma'$, and let $v = \phi(v')$.  If $\phi$ is constant in a neighborhood of $v'$ then evidently $g$ is smooth in a neighborhood of $v'$; assume then that $\phi$ is not constant in any neighborhood of $v'$, so $v\notin\del\Sigma$ by Definition~\ref{def:morphism}.

  Condition~(1): Suppose that $v'$ has valency $1$.  If the edge adjacent to $v'$ maps to a vertex of $\Sigma$ then clearly $g$ is constant in a neighborhood of $v'$.  Otherwise, $v$ has valency one by harmonicity, so $f$ is constant in a neighborhood of $v$, and hence $g$ is constant in a neighborhood of $v'$.

  Condition~(2): Suppose that $v'$ has valency $2$, with edges $e_1',e_2'$ starting at $v'$.  There are several sub-cases.
  \begin{enumerate}\renewcommand{\labelenumi}{{(\alph{enumi})}}
    \item If $e_1'$ and $e_2'$ both map to vertices, then $\phi$ is constant in a neighborhood of $v'$.
    \item If $e_1'$ maps to a vertex $v$ and $e_2'$ does not, then $v$ has valency one by harmonicity, so $f$ is constant in a neighborhood of $v$, hence $g$ is constant in a neighborhood of $v'$, so $(g|_{e_1'},g|_{e_2'})$ is smooth at $v'$.
    \item If $\phi(e_1')$ and $\phi(e_2')$ are equal to the same edge of $\Sigma$, then again $v$ has valency one by harmonicity, and we proceed as above.
    \item Suppose then that $e_1 = \phi(e_1')$ and $e_2 = \phi(e_2')$ are distinct edges of $\Sigma$.  Then $v$ has valency $2$ by harmonicity, and
      \[ g\circ t_{e_i'}(x) = f\circ\phi\circ t_{e_i'}(x)
        = f\circ t_{e_i}(d_{e_i'}(\phi) x) \]
      for $i=1,2$ and $x\in[0,\ell(e_i')]$.  Hence
      \[ \frac{\d^n g}{\d t_{e_i'}^n}(v') = d_{e_i'}(\phi)^n\frac{\d^n f}{\d t_{e_i}^n}(v) \]
      for $n\geq 0$.  It follows from smoothness of $(f|_{e_1},f|_{e_2})$ at $v$ that
      \[ \frac{1}{d_{e_1'}(\phi)^n}\frac{\d^n g}{\d t_{e_1'}^n}(v) = (-1)^n\frac{1}{d_{e_2'}(\phi)^n}\frac{\d^n g}{\d t_{e_2'}^n}(v). \]
      The harmonicity condition says that $d_{e_1'}(\phi)=d_{e_2'}(\phi)$; canceling this factor from both sides proves smoothness of $(g|_{e_1'},g|_{e_2'})$ at $v'$.
    \end{enumerate}

  Condition~(3): Suppose that $v'$ has valency greater than $2$.  Whatever the valency of $v$, we have $\sum_{v=e^-}\d f/\d t_e(v) = 0$.  If $v$ is an isolated vertex then $\phi$ is constant in a neighborhood of $v'$.  Otherwise, choose an edge $e$ starting at $v$.  If $e'$ is an edge starting at $v'$ mapping to $e$ then $\d g/\d t_{e'}(v') = d_{e'}(\phi)\,\d f/\d t_e(v)$ as above, so
    \[ \sum_{\substack{(e')^- = v'\\e'\mapsto e}} \frac{\d g}{\d t_{e'}}(v')
      = \frac{\d f}{\d t_e}(v)\sum_{\substack{(e')^- = v'\\e'\mapsto e}} d_{e'}(\phi)
      = \frac{\d f}{\d t_e}(v)d_{v'}(\phi).
    \]
    Summing over all edges starting at $v$, and noting that $\d g/\d t_{e'}(v')$ is zero if $e'$ is crushed to the vertex $v$, we have
    \[ \sum_{v'=(e')^-}\frac{\d g}{\d t_{e'}}(v)
      = d_{v'}(\phi)\sum_{v=e^-}\frac{\d f}{\d t_e}(v)
      = 0. \]
  This finishes the proof that $g$ is smooth.

  Compatibility of $\phi^*$ with $\d'$ and $\d''$ is easily verified and is left to the reader.
\end{proof}

\begin{rem} \label{rem:functoriality of pull-back}
  If $\phi\colon\Sigma'\to\Sigma$ and $\psi\colon\Sigma''\to\Sigma'$ are harmonic maps, then $\phi\circ\psi$ is harmonic, and $(\phi\circ\psi)^*=\psi^*\circ\phi^*$.  This follows from Proposition~\ref{prop:characterization of harmonic morphisms}.  Moreover, one computes easily that for $v'' \in \Sigma'' \setminus \del \Sigma''$ and $v' \coloneq \psi(v'')$, we have $d_{v''}(\phi \circ \psi)=d_{v'}(\phi)d_{v''}(\psi)$.
\end{rem}

\begin{rem} \label{rem:unweighting.and.harmonic}
Let $\nu\colon\Sigma_0\to\Sigma$ be the unweighting of $\Sigma$ as in Definition~\ref{def:weighted.graph}.  The map $\nu$ is harmonic (of degree $1$ if $\Sigma$ has an edge), and $\nu^*$ agrees with the isomorphism in Lemma~\ref{lem:unweighting.forms}.
\end{rem}

\subsection{Subgraphs}\label{sec:subgraphs}
The most basic example of a harmonic map is the inclusion of a subgraph.

\begin{defn}\label{def:subgraph}
  Let $\Sigma$ be a weighted metric graph with boundary.  A subset $\Sigma'\subset\Sigma$ is a \defi{subgraph} if it is a union of vertices and closed edges of a subdivision of $\Sigma$.  A subgraph inherits the structure of a weighted metric graph; its boundary $\del\Sigma'$ is defined to be the union of $\Sigma'\cap\del\Sigma$ with the relative boundary of $\Sigma'$ in $\Sigma$.
\end{defn}

The inclusion of a subgraph $\iota\colon\Sigma'\inject\Sigma$ is $\Z$-harmonic: if $v'\in\Sigma'$ is a vertex, then either $\iota$ is an isomorphism in a neighborhood of $v'$, or $v'\in\del\Sigma'$.  The  homomorphisms
\[\iota^*\colon\cA^{p,q}(\Sigma,\del\Sigma) \To \cA^{p,q}(\Sigma',\del\Sigma') \]
are called \defi{restrictions}.

\subsection{Modifications}\label{sec:modifications}
Our next important example of a harmonic map arises by attaching trees to a graph.

\begin{defn}\label{def:modification}
  Let $\Sigma$ be a weighted metric graph.  A \defi{modification} of $\Sigma$ is a weighted metric graph $\Sigma'$ obtained from $\Sigma$ by attaching finitely many (weighted metric) trees to points of $\Sigma$.  If $\Sigma$ has boundary $\del\Sigma$, then we consider $\Sigma'$ as a weighted metric graph with boundary by setting $\del\Sigma' = \del\Sigma$.

  If $\Sigma'$ is a modification of $\Sigma$, then there are natural inclusion and retraction maps $\Sigma\inject\Sigma'$ and $\Sigma'\surject\Sigma$, respectively.  The retraction map is $\Z$-harmonic (of degree $1$ if $\Sigma$ has an edge), while inclusion is not harmonic.
\end{defn}

\begin{lem}\label{lem:modif.isom.cohom}
  Let $\Sigma$ be a weighted metric graph with boundary and let $\Sigma'$ be a modification of $\Sigma$.  Suppose that $\Sigma$ has no isolated interior vertices.  Then the retraction map $\Sigma'\surject\Sigma$ induces isomorphisms $H^{p,q}(\Sigma,\del\Sigma)\isom H^{p,q}(\Sigma',\del\Sigma')$ for all $p,q\in\{0,1\}$.
\end{lem}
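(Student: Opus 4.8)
The plan is to reduce the computation of $H^{p,q}(\Sigma',\del\Sigma')$ to that of $H^{p,q}(\Sigma,\del\Sigma)$ by peeling off one tree at a time, and for a single tree, by peeling off one leaf edge at a time. First I would record the basic fact that if $r\colon\Sigma'\surject\Sigma$ is the retraction and $\iota\colon\Sigma\inject\Sigma'$ the inclusion, then $r\circ\iota=\mathrm{id}_\Sigma$; since $r$ is $\Z$-harmonic (Definition~\ref{def:modification}), the pullback $r^*$ is a homomorphism of bigraded differential algebras by Lemma~\ref{lem:pullback.harmonic}, and it induces $r^*\colon H^{p,q}(\Sigma,\del\Sigma)\to H^{p,q}(\Sigma',\del\Sigma')$. (Note $\iota$ is not harmonic, so there is no honest pullback $\iota^*$ on the level of forms, but there is a restriction of forms to the subgraph $\Sigma$, which is a left inverse to $r^*$; this shows injectivity of $r^*$ for free. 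The real content is surjectivity.)

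For surjectivity I would argue by induction, reducing to the case where $\Sigma'$ is obtained from $\Sigma$ by attaching a single edge $e_0$ at a point $p$ of $\Sigma$, with the new leaf vertex $v_0$ an interior vertex of $\Sigma'$ (so $v_0\notin\del\Sigma'$; if $v_0$ were in $\del\Sigma'$ it would not come from $\del\Sigma=\del\Sigma'$ — but by the setup $\del\Sigma'=\del\Sigma$, so the newly attached leaves are automatically interior). The hypothesis that $\Sigma$ has no isolated interior vertices guarantees that $p$ is not an isolated interior vertex of $\Sigma$, which is needed so that the smoothness conditions at $p$ are nontrivial and the analysis below goes through; it also ensures $\Sigma'$ itself has an edge. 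Given a form $\omega'$ on $\Sigma'$ representing a class in $H^{p,q}(\Sigma',\del\Sigma')$, I want to modify $\omega'$ within its cohomology class so that it is pulled back from $\Sigma$, i.e.\ so that it vanishes on the edge $e_0$ in the appropriate sense and is "constant towards $v_0$". Concretely:
\begin{itemize}
\item For $(p,q)=(0,0)$: $H^{0,0}=\R$ on both sides (constants), and $r^*$ is the identity on constants, so this case is trivial.
\item For $(0,1)$: given $\omega'=(f_e\,\d''t_e)$ with $\d''$-class in the cokernel, use the freedom to add $\d''g$ for $g\in\cA^{0,0}(\Sigma',\del\Sigma')$. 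Since $v_0$ is an interior leaf, a smooth function near $v_0$ is constant there, but we have full freedom to choose $g$ on the tree edge $e_0$ with prescribed behavior; I would choose $g$ so that $\omega'+\d''g$ vanishes identically on $e_0$. The analogous choice works for $(1,0)$ with $\d''$ replaced appropriately (now modifying a $(1,0)$-form by nothing — rather, $H^{1,0}=\ker\d''$, so one argues that a closed $(1,0)$-form on $\Sigma'$ is determined by a linear-algebra condition on the incidence matrix, and attaching a leaf edge $e_0$ forces the coefficient $c_{e_0}=0$ by the vertex condition at $v_0$, exactly as in~\secref{sec:computation-h1-0}; hence the form already comes from $\Sigma$).
\item For $(1,1)$: given $\omega'=(f_e\,\d't_e\d''t_e)$, I would correct it by $\d''\eta$ for a $(1,0)$-form $\eta$ supported near $e_0$, chosen so that $\int_0^{\ell(e_0)}f_{e_0}$ is "moved off" $e_0$; since $f_{e_0}$ must vanish near the leaf $v_0$ anyway (condition in Definition~\ref{def:diff.11.form.graph}), and the value at the attaching point $p$ can be adjusted, one arranges $f_{e_0}\equiv 0$. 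When $\del\Sigma=\emptyset$ this is consistent with $h^{1,1}=1$ since $\int_{\Sigma'}=\int_\Sigma\circ r^*$ by Lemma~\ref{lem:pullback.integrate} (or a direct check); when $\del\Sigma\neq\emptyset$, $h^{1,1}=0$ on both sides so there is nothing to prove beyond well-definedness.
\end{itemize}

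The main obstacle is the surjectivity for $(0,1)$ and $(1,1)$: one must check that the corrected form $\omega'+\d''(\text{something})$ (i) still lies in $\cA^{p,q}(\Sigma',\del\Sigma')$, i.e.\ satisfies the smoothness/slope compatibility conditions at the attaching vertex $p$ after the modification, and (ii) genuinely is the $r$-pullback of a smooth form on $\Sigma$, which requires checking the compatibility conditions at $p$ viewed as a vertex of $\Sigma$ (whose valency dropped by one). The key point is that the correction $g$ (resp.\ $\eta$) can be taken to vanish to infinite order at $p$ along all edges of $\Sigma$ while carrying all the "mass" on $e_0$, so the conditions at $p$ are unchanged for the $\Sigma$-edges; this is where one uses that bump-type functions and the flexibility afforded by Remark~\ref{rem:partition of unity}(4). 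Passing from a single leaf edge to a single tree is an easy induction on the number of edges of the tree (always peel a leaf), and from a single tree to finitely many trees is another easy induction; I would phrase the whole argument as a single induction on $\#E^+(\Sigma')-\#E^+(\Sigma)$. I do not expect surjectivity of $r^*$ on $H^{1,0}$ to require any correction at all — the closedness condition already kills the tree coefficients — so the heart of the matter is really the cokernel computations in bidegrees $(0,1)$ and $(1,1)$.
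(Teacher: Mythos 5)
Your overall strategy (peel off one leaf edge at a time and repair representatives near the attaching point) is workable in outline, but the key technical device you propose does not function, and it fails precisely at the point where the ad-hoc vertex conditions of Definitions~\ref{def:smooth.on.graph}--\ref{def:diff.11.form.graph} bite. Concretely: if the correcting function $g$ vanishes to infinite order at the attaching point $p$ along every edge of $\Sigma$, then the interior-vertex condition at $p$ (which has valency $\geq 3$ in $\Sigma'$ once $e_0$ is attached to a non-leaf point) forces the slope of $g$ along $e_0$ at $p$ to vanish as well; since you need $\d g/\d t_{e_0}=-f_{e_0}$ on all of $e_0$, this would require $f_{e_0}(p)=0$, which is not guaranteed. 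So the correction must disturb $\omega'$ on the $\Sigma$-side. More seriously, even after arranging that $\omega'+\d''g$ vanishes on $e_0$, membership in the image of $r^*$ requires its restriction to satisfy the conditions at $p$ \emph{as a vertex of $\Sigma$}: when $p$ has valency $2$ in $\Sigma$ (hence $3$ in $\Sigma'$), the form on $\Sigma'$ only satisfies the first-order balancing at $p$, whereas the valency-$2$ condition in $\Sigma$ is an infinite-order matching of Taylor coefficients. Your proposal acknowledges this check but offers no mechanism to achieve it; a further Borel-type correction of the full jet at $p$ would be needed. The same discrepancy undermines the claim that injectivity is ``for free'' from restriction: the restriction of a form (or of a primitive $\eta'$ with $r^*\omega=\d''\eta'$) to the subgraph $\Sigma$ a priori lands only in $\cA^{p,q}(\Sigma,\del\Sigma\cup A)$, $A$ the set of attaching points, and showing it actually lies in $\cA^{p,q}(\Sigma,\del\Sigma)$ again requires the vertex-by-vertex analysis you are trying to avoid.

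The paper's proof sidesteps all of this local analysis. Since $\Sigma$ and $\Sigma'$ have the same genus and the same boundary, Proposition~\ref{prop:dolbeault.graphs} already gives $h^{p,q}(\Sigma,\del\Sigma)=h^{p,q}(\Sigma',\del\Sigma')$, so it suffices to see that $\tau^*$ matches explicit generators: constants for $H^{0,0}$; a bump form on an edge of $\Sigma$, detected by $\int$ via $d(\tau)=1$, for $H^{1,1}$; the forms $\omega_1,\ldots,\omega_g$ of~\secref{sec:computation-h0-1}, supported on the edges outside a spanning tree and hence away from the attached trees, for $H^{0,1}$; and for $H^{1,0}$ the observation (which you also make) that a $\d''$-closed $(1,0)$-form is constant on edges and therefore vanishes identically on the attached boundaryless trees. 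I would encourage you either to adopt this global route or to supply the missing higher-order corrections at the attaching vertices.
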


\begin{proof}
  As usual we may replace $\Sigma$ and $\Sigma'$ by their unweightings to assume all edge weights are trivial.  Since the cohomology of a disjoint union is a direct sum, we may assume $\Sigma$ to be connected.  Let $\tau\colon\Sigma'\surject\Sigma$ denote the retraction.  If $\Sigma$ consists of a single boundary vertex $v$ then $\Sigma'$ is a tree with $\del\Sigma' = \{v\}$, so the only nonzero cohomology group of $\Sigma'$ is $H^{0,0}$, and hence $\tau^*$ is an isomorphism on all $H^{p,q}$.  Assume then that $\Sigma$ has an edge.  It is clear that $\tau^*$ is an isomorphism on $H^{0,0}$ and $H^{1,1}$ by~\secref{sec:computation-h0-0} and~\secref{sec:computation-h1-1}, respectively.

In~\secref{sec:computation-h0-1} we constructed forms $\omega_1,\ldots,\omega_g\in\cA^{0,1}(\Sigma,\del\Sigma)$ whose cohomology classes form a basis of $H^{0,1}(\Sigma,\del\Sigma)$.  The pullbacks $\tau^*\omega_1,\ldots,\tau^*\omega_g$ are simply the forms in $\cA^{0,1}(\Sigma',\del\Sigma')$ obtained by the same construction on $\Sigma'$, so $\tau^*$ is an isomorphism on $H^{0,1}$.

  The kernel of $\d''$ on $\cA^{1,0}$ consists of all forms that are constant on edges. Any such form on $\Sigma'$ is zero on the edges in $\Sigma'\setminus\Sigma$, which is a disjoint union of boundaryless trees.  Any such form is the image under $\tau^*$ of a form on $\Sigma$ which is constant on edges, so $\tau^*$ is an isomorphism on $H^{1,0}$.
\end{proof}

\subsection{Quotients}\label{sec:quotients}
Our next example of a harmonic map will arise from a Galois extension of the ground field of a curve: see Proposition~\ref{prop:skeleton.basechange}.  Here we treat the more general case of a quotient by a finite group.

\begin{defn}\label{def:quotient.graph}
  Let $\pi\colon\Sigma'\to\Sigma$ be a surjective harmonic map of weighted metric graphs with boundary.  Let $G$ be a finite group acting on $\Sigma'$ by harmonic maps such that $\pi\circ\sigma = \pi$ for all $\sigma\in G$.  We say that $\Sigma$ is the \defi{quotient} of $\Sigma'$ by $G$, and we write $\Sigma = \Sigma'/G$, provided that:
  \begin{enumerate}
  \item \label{item:quotient1}
  $G$ acts transitively on the fibers of $\pi$, and
  \item \label{item:quotient2}
  $\pi\inv(\del\Sigma) = \del\Sigma'$.
  \end{enumerate}
\end{defn}

\begin{rem}\label{rem:quotient.graph.props}\

  \begin{enumerate}
  \item A quotient map has finite fibers, so no edge of $\Sigma'$ is
    crushed to a vertex of~$\Sigma$.
  \item Saying that $G$ acts transitively on fibers means $\Sigma$ is the quotient of $\Sigma'$ by $G$ in the category of sets.
  We will see in Lemma~\ref{lem:quotient.in.top.cat} below that it is also the quotient in the category of topological spaces.
  \item The map $\pi\colon\Sigma'\to\Sigma$ makes $\Sigma$ into the quotient of $\Sigma'$ by $G$ if and only if the associated map $\pi_0\colon\Sigma_0'\to\Sigma_0$ makes the unweighting $\Sigma_0$ into the quotient of the unweighting $\Sigma_0'$ by $G$.
 \end{enumerate}
\end{rem}

\begin{rem}\label{rem:subdivision and G-action}
Let $G$ be a finite group acting on a weighted metric graph $\Sigma'$ with boundary by piecewise linear maps.  There is a subdivision of $\Sigma'$  such that $G$ acts on the vertices and edges, and such that for every edge of $\Sigma'$, the tail is not in the same $G$-orbit as the head. In particular, we have $\sigma(e') \neq \bar e{}'$ for every edge $e'$ and every $\sigma \in G$. Then the quotient $\pi\colon\Sigma'\to\Sigma$ of $\Sigma'$ by $G$ exists in the category of simplicial complexes, and gives a graph $\Sigma$ with boundary $\partial \Sigma \coloneqq \pi(\partial \Sigma')$.
\end{rem}

\begin{rem} \label{lem:quotient.in.top.cat}
  Let $\pi\colon\Sigma'\to\Sigma$ be a harmonic map of weighted metric graphs with boundary, and suppose that $\Sigma$ is the quotient of $\Sigma'$ by a finite group $G$.   Choose subdivisions of $\Sigma'$ and $\Sigma$ such that every edge $e'$ of $\Sigma'$ is mapped linearly onto an edge of $\Sigma$.  This subdivision of $\Sigma'$ satisfies the hypotheses of Remark~\ref{rem:subdivision and G-action}, so $\Sigma$ is the quotient simplicial complex of Remark~\ref{rem:subdivision and G-action}.  In particular, $\pi$ is the quotient map in the category of topological spaces, and it is open.
\end{rem}

The following lemma shows that $\Sigma'/G$ is the quotient in the category of weighted metric graphs with boundary, with harmonic maps as morphisms.

\begin{lem}\label{lem:categorical.graph.quotient}
  Let $\pi\colon\Sigma'\to\Sigma$ be a harmonic map of weighted metric graphs with boundary, and suppose that $\Sigma$ is the quotient of $\Sigma'$ by a finite group $G$.  If $\phi'\colon\Sigma'\to\Sigma''$ is a harmonic map of weighted metric graphs with boundary such that $\phi'\circ\sigma = \phi'$ for all $\sigma\in G$, then there exists a unique harmonic map $\phi\colon\Sigma\to\Sigma''$ such that $\phi\circ\pi = \phi'$.
\end{lem}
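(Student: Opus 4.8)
The plan is to construct $\phi$ first as a continuous map, using that $\pi$ is a topological quotient, and then to upgrade it to a harmonic map by invoking Lemma~\ref{lem:converse.to.harmonic.composition}.

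\emph{Construction and uniqueness.} By Lemma~\ref{lem:quotient.in.top.cat}, the map $\pi\colon\Sigma'\to\Sigma$ is the quotient of $\Sigma'$ by $G$ in the category of topological spaces. Since $\phi'\circ\sigma = \phi'$ for every $\sigma\in G$, the map $\phi'$ is constant on the $G$-orbits of $\Sigma'$, and by condition~(\ref{item:quotient1}) of Definition~\ref{def:quotient.graph} these orbits are exactly the fibers of $\pi$. Hence the universal property of the quotient topology yields a unique continuous map $\phi\colon\Sigma\to\Sigma''$ with $\phi\circ\pi = \phi'$. Because $\pi$ is surjective, any map $\Sigma\to\Sigma''$ with $\phi\circ\pi=\phi'$ is completely determined by $\phi'$; in particular the harmonic map asserted in the statement is unique if it exists, so it remains only to check that the continuous map $\phi$ just obtained is harmonic.

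\emph{Harmonicity.} I would apply Lemma~\ref{lem:converse.to.harmonic.composition} with $\psi\coloneq\pi$. By hypothesis $\pi$ is a surjective harmonic map, and condition~(\ref{item:quotient2}) of Definition~\ref{def:quotient.graph} gives $\pi\inv(\del\Sigma)=\del\Sigma'$, hence $\pi(\del\Sigma')\subset\del\Sigma$. Moreover $\phi\circ\pi=\phi'$ is harmonic by assumption. Lemma~\ref{lem:converse.to.harmonic.composition} therefore shows that $\phi$ is harmonic; since the proof of that lemma also establishes piecewise linearity of $\phi$, the map $\phi$ is a morphism of weighted metric graphs with boundary, and the proof is complete.

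The argument is thus essentially formal, assembled entirely from results proved above; the only point needing attention is verifying the three hypotheses of Lemma~\ref{lem:converse.to.harmonic.composition}, each of which is immediate from Definition~\ref{def:quotient.graph} together with the topological description of $\pi$. So I do not expect a genuine obstacle. If one prefers to avoid Lemma~\ref{lem:converse.to.harmonic.composition}, one can argue directly: pass to unweightings via Remark~\ref{rem:quotient.graph.props}(3) and Remark~\ref{rem:remark for harmonic maps}(3), subdivide as in Remark~\ref{rem:subdivision and G-action} so that each edge of $\Sigma'$ maps linearly onto an edge of $\Sigma$ and of $\Sigma''$, then check piecewise linearity of $\phi$ on an edge $e$ of $\Sigma$ by choosing a preimage edge $e'$ and using $d_{e'}(\phi')=d_e(\phi)\,d_{e'}(\pi)$, and harmonicity at an interior vertex $v$ of $\Sigma$ by choosing a preimage vertex $v'\notin\del\Sigma'$ near which $\pi$ is nonconstant and using $d_{v'}(\phi')=d_v(\phi)\,d_{v'}(\pi)$ with $d_{v'}(\pi)\neq 0$.
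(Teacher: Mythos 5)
Your proof is correct and follows exactly the route the paper takes: obtain the unique continuous $\phi$ from the topological quotient property of $\pi$ (Remark~\ref{lem:quotient.in.top.cat}), then deduce harmonicity from Lemma~\ref{lem:converse.to.harmonic.composition}, whose hypotheses (surjectivity and harmonicity of $\pi$, $\pi(\del\Sigma')\subset\del\Sigma$, harmonicity of $\phi\circ\pi=\phi'$) you verify just as the paper implicitly does. The added direct argument at the end is a fine alternative but unnecessary.
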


\begin{proof}
  Since $\Sigma$ is the quotient of $\Sigma'$ in the category of topological spaces, there exists a unique continuous map $\phi\colon\Sigma\to\Sigma''$ such that $\phi\circ\pi=\phi'$.  This map is harmonic by Lemma~\ref{lem:converse.to.harmonic.composition}.
\end{proof}

Next we show that quotients by finite groups always exist, and are unique up to harmonic isomorphism (Remark~\ref{rem:harmonic.isomorphism}).

\begin{prop}\label{prop:graph.quotients.exist}
  Let $G$ be a finite group acting on a weighted metric graph with boundary $(\Sigma',\del\Sigma')$ by harmonic maps preserving $\del\Sigma'$.  Then there exists a weighted metric graph with boundary $(\Sigma,\del\Sigma)$ and a surjective harmonic map $\pi\colon\Sigma'\to\Sigma$ such that $\Sigma$ is the quotient of $\Sigma'$ by $G$ in the sense of Definition~\ref{def:quotient.graph}.  Moreover, the quotient $\Sigma$ is unique up to harmonic isomorphism.
\end{prop}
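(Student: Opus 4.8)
The plan is to build $\Sigma$ by hand as a metrized simplicial quotient and then to deduce uniqueness from the universal property of Lemma~\ref{lem:categorical.graph.quotient}. For existence, by Remark~\ref{rem:quotient.graph.props}(3) I may assume all edge weights are trivial. First I would choose a subdivision of $\Sigma'$ as in Remark~\ref{rem:subdivision and G-action}, so that $G$ acts simplicially and the two endpoints of every edge lie in distinct $G$-orbits; in particular no edge is sent to its reverse. The underlying simplicial quotient $\pi\colon\Sigma'\to\Sigma$ then exists, has finite fibres, crushes no edge, is a homeomorphism on each edge, and has no loop edges; set $\del\Sigma\coloneqq\pi(\del\Sigma')$. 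The real work is to metrize $\Sigma$ so that $\pi$ becomes harmonic. Here I use that each $\sigma\in G$ is a harmonic \emph{isomorphism} of finite order (its inverse lies in $G$): by Remark~\ref{rem:harmonic.isomorphism} it rescales each connected component of $\Sigma'\setminus\del\Sigma'$ by a positive constant, and these constants satisfy a multiplicative cocycle identity over the (finite) $G$-set of components, so one can rescale the metric of $\Sigma'$ component by component --- which does not affect harmonicity of any $\sigma$, by the converse statement in Remark~\ref{rem:harmonic.isomorphism} --- to arrange that $G$ acts on $\Sigma'$ by \emph{isometries}. A quotient of the rescaled graph yields a quotient of $\Sigma'$ after composing with the harmonic isomorphism, so from now on $G$ acts by isometries.

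Now every edge $e$ of $\Sigma$ has all of its lifts of one common length $\ell(e')$, and all its edge stabilizers $G_{e'}\subset G$ of one common order $n_e$, and I would set $\ell(e)\coloneqq n_e\,\ell(e')$, so that $d_{e'}(\pi)=n_e$ for every lift. To check that $\pi$ is harmonic at an interior vertex $v'$, the key point is that $\mathrm{Stab}_G(v')$ acts by isometries on the edges emanating from $v'$: an element of $\mathrm{Stab}_G(v')$ is a finite-order harmonic isomorphism fixing $v'$, hence fixing and scaling by $1$ the component of $v'$. Using the ``distinct orbits'' property of the subdivision, the edges at $v'$ lying over a fixed edge $e$ at $\pi(v')$ form a single $\mathrm{Stab}_G(v')$-orbit, necessarily of size $[\mathrm{Stab}_G(v'):G_{e'}]=|\mathrm{Stab}_G(v')|/n_e$, so
\[
  \sum_{\substack{(e')^-=v'\\ e'\mapsto e}}d_{e'}(\pi)
  = \frac{|\mathrm{Stab}_G(v')|}{n_e}\cdot n_e
  = |\mathrm{Stab}_G(v')|
\]
is independent of the edge $e$; thus $\pi$ is harmonic with $d_{v'}(\pi)=|\mathrm{Stab}_G(v')|$ (and is piecewise $\Z$-linear). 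The remaining conditions of Definition~\ref{def:quotient.graph} then hold for free: $\pi$ is surjective and $G$ acts transitively on its fibres by construction of the set-theoretic quotient, and $\pi\inv(\del\Sigma)=\del\Sigma'$ because $\del\Sigma'$ is $G$-stable.

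For uniqueness, suppose $\pi_1\colon\Sigma'\to\Sigma_1$ and $\pi_2\colon\Sigma'\to\Sigma_2$ are both quotients of $\Sigma'$ by $G$. Since $\pi_2$ is harmonic and $G$-invariant, Lemma~\ref{lem:categorical.graph.quotient} applied to the quotient $\pi_1$ produces a harmonic $\phi\colon\Sigma_1\to\Sigma_2$ with $\phi\circ\pi_1=\pi_2$; symmetrically there is a harmonic $\psi\colon\Sigma_2\to\Sigma_1$ with $\psi\circ\pi_2=\pi_1$. Then $(\psi\circ\phi)\circ\pi_1=\pi_1$, and the uniqueness clause of Lemma~\ref{lem:categorical.graph.quotient} (applied to the $G$-invariant harmonic map $\pi_1$ itself) gives $\psi\circ\phi=\mathrm{id}_{\Sigma_1}$, and likewise $\phi\circ\psi=\mathrm{id}_{\Sigma_2}$. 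Hence $\phi$ is a harmonic map with harmonic inverse, i.e.\ a harmonic isomorphism.

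I expect the main obstacle to be the metrization step: recognizing that the length of an edge of $\Sigma$ must be the length of a lift scaled by the order of its stabilizer --- the naive choice of the common length of a $G$-orbit of edges does \emph{not} make $\pi$ harmonic --- together with the auxiliary fact that the stabilizer of an interior vertex acts isometrically on the incident edges, which is where finiteness of $G$ and Remark~\ref{rem:harmonic.isomorphism} enter. The preliminary reduction to an isometric $G$-action is conceptually easy but needs the small cocycle computation indicated above, and some care with orientations is needed to justify that the edges above $e$ at $v'$ form a single $\mathrm{Stab}_G(v')$-orbit.
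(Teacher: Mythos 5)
Your proposal is correct and its overall architecture (simplicial quotient after the subdivision of Remark~\ref{rem:subdivision and G-action}, a metric on the quotient making $\pi$ harmonic, harmonicity at $v'$ via transitivity of $\mathrm{Stab}_G(v')$ on the lifts of a fixed edge, and uniqueness from the universal property of Lemma~\ref{lem:categorical.graph.quotient}) matches the paper's. The genuine difference is in the metrization. The paper does not normalize the $G$-action: it defines $\ell(e)$ directly by $1/\ell(e)=\sum_{e'\mapsto e}1/\ell(e')$ and then verifies harmonicity by a local computation, first showing that all lifts of $e$ at a fixed vertex $v'$ have a common length $\ell(e,v')$ (using $d_{v'}(\sigma)^m=1\Rightarrow d_{v'}(\sigma)=1$ for $\sigma\in G_{v'}$, the same finite-order trick you use) and then comparing different vertices over $v$ via constants $d(v',v'')$, arriving at $d_{v'}(\pi)=\bigl(\sum_{v''\mapsto v}1/d(v',v'')\bigr)^{-1}$. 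You instead pay an up-front cost --- splitting the multiplicative cocycle $\alpha_\sigma(C)$ to make $G$ act by isometries, which does work since $\R_{>0}$ is uniquely divisible (e.g.\ $\beta(C)=\bigl(\prod_{\sigma\in G}\alpha_\sigma(C)\bigr)^{1/\#G}$ splits it), and a component-by-component rescaling is a harmonic isomorphism by Remark~\ref{rem:harmonic.isomorphism} --- after which the formula $\ell(e)=\#G_{e'}\cdot\ell(e')$ and the computation $d_{v'}(\pi)=[G_{v'}:G_{e'}]\cdot\#G_{e'}=\#G_{v'}$ are completely transparent. The two constructions differ by the global factor $\#G$ (your $\ell(e)$ equals $\#G$ times the paper's), so they agree up to harmonic isomorphism, as the uniqueness clause demands. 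Your approach buys a cleaner harmonicity check and an explicit local degree; the paper's avoids touching the metric on $\Sigma'$ and hence needs no cocycle-splitting step. The one place you should spell out the details is the splitting itself (well-definedness of the normalizing constants on each $G$-orbit of components), but this is routine given finiteness of $G$.
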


\begin{proof}
  We replace $\Sigma'$ by its unweighting to assume all edge weights are trivial.  We subdivide $\Sigma'$ as in Remark~\ref{rem:subdivision and G-action}.  Let $\Sigma$ be the quotient simplicial complex $\Sigma'/G$: this is a graph with vertex set $V(\Sigma) = V(\Sigma')/G$ and edge set $E(\Sigma) = E(\Sigma')/G$ endowed with the obvious incidence relation.  Note that $\Sigma$ has no loop edges.  Let $\pi\colon\Sigma'\to\Sigma$ be the quotient map.  We set $\del\Sigma = \pi(\del\Sigma')$.  For an edge $e\in E(\Sigma)$ we set $w(e) = 1$, and we define $\ell(e)$ by
  \[ \frac 1{\ell(e)} = \sum_{e'\mapsto e} \frac{1}{\ell(e')}. \]

  In order to prove that $\Sigma$ is the quotient $\Sigma'/G$ in the sense of Definition~\ref{def:quotient.graph}, we must show that $\pi$ is harmonic.  Let $v'\in\Sigma'$ be an interior vertex, let $v = \pi(v')$, and let $e$ be an edge of $\Sigma$ beginning at $v$.  Fix an edge $e'$ beginning at $v'$ and mapping to $e$.  If $e''$ is another such edge, then there exists $\sigma\in G$ such that $\sigma(e') = e''$: the orientation is preserved because the vertices of $e''$ belong to distinct $G$-orbits.  Hence the stabilizer $G_{v'}$ of $v'$ acts transitively on the set of edges beginning at $v'$ and mapping to $e$, and the number of such edges is $[G_{v'}:G_{e'}]$.  This number is independent of the choice of $v'\mapsto v$ and $e'\mapsto e$, so we denote it by $\epsilon(e)$.  For $\sigma\in G_{v'}$ we have $\ell(\sigma e') = \ell(e')d_{v'}(\sigma)$ by~\eqref{eq:dvprimephi}; hence $\ell(\sigma^2 e') = \ell(e')d_{v'}(\sigma)^2$, etc.  Since $\sigma^me' = e'$ for some $m\geq 1$, this shows $d_{v'}(\sigma)^m = 1$, so $d_{v'}(\sigma) = 1$.  It follows that $\ell(e'')$ is the same number for all $e''$ beginning at $v'$ and mapping to $e$; we denote this number by $\ell(e,v')$.  If $\sigma(v') = v''$ and $\sigma(e') = e''$ for some $\sigma\in G$ then $\ell(e'') = \ell(e')d_{v'}(\sigma)$, so that $\ell(e,v'') = d_{v'}(\sigma)\,\ell(e,v')$.  It follows that $d_{v'}(\sigma)$ is independent of the choice of $\sigma$ sending $v'$ to $v''$; we set $d(v',v'') = d_{v'}(\sigma)$ for any such $\sigma$, so that $\ell(e,v'') = d(v',v'')\,\ell(e,v')$.

  Now we compute
  \[\begin{split}
      d_{v'}(\pi) &= \ell(e)\sum_{\substack{(e')^-=v'\\e'\mapsto e}}\frac 1{\ell(e')}
      = \frac{\epsilon(e)/\ell(e,v')}{\sum_{e''\mapsto e}1/\ell(e'')}
      = \frac{\epsilon(e)/\ell(e,v')}{\sum_{v''\mapsto v}\epsilon(e)/\ell(e,v'')} \\
      &= \frac{1/\ell(e,v')}{\sum_{v''\mapsto v}1/(d(v',v'')\,\ell(e,v'))}
      = \left( \sum_{v''\mapsto v}\frac 1{d(v',v'')} \right)\inv.
    \end{split} \]
  This is independent of the choice of $e$, so $\pi$ is harmonic, as desired.

  Unicity is automatic from the universal property of Lemma~\ref{lem:categorical.graph.quotient}.
\end{proof}

Our next goal is to show that $H^{p,q}(\Sigma,\del\Sigma) = H^{p,q}(\Sigma',\del\Sigma')^G$ if $\Sigma = \Sigma'/G$.  If this were true on the level of forms, i.e.\ if it were true that $\cA^{p,q}(\Sigma,\del\Sigma) = \cA^{p,q}(\Sigma',\del\Sigma')^G$ as in~\otherfile{Proposition~\ref*{I-Galois invariant forms}}, then the result would follow formally as in~\otherfile{Corollary~\ref*{I-Galois action on cohomology}}.  However, this is not the case in general.  For instance, let $\Sigma'$ be the circle of circumference $2$ around the origin in $\R^2$, with the points on the $x$-axis being vertices and with no boundary, and let $G = \Z/2\Z$ act on $\Sigma'$ by flipping over the $x$-axis.  Let $\pi\colon\Sigma'\to\Sigma$ be the quotient.  Then $\Sigma$ is a line segment $e$ of length $1$.  Consider the function $f\colon \Sigma\to\R$ defined by $f\circ t_e(x) = \exp(-1/x^2)\cdot \exp(-1/(1-x)^2)$, and let $g = f\circ\pi$.  Then $g\in\cA^{0,0}(\Sigma')^G$, but $f\notin\cA^{0,0}(\Sigma)$ because $f$ is not constant in a neighborhood of the vertices.  Similar examples can be constructed in any bidegree, and similar problems can arise at valence-$2$ vertices of $\Sigma$.  This issue also makes it awkward to define a transfer map $\cA^{p,q}(\Sigma',\del\Sigma')\to\cA^{p,q}(\Sigma,\del\Sigma)$.

One could imagine modifying our definition of smooth forms on graphs to satisfy $\cA^{p,q}(\Sigma,\del\Sigma) = \cA^{p,q}(\Sigma',\del\Sigma')^G$ without changing the Dolbeault cohomology groups, but our definition is designed to be compatible with pullback of Lagerberg forms (Section~\ref{Sec:lagerberg-forms}) and with the non-Archimedean situation (Section~\ref{Sec:forms-curves}).  Therefore we must live with a tedious, ad-hoc proof of the following Proposition.

\begin{prop}\label{prop:quotient.graph.cohom}
  Let $\pi\colon\Sigma'\to\Sigma$ be a harmonic map of weighted metric graphs with boundary, and suppose that $\Sigma$ is the quotient of $\Sigma'$ by a finite group $G$.  Then the canonical homomorphisms
  \begin{equation}\label{eq:graph.cohom.quotient}
    \pi^*\colon H^{p,q}(\Sigma,\del\Sigma)\To H^{p,q}(\Sigma',\del\Sigma')^G
  \end{equation}
  are isomorphisms for all $p,q\in\{0,1\}$.
\end{prop}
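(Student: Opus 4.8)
First I would reduce.  Dolbeault cohomology carries disjoint unions to direct sums, and $G$ permutes the connected components of $\Sigma'$ with the components of $\Sigma$ as orbit space; fixing a component $C_0'$ of $\Sigma'$ with stabilizer $G_0\subseteq G$ and replacing $(\Sigma',G)$ by $(C_0',G_0)$, I may assume $\Sigma'$ and $\Sigma$ connected.  By Lemmas~\ref{lem:unweighting.forms} and~\ref{lem:integral.unweighting} I may assume all edge weights trivial.  Since a quotient is unique up to harmonic isomorphism (Proposition~\ref{prop:graph.quotients.exist}) and harmonic isomorphisms induce isomorphisms on $\cA^{\bullet,\bullet}$, hence on every $H^{p,q}$ (Lemma~\ref{lem:pullback.harmonic}, Remark~\ref{rem:functoriality of pull-back}), I may take $\Sigma$ to be exactly the quotient constructed in the proof of Proposition~\ref{prop:graph.quotients.exist}, so that $d_e(\pi)=1$ for every edge $e$ of $\Sigma$.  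Throughout, $\pi^*$ automatically lands in the $G$-invariants, since $\sigma^*\circ\pi^*=(\pi\circ\sigma)^*=\pi^*$ for $\sigma\in G$.  Now $H^{0,0}$ is clear, both sides being the line of constants with $\pi^*$ the identity there.  For $H^{1,1}$: if $\del\Sigma\neq\emptyset$ then $\del\Sigma'=\pi\inv(\del\Sigma)\neq\emptyset$, so both groups vanish; if $\del\Sigma=\emptyset$, a change of variables on each edge (using $d_{e'}(\pi)\ell(e')=\ell(e)$ and $d_e(\pi)=1$) gives $\int_{\Sigma'}\pi^*\omega=\int_\Sigma\omega$, so $\pi^*$ matches the isomorphisms $\int\colon H^{1,1}\isom\R$ on both sides and is an isomorphism.

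For $H^{0,1}$, the computation in~\secref{sec:computation-h0-1} identifies $H^{0,1}(\Sigma,\del\Sigma)$ with $H^1_{\sing}(\Sigma;\R)=\operatorname{Hom}(H_1(\Sigma;\Z),\R)$ via the period map $[\omega]\mapsto\bigl(\gamma\mapsto\int_\gamma\omega\bigr)$, independently of $\del\Sigma$, and likewise for $\Sigma'$ $G$-equivariantly.  The same change of variables gives $\int_{\gamma'}\pi^*\omega=\int_{\pi\circ\gamma'}\omega$ for paths $\gamma'$ in $\Sigma'$, so under these identifications $\pi^*$ is the topological pullback $H^1_{\sing}(\Sigma;\R)\to H^1_{\sing}(\Sigma';\R)$.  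Since $\pi$ exhibits $\Sigma$ as the quotient $\Sigma'/G$ of a finite CW complex in the topological category (Remark~\ref{lem:quotient.in.top.cat}), the classical transfer argument for finite group actions (using $\R[G]$ semisimple) shows this is an isomorphism onto $H^1_{\sing}(\Sigma';\R)^G$.

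The case $H^{1,0}$ is the real work, and I expect it to be the main obstacle: it is not seen by topology --- when $\del\Sigma\neq\emptyset$ the group $H^{1,0}(\Sigma,\del\Sigma)$ is genuinely larger than $H_1(\Sigma;\R)$ --- and there is no descent of forms.  By~\secref{sec:computation-h1-0}, with trivial weights $H^{1,0}(\Sigma,\del\Sigma)=\{z\in\R E^+(\Sigma)\colon Bz\in W\}$ where $B\colon\R E^+(\Sigma)\to\R V(\Sigma)$ is the incidence map and $W=\Span\{(v)\colon v\in\del\Sigma\}$, and the Dolbeault pullback sends an edge-constant form $z=\sum c_e(e)$ to $\pi^*z=\sum_{e'}c_{\pi(e')}\,d_{e'}(\pi)\,(e')$.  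The chain $\pi^*z$ is $G$-invariant (as $\sigma^*\pi^*z=\pi^*z$), so $B\pi^*z$ has coefficients constant on each fiber of $\pi$; these sum to $(Bz)_v$ over the fiber above $v$ (using $d_e(\pi)=1$), so in particular $\pi^*$ carries cycles to cycles, and grouping edges gives $\pi_*(\pi^*z)=\sum_e d_e(\pi)c_e(e)=z$.  When $\del\Sigma=\emptyset$, $H^{1,0}(\Sigma,\del\Sigma)=\ker B=H_1(\Sigma;\R)$, on which $\pi^*$ is injective ($d_{e'}(\pi)>0$ and $\pi$ is onto on edges), lands in $H_1(\Sigma';\R)^G$, and is an isomorphism since $\dim H_1(\Sigma;\R)=\dim H_1(\Sigma';\R)^G$ by the transfer argument.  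When $\del\Sigma\neq\emptyset$ I would compare the short exact sequence
\[ 0\To H_1(\Sigma;\R)\To H^{1,0}(\Sigma,\del\Sigma)\xrightarrow{\ B\ }\bigl\{a\in W\colon\textstyle\sum_v a_v=0\bigr\}\To 0, \]
exact because the image of $B$ is the space of degree-zero $0$-chains of a connected graph, with its $G$-equivariant analogue for $\Sigma'$.  Applying $(-)^G$ (exact, $\R[G]$ semisimple) and mapping via $\pi^*$ gives a morphism of short exact sequences whose left vertical arrow is the isomorphism of the boundaryless case.  Its right vertical arrow has target of dimension $\#\del\Sigma-1$ --- since $(W')^G\cong\R^{\del\Sigma'/G}=\R^{\del\Sigma}$, $G$ acting transitively on the fibers of $\pi$ --- equal to the source dimension, and it is injective, for if $z$ lifts $a$ and $B\pi^*z=0$ then $\pi^*z$ is a cycle, hence so is $z=\pi_*(\pi^*z)$ ($\pi_*$ being a chain map), whence $a=Bz=0$.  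So the right arrow is an isomorphism, and the five lemma finishes $H^{1,0}$.  The delicate points are exactly the reduction to the degree-one quotient and the by-hand tracking of the boundary term in this last comparison, the sharpest being that cycles on $\Sigma$ pull back to cycles on $\Sigma'$, which uses both $G$-invariance and $d_e(\pi)=1$.
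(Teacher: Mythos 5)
Your reductions and your treatment of $H^{0,0}$, $H^{1,1}$ and $H^{0,1}$ are correct, and the $H^{0,1}$ argument via the period isomorphism and the simplicial transfer is a genuinely different, arguably cleaner route than the paper's (which runs a snake-lemma reduction to a descent statement for smooth functions and then matches periods by hand). The reason your route works there is that the change of variables in $\int_{e''}\sigma^*\omega'$ absorbs the expansion factor $d_{e''}(\sigma)$, so the period map really does intertwine the Dolbeault $G$-action on $(0,1)$-forms with the ordinary permutation action on $\Hom(H_1(\Sigma';\Z),\R)$.

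That cancellation is exactly what is missing in your $H^{1,0}$ argument when $\del\Sigma\neq\emptyset$, and this is where the gap is. On edge-constant forms the Dolbeault action is the \emph{twisted} permutation action $(\sigma^*z')_{e''}=d_{e''}(\sigma)\,z'_{\sigma(e'')}$, and a finite-order harmonic automorphism of a graph with nonempty boundary need not preserve edge lengths: Remark~\ref{rem:harmonic.isomorphism} forces $d_{e''}(\sigma)=1$ only when $\del\Sigma'=\emptyset$, while with boundary $\sigma$ may scale different components of $\Sigma'\setminus\del\Sigma'$ by different factors, which can disagree at a boundary vertex. Consequently $\sigma^*$ need \emph{not} preserve $\ker B'=H_1(\Sigma';\R)$, so your short exact sequence for $\Sigma'$ is not a sequence of $G$-modules, $(-)^G$ and the five lemma do not apply, and the left vertical arrow is not ``the isomorphism of the boundaryless case'' (that argument identified the Dolbeault action with the permutation action, which used $d_{e''}(\sigma)=1$). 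Concretely, let $\Sigma'$ have two boundary vertices $u_1,u_2$ joined by four parallel edges $a_1,a_2,b_1,b_2$ of lengths $1,2,3,4$, and let $\sigma$ swap $a_1\leftrightarrow a_2$ and $b_1\leftrightarrow b_2$ (harmonic, since every vertex lies in $\del\Sigma'$); then $z'=a_1-b_1$ is a cycle but $\sigma^*z'=\tfrac12 a_2-\tfrac34 b_2$ is not. Switching to the untwisted permutation action does not help either, since the image of $\pi^*$ consists of twisted invariants. The repair is the paper's direct computation: $\sigma^*$-invariance of an edge-constant form forces $c_{e'}=C_{\pi(e')}\,d_{e'}(\pi)$ for constants $C_e$, and the harmonicity identity $d_{v'}(\pi)=\sum_{(e')^-=v',\,e'\mapsto e}d_{e'}(\pi)$ gives $\sum_{e^-=v}C_e=\tfrac1{d_{v'}(\pi)}\sum_{(e')^-=v'}c_{e'}=0$ at each interior $v$, so the form descends; together with your injectivity this finishes $H^{1,0}$ with no exact-sequence bookkeeping. (Equivalently: $\pi^*$ is a bijection from $\R E^+(\Sigma)$ onto the twisted invariants of $\R E^+(\Sigma')$, and $(B'\pi^*z)_{v'}=d_{v'}(\pi)(Bz)_{\pi(v')}$ at interior $v'$, so $\pi^*z$ is a relative cycle if and only if $z$ is.)
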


\begin{proof}
  We may assume that $\Sigma$ is connected.  If $\Sigma$ has no edges then neither does $\Sigma'$, in which case the result is clear.  Hence we assume that $\Sigma$ contains an edge.  By passing to unweightings, we may assume that all edge weights are trivial.

  The pullback homomorphisms $\pi^*\colon\cA^{p,q}(\Sigma,\del\Sigma)\to\cA^{p,q}(\Sigma',\del\Sigma')$ are injective for all $p,q$ because a form on $\Sigma$ can be recovered from its pullback to $\Sigma'$.  We show that~\eqref{eq:graph.cohom.quotient} is an isomorphism in each bidegree separately.

\subsubsection{$H^{0,0}$:} By Lemma~\ref{lem:quotient.in.top.cat}, the map $\pi$ is open, so a connected component of $\Sigma'$ surjects onto $\Sigma$ (the image of such a connected component is open, and is closed because connected components are compact).  It follows that $G$ acts transitively on $\pi_0(\Sigma')$, so $\pi^*$ is an isomorphism on $H^{0,0}$.

\subsubsection{$H^{0,1}$:} The higher cohomology groups of a finite group are torsion, so $W\mapsto W^G$ is an exact functor on the category of $\R$-vector spaces.  Hence we have an exact sequence
  \[ 0 \To H^{0,0}(\Sigma',\del\Sigma')^G \To \cA^{0,0}(\Sigma',\del\Sigma')^G
    \To \cA^{0,1}(\Sigma',\del\Sigma')^G \To H^{0,1}(\Sigma',\del'S')^G \To 0.
  \]
  This gives rise to a homomorphism of short exact sequences
  \[\xymatrix@R=.3in{
      0 \ar[r] &
      {\cA^{0,0}(\Sigma,\del\Sigma)/H^{0,0}(\Sigma,\del\Sigma)} \ar[r] \ar[d] &
      {\cA^{0,1}(\Sigma,\del\Sigma)} \ar[r] \ar[d] &
      {H^{0,1}(\Sigma,\del\Sigma)} \ar[r] \ar[d] & 0 \\
      0 \ar[r] &
      {\cA^{0,0}(\Sigma',\del\Sigma')^G/H^{0,0}(\Sigma',\del\Sigma')^G} \ar[r] &
      {\cA^{0,1}(\Sigma',\del\Sigma')^G} \ar[r] &
      {H^{0,1}(\Sigma',\del\Sigma')^G} \ar[r] & {0\rlap.}
    }\]
  The middle vertical arrow is an injection, and since $H^{0,0}(\Sigma,\del\Sigma)\isom H^{0,0}(\Sigma',\del\Sigma')^G$, the cokernel of the left vertical arrow is the same as the cokernel of $\cA^{0,0}(\Sigma,\del\Sigma)\to\cA^{0,0}(\Sigma',\del\Sigma')^G$.  By the snake lemma, showing that the right vertical arrow is an injection is equivalent to proving that the homomorphism
  \begin{equation}\label{eq:homom.of.cokernels}
    \cA^{0,0}(\Sigma',\del\Sigma')^G/\cA^{0,0}(\Sigma,\del\Sigma)
    \To \cA^{0,1}(\Sigma',\del\Sigma')^G/\cA^{0,1}(\Sigma,\del\Sigma)
  \end{equation}
  of cokernels is an injection.

  Let $g\in\cA^{0,0}(\Sigma',\del\Sigma')^G$, and suppose that $\d''g = \pi^*\omega$ for some $\omega = (h_e\,\d''t_e)\in\cA^{0,1}(\Sigma,\del\Sigma)$.   Since $\Sigma$ is the quotient of $\Sigma'$ in the category of topological spaces, there exists a unique continuous function $f\colon\Sigma\to\R$ such that $f\circ\pi = g$.  We claim that $f$ is smooth.  It is clear that $f$ is smooth on edges, so we must verify the other conditions of Definition~\ref{def:smooth.on.graph}.  If $e'$ is an edge of $\Sigma'$ and $e = \pi(e')$ then
  \[ d_{e'}(\pi)\frac{\d f}{\d t_e}\circ\pi = \frac{\d g}{\d t_{e'}} = h_{e}\circ \pi\,d_{e'}(\pi), \]
  so $\d f/\d t_e = h_e$ for each edge $e$.

  Let $v\in\Sigma\setminus\del\Sigma$.
  \begin{enumerate}
  \item Suppose that $v$ has valency $1$, with adjacent edge $e$.  Since $h_e$ is zero in a neighborhood of $v$, it follows that $f$ is constant in a neighborhood of $v$.
  \item Suppose that $v$ has valency $2$ with outgoing edges $e_1,e_2$.  Then smoothness of $(h_{e_1},-h_{e_2})$ at $v$ implies smoothness of $f$ at $v$.
  \item If $v$ has valency greater than $2$, then
    \[ 0 = \sum_{v=e^-} h_e(v) = \sum_{v=e^-} \frac{\d f}{\d t_e}(v). \]
  \end{enumerate}
  This proves that~\eqref{eq:homom.of.cokernels} is injective.

  Now we show that $H^{0,1}(\Sigma,\del\Sigma)\to H^{0,1}(\Sigma',\del\Sigma')^G$ is surjective.  Fix $\omega' = (g_{e'}\,\d''t_{e'})\in\cA^{0,1}(\Sigma',\del\Sigma')^G$.  Being fixed by the $G$-action means that there exist smooth functions $(f_e)_{e\in E(\Sigma)}$ such that $g_{e'} = f_e\circ\pi\,d_{e'}(\pi)$ for $e'\mapsto e$.   Note that $\omega = (f_e\,\d''t_e)$ need not be a smooth $(0,1)$-form: for instance, it need not be zero in a neighborhood of an interior vertex of valency~1.  However, for a path $\gamma$ in $\Sigma$ it still makes sense to integrate $\omega$ along $\gamma$ as in~\eqref{eq:integral.along.path}.  Choose a vertex $v_0\in\Sigma$ and closed paths $\gamma_1,\ldots,\gamma_g$ generating $\pi_1(\Sigma,v_0)$.  We showed in~\secref{sec:computation-h0-1} that there exists $\eta\in\cA^{0,1}(\Sigma,\del\Sigma)$ such that $\int_{\gamma_i}\eta = \int_{\gamma_i}\omega$ for all $i$.  This implies $\int_{\gamma}\eta = \int_{\gamma}\omega$ for any closed path $\gamma$.

  We claim that $[\omega'] = \pi^*[\eta]$ in $H^{0,1}(\Sigma',\del\Sigma')$.  Let $\gamma' = e_1'\cup\cdots\cup e_n'$ be a path in $\Sigma'$, let $e_i' = \pi(e_i)$, and let $\gamma = \pi(\gamma') = e_1\cup\cdots\cup e_n$.  Using
  \[ g_{e'}\circ t_{e'}(x) = d_{e'}(\pi)\,f_e\circ\pi\circ t_{e'}(x) = d_{e'}(\pi)\,f_e\circ t_e(d_{e'}(\pi)x) \]
  for $e'\mapsto e$, we calculate
  \[\begin{split}
      \int_{\gamma'}\omega' &= \sum_{i=1}^n \int_0^{\ell(e_i')}g_{e_i'}\circ t_{e_i'}(x)\,\d x \\
      &= \sum_{i=1}^n \int_0^{\ell(e_i')}f_{e_i}\circ t_{e_i}(d_{e_i'}(\pi)x)\,d_{e_i'}(\pi)\,\d x \\
      &= \sum_{i=1}^n \int_0^{\ell(e_i)}f_{e_i}\circ t_{e_i}(x)\,\d x = \int_\gamma\omega.
    \end{split}\]
  In particular, if $\gamma'$ is a closed path in $\Sigma'$ then $\gamma$ is a closed path in $\Sigma$, so $\int_{\gamma'}\omega' = \int_\gamma\omega = \int_\gamma\eta$.  The same calculation shows $\int_{\gamma'}\pi^*\eta = \int_\gamma\eta$, so $\int_{\gamma'}\pi^*\eta = \int_{\gamma'}\omega'$ for all closed paths $\gamma'$.  This implies $[\omega'] = [\pi^*\eta]$ as in~\secref{sec:computation-h0-1}.  This concludes the proof that $H^{0,1}(\Sigma,\del\Sigma)\to H^{0,1}(\Sigma',\del\Sigma')^G$ is an isomorphism.

  \subsubsection{$H^{1,0}$:} We must show that if $\omega'\in\cA^{1,0}(\Sigma',\del\Sigma')$ is a smooth form that is constant on edges (see~\secref{sec:computation-h1-0}) and invariant under $G$, then $\omega' = \pi^*\omega$ for $\omega\in\cA^{1,0}(\Sigma,\del\Sigma)$.  Write $\omega' = (c_{e'}\,\d't_{e'})$ for $c_{e'}\in\R$.  By $G$-invariance, there exist numbers $C_e\in\R$ for $e\in E(\Sigma)$ such that $c_{e'} = C_e\,d_{e'}(\pi)$ for $e'\mapsto e$.  We claim that $\omega=(C_e\,\d't_e)$ is in $\cA^{1,0}(\Sigma,\del\Sigma)$.  Since the $C_e$ are constant, all three conditions of Definition~\ref{def:diff.form.graph} are equivalent to showing $\sum_{v=e^-}C_e = 0$ for vertices $v\in\Sigma\setminus\del\Sigma$.  Let $v\in\Sigma\setminus\del\Sigma$, and choose $v'\in\Sigma'$ mapping to $v$; note that $v'\notin\del\Sigma'$.  For $e$ an edge starting at $v$, we have
  \[ d_{v'}(\pi) = \sum_{\substack{(e')^-=v'\\e'\mapsto e}} d_{e'}(\pi) \]
  by definition. Multiplying by $C_e = c_{e'}/d_{e'}(\pi)$ and rewriting gives
  \[ C_e = \frac 1{d_{v'}(\pi)}\sum_{\substack{(e')^-=v'\\e'\mapsto e}}d_{e'}(\pi)\cdot\frac{c_{e'}}{d_{e'}(\pi)}
    = \frac 1{d_{v'}(\pi)}\sum_{\substack{(e')^-=v'\\e'\mapsto e}} c_{e'}.
  \]
  Summing over all edges $e$ starting at $v$ then gives
  \[ \sum_{e^-=v}C_e
    = \frac 1{d_{v'}(\pi)}\sum_{(e')^-=v'} c_{e'} = 0,
  \]
  as required.

\subsubsection{$H^{1,1}$:} By Proposition~\ref{prop:dolbeault.graphs}, we only need to show that the map $\pi^*\colon H^{1,1}(\Sigma)\to H^{1,1}(\Sigma')$ is nonzero when $\del\Sigma=\emptyset$.  This is true because pulling back a bump function on an edge gives a $(1,1)$-form with nonzero integral: see~\secref{sec:computation-h1-1}.
\end{proof}

\subsection{Integration of Pullbacks}
We conclude this section by showing that integration of forms has the expected behavior with respect to pullback.

\begin{lem}\label{lem:pullback.integrate}
  Let $\phi\colon\Sigma'\to\Sigma$ be a harmonic morphism of weighted metric graphs with boundary, and suppose that $\phi$ has a well-defined degree $d(\phi)$ in the sense of Definition~\ref{def:harmonic.degree}.  Then
  \[ \int_{\Sigma'}\phi^*\omega = d(\phi)\int_\Sigma\omega \qquad
    \int_{\del\Sigma'}\phi^*\eta' = d(\phi)\int_{\del\Sigma}\eta' \qquad
    \int_{\del\Sigma'}\phi^*\eta'' = d(\phi)\int_{\del\Sigma}\eta''
  \]
  for all $\omega\in\cA^{1,1}(\Sigma,\del\Sigma),~\eta'\in\cA^{1,0}(\Sigma,\del\Sigma),$ and $\eta''\in\cA^{0,1}(\Sigma,\del\Sigma)$.
\end{lem}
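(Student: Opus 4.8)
The plan is to reduce everything to a change-of-variables computation on each edge, after which the compatibility of degrees does the combinatorial bookkeeping.

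First I would pass to unweightings. By Lemma~\ref{lem:unweighting.forms} and Lemma~\ref{lem:integral.unweighting}, together with the identity $\nu'^*\phi^* = \phi_0^*\nu^*$ recorded in the proof of Lemma~\ref{lem:pullback.harmonic}, and the fact that $d(\phi)=d(\phi_0)$ directly from Definition~\ref{def:harmonic.degree} (both equal $\ell_0(e)\sum_{e'\mapsto e}1/\ell_0(e')$), it suffices to treat the case where $\Sigma$ and $\Sigma'$ have trivial edge weights. Fix compatible subdivisions of $\Sigma$ and $\Sigma'$ so that $\phi$ maps each edge of $\Sigma'$ either homeomorphically onto an edge of $\Sigma$ (respecting orientations and parameterizations as in Definition~\ref{def:morphism}(4)) or onto a vertex of $\Sigma$.

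Next I would prove the $(1,1)$-form identity by direct computation. Write $\omega=(f_e\,\d't_e\d''t_e)$, so that $\phi^*\omega=(f_{\phi(e')}\circ\phi\;d_{e'}(\phi)^2\,\d't_{e'}\d''t_{e'})$ with the coefficient taken to be zero when $e'$ is crushed. For a non-crushed edge $e'$ with $e=\phi(e')$ we have $\phi\circ t_{e'}(x)=t_e(d_{e'}(\phi)x)$, so the substitution $u=d_{e'}(\phi)x$, whose range is $[0,\ell(e)]$ because $d_{e'}(\phi)=\ell(e)/\ell(e')$, gives
\[ \int_0^{\ell(e')} f_e\circ t_e(d_{e'}(\phi)x)\,d_{e'}(\phi)^2\,\d x = d_{e'}(\phi)\int_0^{\ell(e)} f_e\circ t_e(u)\,\d u. \]
Summing over the oriented edges of $\Sigma'$ and regrouping the preimages of each oriented edge $e$ of $\Sigma$ (crushed edges contributing nothing), the coefficient of $\int_0^{\ell(e)}f_e\circ t_e(u)\,\d u$ becomes $\sum_{e'\mapsto e} d_{e'}(\phi)=d_e(\phi)=d(\phi)$ by Definition~\ref{def:harmonic.degree} and the hypothesis that $\phi$ has a well-defined degree; hence $\int_{\Sigma'}\phi^*\omega=d(\phi)\int_\Sigma\omega$.

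Finally, I would deduce the two boundary formulas from the $(1,1)$ case rather than repeating the bookkeeping. Since $\phi^*$ commutes with $\d'$ and $\d''$ (Lemma~\ref{lem:pullback.harmonic}) and Stokes' theorem (Theorem~\ref{thm:stokes}) holds on both graphs, for $\eta'\in\cA^{1,0}(\Sigma,\del\Sigma)$ we get $\int_{\del\Sigma'}\phi^*\eta' = \int_{\Sigma'}\d''(\phi^*\eta') = \int_{\Sigma'}\phi^*(\d''\eta') = d(\phi)\int_\Sigma\d''\eta' = d(\phi)\int_{\del\Sigma}\eta'$, applying the just-proved $(1,1)$ identity to $\d''\eta'$; the argument for $\eta''$ is identical with $\d'$ replacing $\d''$. (Alternatively, one can run the same substitution-and-regrouping argument directly on the formulas in~\eqref{eq:bdy.int.2}, using that $\phi$ respects orientations, so $\phi(e'^{-})=\phi(e')^{-}$ and $\phi(e'^{+})=\phi(e')^{+}$.) The only real subtlety, which I expect to be the main (minor) obstacle, is this orientation and crushed-edge bookkeeping: one must check that summing over oriented edges of $\Sigma'$ and then grouping by image is legitimate, that crushed edges drop out because $d_{e'}(\phi)=0$, and that it is exactly the well-definedness of the degree that makes $\sum_{e'\mapsto e}d_{e'}(\phi)$ independent of $e$. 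None of this is deep, but it is where care is needed.
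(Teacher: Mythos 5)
Your proposal is correct. The reduction to unweightings and the change-of-variables computation for the $(1,1)$ case are essentially identical to the paper's proof: the paper likewise computes $\int_0^{\ell(e')}f_e\circ\phi\circ t_{e'}(x)\,d_{e'}(\phi)^2\,\d x = d_{e'}(\phi)\int_0^{\ell(e)}f_e\circ t_e(x)\,\d x$ and then sums over $e'\mapsto e$ using~\eqref{eq:degree.edge.sum}. Where you genuinely diverge is in the two boundary formulas. The paper handles them by a second direct computation: for $\eta'=(g_e\,\d't_e)$ it evaluates $\sum_{e'\mapsto e}g_{\phi(e')}\circ\phi(e'^-)\,d_{e'}(\phi)=d(\phi)\,g_e(e^-)$ and sums over edges via~\eqref{eq:bdy.int.2}. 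You instead deduce them from the already-proved $(1,1)$ identity by the chain $\int_{\del\Sigma'}\phi^*\eta'=\int_{\Sigma'}\d''\phi^*\eta'=\int_{\Sigma'}\phi^*\d''\eta'=d(\phi)\int_\Sigma\d''\eta'=d(\phi)\int_{\del\Sigma}\eta'$, using Stokes on both graphs and the compatibility of $\phi^*$ with $\d''$ from Lemma~\ref{lem:pullback.harmonic}. This is legitimate and creates no circularity (Theorem~\ref{thm:stokes} and Lemma~\ref{lem:pullback.harmonic} are both established independently of this lemma), and it buys you a shorter argument at the cost of routing a purely combinatorial identity through the integration theory; the paper's direct computation is marginally more self-contained and makes the orientation bookkeeping for boundary integrals explicit, which is the one place your parenthetical remark correctly identifies as needing care.
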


\begin{proof}
  By Lemma~\ref{lem:integral.unweighting} we may replace $\Sigma$ and $\Sigma'$ by their unweightings to assume that all edge weights are trivial.  If $\omega = (f_e\,\d't_e\d''t_e)$ then $\phi^*\omega = (f_{\phi(e')}\circ\phi\,\d_{e'}(\phi)^2\,\d't_{e'}\d''t_{e'})$.  Let $e'$ be an edge of $\Sigma'$.  If $\phi(e')$ is a vertex then $d_{e'}(\phi)=0$; otherwise we let $e=\phi(e')$, and we calculate
  \[\begin{split}
      \int_0^{\ell(e')}f_e\circ\phi\circ t_{e'}(x)\,d_{e'}(\phi)^2\,\d x
      &= d_{e'}(\phi)\int_0^{\ell(e)} f_e\circ t_e(x)\,\d x.
    \end{split}\]
  Summing over all $e'\mapsto e$ and using~\eqref{eq:degree.edge.sum} gives
  \[ \sum_{e'\mapsto e}\int_0^{\ell(e')}f_e\circ\phi\circ t_{e'}(x)\,d_{e'}(\phi)^2\,\d x
    = d(\phi)\int_0^{\ell(e)} f_e\circ t_e(x)\,\d x;
  \]
  summing over all edges $e$ proves $\int_{\Sigma'}\phi^*\omega = d(\phi)\int_\Sigma\omega$.

  If $\eta' = (g_e\,\d't_e)$ then $\phi^*\eta'=(g_{\phi(e')}\circ\phi\,d_{e'}(\phi)\,\d't_{e'})$.  Let $e$ be an edge of $\Sigma$.  We use~\eqref{eq:bdy.int.2} and~\eqref{eq:degree.edge.sum} to compute
  \[ \sum_{e'\mapsto e} g_{\phi(e')}\circ\phi(e'^-)\,d_{e'}(\phi)
    = g_e(e^-)\sum_{e'\mapsto e}d_{e'}(\phi) = d(\phi)\,g_e(e^-).
  \]
  Summing over all edges of $\Sigma$, and noting that $g_{\phi(e')}\circ\phi\,d_{e'}(\phi)=0$ if $e'$ maps to a vertex of $\Sigma$, we have
  \[ \int_{\del\Sigma'}\phi^*\eta'
    = \sum_e d(\phi)\,g_e(e^-) = d(\phi)\int_{\del\Sigma}\eta'. \]
  The proof for $\eta''\in\cA^{0,1}(\Sigma,\del\Sigma)$ is identical.
\end{proof}


\section{Relationship to Lagerberg forms}
\label{Sec:lagerberg-forms}

For an open subset $U\subset\R^n$, we let $\cA^{p,q}(U)$ denote the space of Lagerberg forms on $U$, as in~\cite{chambert_ducros12:forms_courants}.  In this section we show that our differential forms on graphs arise naturally as local pullbacks of Lagerberg forms on $\R^n$ under suitable harmonic tropicalizations.  Throughout we fix a weighted metric graph with boundary $(\Sigma,\del\Sigma)$ as in Section~\ref{Sec:forms-graphs}.

\subsection{$(R,\Gamma)$-Harmonic Tropicalizations}
Recall that we defined harmonic functions in Definition~\ref{def:harmonic.func.graph}.

\begin{defn} \label{def:harmonic tropicalization}
  A \defi{harmonic} (resp.\ \defi{$R$-harmonic}, resp.\ \defi{$(R,\Gamma)$-harmonic}) \defi{tropicalization} of $(\Sigma,\del\Sigma)$ is a function $h = (h_1,\ldots,h_n)\colon\Sigma\to\R^n$, where $h_1,\ldots,h_n$ are harmonic (resp.\ $R$-harmonic, resp.\ $(R,\Gamma)$-harmonic) functions on $(\Sigma,\del\Sigma)$.
\end{defn}

\begin{defn}\label{def:gamma.ratl.pts}
  Let $\Gamma\subset\R_+$ be an additive subgroup containing the lengths of the edges of $\Sigma$.  A point $x$ of $\Sigma$ is called \defi{$\Gamma$-rational} if $x = t_e(y)$ for some edge $e$ and some $y\in\Gamma$.  The set of $\Gamma$-rational points of $\Sigma$ is denoted $\Sigma(\Gamma)$.
\end{defn}

For the rest of this section, we fix an additive subgroup $\Gamma\subset\R_+$ containing the lengths of the edges of $\Sigma$.  Either $\Gamma$ is discrete or it is dense in $\R$.  We will need a dense subgroup in the sequel, so we use the following notation.

\begin{notn}\label{notn:barGamma}
  If $\Gamma$ is discrete, we let $\bar\Gamma$ denote the saturation of $\Gamma$ in $\R_{+}$.  Otherwise, we set $\bar\Gamma = \Gamma$.
\end{notn}

\subsection{Pullbacks Under Harmonic Tropicalizations}
In this subsection, we define pullbacks of Lagerberg forms under harmonic tropicalizations.

\begin{defn}\label{def:pullback.forms}
  Fix a harmonic tropicalization $h = (h_1,\ldots,h_n)\colon\Sigma\to\R^n$ of $(\Sigma,\del\Sigma)$ as in Definition~\ref{def:harmonic tropicalization}, and let $U\subset\R^n$ be an open neighborhood of $h(\Sigma)$.
  \begin{enumerate}
  \item For $g\in\cA^{0,0}(U)$ we define $h^*g = g\circ h$.
  \item For $\eta = \sum_{i=1}^n g_i\,\d'x_i\in\cA^{1,0}(U)$ we define $h^*\eta = (f_e\,\d't_e)$ by
    \[ f_e = \sum_{i=1}^n\frac{\d h_i}{\d t_e}\,g_i\circ h. \]
  \item For $\eta = \sum_{i=1}^n g_i\,\d''x_i\in\cA^{0,1}(U)$ we define $h^*\eta = (f_e\,\d''t_e)$ by
    \[ f_e = \sum_{i=1}^n\frac{\d h_i}{\d t_e}\,g_i\circ h. \]
  \item For $\eta = \sum_{i,j=1}^n g_{ij}\,\d'x_i\wedge\d''x_j\in\cA^{1,1}(U)$ we define $h^*\eta = (f_e\,\d't_e\d''t_e)$ by
    \[ f_e = \sum_{i,j=1}^n\frac{\d h_{i}}{\d t_e}\frac{\d h_j}{\d t_e}\,g_{ij}\circ h. \]
  \end{enumerate}
\end{defn}

\begin{lem}\label{lem:pullbacks.are.smooth}
  For $p,q\in\{0,1\}$, the pullback $h^*$ defines a homomorphism of differential bigraded algebras $\cA^{p,q}(U)\to\cA^{p,q}(\Sigma,\del\Sigma)$.  This factors through an \emph{injective} homomorphism $\cA^{p,q}(h(\Sigma))\to\cA^{p,q}(\Sigma,\del\Sigma)$.
\end{lem}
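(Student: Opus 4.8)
The plan is to verify directly that each $h^*$ lands in the space of smooth forms on $(\Sigma,\del\Sigma)$, then check compatibility with products and differentials, and finally factor through $\cA^{p,q}(h(\Sigma))$ and prove injectivity. For the first and main part, fix $\eta\in\cA^{p,q}(U)$ and set $\omega=h^*\eta$. Since each $h_i$ is harmonic, hence smooth, and $g_i\circ h$ (resp.\ $g_{ij}\circ h$) is smooth on edges because $h$ is smooth on edges, the functions $f_e$ appearing in Definition~\ref{def:pullback.forms} are smooth on each edge; the sign conventions $f_{\bar e}=\pm f_e$ are immediate from $\d h_i/\d t_{\bar e}=-\d h_i/\d t_e$. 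It remains to check the vertex compatibility conditions of Definitions~\ref{def:smooth.on.graph}, \ref{def:diff.form.graph}, \ref{def:diff.11.form.graph} at each interior vertex $v$. The cleanest route is to observe that $h\colon\Sigma\to\R^n$, viewed with $\R^n$ given the structure of an infinite weighted metric graph (trivial weights, empty boundary), is a harmonic map in the sense of Section~\ref{Sec:harmonicity} — this is essentially Remark~\ref{rem:harmonic functions vs morphisms} applied coordinatewise — and that $h^*$ of Definition~\ref{def:pullback.forms} agrees with the pullback $\phi^*$ of Lemma~\ref{lem:pullback.harmonic} composed with the coordinate description of Lagerberg forms on $\R^n$. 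Then smoothness of $h^*\eta$ follows from Lemma~\ref{lem:pullback.harmonic}. If one prefers to avoid the (slightly informal) ``infinite metric graph'' device, one instead checks the conditions by hand exactly as in the proof of Lemma~\ref{lem:pullback.harmonic}: at a valency-$1$ interior vertex the harmonicity of the $h_i$ forces each $h_i$ to be locally constant, so all $f_e$ vanish nearby; at a valency-$2$ interior vertex the higher-order smoothness of $(h_i|_{e_1},h_i|_{e_2})$ combined with the chain rule gives the required gluing; at a vertex of valency $>2$ one sums the slope identity $\sum_{v=e^-}\d h_i/\d t_e(v)=0$ against the appropriate weights.

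Compatibility of $h^*$ with the wedge product and with $\d',\d''$ is a routine computation using the chain rule: for instance $\d'(g\circ h)=\bigl(\sum_i (\d h_i/\d t_e)\,(\partial_i g)\circ h\;\d't_e\bigr)=h^*(\d'g)$, and similarly on forms of higher bidegree; the product compatibility is immediate from the defining formulas. So $h^*\colon\cA^{p,q}(U)\to\cA^{p,q}(\Sigma,\del\Sigma)$ is a homomorphism of bigraded differential algebras.

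For the factorization: if $\eta,\eta'\in\cA^{p,q}(U)$ have the same restriction to $h(\Sigma)$, then the defining formulas for $h^*\eta,h^*\eta'$ only involve the values of the coefficient functions $g_i,g_{ij}$ along $h(\Sigma)$, so $h^*\eta=h^*\eta'$; since $\cA^{p,q}(h(\Sigma))$ is by definition the quotient of $\cA^{p,q}(U)$ by the subsheaf of forms restricting to zero on $h(\Sigma)$ (equivalently, a colimit over open neighborhoods of $h(\Sigma)$), $h^*$ descends to $\cA^{p,q}(h(\Sigma))\to\cA^{p,q}(\Sigma,\del\Sigma)$. Finally, for injectivity: given $[\eta]\in\cA^{p,q}(h(\Sigma))$ with $h^*\eta=0$, one must show $\eta$ restricts to zero on $h(\Sigma)$. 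Since $\Sigma$ is covered by edges and $h^*\eta=0$ means each $f_e=0$, it suffices to show that on the image $h(e)$ of a single edge $e$ the coefficient functions vanish; this is where one uses that $h|_e$ parametrizes a line segment (or point) in $\R^n$ and that at least one slope $\d h_i/\d t_e$ is nonzero unless $h|_e$ is constant, in which case $h(e)$ is a point and ``restricting to zero'' on a point is a weaker condition handled directly. I expect this last injectivity step to be the main obstacle, since it requires a careful bookkeeping of which directional derivatives along $h(\Sigma)$ are detected by the pullback — in particular one should reduce to the statement that a Lagerberg form on $\R^n$ that pulls back to zero under every affine-linear map $\R\to\R^n$ whose image meets $h(\Sigma)$ already vanishes on $h(\Sigma)$, which for a piecewise-linear subset like $h(\Sigma)$ follows from the fact that tangent directions of $h(\Sigma)$ span the relevant tangent data at each point.
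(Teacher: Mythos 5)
Your proposal is correct and follows essentially the same route as the paper: a direct verification of the vertex conditions of Definitions~\ref{def:smooth.on.graph}--\ref{def:diff.11.form.graph} at interior vertices of valency $1$, $2$, and $>2$ (the paper first passes to the unweighting to make all weights trivial), a routine check of compatibility with $\wedge$, $\d'$, $\d''$, and injectivity via the observation that the restriction of $\eta$ to each segment $\sigma=h(e)$ of $h(\Sigma)$ is by definition the pullback under the affine parameterization $p_e=h\circ t_e$, hence is recovered directly from $h^*\eta$ without any further tangent-space bookkeeping. The only caveat is that your preferred ``infinite metric graph'' device does not literally apply for $n\ge 2$ ($\R^n$ is not a metric graph, and Remark~\ref{rem:harmonic functions vs morphisms} covers only a single harmonic function), so the by-hand verification you offer as a fallback --- which is exactly the paper's argument --- is the one to keep.
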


Here we use the space of smooth differential forms $\cA^{p,q}(h(\Sigma))$ on the polytopal complex $h(\Sigma)$ as defined in~\cite{gubler16:forms_currents}.

\begin{proof}
  First we need to show that the pullback of a Lagerberg form is a smooth form on $(\Sigma,\del\Sigma)$.
  We verify only the case $(p,q)=(0,0)$, as the other cases are similar.
  By Lemma~\ref{lem:unweighting.forms}, we may check smoothness on the unweighting $\Sigma_0$, hence we may assume that all weights are $1$.
  Let $g\in\cA^{0,0}(U)$, and let $f = h^*g = g\circ h$.  Let $e$ be an edge of $\Sigma$.  Then
  \[ f\circ t_e(x) = g\circ h\circ t_e(x) = g(a_1x+b_1,\,\ldots,\,a_nx+b_n) \]
  for some $a_i,b_i\in\R$, which shows that $f$ is smooth on edges.  Now we verify the three conditions of Definition~\ref{def:smooth.on.graph}.
  \begin{enumerate}
  \item Let $v\in\Sigma\setminus\del\Sigma$ have valency $1$, and let $e$ be the outgoing edge at $v$.  Then $h$ is constant on $e$, so $f$ is constant in a neighborhood of $v$.
  \item Let $v\in\Sigma\setminus\del\Sigma$ have valency $2$, with outgoing edges $e_1,e_2$.  The second condition of Definition~\ref{def:smooth.on.graph} (as applied to $h$) implies that $h$ is linear on $e_1\cup e_2$, so $(f|_{e_1}, f|_{e_2})$ is smooth at $v$.
  \item Let $v\in\Sigma\setminus\del\Sigma$ have valency greater than $2$, with outgoing edges $e_1,e_2,\ldots,e_r$.  Let $f_i = f\circ t_{e_i}$, and let $x_1,\ldots,x_n$ be the coordinates on $\R^n$.  We compute
  \begin{gather*}
    \sum_{i=1}^r\frac{\d f}{\d t_{e_i}}(v)
    = \sum_{i=1}^r\frac{\d f_i}{\d x}(0)
    = \sum_{i=1}^r\frac{\d(g\circ h\circ t_{e_i})}{\d x}(0)
    = \sum_{i=1}^r\sum_{j=1}^n \frac{\d g}{\d x_j}(h(0))\frac{\d(h_j\circ t_{e_j})}{\d x}(0) \\
    = \sum_{j=1}^n\frac{\d g}{\d x_j}(h(0))\sum_{i=1}^r\frac{\d h_j}{\d t_{e_i}}(v) = 0.
  \end{gather*}
  \end{enumerate}
  Hence $f$ is a smooth function on $\Sigma$.

  We leave it to the reader to verify that $h^*$ respects the differentials $\d'',\d''$ and the wedge product, so that $h^*$ is a homomorphism of differential bigraded algebras.  It remains then to show that $h^*$ factors through an injective homomorphism $\cA^{p,q}(h(\Sigma))\to\cA^{p,q}(\Sigma,\del\Sigma)$. Let $e$ be an edge of $\Sigma$ such that $\sigma = h(e)$ is a line segment. Note that $p_e = h\circ t_e\colon[0,\ell(e)]\to\sigma$ is an affine parameterization of $\sigma$. If $U\subset\R^n$ is an open neighborhood of $h(\Sigma)$ and $\eta\in\cA^{p,q}(U)$, then the restriction $\eta|_\sigma$ of $\eta$ to $\sigma$ is by definition the form $p_e^*\eta$ on $[0,\ell(e)]$.  Any form on $h(\Sigma)$ is by definition a collection of forms $(\eta|_\sigma)$ arising in this way; it is zero if and only if $\eta|_\sigma=0$ for each edge $\sigma$ of $h(\Sigma)$.   Definition~\ref{def:pullback.forms} is set up to be compatible with the pullback of Lagerberg forms with respect to $p_e$, as in~\cite[2.3]{gubler16:forms_currents}.  Applying functoriality of pullbacks of Lagerberg forms with respect to affine maps (using $p_e\inv$), we deduce that one can recover the restriction of $\eta$ to $h(\Sigma)$ from $h^*\eta$, which shows that $h^*\colon\cA^{p,q}(h(\Sigma))\to\cA^{p,q}(\Sigma,\del\Sigma)$ is injective.
\end{proof}

Next we show that the pullback of Lagerberg forms is compatible with with harmonic maps of graphs.  Note that if $\phi\colon\Sigma'\to\Sigma$ is a harmonic map of weighted metric graphs with boundary and $h\colon\Sigma\to\R^n$ is a harmonic tropicalization then $h\circ\phi\colon\Sigma'\to\R^n$ is again a harmonic tropicalization by Lemma~\ref{lem:pullback.harmonic}.

\begin{lem}\label{lem:pullback.lagerberg.harmonic}
  Let $\phi\colon\Sigma'\to\Sigma$ be a harmonic map of weighted metric graphs with boundary, let $h\colon\Sigma\to\R^n$ be a harmonic tropicalization, and let $h' = h\circ\phi\colon\Sigma'\to\R^n$.  Then this triangle commutes for all open neighborhoods $U$ of $h(\Sigma)$ and all $p,q\in\{0,1\}$:
  \[ \xymatrix @C=-.2in{
      {\cA^{p,q}(U)} \ar[rr]^(.45){h'^*} \ar[dr]_{h^*} & & {\cA^{p,q}(\Sigma',\del\Sigma')} \\
      & {\cA^{p,q}(\Sigma,\del\Sigma)\rlap.} \ar[ur]_{\phi^*}
    } \]
\end{lem}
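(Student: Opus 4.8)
The plan is to verify commutativity of the triangle directly on each bidegree, by computing both composites on the edges of $\Sigma'$ and comparing. Since a smooth form on a graph is determined by its restriction to the open edges, it suffices to check equality edge by edge. Fix an edge $e'$ of $\Sigma'$. After subdividing $\Sigma$ and $\Sigma'$, we may assume that $e'$ is either mapped isomorphically by $\phi$ onto an edge $e$ of $\Sigma$, or crushed to a vertex of $\Sigma$; in the latter case both composites vanish on $e'$ by the conventions in Definitions~\ref{def:pullback.forms} and~\ref{lem:pullback.harmonic} (note $h'$ is then constant on $e'$, so $\d h'_i/\d t_{e'} = 0$), so we may assume $\phi(e') = e$ with expansion factor $d_{e'}(\phi)$.

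For the key computation, recall that $h' = h\circ\phi$, so $h'_i = h_i\circ\phi$ and by linearity of $\phi$ on $e'$ we have $\frac{\d h'_i}{\d t_{e'}} = d_{e'}(\phi)\,\frac{\d h_i}{\d t_e}\circ\phi$. Take for instance $\eta = \sum_i g_i\,\d'x_i \in\cA^{1,0}(U)$. On the one hand, $h'^*\eta = (f'_{e'}\,\d't_{e'})$ with $f'_{e'} = \sum_i \frac{\d h'_i}{\d t_{e'}}\,g_i\circ h' = d_{e'}(\phi)\sum_i \bigl(\frac{\d h_i}{\d t_e}\circ\phi\bigr)(g_i\circ h\circ\phi)$. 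On the other hand, $h^*\eta = (f_e\,\d't_e)$ with $f_e = \sum_i \frac{\d h_i}{\d t_e}\,g_i\circ h$, and $\phi^*(h^*\eta)$ has $e'$-component $f_{\phi(e')}\circ\phi\cdot d_{e'}(\phi) = d_{e'}(\phi)\,(f_e\circ\phi)$, which is exactly $f'_{e'}$. The $(0,1)$ case is identical, the $(0,0)$ case is immediate ($g\circ h\circ\phi = g\circ h'$), and the $(1,1)$ case works the same way but produces a factor $d_{e'}(\phi)^2$ matching the definition of $\phi^*$ on $\cA^{1,1}$ via the two slope factors $\frac{\d h_i}{\d t_e}\frac{\d h_j}{\d t_e}$. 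Since each composite lands in $\cA^{p,q}(\Sigma',\del\Sigma')$ by Lemmas~\ref{lem:pullbacks.are.smooth} and~\ref{lem:pullback.harmonic}, and they agree on every open edge, they are equal.

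The only genuine subtlety — and the step I expect to require the most care — is the bookkeeping around contracted edges and subdivisions: one must make sure that the subdivision making $\phi$ edgewise-linear, the subdivision making $h$ linear on edges, and the definition of $h'^*$ are all mutually compatible, and that the "take it to be zero" conventions on both sides of the triangle match up on the contracted edges (and at the isolated vertices, where all positive-degree forms vanish trivially). Everything else is a routine unwinding of the chain rule $\frac{\d h'_i}{\d t_{e'}} = d_{e'}(\phi)\bigl(\frac{\d h_i}{\d t_e}\circ\phi\bigr)$, which I would state once and invoke in each bidegree.
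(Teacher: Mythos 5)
Your proposal is correct and is essentially the paper's argument: the paper's proof simply says the lemma follows from the definitions using the identity $\d(h_i\circ\phi)/\d t_{e'} = d_{e'}(\phi)\,\d h_i/\d t_e$ for $e'\mapsto e$, which is exactly the chain-rule fact you isolate and then unwind in each bidegree. Your extra care with contracted edges and the matching zero conventions is a fine (and harmless) elaboration of what the paper leaves implicit.
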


\begin{proof}
  This follows easily from the definitions, using the fact that if $e'\subset\Sigma'$ is an edge mapping to $e\subset\Sigma$, then $\d(h_i\circ\phi)/\d t_{e'} = d_{e'}(\phi)\,\d h_i/\d t_e$ for a harmonic function $h_i\colon\Sigma\to\R$.
\end{proof}

In particular, if $\nu\colon\Sigma_0\to\Sigma$ is the unweighting of $\Sigma$ and $h\colon\Sigma\to\R^n$ is a harmonic tropicalization, then $h\circ\nu$ is a harmonic tropicalization and $(h\circ\nu)^*=\nu^*h^*$.  See Remark~\ref{rem:unweighting.and.harmonic}.

\subsection{Smooth Forms Are Local Pullbacks of Lagerberg Forms}
Next we show that any smooth form on $\Sigma$ is locally the pullback of a Lagerberg form under a $(\Z,\Gamma)$-harmonic tropicalization.  First we discuss what kind of neighborhoods we will consider when we define ``local'' pullbacks.  We want to allow edge lengths in $\bar\Gamma$ (because it is dense in $\R$), but we still need to make sense of $(\Z,\Gamma)$-harmonic tropicalizations in order to relate to the non-Archimedean situation in Section~\ref{Sec:forms-curves}, which leads to the following ad-hoc definition.

\begin{defn}\label{def:rational.subgraph}
  Let $U\subset\Sigma$ be a subgraph with vertices in $\Sigma(\bar\Gamma)$ (Definition~\ref{def:gamma.ratl.pts}), regarded as a weighted metric graph with boundary as in Definition~\ref{def:subgraph}.  A function $h\colon U\to\R$ is \defi{$(\Z,\Gamma)$-harmonic} on $(U,\del U)$ if it is harmonic with integral slopes, and the following condition holds for each edge $e\subset U$.  Let $e'$ be the edge of $\Sigma$ containing $e$, and let $h'\colon e'\to\R$ be the unique affine-linear function extending $h|_e$.  Then we require that $h'$ take values in $\Gamma$ on the endpoints of $e'$.

  We define a $(\Z,\Gamma)$-harmonic tropicalization of $(U,\del U)$ as an $n$-tuple of $(\Z,\Gamma)$-harmonic functions, as in Definition~\ref{def:harmonic tropicalization}.
\end{defn}

Recall from~\secref{sec:subgraphs} that the pullback $\cA^{p,q}(\Sigma,\del\Sigma)\to\cA^{p,q}(U,\del U)$ under inclusion of a subgraph is called \defi{restriction}.

\begin{prop}\label{prop:local.pullbacks.graphs}
  Let $p,q\in\{0,1\}$, let $\omega\in\cA^{p,q}(\Sigma,\del\Sigma)$, and let $x\in\Sigma$.  Then there is a neighborhood $U\subset\Sigma$ of $x$ which is a subgraph with $\bar\Gamma$-rational vertices, a $(\Z,\Gamma)$-harmonic tropicalization $h\colon U\to\R^n$ of $(U,\del U)$, and a Lagerberg form $\eta\in\cA^{p,q}(\R^n)$, such that $h^*\eta = \omega|_U$.  Moreover, if $x$ is not an interior leaf vertex then we can choose $U$ and $h$ independent of $\omega$.
\end{prop}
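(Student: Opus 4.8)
The plan is to reduce immediately to the unweighting via Lemmas~\ref{lem:unweighting.forms} and~\ref{lem:pullback.lagerberg.harmonic}, so that we may assume all edge weights are $1$; note that the $\bar\Gamma$-rationality and $(\Z,\Gamma)$-harmonicity conditions only refer to the original lengths and so must be tracked carefully when passing back and forth, but the smoothness verifications are weight-free. The argument then proceeds by a case analysis on the point $x\in\Sigma$, building in each case an explicit subgraph neighborhood $U$ and an explicit $(\Z,\Gamma)$-harmonic tropicalization $h\colon U\to\R^n$; the Lagerberg form $\eta$ is then produced by solving for its coefficient functions along the images $h(e)$, using the injectivity of $h^*\colon\cA^{p,q}(h(U))\to\cA^{p,q}(U,\del U)$ from Lemma~\ref{lem:pullbacks.are.smooth} and the fact that smooth Lagerberg forms on $\R^n$ extend smooth forms defined on polytopal subcomplexes (here is where Lemma~\ref{lem:extend.smooth}, credited to Bryant in the acknowledgments, enters).

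First I would treat the generic case: $x$ is an interior point of an edge $e$, or more generally $x$ is a vertex all of whose adjacent edges we can coordinatize coherently. Choose a small subgraph $U$ around $x$ consisting of finitely many edge-germs $e_1,\ldots,e_r$ emanating from $x$, with the new vertices chosen $\bar\Gamma$-rational (possible since $\bar\Gamma$ is dense, or equals $\Gamma$ and already contains the lengths). Take $n=r$ and let $h_i\colon U\to\R$ be the $(\Z,\Gamma)$-harmonic function that is the arclength coordinate along $e_i$ (with integer slope $1$ there, slope $0$ on the other edges, suitably cut off to remain harmonic at the far endpoints, or simply extended affinely on the containing edge of $\Sigma$ so the endpoint values land in $\Gamma$ — this is exactly what Definition~\ref{def:rational.subgraph} demands). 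Then $h$ embeds $U$ as a ``coordinate star'' in $\R^r$, each edge mapping isometrically into a distinct coordinate axis, and the harmonicity balancing condition at $x$ for $\omega$ translates into precisely the balancing condition that a Lagerberg form pulled back from $\R^r$ would impose — so a smooth $\eta\in\cA^{p,q}(\R^r)$ with $h^*\eta=\omega|_U$ exists by extending the coefficient data from the star $h(U)$ to a neighborhood in $\R^r$ (Bryant's lemma) and then to all of $\R^r$ by a bump function.

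The remaining case is when $x$ is an interior leaf vertex (valency~$1$), where we are allowed to let $U$ and $h$ depend on $\omega$: here a harmonic function on $U$ must be constant near $x$, so $h$ collapses a neighborhood of $x$ to a point, and the smooth form $\omega$ is already zero in a neighborhood of $x$ in positive bidegree; in bidegree $(0,0)$ we pull back a constant plus a bump, so we can take $h$ to record the single arclength coordinate on the one adjacent edge and solve for $\eta$ directly. One must also handle valency-$2$ interior vertices, where the smoothness-at-$v$ gluing condition makes the two edge-germs coordinatize by a single $h$ (slope $+1$ on one side, $-1$ on the other, landing in $\Gamma$ at the endpoints of the containing edge of $\Sigma$), reducing to the one-dimensional Lagerberg case on an interval; and boundary vertices and isolated vertices, where there are no conditions on $\omega$ and the construction is trivial (take $U=\{x\}$ or a small star with $h$ arbitrary coherent arclength coordinates).

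I expect the main obstacle to be the final assembly step in the generic case: producing a genuine smooth Lagerberg form $\eta$ on an \emph{open} subset of $\R^n$ whose restriction to the polytopal complex $h(U)$ is the prescribed coefficient data coming from $\omega$. The edges of $h(U)$ meet only at the image of $x$, and the prescribed data on the different edges need not a priori glue to something smooth transverse to the complex — this is exactly the technical content of Lemma~\ref{lem:extend.smooth}, and invoking it correctly (checking that the compatibility hypotheses at $h(x)$ hold, which is where the weighted-balancing condition of Definitions~\ref{def:smooth.on.graph}, \ref{def:diff.form.graph}, \ref{def:diff.11.form.graph} gets used) is the heart of the argument. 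The bookkeeping to keep the tropicalization $(\Z,\Gamma)$-harmonic — integer slopes together with $\Gamma$-valued endpoint values on the \emph{containing} edges of $\Sigma$, not just on $U$ — is a secondary but genuinely fiddly point, handled by the density of $\bar\Gamma$ and the hypothesis that $\Gamma$ contains all the edge lengths.
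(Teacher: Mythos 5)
Your overall strategy (unweight, do a case analysis on $x$, use Lemma~\ref{lem:extend.smooth} to extend coefficient data off the image of $U$) matches the paper's, and you correctly locate the delicate point in the extension step. But there is a genuine error in your main case. At an \emph{interior} vertex $x$ of valency $r\geq 3$, the functions you propose --- $h_i$ with slope $1$ along $e_i$ and slope $0$ along every other edge --- are \emph{not} harmonic at $x$: the sum of outgoing slopes there is $1\neq 0$, violating condition~(3) of Definition~\ref{def:smooth.on.graph}. Consequently $h=(h_1,\ldots,h_r)$ is not a harmonic tropicalization of $(U,\del U)$, which the proposition explicitly requires, and Lemma~\ref{lem:pullbacks.are.smooth} does not apply: pulling back even the constant form $\d'x_1$ along your $h$ yields $(f_e\,\d't_e)$ with $f_{e_1}\equiv 1$ and $f_{e_j}\equiv 0$ otherwise, which fails the balancing condition $\sum_{x=e^-}f_e(x)=0$ and so does not lie in $\cA^{1,0}(U,\del U)$. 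The ``coordinate star'' picture, with each adjacent edge going isometrically to a distinct coordinate axis, is only legitimate at \emph{boundary} vertices, where no harmonicity condition is imposed; you cannot transport it to interior vertices.

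The correct construction at an interior vertex of valency $r+1$ singles out one edge $e_0$ and takes, for $1\leq i\leq r$, the function $h_i$ equal to $t_{e_i}^{-1}$ on $e_i$, to $-t_{e_0}^{-1}$ on $e_0$, and to $0$ on the remaining edges; each $h_i$ then has outgoing slopes $1,-1,0,\ldots,0$ summing to zero, and the image $h(U)\subset\R^r$ is the union of the $r$ coordinate segments together with the segment in the direction $v_0=-(1,\ldots,1)$ --- exactly the configuration for which Lemma~\ref{lem:extend.smooth} is stated. This changes the rest of the argument nontrivially: for $(1,0)$-, $(0,1)$- and $(1,1)$-forms the coefficient data prescribed on the star by $\omega$ does not automatically satisfy the sum-of-directional-derivatives hypothesis of Lemma~\ref{lem:extend.smooth}, and one must insert explicit linear correction terms on an auxiliary edge (and, in the $(1,1)$ case, an extra term $g_2\,\d'x_1\wedge\d''x_2$ to cancel the unwanted contribution of $\d'x_1\wedge\d''x_1$ along $e_0$). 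Your proposal gestures at ``solving for the coefficient functions,'' but as written both the tropicalization itself and the verification of the hypotheses of the extension lemma would fail.
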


We will use the following construction in the proof of Proposition~\ref{prop:local.pullbacks.graphs}.  We are very grateful to Robert Bryant for providing us with the argument.

\begin{lem}\label{lem:extend.smooth}
  Let $n\geq 2$, for $1\leq i\leq n$ let $v_i$ be the $i$th unit coordinate vector in $\R^n$, and let $v_0 = -\sum_{i=1}^n v_i = (-1,-1,\ldots,-1)$.  For $i=0,1,\ldots,n$ let $e_i$ denote the line segment from $0$ to $\ell_iv_i$, where $\ell_0,\ell_1,\ldots,\ell_n>0$.  Let $f_i\colon e_i\to\R$ be smooth functions for $i=0,1,\ldots,n$ (smooth on one side at the endpoints), and suppose that $f_0(0) = f_1(0) = \cdots = f_n(0)$, and that
  \[ \sum_{i=0}^n \frac{\d f_i}{\d v_i}(0) = 0, \]
  where $\d/\d v_i$ denotes the directional derivative.  Then there is a smooth function $f\colon\R^n\to\R$ such that $f|_{e_i} = f_i$.
\end{lem}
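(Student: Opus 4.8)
The plan is to split the extension problem at the origin. First I would build, using Borel's theorem, a single smooth function $F$ on $\R^n$ whose Taylor expansion at $0$ is simultaneously compatible with all of the $f_i$; then $f_i - F|_{e_i}$ is flat at the origin for each $i$, and a partition-of-unity argument along the rays (which are pairwise disjoint away from $0$) produces the required correction term. An alternative first reduction --- subtract a suitable affine function so that each $f_i$ vanishes to second order at $0$, which is possible \emph{exactly} because of the hypothesis $\sum \d f_i/\d v_i(0)=0$ --- is subsumed by the Borel step below.

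For each $i$ let $c_{i,m}$ denote the $m$-th Taylor coefficient at $s=0$ of the one-variable function $s \mapsto f_i(sv_i)$; the hypotheses say precisely that $c_{i,0}$ is independent of $i$ and that $\sum_{i=0}^n c_{i,1} = 0$. The heart of the argument is to produce, for every $m \ge 0$, a homogeneous polynomial $P_m$ of degree $m$ on $\R^n$ with $P_m(v_i) = c_{i,m}$ for all $i = 0, 1, \dots, n$. For $m = 0$ one takes the common constant. For $m = 1$, any linear form $P$ satisfies $\sum_{i=0}^n P(v_i) = P(v_0 + \dots + v_n) = P(0) = 0$, so the $n+1$ interpolation conditions are consistent exactly because $\sum_i c_{i,1} = 0$, and a linear form is pinned down by its values on the basis $v_1, \dots, v_n$. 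For $m \ge 2$ I would check that the $n+1$ evaluation functionals $P \mapsto P(v_i)$ on the space of homogeneous degree-$m$ forms are linearly independent, so that the evaluation map onto $\R^{n+1}$ is surjective: for $i \ge 1$, $P(v_i)$ is the coefficient of $x_i^m$ in $P$, and these $n$ functionals are independent; and the monomial $x_1^{m-1} x_2$ --- available since $n \ge 2$ and $m \ge 2$ --- vanishes at every $v_i$ with $i \ge 1$ but takes the value $(-1)^m \ne 0$ at $v_0 = (-1, \dots, -1)$, which shows the $v_0$-evaluation is independent of the rest.

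By Borel's theorem there is $F \in C^\infty(\R^n)$ whose formal Taylor series at $0$ is $\sum_{m \ge 0} P_m$. Then $s \mapsto F(sv_i)$ has $m$-th Taylor coefficient $P_m(v_i) = c_{i,m}$, so $g_i := f_i - F|_{e_i}$ is smooth on $e_i$ and flat at $0$, and it remains to find a smooth $G$ on $\R^n$ with $G|_{e_i} = g_i$ for all $i$; then $f := F + G$ is the desired function. Since the unit vectors $v_i / |v_i|$ are $n+1$ distinct points of the sphere, one can choose pairwise disjoint open cones $U_0, \dots, U_n \subset \R^n \setminus \{0\}$ (with disjoint closures away from $0$) such that $e_i \setminus \{0\} \subset U_i$, the orthogonal projection $r_i \colon U_i \to \R v_i$ onto the line through $e_i$ (which restricts to the identity on $e_i$ and satisfies $|r_i(x)| \le |x|$), and a cutoff $\chi_i$ with $\{\chi_i \ne 0\} \subset U_i$ and $\chi_i \equiv 1$ on a neighborhood of $e_i \setminus \{0\}$. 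After extending each $g_i$ to a smooth function on $\R v_i$ that is still flat at $0$, the product $\chi_i \cdot (g_i \circ r_i)$, extended by zero, is smooth on all of $\R^n$ --- flatness of $g_i$ is what makes it smooth at the origin, where $\chi_i$ need not vanish --- and $G := \sum_{i=0}^n \chi_i (g_i \circ r_i)$ then satisfies $G|_{e_j} = g_j$, since $\mathrm{supp}\,\chi_i$ misses $e_j \setminus \{0\}$ for $i \ne j$ and $g_j(0) = 0$.

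The main obstacle is the degree-$\ge 2$ interpolation step, i.e.\ showing that the $n+1$ points $v_0, \dots, v_n$ impose independent conditions on homogeneous forms of every degree; this is exactly where $n \ge 2$ enters, and the statement is genuinely false for $n = 1$ (two rays on a line), where all higher one-sided derivatives must match. The remaining ingredients --- Borel's theorem and the bump-function gluing of flat data along rays disjoint away from the origin --- are routine, although one should take care that flatness of the $g_i$ really does rescue smoothness of $G$ at the origin.
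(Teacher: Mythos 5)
Your proof is correct, and its first half coincides exactly with the paper's: interpolate the one-variable Taylor coefficients $c_{i,m}$ by homogeneous forms $P_m$ (degree $0$ and $1$ forced, degree $\geq 2$ by surjectivity of evaluation at $v_0,\ldots,v_n$), invoke Borel's theorem, and subtract to reduce to the case where every $f_i$ is flat at the origin. In fact you supply a detail the paper only asserts (``this is possible because $n,j\geq 2$''): your observation that $x_1^{m-1}x_2$ kills $v_1,\ldots,v_n$ but not $v_0$ is a clean proof that the $n+1$ evaluation functionals are independent in degree $m\geq 2$. Where you diverge is the final gluing of the flat data. The paper multiplies by explicit polynomial weights $Q_i=\prod_{j\neq i}q_j$, where $q_j$ is a quadratic vanishing exactly on the line $\R v_j$; since $Q_i$ vanishes only to \emph{finite} order at $0$ along $\R v_i$, the flat function $f_i/Q_i$ extends smoothly to the line and then to $\R^n$ via projection, and $f=\sum Q_ig_i$ restricts correctly because $Q_i$ annihilates the other lines. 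You instead use disjoint open cones and $0$-homogeneous cutoffs $\chi_i$ together with the orthogonal projections $r_i$. Both work; the one point you must (and do) flag in your version is that $\chi_i$ is not smooth at the origin, so smoothness of $\chi_i\cdot(g_i\circ r_i)$ there rests on the derivatives of $\chi_i$ being $O(|x|^{-k})$ while $g_i\circ r_i$ is flat --- the standard estimate, but it should be stated. The paper's $Q_i$ trick buys globally smooth weights with no cutoff at all, at the cost of the flat-division step; your cone construction avoids division and is perhaps more transparent geometrically, at the cost of the homogeneity estimate at the origin. Either route is acceptable.
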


\begin{proof}
  First we construct a formal power series to serve as the Taylor expansion at the origin of such a function $f$.  To that end, we write the Taylor series for $f_i(tv_i)$ as $F_i(t) = \sum_{j\geq 1} \alpha_{i,j}t^j$.  Define $p_0 = f_0(0)$ and $p_1 = \sum_{i=1}^n\alpha_{i,1} x_i$, and note that
  \[ p_1(v_0) = p_1(-1,-1,\ldots,-1) = -\sum_{i=1}^n\frac{\d f_i}{\d v_i}(0) = \frac{\d f_0}{\d v_0}(0) = \alpha_{0,1}. \]
  For $j\geq 2$ choose a homogeneous polynomial $p_j$ in $n$ variables satisfying $p_j(v_i) = \alpha_{i,j}$ for $i=0,1,\ldots,n$; this is possible because $n,j\geq 2$.  Let $F = \sum_{n\geq 0} p_n$.  By construction, $F(tv_i) = F_i(t)$ for all $i=0,1,\ldots,n$.

  By a theorem of Borel~\cite[Theorem~1.5.4]{narasimh85:analysis_real_complex_manifold}, there exists a smooth function $g$ on $\R^n$ whose Taylor expansion at the origin is $F$.  Replacing $f_i$ by $f_i-g$, we may assume that all $f_i$ vanish to infinite order at $0$.  By a similar argument, one can extend $f_i$ to a smooth function on $\R v_i$.

  For $i\geq 1$ define $q_i(x_1,\ldots,x_n) = \sum_{j\neq i}x_j^2$, and let $q_0(x_1,\ldots,x_n) = \sum_{i=2}^n (x_i-x_1)^2$.  Then $q_i$ vanishes on $\R v_i$, and is positive elsewhere.  The function $Q_i = \prod_{j\neq i} q_j$ vanishes on each line $\R v_j$ for $j\neq i$, but only vanishes to finite order at the origin on $\R v_i$, and is positive elsewhere on $\R v_i$.  It follows that $f_i/Q_i$ extends to a smooth function on all of $\R v_i$, since $f_i$ vanishes to infinite order at $0$.  Therefore, $f_i/Q_i$ extends to a smooth function $g_i$ on all of $\R^n$ (compose with orthogonal projection onto the line).  Now consider the sum
  \[ f = Q_0 g_0 + Q_1 g_1 + \cdots + Q_n g_n. \]
  This is a smooth function on $\R^n$ that restricts to $f_i$ on $\R v_i$.
\end{proof}

Contained in the proof of Lemma~\ref{lem:extend.smooth} is the following useful fact:

\begin{lem}\label{lem:extend.smooth.1d}
  Let $f\colon[0,1]\to\R$ be a smooth function, smooth on one side at the endpoints.  Then $f$ is the restriction of a smooth function on $\R$.
\end{lem}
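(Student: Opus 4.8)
The plan is to reduce to the case where $f$ vanishes to infinite order at both endpoints and then extend by zero; this is exactly the type of argument already used in the proof of Lemma~\ref{lem:extend.smooth}.

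First I would remove the Taylor expansions of $f$ at the two endpoints. Since $f$ is smooth on one side at $0$ and at $1$, it has well-defined one-sided derivatives $f^{(k)}(0)$ and $f^{(k)}(1)$ for all $k\geq 0$. By Borel's theorem~\cite[Theorem~1.5.4]{narasimh85:analysis_real_complex_manifold} there are smooth functions $g_0,g_1$ on $\R$ with $g_0^{(k)}(0) = f^{(k)}(0)$ and $g_1^{(k)}(1) = f^{(k)}(1)$ for all $k$. Picking a smooth $\chi\colon\R\to\R$ that is identically $1$ near $0$ and identically $0$ near $1$, the function $h = \chi g_0 + (1-\chi)g_1$ is smooth on $\R$, equals $g_0$ near $0$ and $g_1$ near $1$, and hence agrees with $f$ to infinite order at both endpoints. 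Replacing $f$ by $f - h|_{[0,1]}$, it suffices to treat the case $f^{(k)}(0) = f^{(k)}(1) = 0$ for all $k$.

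In that case I would set $\tilde f\colon\R\to\R$ equal to $f$ on $[0,1]$ and to $0$ elsewhere, and show by induction on $k$ that $\tilde f$ is $k$-times continuously differentiable with $\tilde f^{(k)}$ equal to the extension by zero of $f^{(k)}$. The base case is clear since $f(0) = f(1) = 0$. For the inductive step, the only points at which differentiability of $\tilde f^{(k)}$ is not immediate are $0$ and $1$; at $0$ the left derivative is $0$, while the right difference quotient equals $f^{(k)}(s)/s = f^{(k+1)}(\xi_s)$ for some $\xi_s\in(0,s)$ by the mean value theorem, which tends to $f^{(k+1)}(0) = 0$ as $s\to 0^+$, and the situation at $1$ is identical. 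Thus $\tilde f^{(k)}$ is differentiable with derivative the extension by zero of $f^{(k+1)}$, which is continuous, completing the induction. Then $\tilde f$ (plus the function $h$ from the previous step, in the unreduced situation) is the desired smooth extension.

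There is no serious obstacle here; the only points requiring a moment's care are the elementary mean-value estimate in the last paragraph and keeping the two endpoints separate via the cutoff $\chi$. One could alternatively invoke Seeley's extension theorem, but the argument above is elementary and at exactly the level of generality needed for Lemma~\ref{lem:extend.smooth}.
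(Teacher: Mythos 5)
Your proposal is correct and follows essentially the same route as the paper, which deduces this lemma from the argument inside Lemma~\ref{lem:extend.smooth}: kill the Taylor expansions at the endpoints via Borel's theorem and then extend the resulting flat function by zero. Your write-up just makes explicit (via the cutoff $\chi$ and the mean-value-theorem induction) the steps the paper leaves as ``by a similar argument.''
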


\subsubsection{Proof of Proposition~\ref{prop:local.pullbacks.graphs}, unweighting step}
Lemmas~\ref{lem:unweighting.forms} and~\ref{lem:pullback.lagerberg.harmonic} allow us to pass to the unweighting $\Sigma_0$ of $\Sigma$ in our construction, with the following caveat: if $h\colon\Sigma_0\to\R$ is a $(\Z,\Gamma)$-harmonic function, then the slopes of $h$ regarded as a function on $\Sigma$ are only rational numbers (the slope on $e$ is divided by $w(e)$).  Thus the following argument is needed.

Let $N$ be the least common multiple of the weights of the edges of $\Sigma$.
Suppose that $h_1,\ldots,h_n\colon\Sigma\to\R$ are harmonic functions with slopes in $\frac 1N\Z$, taking values in $\frac1N\Gamma$ on the vertices.  Let $h = (h_1,\ldots,h_n)\colon\Sigma\to\R^n$, let $\omega\in\cA^{p,q}(\R^n)$, and let $\eta=h^*\omega$.  The harmonic function $Nh\colon\Sigma\to\R^n$ (composition of $h$ with multiplication by $N$ on $\R^n$) is $(\Z,\Gamma)$-harmonic.  We have
\[ (Nh)^*\left[\frac 1N\right]^*\omega = h^*[N]^*\left[\frac 1N\right]^*\omega = h^*\omega = \eta, \]
so there also exists a $(\Z,\Gamma)$-harmonic tropicalization $Nh$ and a Lagerberg form $[1/N]^*\omega$ pulling back to $\eta$.  Therefore, we may pass to the unweighting of $\Sigma$ and replace $\Gamma$ by $\frac1N\Gamma$ to assume that all edge weights of $\Sigma$ are $1$.

\subsubsection{Proof of Proposition~\ref{prop:local.pullbacks.graphs} for $\cA^{0,0}$}
\label{sec:pullback.A00}
  Let $f\in\cA^{0,0}(\Sigma,\del\Sigma)$, and fix $x\in\Sigma$.  We treat several cases:
  \begin{enumerate}
  \item If $x$ is contained in the interior of an edge $e$, then there exists a neighborhood $U$ consisting of a segment contained in the interior of $e$ with $\bar\Gamma$-rational endpoints; then $\del U$ consists of the endpoints of $U$.  In this case, $h = t_e\inv|_U\colon U\to\R$ is a $(\Z,\Gamma)$-harmonic tropicalization, and $f|_U = g\circ h$, where $g\colon\R\to\R$ is a smooth function extending $f\circ t_e$ (Lemma~\ref{lem:extend.smooth.1d}).
  \item If $x$ is an interior vertex of valency $2$ then we may remove $x$ from the vertex set of $\Sigma$ without affecting $\cA^{0,0}(\Sigma,\del\Sigma)$, and proceed as in~(1).
  \item If $x$ is an interior leaf vertex then $f$ is constant in a neighborhood of $x$; choosing a small enough neighborhood $U$ with $\bar\Gamma$-rational endpoints, we have $f = g\circ h$ for $h\colon U\to\Gamma\subset\R$ and $g\colon\R\to\R$ both constant.
  \item Suppose that $x$ is a boundary vertex with outgoing edges $e_1,\ldots,e_r$.  Shrinking the edges if necessary (with edge lengths still in $\bar\Gamma$), we can choose $U = e_1\cup\cdots\cup e_r$ such that $\del U = \{x,e_1^+,\ldots,e_r^+\}$.  For $i=1,\ldots,r$ define $h_i\colon U\to\R$ to be $h_i = t_{e_i}\inv\colon e_i\to\R$ on $e_i$ and $0$ on $U\setminus e_i$.  Each $h_i$ is a $(\Z,\Gamma)$-harmonic function on $(U,\del U)$; set $h = (h_1,\ldots,h_r)\colon U\to\R^r$.  Let $g_i\colon\R\to\R$ be a smooth function extending $f\circ t_{e_i}$ (Lemma~\ref{lem:extend.smooth.1d}), and set $g(x_1,\ldots,x_r) = \sum_{i=1}^r g_i(x_i)$.  Then $f = g\circ h$.
  \item Finally, suppose that $x$ is an interior vertex of valency at least $3$, with outgoing edges $e_0,e_1,\ldots,e_r$.  Shrinking the edges if necessary (with edge lengths still in $\bar\Gamma$), we can choose $U = e_0\cup e_1\cup\cdots\cup e_r$ such that $\del U = \{e_0^+,e_1^+,\ldots,e_r^+\}$.  For $1\leq i\leq r$, define a $(\Z,\Gamma)$-harmonic function $h_i\colon U\to\R$ to be $t_{e_i}\inv$ on $e_i$, $-t_{e_0}\inv$ on $e_0$, and $0$ on $e_j$ for $j\notin\{0,i\}$.  Let $h = (h_1,\ldots,h_r)\colon U\to\R^r$.  For $i=1,\ldots,r$, this function maps $e_i$ to the interval $[0,\ell(e_i)]$ in the $i$th coordinate axis, and it maps $e_0$ to the line segment from $0$ to $-\ell(e_0)\,(1,1,\ldots,1)$.  See Figure~\ref{fig:image.of.h}.  Noting that $h$ is a homeomorphism from $U$ onto its image, we define $g\colon h(U)\to\R$ by $g = f\circ h\inv$.  This function is smooth on each edge $h(e_i)$, and Condition~(3) of Definition~\ref{def:smooth.on.graph} implies that the sum of the directional derivatives of $g$ at $0$ in the directions of the edges, is equal to $0$.  By Lemma~\ref{lem:extend.smooth}, we can extend $g$ to a smooth function on $\R^r$; then $f = g\circ h$.
  \end{enumerate}

  \begin{figure}[ht]\label{fig:image.of.h}
    \centering
    \begin{tikzpicture}[every node/.style={inner sep=1pt}]
      \point at (0, 0);
      \draw[thick] (0, 0) to["$h(e_1)$"] (2.2, 0);
      \draw[thick] (0, 0) to["$h(e_2)$"] (0, 2);
      \draw[thick] (0, 0) to["$h(e_0)$"] (-1.3, -1.3);
    \end{tikzpicture}
    \caption{The image of $h$.}
  \end{figure}
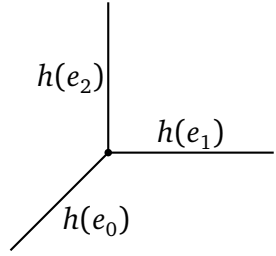

  Note that we only made reference to the particular form $\omega$ in case~(3); the same will be true for forms in $\cA^{p,q}(\Sigma,\del\Sigma)$ for all $p,q\in\{0,1\}$.

\subsubsection{Proof of Proposition~\ref{prop:local.pullbacks.graphs} for $\cA^{1,0}$ and $\cA^{0,1}$}
\label{sec:pullback.A10}
  Let $\omega = (f_e\,\d' t_e)\in\cA^{1,0}(\Sigma,\del\Sigma)$ and let $x\in\Sigma$.  Cases~(1)--(4) of~\secref{sec:pullback.A00} are handled much like the $\cA^{0,0}$ situation, so we assume that we are in case~(5).  We use the same notations as in case~(5) of~\secref{sec:pullback.A00}, and for brevity we set $f_i = f_{e_i}$.  Define $g_1\colon h(U)\to\R$ by
  \[
    g_1(x_1,\ldots,x_r) =
    \begin{cases}
      -f_0\circ h\inv(x_1,\ldots,x_r) + f_0(v) + f_1(v) & \text{on } h(e_0) \\
      f_1\circ h_1\inv(x_1) & \text{on } h(e_1) \\
      \bigl(\frac{\d f_0}{\d t_{e_0}}(v) - \frac{\d f_1}{\d t_{e_1}}(v)\bigr)\,x_2 + f_1(v) & \text{on } h(e_2) \\
      f_1(v) & \text{on } h(e_i),\; i > 2.
    \end{cases}
  \]
  Informally, this restricts to $-f_0$ on $h(e_0)$ up to an additive constant; it restricts to $f_1$ on $h(e_1)$; it is linear on $h(e_2)$ with slope designed to satisfy the balancing condition of Lemma~\ref{lem:extend.smooth}; and it is constant on other edges.  By Lemma~\ref{lem:extend.smooth}, $g_1$ extends to a smooth function on $\R^n$, which we also denote by $g_1$.

  For $i > 1$ we define $g_i\colon h(U)\to\R$ as follows:
  \[
    g_i(x_1,\ldots,x_r) =
    \begin{cases}
      -\frac{\d f_i}{\d t_{e_i}}(v)\,x_1 + f_i(v) & \text{on } h(e_1) \\
      f_i\circ h_i\inv(x_i) & \text{on } h(e_i) \\
      f_i(v) & \text{elsewhere.}
    \end{cases}
  \]
  As above, $g_i$ extends to a smooth function on $\R^n$, which we again denote by $g_i$.

  Define
  \[ \eta = \sum_{i=1}^r g_i\,\d'x_i \in \cA^{1,0}(\R^n). \]
  The pullback $h^*\d'x_i$ is equal to $\d't_{e_i}$ on $e_i$, is equal to $-\d't_{e_0}$ on $e_0$, and is zero on all other edges.  It follows that $h^*\eta$ is equal to $f_i\d't_{e_i}$ on $e_i$ for $i\geq 1$, and is equal to
  \[ \bigl( f_0 - f_0(v) - f_1(v) - \cdots - f_r(v) \bigr)\,\d t_{e_0} = f_0\,\d t_{e_0} \]
  on $e_0$.  Therefore, we have $h^*\eta = \omega$.

  The proof for $\cA^{0,1}$ is identical.

\subsubsection{Proof of Proposition~\ref{prop:local.pullbacks.graphs} for $\cA^{1,1}$}
\label{sec:pullback.A11}
  Let $\omega\in\cA^{1,1}(\Sigma,\del\Sigma)$ and let $x\in\Sigma$.  Cases~(1)--(4) of~\secref{sec:pullback.A00} are handled much like the $\cA^{0,0}$ situation, so we assume that we are in case~(5).  We use the same notations as in case~(5) of~\secref{sec:pullback.A00}.  By linearity, we may assume that $\omega = (f_{e_i}\d't_{e_i}\d''t_{e_i})$, where $f_{e_i} = 0$ unless $i=1$; we set $f = f_{e_1}$.  Define $g_1\colon h(U)\to\R$ by
  \[g_1(x_1,\ldots,x_r) =
    \begin{cases}
      f\circ h_1\inv(x_1) & \text{on } h(e_1) \\
      -\frac{\d f}{\d t_{e_1}}(v)\,x_2 + f(v) & \text{on } h(e_2) \\
      f(v) & \text{elsewhere}.
    \end{cases}
  \]
  Informally, this is the function that restricts to $f$ on $h(e_1)$, is linear on $h(e_2)$ with slope designed to satisfy the balancing condition of Lemma~\ref{lem:extend.smooth}, and is constant on other edges.  By Lemma~\ref{lem:extend.smooth}, $g_1$ extends to a smooth function on $\R^n$, which we also denote by $g_1$.  Let $g_2\colon\R^n\to\R$ be the constant function $-f(v)$, and define
  \[\eta = g_1\d'x_1\wedge\d''x_1 + g_2\d'x_1\wedge\d''x_2 \in\cA^{1,1}(\R^n). \]
  The pullback $h^*\d'x_1\wedge\d''x_1$ is equal to $\d't_{e_1}\d''t_{e_1}$ on $e_1$, is equal to $\d't_{e_0}\d''t_{e_0}$ on $e_0$, and is zero on all other edges; the pullback $h^*\d'x_1\wedge\d''x_2$ is equal to $\d't_{e_0}\d''t_{e_0}$ on $e_0$, and is zero on all other edges.  It follows that $h^*\eta$ is equal to $f\d't_{e_1}\d''t_{e_1}$ on $e_1$, is $(f(v)-f(v))\d't_{e_0}\d''t_{e_0} = 0$ on $e_0$, and is zero on all other edges as well.  Therefore we have $h^*\eta = \omega$.

  This concludes the proof of Proposition~\ref{prop:local.pullbacks.graphs}.\qed

\subsection{Integration of Pullbacks}\label{sec:integrate-pullback}
In this subsection, we prove that integration of Lagerberg forms and integration in the sense of~\secref{sec:integration} are compatible under pullbacks.  In order to do this, we need to define the image of a graph under a $\Z$-harmonic tropicalization $h\colon\Sigma\to\R^n$ as a tropical cycle, i.e., we need a notion of \emph{multiplicities}.

Let $h = (h_1,\ldots,h_n)\colon\Sigma\to\R^n$ be a $\Z$-harmonic tropicalization.
Note that $h(\Sigma)$ is a rational polytopal complex of dimension (at most) one.  Choose a subdivision of $h(\Sigma)$ and a subdivision of $\Sigma$ such that the inverse image of each edge of $h(\Sigma)$ is a union of edges of $\Sigma$, and the image of each edge of $\Sigma$ is an edge or a vertex of $h(\Sigma)$.

\begin{defn}\label{def:trop.graph.mult}
  The \defi{multiplicity} of an edge $e\subset\Sigma$ is
  \[ m_e = w(e)\gcd(a_1,\ldots,a_n), \]
  where $a_i$ is the slope of $h_i$ on $e$; we set $m_e=0$ if $h$ is constant on $e$.  We define the \emph{multiplicity} $m_\sigma$ of an edge $\sigma\subset h(\Sigma)$ as $\sum_{e\mapsto\sigma} m_e$.

This definition of multiplicity makes $h(\Sigma)$ into a weighted polytopal complex, which we call $\Trop_h(\Sigma)$, or $\Trop(\Sigma)$ if there is no confusion.
\end{defn}

\begin{rem}\label{rem:Trop.unweighting}
  Let $h\colon\Sigma\to\R^n$ be a $\Z$-harmonic tropicalization, and let $\nu\colon\Sigma_0\to\Sigma$ be the unweighting.  Let $e$ be an edge of $\Sigma$, and let $e_0$ be the associated edge in $\Sigma_0$.  Then $h_0 = h\circ\nu\colon\Sigma_0\to\R^n$ is a $\Z$-harmonic tropicalization, and $m_{e_0} = m_e$, so $\Trop(\Sigma) = \Trop(\Sigma_0)$ as weighted polytopal complexes.
\end{rem}

\begin{rem}\label{rem:trop.and.degree}
  More generally, let $\phi\colon\Sigma'\to\Sigma$ be a harmonic morphism of weighted metric graphs with boundary.  Suppose that $\phi$ has a well-defined positive degree $d(\phi)>0$ in the sense of Definition~\ref{def:harmonic.degree}, and that the expansion factor $d_{e'}(\phi)$ is an integer for each edge $e'\subset\Sigma'$.  Let $h\colon\Sigma\to\R^n$ be a $\Z$-harmonic tropicalization, and let $h' = h\circ\phi\colon\Sigma'\to\R^n$.  Then $h'$ is a $\Z$-harmonic tropicalization, and $\Trop_{h'}(\Sigma') = d(\phi)\Trop_h(\Sigma)$ as weighted polytopal complexes.
\end{rem}

The following Proposition is proved in the same way as~\cite[Theorem~5.14]{baker_payne_rabinoff16:tropical_curves}; alternatively, it follows from the proof of Proposition~\ref{prop:compat.integration}(2) below.  It implies that the weights on $h(\Sigma)$ are insensitive to further subdivision.

\begin{prop}\label{prop:trop.graph.balanced}
  With the notation in Definition~\ref{def:trop.graph.mult}, the weighted polytopal complex $\Trop(\Sigma)$ is balanced outside the image of $\del\Sigma$.
\end{prop}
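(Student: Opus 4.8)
The plan is to verify the balancing condition of the (at most) one-dimensional weighted polytopal complex $\Trop(\Sigma)=h(\Sigma)$ at each $0$-cell $P$ with $P\notin h(\del\Sigma)$ by a direct computation, reducing it to the harmonicity of the coordinate functions $h_1,\ldots,h_n$. First I would fix compatible subdivisions of $\Sigma$ and of $h(\Sigma)$ so that $P$ is a vertex of $h(\Sigma)$ and every edge of $\Sigma$ is either crushed to a vertex of $h(\Sigma)$ or mapped homeomorphically onto an edge of $h(\Sigma)$; edges of the first kind have multiplicity $0$ and contribute nothing. Since $P\notin h(\del\Sigma)$, every vertex $v'$ of $\Sigma$ with $h(v')=P$ lies in the interior $\Sigma\setminus\del\Sigma$, and $h\inv(P)$ is the union of these vertices with finitely many crushed edges.

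The computation rests on one identity. Let $v'$ be a vertex of $\Sigma$ over $P$, let $e'$ be an edge with $(e')^-=v'$ on which $h$ is non-constant, and write $a_i=\frac{\d h_i}{\d t_{e'}}(v')\in\Z$ for the (integer, since $h$ is $\Z$-harmonic) slopes. Then $\sigma\coloneq h(e')$ is an edge of $h(\Sigma)$ incident to $P$, its primitive integer direction vector $u_{\sigma,P}$ pointing away from $P$ equals $(a_1,\ldots,a_n)/\gcd(a_1,\ldots,a_n)$, and hence, since $m_{e'}=w(e')\gcd(a_1,\ldots,a_n)$,
\[
  m_{e'}\, u_{\sigma,P} \;=\; w(e')\,(a_1,\ldots,a_n)
  \;=\; \Bigl( w(e')\tfrac{\d h_1}{\d t_{e'}}(v'),\ \ldots,\ w(e')\tfrac{\d h_n}{\d t_{e'}}(v') \Bigr).
\]
Summing over all edges $e'$ issuing from $v'$ and invoking the harmonicity of each $h_i$ at the interior vertex $v'$ — which in every case yields $\sum_{(e')^-=v'} w(e')\,\frac{\d h_i}{\d t_{e'}}(v')=0$ (directly when the valency is $>2$; from the $n=1$ case of the valency-$2$ smoothness condition when the valency is $2$; and trivially at an interior leaf, where $h_i$ is locally constant) — gives $\sum_{(e')^-=v'} m_{e'}\, u_{h(e'),P}=0$ in $\R^n$, where the sum may be restricted to the edges on which $h$ is non-constant.

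It remains to sum this vanishing over all vertices $v'$ of $\Sigma$ with $h(v')=P$. For each edge $\sigma$ of $h(\Sigma)$ incident to $P$, the edges $e'$ with $h(e')=\sigma$ occurring in the double sum run exactly once over all $e'\mapsto\sigma$, each oriented away from its (unique) endpoint lying over $P$; since $m_\sigma=\sum_{e'\mapsto\sigma}m_{e'}$ by Definition~\ref{def:trop.graph.mult}, the double sum collapses to $\sum_{\sigma\ni P} m_\sigma\, u_{\sigma,P}=0$, which is precisely the balancing condition at $P$. (Alternatively one may cite~\cite[Theorem~5.14]{baker_payne_rabinoff16:tropical_curves}, whose proof is the unweighted, boundary-free analogue of this argument, or deduce the statement from the proof of Proposition~\ref{prop:compat.integration}(2).) The hard part is purely organizational: arranging the subdivisions so that no edge of $\Sigma$ straddles $P$, handling valency-$2$ vertices of $\Sigma$ over $P$ via the first-order smoothness condition rather than the balancing condition, and checking that the passage from edges of $\Sigma$ incident to the fibre $h\inv(P)$ to edges of $h(\Sigma)$ incident to $P$ reproduces precisely the multiplicities $m_\sigma$.
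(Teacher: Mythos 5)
Your argument is correct and is precisely the one the paper intends: the paper does not write out a proof but states that the result ``is proved in the same way as~\cite[Theorem~5.14]{baker_payne_rabinoff16:tropical_curves}'' (or follows from the proof of Proposition~\ref{prop:compat.integration}(2)), and your computation is exactly that argument adapted to edge weights and boundary, with the valency-$1$ and valency-$2$ cases of Definition~\ref{def:smooth.on.graph} correctly reduced to the weighted sum-of-slopes identity. No gaps.
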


Now we prove compatibility of the integration theories.  We use integration and boundary integrals for weighted polytopal complexes from~\cite{gubler16:forms_currents}.

\begin{prop}\label{prop:compat.integration}
  Let $h\colon\Sigma\to\R^n$ be a $\Z$-harmonic tropicalization and let $U\subset\R^n$ be an open neighborhood of $h(\Sigma)$.
  \begin{enumerate}
  \item For $\eta\in\cA^{1,1}(U)$ we have
    \[ \int_{\Trop(\Sigma)}\eta = \int_\Sigma h^*\eta. \]
  \item For $\eta\in\cA^{1,0}(U)$ we have
    \[ \int_{\del\Trop(\Sigma)}\eta = \int_{\del\Sigma} h^*\eta, \]
    and likewise for $\eta\in\cA^{0,1}(U)$.
  \end{enumerate}
\end{prop}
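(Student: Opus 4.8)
The plan is to reduce everything to a computation on a single edge and then sum. First I would use Lemma~\ref{lem:integral.unweighting} together with Remark~\ref{rem:Trop.unweighting} to pass to the unweighting, so that all edge weights of $\Sigma$ are $1$; this is legitimate since $\int_{\Sigma_0}\nu^*(h^*\eta) = \int_\Sigma h^*\eta$ by Lemma~\ref{lem:integral.unweighting}, $(h\circ\nu)^* = \nu^*h^*$ by Lemma~\ref{lem:pullback.lagerberg.harmonic}, and $\Trop_{h\circ\nu}(\Sigma_0) = \Trop_h(\Sigma)$ by Remark~\ref{rem:Trop.unweighting}. After this reduction we may also subdivide $\Sigma$ and $h(\Sigma)$ compatibly, as in Definition~\ref{def:trop.graph.mult}, so that every edge $e$ of $\Sigma$ either maps to a vertex of $\Trop(\Sigma)$ or maps affinely and surjectively onto an edge $\sigma$ of $\Trop(\Sigma)$; edges crushed to vertices contribute nothing on either side of either identity, so we may restrict attention to edges $e$ with $h(e) = \sigma$ one-dimensional.

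For part~(1): fix an edge $\sigma$ of $\Trop(\Sigma)$, and let $e$ be an edge of $\Sigma$ with $h(e)=\sigma$. Write $a_i$ for the slope of $h_i$ on $e$, so that $p_e = h\circ t_e\colon[0,\ell(e)]\to\sigma$ is the affine map $x\mapsto (a_1 x + b_1,\dots,a_n x + b_n)$, and the edge length of $\sigma$ in the lattice-length normalization is $\ell(e)\cdot\gcd(a_1,\dots,a_n)$. By the definition of integration of a Lagerberg $(1,1)$-form over a one-dimensional weighted polytopal complex in~\cite{gubler16:forms_currents}, $\int_\sigma\eta$ is computed by pulling $\eta$ back along a unimodular parameterization of $\sigma$ and integrating; comparing with the explicit formula $f_e = \sum_{i,j}\tfrac{\d h_i}{\d t_e}\tfrac{\d h_j}{\d t_e}\,g_{ij}\circ h$ from Definition~\ref{def:pullback.forms}, a direct change-of-variables shows that the contribution $\tfrac12\sum_{e\mapsto\sigma} w(e)\int_0^{\ell(e)} f_e\circ t_e\,\d x$ to $\int_\Sigma h^*\eta$ equals $m_\sigma\int_\sigma\eta$ (the multiplicities $m_e = w(e)\gcd(a_1,\dots,a_n)$ are exactly the Jacobian factors needed to convert the edge-length integral into the lattice-length integral, and the factor $\tfrac12$ is absorbed by $f_{\bar e}=f_e$ as in Definition~\ref{def:integration.11}). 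Summing over all edges $\sigma$ of $\Trop(\Sigma)$ gives $\int_\Sigma h^*\eta = \sum_\sigma m_\sigma\int_\sigma\eta = \int_{\Trop(\Sigma)}\eta$.

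For part~(2): the boundary integral $\int_{\del\Trop(\Sigma)}\eta$ over a weighted polytopal complex in~\cite{gubler16:forms_currents} is, by definition, a sum over the one-dimensional faces $\sigma$ of weighted contributions $m_\sigma$ times the value of $\eta$ against the primitive outward generator at the endpoints of $\sigma$ that lie on $\del\Trop(\Sigma)$; by Proposition~\ref{prop:trop.graph.balanced}, $\Trop(\Sigma)$ is balanced away from $h(\del\Sigma)$, so the only surviving contributions come from vertices in $h(\del\Sigma)$. On the graph side, $\int_{\del\Sigma}h^*\eta = \sum_{v\in\del\Sigma}\sum_{v=e^-} w(e)\,f_e(v)$ with $f_e = \sum_i \tfrac{\d h_i}{\d t_e}\,g_i\circ h$. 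The same per-edge change-of-variables as in part~(1) identifies $w(e)\,f_e(v)$ with the lattice-normalized pairing of $\eta$ with the image direction $h_*(\text{unit tangent at }v)$ weighted by $m_e$; using formula~\eqref{eq:bdy.int.2} to rewrite $\int_{\del\Sigma}h^*\eta$ as a sum over all vertices (the interior-vertex terms vanishing by harmonicity and balancedness) and then pushing forward to $\Trop(\Sigma)$ yields $\int_{\del\Sigma}h^*\eta = \int_{\del\Trop(\Sigma)}\eta$. The case $\eta\in\cA^{0,1}(U)$ is identical after applying the Lagerberg involution $J$.

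The main obstacle I anticipate is purely bookkeeping: matching the normalization conventions of~\cite{gubler16:forms_currents} — the lattice (unimodular) length used in defining $\int_\sigma$ and $\int_{\del\Trop(\Sigma)}$, and the convention for the primitive outer normal — against the edge-length conventions of Definitions~\ref{def:integration.11} and~\ref{def:integration.10}, and checking that the $\gcd$-factor in $m_e$ is precisely the discrepancy. Once the correct parameterization $p_e$ is fixed and one verifies that $p_e$ covers $\sigma$ with lattice-length multiplicity $\gcd(a_1,\dots,a_n)$, both identities become one-line changes of variable summed over edges; the balancedness input from Proposition~\ref{prop:trop.graph.balanced} is what guarantees the interior terms drop out of the boundary integral, mirroring the vanishing $\sum_{v=e^-}w(e)f_e(v)=0$ at interior vertices used in~\eqref{eq:bdy.int.2}.
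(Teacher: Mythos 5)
Your proposal is correct and follows essentially the same route as the paper: pass to the unweighting via Lemma~\ref{lem:integral.unweighting} and Remark~\ref{rem:Trop.unweighting}, subdivide as in Definition~\ref{def:trop.graph.mult}, and verify on each non-contracted edge that the multiplicity $m_e=\gcd(a_1,\ldots,a_n)$ is precisely the change-of-variables factor converting the lattice-length normalization of~\cite{gubler16:forms_currents} into the edge-length integral of Definitions~\ref{def:integration.11} and~\ref{def:integration.10}, then sum over edges. The one organizational divergence is in part~(2): you invoke Proposition~\ref{prop:trop.graph.balanced} to discard the interior-vertex contributions to $\int_{\del\Trop(\Sigma)}\eta$, whereas the paper simply computes $m_e\int_{\del\sigma}\eta = f_e(e^-)-f_e(e^+)$, sums over all edges and all endpoints, and identifies the total with $\int_{\del\Sigma}h^*\eta$ via~\eqref{eq:bdy.int.2} (i.e.\ the smoothness of $h^*\eta$ at interior vertices, Lemma~\ref{lem:pullbacks.are.smooth}) --- and since the paper notes that Proposition~\ref{prop:trop.graph.balanced} \emph{follows from} this very computation, if you take your route you must rely on the independent proof of balancing in the style of~\cite{baker_payne_rabinoff16:tropical_curves} to avoid circularity.
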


\begin{proof}
  By Lemma~\ref{lem:integral.unweighting} and Remark~\ref{rem:Trop.unweighting}, we may replace $\Sigma_0$ by its unweighting to assume that the edge weights are trivial.

  We begin with~(1).  We assume that we have subdivided $\Sigma$ and $h(\Sigma)$ as in Definition~\ref{def:trop.graph.mult}.  By linearity, we may assume $\eta = g\,\d' x_i\wedge\d'' x_j$, so $\omega = h^*\eta = (f_e\,\d't_e\d''t_e)$ for $f_e = \frac{\d h_i}{\d t_e}\frac{\d h_j}{\d t_e}\,g\circ h$.  Fix an edge $e$ of $\Sigma$, and let $a_i = \d h_i/\d e$.  If $h(e)$ is a point in $\R^n$ then $\int_\sigma\omega=0$; likewise, $\d h_i/\d t_e = 0$, so $\int_e\omega=0$.  Hence we may suppose that $\sigma = h(e)$ is a line segment in $\R^n$.  Let $\bA_\sigma$ be the affine span of $\sigma$, and let $\bL_\sigma$ be the underlying real vector space.  The integral structure on $\bA_\sigma$ is induced by $\Z^n\cap\bL_\sigma$, so we can identify $\bA_\sigma$ with $\R$ (as $\Z$-affine spaces) by $p_e(x) = h\circ t_e(x/m_e)$, where $m_e = \gcd(a_1,\ldots,a_n)$ is the edge multiplicity of Definition~\ref{def:trop.graph.mult}.  We compute
  \[\begin{split} \int_{\sigma}\eta
      &= \int_{[0,\ell(e)m_e]}g\circ p_e(x)\,\d'(x_i\circ p_e(x))\wedge\d''(x_j\circ p_e(x))\\
      &= \int_0^{\ell(e)m_e}g\circ h\circ t_e(x/m_e)\,\frac{a_ia_j}{m_e^2}\,\d x \\
      &= \frac 1{m_e}\int_0^{\ell(e)}a_ia_j\,g\circ h\circ t_e(x)\,\d x
      = \frac 1{m_e}\int_0^{\ell(e)}f_e\circ t_e(x)\,\d x.
    \end{split}\]
  Summing over all (unoriented) edges mapping to $\sigma$ gives
  \[ \sum_{e\mapsto\sigma}\int_0^{\ell(e)}f_e\circ t_e(x)\,\d x
    = \sum_{e\mapsto\sigma} m_e\int_\sigma\eta
    = m_\sigma\int_\sigma\eta. \]
Summing over all edges of $\Sigma$ then yields $\int_{\Trop(\Sigma)}\eta = \int_\Sigma\omega$.

  We will prove assertion~(2) for $\eta\in\cA^{1,0}(U)$, as the proof for $\eta\in\cA^{0,1}(U)$ is the same.  Again by linearity we may assume $\eta = g\,\d' x_i$, so $\omega = h^*\eta = (f_e\,\d't_e)$ for $f_e = \frac{\d h_i}{\d t_e}\,g\circ h$.  Let $e$ be an edge of $\Sigma$, and assume $\sigma = h(e)$ is a line segment, as above.  Let $a_i = \d h_i/\d t_e$, so $(b_1,\ldots,b_n) = \frac 1{m_e}(a_1,\ldots,a_n)\in\Z^n$ is the primitive lattice vector in the direction $h(e^+)-h(e^-)$.  Unwrapping the definition in~\cite[2.8]{gubler16:forms_currents}, we have $\int_{\del\sigma}\eta = b_i g(h(e^-)) - b_i g(h(e^+))$,%
  \footnote{For $\eta\in\cA^{0,1}(U)$ we must exchange $e^+$ with $e^-$.  Note that there is a sign mistake in~\cite{gubler16:forms_currents}; see the erratum in~\cite{gubler13:forms_currents_analytif_algebraic_variety}.}
  so
  \[ m_e\int_{\del\sigma}\eta = a_i\bigl( g\circ h(e^-) - g\circ h(e^+) \bigr)
    = f_e(e^-) - f_e(e^+).
  \]
  Summing over all edges and using the fact that $f_e=0$ if $h$ is constant on $e$ gives
  \[ \int_{\del\Trop(\Sigma)}\eta
    = \sum_\sigma\sum_{e\mapsto\sigma}m_e\int_{\del\sigma}\eta
    = \sum_{v\in V(\Sigma)}\sum_{v=e^-} f_e(v) = \int_{\del\Sigma}\omega, \]
  where the last equality holds by~\eqref{eq:bdy.int.2}.
\end{proof}


\section{Curves and Skeletons}
\label{Sec:curves-skeletons}

In this section we fix our notions regarding non-Archimedean analytic curves and their structure theory via skeletons.  Importantly, we prove several functoriality properties of skeletons that will be used in the sequel.  Our main reference is Thuillier's thesis~\cite[Section~2]{thuillier05:thesis}, where most of the theory is worked out.  See also~\cite{baker_payne_rabinoff13:analytic_curves,ducros14:structur_des_courbes_analytiq}.

\subsection{Notation}
For the rest of the paper we fix a field $k$ that is complete with respect to a nontrivial, non-Archimedean valuation; we call such a field a \defi{non-Archimedean field}.

\medskip\noindent
We will use the following notation concerning non-Archimedean fields. \\[1mm]
\null\begin{tabular}{rl}
  \hbox to 1.5cm{\hfil$k$} & A non-Archimedean field. \\
  $\val$ & $\colon k\to\R\cup\{\infty\}$, the valuation on $k$. \\
  $|\scdot|$ & $= \exp(-\val(\scdot))$, the associated absolute value. \\
  $k^\circ$ & The valuation ring of $k$. \\
  $k^{\circ\circ}$ & The maximal ideal in $k^\circ$. \\
  $\td k$ & $= k^\circ / k^{\circ\circ}$, the residue field of $k$. \\
  $\Gamma$ & $= \val(k^\times)$, the value group of $k$. \\
\end{tabular}

\medskip
In this paper, all analytic spaces are strictly $k$-analytic, good Berkovich spaces. 
This is due to the fact that we are dealing with analytic curves, which are always good~\cite[Proposition~3.3.4]{ducros14:structur_des_courbes_analytiq}.

\medskip\noindent
We will use the following notation concerning analytic spaces.  Let ${\mathscr A}$ be a (strictly) $k$-affinoid algebra and let $X$ be an analytic space.\\[1mm]
\null\begin{tabular}{rl}
  \hbox to 1.5cm{\hfil${\mathscr A}^\circ$} & The ring of power-bounded elements in ${\mathscr A}$. \\
  ${\mathscr A}^{\circ\circ}$ & The ring of topologically nilpotent elements in ${\mathscr A}^\circ$. \\
  $\td {\mathscr A}$ & $= {\mathscr A}^\circ/{\mathscr A}^{\circ\circ}$, the canonical reduction of ${\mathscr A}$. \\
  $|\scdot|_{\sup}$ & The supremum semi-norm on ${\mathscr A}$. \\
  $\sM({\mathscr A})$ & The Berkovich spectrum of ${\mathscr A}$. \\
  $\red_{\sM({\mathscr A})}$ & $\colon \sM({\mathscr A}) \to \Spec(\td {\mathscr A})$, the reduction map. \\
  $\sH(x)$ & The completed residue field at a point $x\in X$. \\
\end{tabular}

\medskip
We will make extensive use of admissible formal $k^\circ$-schemes.  We refer the reader to~\cite{bosch_lutkeboh93:formal_rigid_geometry_I} for the basics of the theory.

\medskip\noindent
We will use the following notation concerning admissible formal schemes.  Let $\fX$ be a quasi-compact admissible formal $k^\circ$-scheme.\\[1mm]
\null\begin{tabular}{rl}
  \hbox to 1.5cm{\hfil$\fX_s$} & The special fiber of $\fX$, a $\td k$-scheme of finite type. \\
  $\fX_\eta$ & The generic fiber of $\fX$, a compact analytic space. \\
  $\red_\fX$ & $\colon\fX_\eta\to\fX_s$, the reduction map. \\
  $\rS_0(\fX)$ & $= \{x\in\fX_\eta\colon\red_\fX(x)\text{ is a generic point of } \fX_s\}$. \\
\end{tabular}

\medskip
The generic fiber of $\Spf(A)$ is by definition the Berkovich spectrum of the $k$-affinoid algebra ${\mathscr A}=A\tensor_{k^\circ}k$; this construction globalizes to give the generic fiber of $\fX$ (see~\cite[Section~2]{gubler_rabinoff_werner:tropical_skeletons}).

\begin{defn}
  A \defi{formal model} of a $k$-analytic space $X$ is an admissible formal $k^\circ$-scheme $\fX$ equipped with an isomorphism $\fX_\eta\cong X$.
\end{defn}

Following Thuillier~\cite[D\'efinition~2.1.5]{thuillier05:thesis}, we call a quasi-compact admissible formal scheme $\fX$ \defi{maximal} if, for every formal affine open $\fU = \Spf(A)\subset\fX$, we have $A = {\mathscr A}^\circ$ where $\mathscr A \coloneqq (A\tensor_{k^\circ} k)$ is the $k$-affinoid algebra associated to the admissible $k$-algebra $A$.  In this case, by~\cite[Proposition~2.4.4]{berkovic90:analytic_geometry}, for every generic point $\td x\in\fX_s$, there exists a unique point $x\in\fX_\eta$ such that $\red_\fX(x) = \td x$.  In other words, $\red_\fX$ sets up a bijection between $\rS_0(\fX)$ and the set of generic points of $\fX_s$.

The following lemma is a version of~\cite[Proposition~2.1.1]{thuillier05:thesis}; the proof is extracted from~\cite[Proposition~3.13]{baker_payne_rabinoff16:tropical_curves}.

\begin{lem}\label{lem:formal.analytic.variety}
  Let $\fX$ be an admissible formal scheme with reduced special fiber.  Then $\fX$ is maximal and we have $|\mathscr A|_{\sup}=|k|$.  Moreover, for any $x\in\rS_0(\fX)$ we have $|\sH(x)| = |k|$.
\end{lem}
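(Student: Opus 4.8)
Maximality and the equality $|\mathscr{A}|_{\sup}=|k|$ are local on $\fX$, so the plan is to reduce first to the case $\fX=\Spf(A)$, where $A$ is an admissible $k^\circ$-algebra whose special fibre $A_s:=A\otimes_{k^\circ}\td k$ is reduced, and to set $\mathscr{A}:=A\otimes_{k^\circ}k$. The key input is the reduced-fibre theory of Bosch and L\"utkebohmert~\cite{bosch_lutkeboh93:formal_rigid_geometry_I} (this is also the route of~\cite[Proposition~3.13]{baker_payne_rabinoff16:tropical_curves} and~\cite[Proposition~2.1.1]{thuillier05:thesis}): once $A_s$ is reduced, the affinoid algebra $\mathscr{A}$ is reduced, the canonical map $A_s\to\td{\mathscr{A}}$ is an isomorphism, $A$ coincides with the ring $\mathscr{A}^\circ$ of power-bounded elements, and $|\mathscr{A}|_{\sup}=|k|$ --- in other words, $A$ is the canonical formal model of a distinguished affinoid algebra. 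The equality $A=\mathscr{A}^\circ$ is exactly the assertion that $\fX$ is maximal; if one prefers to avoid quoting the whole package, one can prove $\mathscr{A}$ reduced by hand (scale a hypothetical nilpotent into $A$, reduce modulo $k^{\circ\circ}$, and use reducedness of $A_s$ together with $k^{\circ\circ}$-adic separatedness of $A$), deduce $A=\mathscr{A}^\circ$ from the finiteness of $\mathscr{A}^\circ$ over $A$ for reduced $\mathscr{A}$, and obtain $|\mathscr{A}|_{\sup}\subseteq|k|$ from reducedness of $\td{\mathscr{A}}$, the reverse inclusion being trivial.

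For the last assertion, fix $x\in\rS_0(\fX)$ and shrink $\fX$ to a formal affine open $\fU=\Spf(A)$ containing $\td x:=\red_\fX(x)$; by the first paragraph $\mathscr{A}^\circ=A$ and $\td{\mathscr{A}}=A_s$ is reduced, so $\td x$ is a generic and hence minimal prime of $A_s$. By~\cite[Proposition~2.4.4]{berkovic90:analytic_geometry} the point $x$ is the unique point of $\fX_\eta$ over $\td x$. To compute $\sH(x)$, I would localize $A$ at the prime $\mathfrak q:=\{a\in A:|a(x)|<1\}$, which lies over $\td x$ and contains $k^{\circ\circ}A$; since $(A_s)_{\td x}$ is a field, the local ring $B:=A_{\mathfrak q}$ has maximal ideal $k^{\circ\circ}B$ and residue field $(A_s)_{\td x}$, and is flat over $k^\circ$. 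Its $k^{\circ\circ}$-adic completion $\widehat B$ is then a $k^\circ$-flat local ring whose maximal ideal is generated by $k^{\circ\circ}$; in the discretely valued case this forces $\widehat B$ to be a complete discrete valuation ring whose uniformizer may be taken to be a uniformizer of $k$, so that its fraction field has value group $\Gamma$. Since $\sH(x)$ is the completion of that fraction field and completion does not enlarge the value group, we conclude $|\sH(x)|=|k|$; in the general case this is part of the same reduction theory of~\cite[\S2.4]{berkovic90:analytic_geometry}.

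The genuine content, and the step I expect to be the main obstacle, is the Bosch--L\"utkebohmert input that a reduced special fibre forces both $A=\mathscr{A}^\circ$ and $A_s\cong\td{\mathscr{A}}$; granting this, the normalization $|\mathscr{A}|_{\sup}=|k|$ and the value-group statement for points of $\rS_0(\fX)$ follow formally from the structure of the reduction map. The only point that requires real care is a non-discrete value group $\Gamma$, where the elementary $k^{\circ\circ}$-adic manipulations above (separatedness, extracting a factor of largest valuation, the complete-DVR structure) become more delicate; this is exactly why it is cleanest to lean on the Bosch--L\"utkebohmert framework, which is set up over an arbitrary non-Archimedean base field.
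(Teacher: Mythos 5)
Your treatment of the first two assertions is essentially the paper's: localize to a formal affine open $\Spf(A)$, observe that reducedness of $A\tensor_{k^\circ}\td k$ makes the kernel of the composite $T^\circ\surject A\surject A/k^{\circ\circ}A$ a reduced ideal, and invoke the standard theory of distinguished affinoid algebras (the paper cites \cite[Proposition~6.4.3/3,4]{bosch_guntzer_remmert84:non_archimed_analysis}; your Bosch--L\"utkebohmert reference packages the same facts). One caution about your parenthetical ``by hand'' route: $\td{\mathscr A}=\mathscr A^\circ/\mathscr A^{\circ\circ}$ is reduced for \emph{every} affinoid algebra, so reducedness of $\td{\mathscr A}$ alone cannot yield $|\mathscr A|_{\sup}\subset|k|$; what is actually needed is $A=\mathscr A^\circ$ together with $\mathscr A^{\circ\circ}=k^{\circ\circ}\mathscr A^\circ$, and, as you note, the scaling argument is delicate for non-discrete $\Gamma$. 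Since you ultimately lean on the citation, this is not a gap.

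The genuine gap is in the last assertion. Your localization argument ($B=A_{\mathfrak q}$ with maximal ideal $k^{\circ\circ}B$, hence a complete discrete valuation ring after completion) works only when $k$ is discretely valued: for non-discrete $\Gamma$ the ring $A$ is not Noetherian, $k^{\circ\circ}B$ is not principal, and there is no discrete valuation ring to be found, so the value-group computation collapses. Deferring to ``the reduction theory of~\cite[\S 2.4]{berkovic90:analytic_geometry}'' does not close this: the results there produce the unique point over each generic point and identify the Shilov boundary, but they do not assert $|\sH(x)|=|k|$. Since the lemma is applied over arbitrary (e.g.\ algebraically closed) $k$, the non-discrete case cannot be set aside. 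The fix is the one the paper uses, and it requires no localization: by prime avoidance choose $\td g\in A_s$ not vanishing at $\td x=\red_\fX(x)$ but vanishing at every other generic point of $\fU_s$ (possible because $\td x$ is a minimal prime of the reduced ring $A_s$), and replace $\fU$ by the formal affine open with special fibre $D(\td g)$. This special fibre is still reduced and now has $\td x$ as its unique generic point, so $x$ is the unique Shilov boundary point of the corresponding affinoid domain, whence $|f(x)|=|f|_{\sup}\in|k|$ for every $f$ in its affinoid algebra by the part of the lemma already proved. Since completion does not change value groups, $|\sH(x)^\times|$ is generated by these values, giving $|\sH(x)|=|k|$.
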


\begin{proof}
  Let $\fU = \Spf(A)$ be a formal affine open in $\fX$.  Since $A$ is of topologically finite type, there is a surjection $\alpha\colon T^\circ\surject A$ for some Tate algebra $T = k\angles{T_1,\ldots,T_n}$.  The $T^\circ$-ideal $T^{\circ\circ} + \ker(\alpha)$ is a reduced ideal because it is the kernel of the composition $T^\circ\surject A\surject A/k^{\circ\circ}A$, and $A/k^{\circ\circ}A = A\tensor_{k^\circ}\td k$ is reduced by hypothesis. It follows from~\cite[Proposition~6.4.3/3,4]{bosch_guntzer_remmert84:non_archimed_analysis} that $\fX$ is maximal and that $|\mathscr A|_{\sup}=|k|$. Applying the latter in a neighbourhood of the generic point $\red_\fX(x)$ in $\fX_s$, we get the last claim. 
\end{proof}

\subsection{Strictly Semistable Curves}\label{sec:semistability}
We will only consider \emph{compact} curves in this paper.

\begin{defn}\label{def:curve}
  A \defi{curve} is a compact, rig-smooth, strictly $k$-analytic space of pure dimension $1$.
\end{defn}

Recall that rig-smooth means that the sheaf of K\"ahler differentials is locally free of rank $1$ with respect to the Grothendieck topology. 
Note that curves are allowed to be disconnected.  We will consider formal models of our curves modeled on a formal annulus.

\begin{notn}
  For $a\in k^\circ\setminus\{0\}$, the \defi{formal annulus of modulus $|a|$} is
  \[ \fS(a) = \Spf\bigl(k^\circ\{T_0,T_1\}/(T_0T_1-a)\bigr). \]
  The generic fiber $\fS(a)_\eta$ is the \defi{annulus of modulus |a|}; it can be identified (via $t\mapsto T_0$ or $t\mapsto T_1$) with the affinoid domain
  \[ S(a) = \bigl\{ x\in\bA^{1,\an} = \Spec(k[t])^\an\colon |a|\leq |t(x)|\leq 1 \bigr\}. \]
\end{notn}

The special fiber $\fS(a)_s$ is isomorphic to $\bG_{\rm m,\td k}$ if $|a|=1$, and is otherwise isomorphic to the union of the coordinate axes in $\bA^2_{\td k}$.

\begin{defn} \label{def:strictly semistable over valuation ring}
  A \defi{strictly semistable $k^\circ$-curve} is a quasi-compact admissible formal $k^\circ$-scheme $\fX$ of pure dimension $1$ that admits a covering by formal open subsets $\fU$ admitting an \'etale morphism to a formal annulus $\fU\to\fS(a_\fU)$.
\end{defn}

\begin{rem}\

  \begin{enumerate}
  \item The generic fiber of a strictly semistable curve is a curve in the sense of Definition~\ref{def:curve}.

  \item A strictly semistable $k^\circ$-curve has reduced special fiber, hence is maximal.

  \item If $k'/k$ is a non-Archimedean field extension then $\fX' = \fX\hat\tensor_{k^\circ}k'^\circ$ is a strictly semistable $k'^\circ$-curve. Note that the special fiber of $\fX'$ is canonically isomorphic to $\fX_s\tensor_{\td k}\td k'$, \emph{not} to $\fX'\tensor_{k^\circ}\td k$.

  \item A $k^\circ$-curve $\fX$ is strictly semistable if and only if $\fX_\eta$ is rig-smooth and $\fX_s$ has at worst ordinary double point singularities and smooth irreducible components.  See~\cite[Remarque~2.2.9]{thuillier05:thesis}.

  \item The irreducible components of $\fX_s$ need not be geometrically connected, and the singularities need not be $\td k$-rational.

  \item A strictly semistable model of $X$ exists after changing base to a finite, separable field extension $k'/k$ by~\cite[6.4.3]{ducros14:structur_des_courbes_analytiq}.
  \end{enumerate}
\end{rem}

\subsection{Skeletons} \label{sec:skeletons}

We say that a strictly affinoid curve $U$ over $k$ is \emph{potentially isomorphic to the unit disc} if there is a finite separable extension $k'/k$ such that the base change of $U$ to $k'$ is isomorphic to a disjoint union of unit discs over $k'$. 

In the following, we consider a strictly semistable model $\fX$ of a curve $X$. We associate to $\fX$  a \defi{skeleton} $\Sigma_\fX\subset X$ along with a retraction $\tau_\fX\colon X\to\Sigma_\fX$: see~\cite[Th\'eor\`eme~2.2.10]{thuillier05:thesis},~\cite[4.3]{berkovic90:analytic_geometry},  and~\cite[Section~4]{baker_payne_rabinoff13:analytic_curves}.  The complement of the underlying set of $\Sigma_\fX$ can be characterized as the set of all points of $X$ admitting a neighborhood that is potentially isomorphic to the unit disc and is disjoint from $\rS_0(\fX)$~\cite[Corollaire~2.2.12]{thuillier05:thesis}.  The retraction map is a strong proper deformation retraction, hence a homotopy equivalence~\cite[Theorem~5.2]{berkovic99:locally_contractible_I}.    The skeleton naturally has the structure of a weighted metric graph with boundary, in the sense of Section~\ref{Sec:forms-graphs}, defined as follows.
\begin{itemize}
\item The vertex set of $\Sigma_\fX$ is equal to $\rS_0(\fX)$.
\item The boundary of $\Sigma_\fX$ is the set of vertices reducing to generic points of $\fX_s$ with non-proper closure.  This concides with the boundary $\del X$ in the sense of~\cite[3.1]{berkovic90:analytic_geometry}.  See~\cite[2.1.2]{thuillier05:thesis}.
\item For $\rho\in\R$ define $\eta_\rho\colon k[t^{\pm1}]\to\R\cup\{0\}$ by
  \begin{equation}\label{eq:eta_rho}
    \eta_\rho\left( \sum a_n t^n \right) = \max\bigl\{ |a_n|\rho^n \bigr\}.
  \end{equation}
  This defines an embedding $\R\inject\bG_{\rm m}^{\an}$ which restricts to a continuous function $[|a|,1]\to S(a)$ whose image is defined to be the skeleton $\Sigma_{\fS(a)}$ of $\fS(a)$.  If $|a|<1$ then $\Sigma_{\fS(a)}$ consists of a single edge $e$; we make $\Sigma_{\fS(a)}$ into a metric graph using the parameterization $t_e\colon[0,\val(a)]\to e$ defined by $t_e(r) = \eta_{\exp(-r)}$.  The retraction $\tau_{\fS(a)}$ is defined by $\tau_{\fS(a)}(x) = t_e(-\log|t(x)|) = \eta_{|t(x)|}$.
\item Each singular point $\td x\in\fX_s$ has a formal neighborhood $\fU\subset\fX$ admitting an \'etale morphism $p\colon\fU\to\fS(a)$ for some $a\in k^{\circ\circ}\setminus\{0\}$.  By shrinking $\fU$, we may assume that $\fU_s$ consists of two components meeting at $\td x$, then $p_\eta$ defines a homeomorphism $p_\eta\inv(\Sigma_{\fS(a)})\isom\Sigma_{\fS(a)}$. Such a $\fU$ is called a \emph{building block} of $\fX$. We associate to $\td x$  the edge $e_{\td x} = p_\eta\inv(\Sigma_{\fS(a)})\subset\Sigma_\fX$ connecting the two vertices of $\Sigma_\fX$ lying above the vertices of $\Sigma_{\fS(a)}$.  We parameterize $e_{\td x}$ by $t_{e_{\td x}} = p\inv\circ t_e$, where $e$ is the unique edge of $\Sigma_{\fS(a)}$; in this way, $\Sigma_\fX$ becomes a metric graph with $\ell(e_{\td x}) = \val(a)$.  The retraction $\tau_\fX$ is defined on $\fU_\eta$ by $\tau_\fX|_{\fU_\eta} = p\inv\circ\tau_{\fS(a)}\circ p_\eta$.
\item The weight of an edge $e_{\td x}$ is defined to be the residue field degree $[\kappa(\td x):\td k]$.
\end{itemize}
It follows from the above description that the graph underlying $\Sigma_\fX$ is the incidence graph of $\fX_s$, and that the length of each edge of $\Sigma_\fX$ is contained in $\Gamma$.

The following fact given in~\cite[Proposition~2.2.17]{thuillier05:thesis} shows that the parameterization of an edge does not depend on the choice of \'etale morphism to a formal annulus.  It is a very special case of~\cite[Theorem~5.3]{berkovic99:locally_contractible_I}. For convenience, we include a proof.

\begin{prop}\label{prop:length.well.defined}
  Let $\fU$ be a building block of $\fX$ at a singular point $\td x$, and consider the edge parameterization $t_e\colon[0,\val(a)]\to e$ defined above.   For  $u\in\Gamma(\fX_\eta,\sO)^\times$, there exist $r\in\Gamma$ and $n\in\Z$ such that
	\[ -\log|u(t_e(x))| = nx + r \]
	for all $x\in[0,\val(a)]$.  Conversely, for any $n\in\Z$ and $r\in\Gamma$, there exists $u\in\Gamma(\fX_\eta,\sO)^\times$ satisfying the above equation.
\end{prop}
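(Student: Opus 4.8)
The plan is to analyze the function $\lambda(x)\coloneqq -\log|u(t_e(x))|$ on $[0,\val(a)]$ directly. (We read the statement with $u\in\Gamma(\fU_\eta,\sO)^\times$: a unit on $\fX_\eta$ restricts to one on $\fU_\eta$, and in the converse the function produced below already lives on $\fU_\eta$.) First I would record the model case: set $S_0\coloneqq p^*T_0$ and $S_1\coloneqq p^*T_1$. Since $p_\eta$ maps $\fU_\eta$ into $\fS(a)_\eta = S(a)$, on which $|T_0|,|T_1|\in[|a|,1]$, both $S_0$ and $S_1$ are units on $\fU_\eta$, and by the definition of $t_e$ via the points $\eta_\rho$ one gets $-\log|S_0(t_e(x))| = x$ and $-\log|S_1(t_e(x))| = \val(a)-x$ for all $x\in[0,\val(a)]$, both affine with $\Gamma$-rational endpoints. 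This already settles the converse: given $n\in\Z$ and $r\in\Gamma = \val(k^\times)$, choose $c\in k^\times$ with $\val(c)=r$ and take $u = c\,S_0^{\,n}$.

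For the direct statement, the first step is to show that $\lambda$ is affine. Recall that $\fU_\eta\setminus e$ is a disjoint union of (potential) open discs, each attached to $e$ at a single point, and that a unit on a (potential) open disc has constant absolute value there; the latter follows from the Newton polygon of a nowhere-vanishing power series on the open unit disc, after descending along the finite extension that trivializes the disc. Consequently, at any interior point $\xi = t_e(x_0)$ of $e$, the outgoing slope of $f\coloneqq -\log|u|$ vanishes in every tangent direction at $\xi$ not pointing along $e$. On the other hand $f$ is harmonic on the curve $\fU_\eta$: its restriction to the skeleton $\Sigma_\fU = e$ is piecewise linear with integer slopes, and at $\xi$ the outgoing slopes of $f$ over all tangent directions sum to zero \cite{thuillier05:thesis, baker_payne_rabinoff13:analytic_curves}. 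Combining the two, the two outgoing slopes of $f$ along $e$ at $\xi$ are negatives of one another, so $\lambda$ has no breakpoint at $x_0$; since $x_0$ was an arbitrary interior point, the piecewise linear function $\lambda$ is affine, say $\lambda(x) = nx + \lambda(0)$ with $n\in\Z$.

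It remains to check that $r\coloneqq\lambda(0) = -\log|u(v_0)|$ lies in $\Gamma$, where $v_0 = t_e(0)$. By construction $v_0$ reduces to the generic point of one of the two components of $\fU_s$, so $v_0\in\rS_0(\fU)$; since $\fU$ is strictly semistable its special fiber is reduced, so Lemma~\ref{lem:formal.analytic.variety} gives $|\sH(v_0)^\times| = |k^\times|$. Hence $r\in\val(k^\times) = \Gamma$ (consistently, $\lambda(\val(a)) = n\val(a)+r\in\Gamma$, as $\val(a)\in\Gamma$ and $t_e(\val(a))\in\rS_0(\fU)$ as well). This completes the argument.

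I expect the main obstacle to be making the ``no interior breakpoint'' step fully rigorous: one must be precise about the structure of the tangent space of $\fU_\eta$ at a type-$2$ point of the skeleton, the identification of the non-skeleton tangent directions with the open discs hanging off $e$, the harmonic balancing of $-\log|u|$, and the integrality of its slopes along $e$. These are all standard facts from the potential theory of Berkovich curves (Thuillier's thesis; Baker--Payne--Rabinoff), so the real task is to cite and assemble them cleanly rather than to prove anything new; alternatively, one can carry out the entire computation after base change to an algebraically closed field, where the disc and slope statements are transparent, and then descend.
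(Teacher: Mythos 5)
Your proposal is correct, and its key step is organized differently from the paper's. The paper's proof handles the forward direction by normalizing $u$ at an endpoint (the same use of Lemma~\ref{lem:formal.analytic.variety} that you make to get $r\in\Gamma$), base changing to an algebraically closed field $K$, invoking Berkovich's lemma that $\red_{\fX_K}\inv(\td x')\cong\red_{\fS(a)_K}\inv(\td y')$ is an \emph{open annulus} of modulus $|a|$, and then applying the structure theorem for units on an annulus (\cite[Proposition~2.10]{baker_payne_rabinoff13:analytic_curves}: $u=ct^n(1+\epsilon)$ with $|\epsilon|_{\sup}<1$), which yields affineness \emph{and} integrality of the slope in a single stroke — there are no breakpoints to rule out. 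You instead take "piecewise linear with integer slopes" as a cited black box and upgrade it to "affine" via the balancing condition at interior type-$2$ points together with the constancy of $|u|$ on the residue discs hanging off $e$; this is a valid potential-theoretic route, though slightly redundant, since the standard references you would cite for integer piecewise-linearity derive it from the very annulus decomposition that already gives affineness on $\red_\fX\inv(\td x)$. Your fallback suggestion — do everything after base change to an algebraically closed field and descend along the edge isomorphism $e'\isom e$ — is precisely the paper's strategy. Two small points to be careful about if you write this up: the balancing identity must be applied only at points in the Berkovich interior of $\fU_\eta$ (the interior points of $e$ qualify, the two Shilov/vertex points do not), and the descent of "units on potential open discs have constant norm" should be phrased via the surjection on points of the base change, exactly as you indicate. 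Your converse via $u=c\,(p^*T_0)^n$ is the explicit form of the paper's appeal to \cite[Lemme~2.2.1]{thuillier05:thesis}.
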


\begin{proof}
  Let $x_0,x_1$ be the endpoints of $e$.  Since $|\sH(x_0)^\times| = |k^\times|$ by Lemma~\ref{lem:formal.analytic.variety}, we may multiply $u$ by a scalar in $k^\times$ to assume $u(x_0) = 1$.  We need to show that $-\log|u|$ is linear with integral slope on the interior of $e$.  Let $K$ be an algebraically closed non-Archimedean field containing $k$, let $\fX_K = \fX\hat\tensor_{k^\circ}K^\circ$ and $\fS(a)_K = \fS(a)\hat\tensor_{k^\circ}K^\circ$, and consider the commutative square
  \[ \xymatrix @R=.3in {
      {\fX_K} \ar[r]^(.45){p_K} \ar[d]_\pi &
      {\fS(a)_K} \ar[d] \\
      {\fX} \ar[r]_(.43)p & {\fS(a).}
    } \]
  Let $\td y$ be the singular point of $\fS(a)_s$, and let $\td y'$ be the singular point of $\fS(a)_{K,s}$.  Then $p_K\inv(\td y') = \pi_s\inv(\td x)$ is a finite set of singular points.  Choose $\td x'\in \pi_s\inv(\td x)$, and let $e'$ be the associated edge of $\Sigma_{\fX_K}$.  Then $\pi_\eta$ maps $e'$ isomorphically onto $e$, in that the  parameterizations of $e$ and $e'$ induced by the \'etale morphisms $p$ and $p_K$, respectively, are compatible with $\pi_\eta$.

  The morphism $\red_{\fX_K}\inv(\td x')\to\red_{\fS(a)_K}\inv(\td y')$ is an isomorphism by~\cite[Lemma~4.4]{berkovic99:locally_contractible_I}.  In particular, $U = \red_{\fX_K}\inv(\td x')$ is isomorphic to the open annulus of modulus $|a|$.  Thus $-\log|\pi_\eta^*f|$ has integral slope along $\Sigma_{\fX_K}\cap U$ by~\cite[Proposition~2.10]{baker_payne_rabinoff13:analytic_curves}, for instance.

  The final assertion is a consequence of~\cite[Lemme~2.2.1]{thuillier05:thesis}: one can choose $u$ to be the pullback of a unit on $\fS(a)_\eta$ under $p_\eta\colon\fX_\eta\to\fS(a)_\eta$.
\end{proof}

\begin{rem} \label{rem:stratum face correspondence}
	By construction, there is a bijective correspondence between edges of $\Gamma_\fX$ and singularities of the special fiber $\fX_s$. Explicitly, the edge $e$ with interior $\mathring{e}$ and the corresponding singularity $\td x$ are related by
	\begin{equation} \label{eq:stratum face for edges}
	\{\td x\}=\red_\fX(\tau_\fX^{-1}(\mathring{e})) \quad \text{and} \quad \mathring{e}= \tau_\fX(\red_\fX^{-1}(\{\td{x}\})).
	\end{equation}
    Similarly, the reduction $\red_\fX$ restricts to a bijection between the vertices $S_0(\fX)$ of $\Gamma_\fX$ and the generic points of the irreducible components of $\fX_s$. If $x \in S_0(\fX)$ and if $R$ is the corresponding irreducible component of $\fX_s$ with all singular points from $\fX_s$ removed, then we have
    \begin{equation} \label{eq:stratum face correspondence for vertices}
    R = \red_\fX(\tau_\fX^{-1}(\{x\})) \quad \text{and} \quad \{x\} = \tau_\fX(\red_\fX^{-1}(R)).
    \end{equation} 
    For a higher dimensional generalization, we refer to~\cite[Proposition~2.8]{vilsmeier21:monge_ampere}. 
\end{rem}

\begin{rem}\label{rem:non-rational-subgraph}
  Let $U\subset\Sigma_\fX$ be a subgraph with $\sqrt\Gamma$-rational vertices (Definition~\ref{def:gamma.ratl.pts}).  Then $V = \tau_\fX\inv(U)$ is a compact strictly $k$-analytic domain in $X$ with boundary $\del V = \del U$, hence is again a curve.  Note however that $U$ is not necessarily a skeleton of $V$, as the lengths of the edges of $U$ are only assumed to be contained in $\sqrt\Gamma$.  After a finite, separable extension of $k$, the curve $V$ does admit a strictly semistable model with associated skeleton $U$.
\end{rem}

\subsection{Functoriality With Respect to Galois Actions}
First we prove functoriality of the skeleton and the retraction with respect to a Galois action.

\begin{lem}\label{lem:skeleton.galois}
  Let $X$ be a $k$-curve, let $k'/k$ be a finite Galois extension with Galois group $G$, let $X' = X\tensor_k k'$ with structure morphism $\pi\colon X'\to X$, and let $\fX'$ be a strictly semistable model of $X'$.  Suppose that $\rS_0(\fX') = \pi\inv(\pi(\rS_0(\fX')))$.  Then $G$ stabilizes $\Sigma_{\fX'}$, and the resulting action on $\Sigma_{\fX'}$ is by harmonic maps preserving lengths, weights, and $\del\Sigma_{\fX'}$.  Furthermore, we have $\tau_{\fX'}\circ\sigma = \sigma\circ\tau_{\fX'}$ for all $\sigma\in G$.
\end{lem}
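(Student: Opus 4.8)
The plan is to work with the intrinsic descriptions of the skeleton $\Sigma_{\fX'}$ and of the retraction $\tau_{\fX'}$ in terms of the analytic space $X'$, and to observe that the hypothesis on $\rS_0(\fX')$ is exactly what is needed to make these intrinsic data $G$-equivariant.

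First I would note that $G = \mathrm{Gal}(k'/k)$ acts on $X' = X\tensor_k k'$ through the second factor by $k$-analytic automorphisms, and that $\pi\circ\sigma = \pi$ for every $\sigma\in G$. Hence $\pi\inv(\pi(\rS_0(\fX')))$ is a $G$-stable subset of $X'$, and by hypothesis it equals $\rS_0(\fX')$, so $\rS_0(\fX')$ is $G$-stable. Next I would invoke \cite[Corollaire~2.2.12]{thuillier05:thesis}: the set $X'\setminus\Sigma_{\fX'}$ consists of the points of $X'$ admitting a neighborhood that is potentially isomorphic to the unit disc and disjoint from $\rS_0(\fX')$. Since $k'/k$ is finite separable, being potentially isomorphic to the unit disc is a property of the underlying $k$-analytic space (one may always enlarge the auxiliary finite separable extension so that it contains $k'$), hence it is preserved by the $k$-automorphism $\sigma$; combined with the $G$-stability of $\rS_0(\fX')$, this shows that $X'\setminus\Sigma_{\fX'}$, and therefore $\Sigma_{\fX'}$, is a $G$-stable subset of $X'$. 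As $G$ also permutes the vertex set $\rS_0(\fX')$, each $\sigma$ restricts to a self-homeomorphism of the weighted metric graph $\Sigma_{\fX'}$ carrying vertices to vertices, and hence edges to edges.

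It then remains to check that $\sigma|_{\Sigma_{\fX'}}$ preserves lengths, weights, and $\del\Sigma_{\fX'}$, and that $\tau_{\fX'}$ is $G$-equivariant. For lengths: $\sigma$ is an isometry for $u\mapsto -\log|u|$ on invertible analytic functions, hence preserves the modulus of every annulus, and the length of an edge $e$ is the modulus of the annulus $\tau_{\fX'}\inv(\mathring e)$; so lengths are preserved. For the boundary: $\del\Sigma_{\fX'} = \del X'$ is the Berkovich boundary, which for the finite extension $k'/k$ is unchanged when $X'$ is regarded as a $k$-analytic space, hence is preserved by the $k$-automorphism $\sigma$. For weights: the weight of $e_{\td x}$ is $[\kappa(\td x):\td{k'}]$, which can be recovered from $X'$ alone as the degree over $\td{k'}$ of the field of constants of $\widetilde{\sH(x)}$ for $x$ an interior point of $e_{\td x}$; since $\sigma$ induces a $\td\sigma$-semilinear isomorphism $\widetilde{\sH(x)}\isom\widetilde{\sH(\sigma x)}$ over $\td{k'}$, this degree is preserved. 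A bijective piecewise-linear self-map of a weighted metric graph that is a local isomorphism at interior vertices and preserves $\ell_0$ on each edge is a harmonic isomorphism (with all local degrees equal to $1$) by the converse criterion of Remark~\ref{rem:harmonic.isomorphism}, giving the harmonicity assertion. Finally, for $\sigma\in G$ and $x\notin\Sigma_{\fX'}$, the point $\tau_{\fX'}(x)$ is the unique point of $\Sigma_{\fX'}$ in the closure of the connected component $D_x$ of $X'\setminus\Sigma_{\fX'}$ through $x$ (each such component being, up to a finite base change, an open disc whose closure in $X'$ meets $\Sigma_{\fX'}$ in a single point), and $\tau_{\fX'}$ is the identity on $\Sigma_{\fX'}$; since $\sigma$ is a homeomorphism of $X'$ fixing $\Sigma_{\fX'}$ setwise, it permutes the components $D_x$ compatibly with their closures, whence $\tau_{\fX'}\circ\sigma = \sigma\circ\tau_{\fX'}$.

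The step I expect to be the main obstacle is the invariance of the edge weights: it requires a model-independent description of the weight together with an understanding of how the semilinear Galois action interacts with reduction, whereas lengths, the boundary, and the $\tau_{\fX'}$-equivariance are comparatively formal once $\rS_0(\fX')$ and $\Sigma_{\fX'}$ are known to be $G$-stable. An alternative handling of the weights, and indeed of the whole lemma, is to use that a strictly semistable formal model of $X'$ is determined by its semistable vertex set $\rS_0$; then the $G$-stability of $\rS_0(\fX')$ already forces $\fX'$ itself to be $G$-stable, with $G$ acting $\td\sigma$-semilinearly on $\fX'_s$ and permuting its nodes $\td x$ by isomorphisms that visibly preserve $[\kappa(\td x):\td{k'}]$, and the equivariance of $\tau_{\fX'}$ then follows from its construction via $\red_{\fX'}$.
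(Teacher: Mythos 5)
Your argument is correct in substance and reaches the conclusion by a more intrinsic route than the paper. The paper's proof begins by extending the $G$-action on $X'$ to the formal model $\fX'$ itself via \cite[Proposition~2.1.15]{thuillier05:thesis} --- this is precisely where the hypothesis $\rS_0(\fX')=\pi\inv(\pi(\rS_0(\fX')))$ enters, exactly as in your first paragraph, since it makes $\rS_0(\fX')$ a $G$-stable set --- and then transports a building block $p\colon\fU'\to\fS(a')$ to a building block $p_\sigma\colon\sigma\inv(\fU')\to\fS(\sigma(a'))$. Because $|\sigma(a')|=|a'|$ and the induced map $\fS(\sigma(a'))\to\fS(a')$ carries the skeleton of the formal annulus, with its parameterization, isomorphically onto the other, one gets in one stroke that $\sigma$ stabilizes $\Sigma_{\fX'}$, is \emph{linear} on edges, and preserves lengths and weights (the residue degree of a node over $\td k$ is unchanged); equivariance of $\tau_{\fX'}$ then follows from \cite[Proposition~2.2.16]{thuillier05:thesis}. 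Your ``alternative handling'' in the last paragraph is essentially this proof. Your primary route --- characterizing $X'\setminus\Sigma_{\fX'}$ via \cite[Corollaire~2.2.12]{thuillier05:thesis}, recovering lengths from moduli of annuli, the boundary from $\del X'$ (your argument here coincides with the paper's), and the retraction from the one-point frontiers of the components of $X'\setminus\Sigma_{\fX'}$ --- is sound, and the topological argument for $\tau_{\fX'}\circ\sigma=\sigma\circ\tau_{\fX'}$ is a nice self-contained alternative. Two steps are thinner than they should be. First, to invoke the converse criterion of Remark~\ref{rem:harmonic.isomorphism} you need $\sigma|_{\Sigma_{\fX'}}$ to be \emph{piecewise linear}, not merely a homeomorphism carrying edges to edges and preserving their total lengths; this requires showing $\sigma$ is an isometry for the canonical metric on the interior of each edge (e.g.\ via moduli of sub-annuli, or via Proposition~\ref{prop:length.well.defined} applied to pullbacks of units), which your one-line ``isometry for $u\mapsto-\log|u|$'' only gestures at. Second, the identification of the weight $[\kappa(\td x):\td k']$ with the constant-field degree of the residue field of $\sH(x)$ requires $x$ to be of type $2$ (for a type-$3$ interior point the residue field is just $\td k'$) and a short base-change argument; you rightly flag this as the delicate point. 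Both issues evaporate once the action is extended to $\fX'$, which is why the paper (and your alternative) takes that route.
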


\begin{proof}
  The $G$-action on $X'$ extends to a $G$-action on $\fX'$ by~\cite[Proposition~2.1.15]{thuillier05:thesis} (regarding $\fX'$ as a maximal admissible formal $k^\circ$-scheme). Let $p\colon\fU'\to\fS(a')$ be the  \'etale morphism for a building block at a singularity $y$ of $\fX_s'$ as in~\secref{sec:skeletons}.  We obtain an \'etale morphism $p_\sigma\colon\sigma\inv(\fU')\to\fS(\sigma(a')) = \fS(a')\tensor_{k',\sigma}k'$ which fits into the Cartesian square
  \[ \xymatrix @R=.3in {
      {\sigma\inv(\fU')} \ar[r] \ar[d]_{p_\sigma} & {\fU'} \ar[d]^p \\
      {\fS(\sigma(a'))} \ar[r]_(.55){f} & {\fS(a')}
    } \]
  in which the horizontal arrows lift the automorphism $\sigma\colon k'^\circ\isom k'^\circ$.  It is immediate from the definition of the skeleton of a formal annulus that  $f\colon\fS(\sigma(a'))\to\fS(a')$ sends $\Sigma_{\fS(\sigma(a'))}$ isomorphically onto $\Sigma_{\fS(a')}$, from which it follows that $\sigma(\Sigma_{\sigma\inv(\fU')}) = \Sigma_{\fU'}$.  This proves that $\sigma$ stabilizes $\Sigma_{\fX'}$ and acts by a graph automorphism preserving lengths.  Weights are preserved because $\sigma$ does not change the residue degrees of singular points of $\fX'_s$ over $\td k$, and $\sigma$ stabilizes $\del X' = \del\Sigma_{\fX'}$ by~\cite[Proposition~3.1.3(ii)]{berkovic90:analytic_geometry} as applied to $X'\to X\to\sM(k)$, noting that $X'\to X$ is finite and hence boundaryless.  It follows from all these considerations that $G$ acts by harmonic maps.  The retraction $\tau_{\fX'}$ respects the \'etale morphisms $p$ and $p_\sigma$ by~\cite[Proposition~2.2.16]{thuillier05:thesis}, and one sees easily that $f\colon\fS(\sigma(a'))\to\fS(a')$ commutes with the retraction maps on both sides, which proves that $\tau_{\fX'}\circ\sigma = \sigma\circ\tau_{\fX'}$.
\end{proof}

\begin{rem}\label{rem:galois.stable.model}
Let $X$ be a $k$-curve and let $S$ be a finite subset of type $2$ points of $X$. Then there is a finite Galois extension $k'/k$ and a strictly semistable model $\fX'$ of $X' \coloneqq X \otimes_k K'$ such that the structure morphism $\pi:X' \to X$ satisfies 
$$\pi^{-1}(S)\subset \rS_0(\fX') = \pi\inv(\pi(\rS_0(\fX'))).$$
To see this, we follow the book of Ducros on analytic curves~\cite{ducros14:structur_des_courbes_analytiq}. It is shown in~\cite[Th\'eor\`eme~5.1.14]{ducros14:structur_des_courbes_analytiq} that $X$ has a finite triangulation such that the vertices  form a finite set $T$ of type 2 points containing $S$. By definition, this means that the connected components $C$ of $X \setminus T$ are such that the base change of $C$ to the completion of an algebraic closure  of $k$ is either a disk or an annulus. It is shown in~\cite[6.4]{ducros14:structur_des_courbes_analytiq} that there is a finite Galois extension $k'/k$ such that triangulation $T' \coloneqq \pi^{-1}(T)$ on $X'$ induces a semistable model $\fX'$ of $X'$ with $S_0(\fX')=T'$. Using a subdivision of the triangulation to omit loop edges, we may assume that $\fX'$ is strictly semistable. Since $\pi$ is surjective, this proves the claim.
\end{rem}

\subsection{Functoriality With Respect to Extension of Scalars}
Our next functoriality property is similar to~\cite[Proposition~2.2.21]{thuillier05:thesis}, although it requires a separate argument.

Recall that we defined harmonic maps of graphs in  Definition~\ref{def:harmonic.morphism}.

\begin{prop}\label{prop:skeleton.basechange}
  Let $k'/k$ be a non-Archimedean extension, let $\fX$ be a strictly semistable model of a curve $X$, and let $\fX' = \fX\hat\tensor_{k^\circ}k'^\circ$ and $X' = \fX'_\eta = X\hat\tensor_k k'$.  Let $\pi\colon\fX'\to\fX$ be the structure morphism.
  \begin{enumerate}
  \item The morphism $\pi_\eta\colon X'\to X$ maps $\Sigma_{\fX'}$ surjectively onto $\Sigma_\fX$, and the restriction of $\pi_\eta$ to $\Sigma_{\fX'}$ is a harmonic map of weighted metric graphs with boundary $\Sigma_\pi\colon\Sigma_{\fX'}\to\Sigma_\fX$.  We have $\Sigma_\pi\inv(\del\Sigma_\fX) = \del\Sigma_{\fX'}$, we have $d_{e'}(\Sigma_\pi) = 1$ for all edges $e'\subset\Sigma_{\fX'}$, and $d(\Sigma_\pi) = 1$ if $\Sigma_\fX$ has an edge.
  \item We have $\Sigma_\pi\circ\tau_{\fX'} = \tau_\fX\circ\pi_\eta$.
  \item If all irreducible components of $\fX_s$ are geometrically irreducible and all singular points of $\fX_s$ are $\td k$-rational, then $\Sigma_\pi\colon\Sigma_{\fX'}\to\Sigma_\fX$ is an isomorphism.
  \item If $\td k' = \td k$, then $\Sigma_\pi\colon\Sigma_{\fX'}\to\Sigma_\fX$ is an isomorphism.
  \item If $k'/k$ is a finite Galois extension with Galois group $G$, then $G$ acts on $\Sigma_{\fX'}$ 
  and $\Sigma_\pi$ identifies $\Sigma_\fX$ with the quotient graph $\Sigma_{\fX'}/G$ in the sense of Definition~\ref{def:quotient.graph}.
  \end{enumerate}
  \end{prop}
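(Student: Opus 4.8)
The plan is to build the map $\Sigma_\pi$ from the building‑block description of skeletons in~\secref{sec:skeletons} and then check the five assertions essentially in order, the harmonicity in~(1) being the one delicate point. For the set‑up one uses $\fX'_s = \fX_s\tensor_{\td k}\td k'$: the reduction $\pi_s$ is faithfully flat, so it carries generic points onto generic points, and by maximality (Lemma~\ref{lem:formal.analytic.variety}) $\pi_\eta$ maps $\rS_0(\fX')$ onto $\rS_0(\fX)$. Since smoothness is stable under base change, the singular points of $\fX'_s$ are exactly the points over the singular points of $\fX_s$; a singular point $\td x$ of $\fX_s$ lies, via the étale structure morphism of a building block, over the $\td k$‑rational node of some $\fS(a)$, so $\kappa(\td x)/\td k$ is finite separable and the points over $\td x$ form $\Spec(\kappa(\td x)\tensor_{\td k}\td k') = \prod_{\td x'\mapsto\td x}\kappa(\td x')$. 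Base‑changing a building block $p\colon\fU\to\fS(a)$ gives an étale $p'\colon\fU'\to\fS(a)\hat\tensor_{k^\circ}k'^\circ$, and the latter is again an annulus of modulus $|a|$ whose skeleton is the interval $[0,\val(a)]$, the structure morphism restricting to the identity on skeletons; chasing the commutative square through $p,p'$ shows that $\pi_\eta$ carries each edge $e_{\td x'}$ isometrically onto $e_{\td x}$. Hence $\Sigma_\pi\coloneqq\pi_\eta|_{\Sigma_{\fX'}}$ is a surjective piecewise linear map with $d_{e'}(\Sigma_\pi)=1$ for every edge, and $d_e(\Sigma_\pi)=[\kappa(\td x):\td k]\inv\sum_{\td x'\mapsto\td x}[\kappa(\td x'):\td k']=1$ by separability of $\kappa(\td x)/\td k$, so once harmonicity is proved, $d(\Sigma_\pi)=1$. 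For the boundary, a component $R$ of $\fX_s$ is smooth, hence regular over the field $\td k$, so $R\tensor_{\td k}\td k'$ is a disjoint union of smooth connected components $\bar R'_j$; by base change and faithfully flat descent of properness along $R\tensor_{\td k}\td k'\to R$, a component $\bar R'_j$ is proper over $\td k'$ precisely when $R$ is proper over $\td k$, which yields $\Sigma_\pi\inv(\del\Sigma_\fX)=\del\Sigma_{\fX'}$. (Alternatively, invoke~\cite[3.1]{berkovic90:analytic_geometry} via the absence of relative boundary for $\pi_\eta$.)

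To prove that $\Sigma_\pi$ is harmonic I would first treat the case $\td k'$ algebraically closed. There each singular point $\td x$ splits into $[\kappa(\td x):\td k]$ points of weight $1$, and $R\tensor_{\td k}\td k'$ is the disjoint union of the $s$ geometric components $\bar R'_j$ of $R$, which are permuted transitively by the automorphisms of $\td k'/\td k$; since these automorphisms fix $\td x$ and act compatibly on the edges lying over $e_{\td x}$, the number of preimages of $\td x$ lying on a fixed $\bar R'_j$ is $[\kappa(\td x):\td k]/s$, independent of the singular point $\td x\in\bar R$. Substituting this into~\eqref{eq:dvprimephi} (using $\ell_0(e_{\td x})=\val(a)/[\kappa(\td x):\td k]$ and $\ell_0(e_{\td x'})=\val(a)$) shows $\Sigma_\pi$ is harmonic at the vertex corresponding to $\bar R'_j$, with local degree $1/s$. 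For a general extension $k'/k$, put $K=\widehat{\overline{k'}}$, a complete algebraically closed extension. Then $\Sigma_{\fX_K}\to\Sigma_\fX$ factors as $\Sigma_{\fX_K}\xrightarrow{\psi}\Sigma_{\fX'}\xrightarrow{\Sigma_\pi}\Sigma_\fX$; by the algebraically closed case applied to $K/k$ and to $K/k'$, the composite and $\psi$ are surjective harmonic, and $\psi(\del\Sigma_{\fX_K})\subseteq\del\Sigma_{\fX'}$ by the boundary part, so Lemma~\ref{lem:converse.to.harmonic.composition} gives that $\Sigma_\pi$ is harmonic. This reduction through an algebraically closed residue field is the step I expect to be the main obstacle, because over a general $k'$ the residue degrees at the double points are not uniform along a component and must be controlled along the components of the base‑changed special fiber.

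Part~(2) is local: over the smooth locus of $\fX_s$ both retractions collapse each residue disc to its vertex, and over a building block $p\colon\fU\to\fS(a)$ one has $\tau_\fX|_{\fU_\eta}=p\inv\circ\tau_{\fS(a)}\circ p_\eta$ and the analogous formula upstairs (Thuillier~\cite[Proposition~2.2.16]{thuillier05:thesis}), while $\fS(a)\hat\tensor_{k^\circ}k'^\circ\to\fS(a)$ commutes with the retractions onto $[0,\val(a)]$; gluing over building blocks yields $\Sigma_\pi\circ\tau_{\fX'}=\tau_\fX\circ\pi_\eta$. For parts~(3) and~(4) the combinatorics collapse: if the singular points of $\fX_s$ are $\td k$‑rational, or if $\td k'=\td k$, then every $\td x$ has a single preimage of the same residue degree, so weights and the lengths $\val(a)$ are preserved; and if moreover the components are geometrically irreducible (part~(3)), or $\td k'=\td k$ so that $\fX'_s\cong\fX_s$ (part~(4)), then $\pi_s$ is a bijection on generic points, so $\Sigma_\pi$ is a bijection on vertices and edges preserving incidence, lengths, weights, and boundary — hence a harmonic isomorphism by Remark~\ref{rem:harmonic.isomorphism}.

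Finally, for part~(5): since $k'/k$ is finite, $\td k'/\td k$ is algebraic, so $\rS_0(\fX')=\pi_\eta\inv(\rS_0(\fX))$ and Lemma~\ref{lem:skeleton.galois} applies, giving that $G$ stabilizes $\Sigma_{\fX'}$ and acts on it by harmonic maps preserving $\del\Sigma_{\fX'}$, with $\Sigma_\pi\circ\sigma=\Sigma_\pi$ since $\pi_\eta\circ\sigma=\pi_\eta$. Together with~(1), this verifies every condition of Definition~\ref{def:quotient.graph} except transitivity of $G$ on the fibers of $\Sigma_\pi$. For $x\in\Sigma_\fX$, $G$ acts transitively on $\pi_\eta\inv(x)$ by Galois descent (for a vertex this is $\sM(\sH(x)\hat\tensor_k k')$; for an interior edge point it is identified, via an étale building‑block morphism, with the fiber of $\fS(a)\hat\tensor_{k^\circ}k'^\circ\to\fS(a)$ over the corresponding point). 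Since $\Sigma_{\fX'}$ is $G$‑stable, the orbit $\pi_\eta\inv(x)$ is contained in or disjoint from $\Sigma_{\fX'}$, and it contains the evident preimages lying on the edges $e_{\td x'}$, so $\pi_\eta\inv(x)\subseteq\Sigma_{\fX'}$ and $G$ acts transitively on $\Sigma_\pi\inv(x)$. Hence $\Sigma_\pi$ identifies $\Sigma_\fX$ with the quotient graph $\Sigma_{\fX'}/G$.
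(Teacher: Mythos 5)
Your proposal is correct, and for most of the statement it follows the same path as the paper: the identification $\fX'_s=\fX_s\tensor_{\td k}\td k'$, the building-block/base-change argument giving $d_{e'}(\Sigma_\pi)=1$ and equal lengths, the count $\sum_{\td x'\mapsto\td x}[\kappa(\td x'):\td k']=\dim_{\td k'}\bigl(\kappa(\td x)\tensor_{\td k}\td k'\bigr)=[\kappa(\td x):\td k]$ giving $d_e(\Sigma_\pi)=1$, the boundary statement via descent of properness, the local reduction to a formal annulus for (2), the collapse in (3)--(4) via Remark~\ref{rem:harmonic.isomorphism}, and the surjectivity-plus-Galois-transitivity argument for (5). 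Where you genuinely diverge is the harmonicity of $\Sigma_\pi$ at interior vertices. The paper proves this directly for an arbitrary extension: it introduces the field of constants $k_1=\Gamma(C,\sO_C)$ of the component below $v'$, splits $k_1\tensor_{\td k}\td k'$ into fields $k_i'$, and computes $d_{v'}(\Sigma_\pi)=[k_1':\td k']/[k_1:\td k]$ by counting points of $\kappa(\td x)\tensor_{k_1}k_i'$, which gives the local degrees explicitly in one stroke. You instead verify harmonicity only when $\td k'$ is algebraically closed --- where all residue degrees upstairs are $1$ and transitivity of the automorphism group of $\td k'$ over $\td k$ on the geometric components of a connected component yields the uniform count $[\kappa(\td x):\td k]/s$, hence $d_{v'}=1/s$ --- and then deduce the general case by factoring $\Sigma_{\fX_K}\to\Sigma_{\fX'}\to\Sigma_\fX$ through $K=\widehat{\overline{k'}}$ and invoking Lemma~\ref{lem:converse.to.harmonic.composition}. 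This is a legitimate and somewhat cleaner alternative: it trades the paper's bookkeeping with fields of constants for the cancellation lemma, at the cost of not producing the local degree formula directly (which the proposition does not need). Two small points to tighten: in (5), the hypothesis $\rS_0(\fX')=\pi_\eta\inv(\rS_0(\fX))$ needed for Lemma~\ref{lem:skeleton.galois} follows not from ``$\td k'/\td k$ is algebraic'' but from the fact that the fiber of $\fX_s\tensor_{\td k}\td k'\to\fX_s$ over a generic point consists of generic points (which your set-up paragraph already contains); and in the algebraically closed case you should justify the transitivity of the automorphism action on the geometric components of a connected $R$ (Galois descent for the set of connected components, using that $\td k'$ contains an algebraic closure of $\td k$).
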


\begin{proof}
  It follows from the definition of the skeleton that $\pi_\eta(\Sigma_{\fX'})\subset \Sigma_\fX$. For brevity we write $\Sigma = \Sigma_\fX$, $\Sigma' = \Sigma_{\fX'}$, and $\phi = \Sigma_\pi\colon\Sigma'\to\Sigma$.  The morphism on special fibers $\pi_s\colon\fX'_s\to\fX_s$ identifies $\fX'_s$ with $\fX_s\tensor_{\td k}\td k'$.  It follows that the inverse image of an irreducible component of $\fX_s$ is a disjoint union of smooth curves in $\fX_s'$, that the inverse image of the smooth locus of $\fX_s$ is smooth, and that the inverse image of a singular point of $\fX_s$ is a finite set of singular points of $\fX_s'$.  Let $\td x'\in\fX_s'$ be a singular point with corresponding edge $e'\subset\Sigma'$, let $\td x = \pi_s(\td x')$, and let $e\subset\Sigma$ be the corresponding edge in $\Sigma$.  
  One shows as in the proof of Proposition~\ref{prop:length.well.defined} that $e'$ maps onto $e$ with expansion factor $d_{e'}(\phi) = 1$, see also~\cite[Proposition~2.2.21]{thuillier05:thesis}.  
  Recall that the points of $\del X = \del\Sigma$ are the vertices that reduce to generic points of non-proper irreducible components of $\fX_s$, and likewise for $\del\Sigma'$.  Since properness can be checked after extension of the ground field, it follows that $\phi\inv(\del\Sigma) = \del\Sigma'$.  In the situation of assertions~(3) and~(4), these considerations show that $\phi$ is an isomorphism, and in general they show that $\phi$ is a surjective map.

  We claim that $\phi$ is harmonic.  The proof involves some basic algebraic geometry of curves over non-algebraically closed fields, and is a mild generalization of the argument in~\cite[Proposition~2.2.21]{thuillier05:thesis}.  Let $C$ be a smooth, proper, connected $\td k$-curve, and let $C' = C\tensor_{\td k}\td k'$.  Since $C$ is geometrically reduced and proper over $\td k$, the field of global functions $k_1 = \Gamma(C,\sO_C)$ is a finite, separable extension of $\td k$, so $k_1\tensor_{\td k}\td k'$ is an \'etale $\td k'$-algebra which splits into a product of finite, separable extensions $k_i'$ of $\td k'$.  The curve $C$ is geometrically connected as a $k_1$-curve, so $C'$ is the disjoint union of the curves $C_i' = C\tensor_{k_1}k_i'$, and $k_i'$ is the field of global functions of $C_i'$ since $C_i'$ is geometrically connected as a $k_i'$-curve.  It follows that
  \begin{equation}\label{eq:total.degree.special.fiber}
    \sum_i [k_i':\td k'] = [k_1:\td k].
  \end{equation}
  Let $\td x\in C$ be a closed point.  The fiber of $C_i'\to C$ over $\td x$ is the spectrum of $\kappa(\td x)\tensor_{k_1}k_i'$, which is a $k_i'$-algebra of dimension $[\kappa(\td x):k_1]$.  Hence
  \begin{equation}\label{eq:local.degree.special.fiber}
    \dim_{\td k'}\bigl(\kappa(\td x)\tensor_{k_1}k_i'\bigr) = [\kappa(\td x):k_1][k_i':\td k']
  \end{equation}
  These are the facts we will use below.

  Let $v'\in\Sigma'$ be an interior vertex, let $v = \phi(v')$, and let $C'$ and $C$ be the corresponding (proper) irreducible components of $\fX'_s$ and $\fX_s$, respectively.  Let $k_1$ (resp.\ $k_1'$) be the field of global functions of $C$ (resp.\ $C'$).  Let $\td x\in C$ be a singular point of $\fX_s$.  Note that $\kappa(\td x)$ is a finite, separable extension of $\td k$, since $\fX_s$ admits an \'etale morphism to $\Spec(\td k[T_1,T_2]/(T_1T_2))$ in a neighborhood of $\td x$, with $\td x$ mapping to the singular point.  Hence $\kappa(\td x)$ is separable over $k_1$, so $\kappa(\td x)\tensor_{k_1}k_1'$ is an \'etale $k_1'$-algebra.  Equation~\eqref{eq:local.degree.special.fiber} then shows that
  \[ \sum_{\substack{\td x'\in C'\\\td x'\mapsto\td x}}[\kappa(\td x'):\td k'] = [\kappa(\td x):k_1]\,[k_1':\td k']. \]
  Let $e\subset\Sigma$ be the edge corresponding to $\td x$.  Recalling that all expansion factors are equal to one, the above equality yields
  \begin{equation}\label{eq:dv.phi.extend.scalars}
 d_{v'}(\phi) =  \sum_{\substack{e'^- = v'\\e'\mapsto e}}\frac{w(e')}{w(e)}d_{e'}(\phi)
    = \sum_{\substack{\td x'\in C'\\\td x'\mapsto\td x}}\frac{[\kappa(\td x'):\td k']}{[\kappa(\td x):\td k]}
    = \frac{[\kappa(\td x):k_1]}{[\kappa(\td x):\td k]}[k_1':\td k']
    = \frac{[k_1':\td k']}{[k_1:\td k]}.
  \end{equation}
  This quantity is independent of $\td x$ (and $e$), which proves harmonicity.  Summing over all $v'$ mapping to $v$ gives $d_e(\phi) = 1$ by~\eqref{eq:total.degree.special.fiber}.  This finishes the proof of~(1).

  Now we move on to assertion~(2).  The assertion is local on $\fX$, so we assume that there exists an \'etale morphism $p\colon\fX\to\fS(a)$ inducing an isomorphism on skeletons.  Let $\fS(a)' = \fS(a)\hat\tensor_{k^\circ}k'^\circ$, let $\pi'\colon\fS(a)'\to\fS(a)$ be the structure morphism, and let $p'\colon\fX'\to\fS(a)'$ be the induced \'etale morphism.  Then $p\circ\pi = \pi'\circ p'$ and $\pi'_\eta\circ\tau_{\fS(a)'} = \tau_{\fS(a)}\circ\pi'_\eta$ by construction, and $p'_\eta\circ\tau_{\fX'} = \tau_{\fS(a)'}\circ p'_\eta$ and $p_\eta\circ\tau_\fX = \tau_{\fS(a)}\circ p_\eta$ by~\cite[Proposition~2.2.16]{thuillier05:thesis}.  Now we calculate
  \[\begin{split}
      p_\eta\circ\pi_\eta\circ\tau_{\fX'}
      &= \pi'_\eta\circ p'_\eta\circ\tau_{\fX'}
      = \pi'_\eta\circ\tau_{\fS(a)'}\circ p'_\eta \\
      &= \tau_{\fS(a)}\circ\pi_\eta'\circ p'_\eta
      = \tau_{\fS(a)}\circ p_\eta\circ\pi_\eta
      = p_\eta\circ\tau_\fX\circ\pi_\eta.
    \end{split}\]
  Since $p_\eta$ is an isomorphism on skeletons and since $\pi_\eta(\Sigma')=\Sigma$, this shows that $\pi_\eta\circ\tau_{\fX'} = \tau_\fX\circ\pi_\eta$, and by definition we have $\pi_\eta\circ\tau_{\fX'} = \phi\circ\tau_{\fX'}$.

  It remains to prove~(5). It follows from Remark~\ref{rem:stratum face correspondence} and the considerations at the beginning that $\pi_\eta$ maps $S_0(\fX')$ onto $S_0(\fX)$ and that $S_0(\fX')=\pi_\eta\inv(S_0(\fX))$. 
  By Lemma~\ref{lem:skeleton.galois}, the Galois group $G$ acts by harmonic maps on $\Sigma'$. 
    By~\cite[Proposition~1.3.5(i)]{berkovic90:analytic_geometry}, the Galois group acts transitively on the fibers of $\pi_\eta\colon X'\to X$, so since $\Sigma'$ surjects onto $\Sigma$, this shows $\pi_\eta\inv(\Sigma) = \Sigma'$, and hence $G$ acts transitively on the fibers of $\phi$ as well.  We have already shown that $\phi\inv(\del\Sigma) = \del\Sigma'$, so we are done.
\end{proof}

\subsection{Functoriality With Respect to Morphisms of Curves}
In this section we discuss functoriality of skeletons with respect to a morphism of strictly semistable $k^\circ$-curves $f\colon\fX'\to\fX$, expanding on~\cite[Proposition~2.2.27]{thuillier05:thesis}.  We begin with a useful set-theoretic functoriality property.

\begin{prop}\label{prop:inverse.image.skeleton}
  Let $f\colon\fX'\to\fX$ be a morphism of strictly semistable $k^\circ$-curves.  Then $f_\eta\inv(\Sigma_{\fX})\subset\Sigma_{\fX'}$.
\end{prop}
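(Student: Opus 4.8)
The plan is to reduce the statement to a local computation near a point $x'\in\Sigma_{\fX'}$ with $f_\eta(x')\in\Sigma_\fX$, using the characterization of the skeleton from Section~\ref{sec:skeletons}: a point lies \emph{off} the skeleton precisely when it has a neighborhood that is potentially isomorphic to the unit disc and disjoint from $\rS_0(\fX)$. So I would argue by contradiction: suppose $f_\eta(x')\in\Sigma_\fX$ but $x'\notin\Sigma_{\fX'}$, and derive a contradiction by transporting a ``disc neighborhood'' of $x'$ forward.

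First I would set up the relevant formal-geometric picture. Since $x'\notin\Sigma_{\fX'}$, by~\cite[Corollaire~2.2.12]{thuillier05:thesis} there is an open neighborhood $W'\subset X'$ of $x'$ that is potentially isomorphic to an open unit disc and satisfies $W'\cap\rS_0(\fX')=\emptyset$. I want to show $f_\eta(x')\notin\Sigma_\fX$, i.e.\ produce such a neighborhood of $y\coloneq f_\eta(x')$ in $X$. The key structural input is the interaction of $f$ with the reduction maps: the morphism $f\colon\fX'\to\fX$ of formal $k^\circ$-schemes induces $f_s\colon\fX'_s\to\fX_s$ on special fibers with $\red_\fX\circ f_\eta = f_s\circ\red_{\fX'}$. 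Because $x'\notin\Sigma_{\fX'}$, the reduction $\red_{\fX'}(x')$ is not a generic point of $\fX'_s$ — but I need more: I need control over where $y$ reduces. The right tool is Remark~\ref{rem:stratum face correspondence} (the stratum--face correspondence), which says $\tau_{\fX'}^{-1}$ of the interior of an edge equals $\red_{\fX'}^{-1}$ of the corresponding singularity, and $\tau_{\fX'}^{-1}$ of a vertex equals $\red_{\fX'}^{-1}$ of an open component. Since $x'$ lies off the skeleton, $\tau_{\fX'}(x')$ lies in the interior of an edge or at a vertex, and $x'$ itself reduces into the smooth locus of a single irreducible component of $\fX'_s$ (the one over which the relevant branch of the skeleton sits); in particular, $\red_{\fX'}(x')$ is a smooth closed point of $\fX'_s$.

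Next, the heart of the matter. Using an \'etale morphism $p'\colon\fU'\to\fS(a')$ from a formal neighborhood of $x'$ — or rather a building block, after shrinking — I can describe a small formal open $\fV'\subset\fX'$ with $\fV'_\eta$ a disc or half-open annulus around $x'$, disjoint from $\rS_0(\fX')$. The claim I must establish is that $f$ carries this disc-like region to a disc-like region in $\fX$ disjoint from $\rS_0(\fX)$. This is where I expect the main obstacle: $f\colon\fX'\to\fX$ need not be finite or flat, and an arbitrary morphism of strictly semistable curves can contract or behave badly. However, $f_\eta$ is a morphism of rig-smooth curves, so on the generic fibers it is well-behaved; and the key point is that $f_\eta$ cannot send a disc onto a region meeting the skeleton of $\fX$, because a disc has no nontrivial harmonic functions with nonzero slope, whereas points of $\Sigma_\fX$ are detected by functions with nonzero slope along the skeleton (Proposition~\ref{prop:length.well.defined}). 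Concretely: if $f_\eta(x')=y\in\Sigma_\fX$ were an interior point of an edge $e$, then pulling back via $f_\eta$ a unit $u\in\Gamma(\fX_\eta,\sO)^\times$ with $-\log|u|$ nonconstant linear on $e$ (which exists by Proposition~\ref{prop:length.well.defined}) would give a unit on $W'$ whose valuation is nonconstant, contradicting that $W'$ is (potentially) a disc, on which every unit has constant absolute value. If instead $y$ is a vertex, a similar argument using functions on a neighborhood of $y$ that are nonconstant in every incident edge direction gives the contradiction — here one must be slightly careful and may pass to a finite separable extension $k''/k$ (harmless by Proposition~\ref{prop:skeleton.basechange}, and compatible with the ``potentially isomorphic to the disc'' hypothesis) so that $W'$ becomes an actual disc and $y$ has $\td k$-rational reduction data.

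Finally I would assemble: having shown that $f_\eta(x')\notin\Sigma_\fX$ whenever $x'\notin\Sigma_{\fX'}$, we conclude by contraposition that $f_\eta^{-1}(\Sigma_\fX)\subset\Sigma_{\fX'}$. I expect the bookkeeping around base change (to make the discs literal and reductions rational) and around the vertex case of Proposition~\ref{prop:length.well.defined} to be the only genuinely delicate parts; the rest is a direct application of Thuillier's characterization of the complement of the skeleton together with the ``no nonconstant units on a disc'' principle. An alternative, possibly cleaner, route is to cite~\cite[Proposition~2.2.27]{thuillier05:thesis} directly for the set-theoretic inclusion, but since the excerpt says this paper ``expands on'' that result, giving the self-contained argument above is preferable.
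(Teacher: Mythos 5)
Your overall strategy --- reduce to a building block so that the target is a formal annulus $\fS(a)$, and then show that a neighborhood of a point off $\Sigma_{\fX'}$ that is potentially isomorphic to the unit disc cannot hit $\Sigma_{\fS(a)}$ --- is exactly the paper's route (Lemma~\ref{lem:map.from.pot.unit.disc}). But the step you identify as ``the heart of the matter'' contains a genuine gap. You argue that pulling back a unit $u$ with $-\log|u|$ nonconstant linear on the edge $e$ gives a unit on the disc $W'$ with nonconstant valuation. It does not: every unit on $S(a)$ has the form $u=\alpha t^n(1+\epsilon)$ with $|\epsilon|_{\sup}<1$, so $|u(z)|=|\alpha|\,|t(z)|^n$ depends only on $\tau_{\fS(a)}(z)$; and since $t\circ f_\eta$ is a unit on the (potential) disc $W'$, its absolute value is constant there, whence $|u\circ f_\eta|$ is constant on $W'$ as well. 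No contradiction arises. What this argument actually proves is only that $f_\eta(W')$ is contained in a single fiber $\tau_{\fS(a)}\inv(\eta_\rho)$ of the retraction --- but that fiber \emph{contains} the skeletal point $\eta_\rho$ itself, and absolute values of units cannot distinguish $\eta_\rho$ from the other points of its fiber. Excluding $\eta_\rho$ from the image is precisely the content of the lemma, and it requires a non-unit: the paper takes the minimal polynomial $p$ of $\alpha$ over $k$ and shows $|p(\pi\circ g(x))|<\max_i|b_i|\rho^i=\eta_\rho(p)$ for all $x$ in the disc, so $\eta_\rho$ is not in the image. (This also disposes of the issue you flag about $\rho$ possibly lying outside $|k^\times|$: the estimate works for any $\rho\in[|a|,1]$, and every such $\eta_\rho$ lies on $\Sigma_{\fS(a)}$, so rationality of $\rho$ is not the obstruction.)

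Two smaller points. First, the separate ``vertex case'' you sketch is unnecessary once the reduction to building blocks is done properly: every point of $\Sigma_\fX$, vertices included, lies in $p_\eta\inv(\Sigma_{\fS(a)})$ for some building block (with $|a|=1$ for isolated vertices), so the annulus case covers everything. Second, the proposed fallback of citing~\cite[Proposition~2.2.27]{thuillier05:thesis} does not directly give the set-theoretic inclusion $f_\eta\inv(\Sigma_\fX)\subset\Sigma_{\fX'}$ for an arbitrary (not necessarily finite) morphism of models, which is why the paper proves Lemma~\ref{lem:map.from.pot.unit.disc} from scratch.
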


The proof consists mostly of the following elementary lemma.

\begin{lem}\label{lem:map.from.pot.unit.disc}
  Let $U$ be a $k$-affinoid space that is potentially isomorphic to the unit disc, let $a\in k^\circ\setminus\{0\}$, and let $f\colon U\to S(a)$ be a morphism.  Then $f(U)\cap\Sigma_{\fS(a)} = \emptyset$.
\end{lem}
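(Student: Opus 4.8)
The plan is to reduce to the case where $k$ is algebraically closed and then use the structure theory of analytic curves over an algebraically closed field, where the distinction between the skeleton and its complement is governed by whether a point has a disc-like neighborhood. Let me sketch this.

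First I would base change to a complete algebraically closed field $K$ containing $k$. By definition of ``potentially isomorphic to the unit disc,'' after a finite separable extension $U_{K}$ becomes a disjoint union of unit discs; further base change to $K$ preserves this. The skeleton of a formal annulus is compatible with base change in the sense that $\Sigma_{\fS(a)_K}$ maps onto $\Sigma_{\fS(a)}$ under the projection $S(a)_K\to S(a)$ (indeed, by Proposition~\ref{prop:skeleton.basechange}, the retraction and skeleton behave functorially under extension of scalars, and $d_{e'}=1$ on the annulus edge). Hence $f(U)\cap\Sigma_{\fS(a)}=\emptyset$ would follow from $f_K(U_K)\cap\Sigma_{\fS(a)_K}=\emptyset$: a point $x\in U$ with $f(x)\in\Sigma_{\fS(a)}$ would lift to a point $x_K\in U_K$ mapping to a point of $\Sigma_{\fS(a)_K}$ in the fiber over $f(x)$ (using surjectivity of the projection on skeletons and the fact that the projection $S(a)_K\to S(a)$ is surjective). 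So it suffices to treat the case $K=k$ algebraically closed, with $U$ literally a disjoint union of closed unit discs.

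In that case, pick any connected component, so $U$ is the closed unit disc $\mathbf{E}=\mathcal{M}(k\{t\})$, and $f\colon\mathbf{E}\to S(a)$ is given by an analytic function $g=f^*(s)$ where $s$ is the standard coordinate on $S(a)$ with $|a|\le|s|\le 1$. I want to show that no point of $\mathbf{E}$ maps into the skeleton segment $\Sigma_{\fS(a)}=\{\eta_\rho:|a|\le\rho\le 1\}$, where $\eta_\rho$ is the point defined by~\eqref{eq:eta_rho}. The key classical fact is that for a power series $g=\sum c_n t^n\in k\{t\}$ and a point $x\in\mathbf{E}$, one has $|g(x)|\le\max_n|c_n|=|g|_{\sup}\le 1$, and moreover $|g(x)|$ equals the sup-norm only when $x$ is the Gauss point of a sub-disc on which all the dominant monomials stay in balance; in any event, on the open unit disc the function $|g|$ is governed by the Newton polygon and the values taken by $g$ on the skeleton points $\eta_\rho$ of $S(a)$ force a strict monotonicity that a disc cannot produce. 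Concretely: the points $\eta_\rho$ with $\rho<1$ are exactly the points of $S(a)$ that are \emph{not} in the image of any morphism from a disc, because such $\eta_\rho$ ``remembers'' a loop — it separates $S(a)$ into two pieces, while the image of $\mathbf{E}$ under an analytic map is a connected set with trivial first Betti number (in fact it is itself a ``subtree'' / the image of a disc is again a disc or a point by the structure theory, e.g.\ \cite[Proposition~2.10]{baker_payne_rabinoff13:analytic_curves} style arguments). I would make this precise by the following argument: the composite $-\log|s\circ f|\colon\mathbf{E}\to\R$ is a continuous function; on the skeleton of $S(a)$ the coordinate function $s$ satisfies $-\log|s(\eta_\rho)|=-\log\rho\in[0,\val(a)]$, and $\tau_{\fS(a)}$ is the level-set retraction $x\mapsto\eta_{|s(x)|}$. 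If some $x\in\mathbf{E}$ had $f(x)=\eta_\rho$ with $|a|<\rho<1$, then in particular $0<-\log|s(f(x))|<\val(a)$, so $g=s\circ f$ is a unit on a neighborhood of $x$ in $\mathbf{E}$ with $0<|g(x)|<1$; but then I can use that on the disc $|g|$ attains an interior value strictly between $0$ and $1$ only along a sub-disc boundary, and such a sub-disc maps to a \emph{disc} inside $S(a)$, which cannot contain the annulus point $\eta_\rho$ — contradiction. The cleanest formulation avoiding case analysis is: $f(\mathbf{E})$ is a connected analytic subset of $S(a)$ with no loop, so $\tau_{\fS(a)}(f(\mathbf{E}))$ is a connected subgraph of the segment $\Sigma_{\fS(a)}$ that does not contain an interior point whose removal disconnects it from both endpoints of the edge; equivalently, since $\mathbf{E}$ has no boundary points mapping to interior points by Definition~\ref{def:morphism}(3) and the disc is ``boundaryless inside'' $S(a)$, the image misses the interior of the edge except possibly... — I would instead just invoke the explicit description: the complement of $\Sigma_{\fS(a)}$ in $S(a)$ is the set of points admitting a disc neighborhood disjoint from $\rS_0(\fS(a))$ (by \cite[Corollaire~2.2.12]{thuillier05:thesis}), and every point of $\mathbf{E}$ has such a neighborhood (its image lands in a disc, avoiding the generic point of the special fiber), hence $f(U)\subset S(a)\setminus\Sigma_{\fS(a)}$.

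The main obstacle I expect is making rigorous the claim that the image of a disc under a morphism to $S(a)$ avoids the skeleton — i.e.\ that a disc cannot ``wrap around'' the annulus. Over an algebraically closed field this is essentially the statement that $\mathcal{O}(\mathbf{E})=k\{t\}$ has no units with image hitting the interior skeleton in a way incompatible with the Newton-polygon description of $|g|$ on the disc, or cleanly, that the image of $\mathbf{E}$ is again a disc or point (so simply connected, so retracts to an endpoint-neighborhood of the segment rather than covering its interior). I would handle this by the neighborhood characterization of $S(a)\setminus\Sigma_{\fS(a)}$ from \cite[Corollaire~2.2.12]{thuillier05:thesis}: for $x\in\mathbf{E}$, the function $g=s\circ f$ on $\mathbf{E}$ has $|g|_{\sup}\le 1$, and a small affinoid neighborhood $W$ of $x$ in $\mathbf{E}$ is again (potentially, indeed actually over $K$) a disc, on which $g$ is analytic; the image $f(W)$ is contained in a disc in $S(a)$ — either $\{|s|<1\}$ deformation-retracting to one end, or an open sub-disc interior to $S(a)$ — in all cases $f(W)$ is disjoint from $\rS_0(\fS(a))$ (which reduces to the generic point of the special fiber), so $f(x)\notin\Sigma_{\fS(a)}$. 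Thus $f(U)\cap\Sigma_{\fS(a)}=\emptyset$, which combined with the base-change reduction completes the proof.
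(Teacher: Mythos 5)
Your proposal never actually closes the central step, and the argument you settle on at the end is circular. The fallback via Thuillier's Corollaire~2.2.12 confuses a neighborhood of $x$ in the \emph{source} with a neighborhood of $f(x)$ in the \emph{target}: the characterization of $S(a)\setminus\Sigma_{\fS(a)}$ requires that $f(x)$ admit a neighborhood \emph{in $S(a)$} that is potentially a disc and disjoint from $\rS_0(\fS(a))$, and the image $f(W)$ of a disc neighborhood $W$ of $x$ is in general not a neighborhood of $f(x)$ (the map $\mathbf E\to S(a)$ is not open; its image is a closed disc). Knowing that $f(W)$ is \emph{contained in} some disc tells you nothing unless you already know that disc contains a neighborhood of $f(x)$ avoiding the skeleton --- which is exactly what you are trying to prove. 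The earlier sketches (Newton polygons, ``$\eta_\rho$ remembers a loop'', image of a disc is a disc) do contain the germ of a correct argument, but the decisive inequality is never established. What is needed, and what the paper proves, is this: $F=g^*(t)$ is a \emph{unit} of the Tate algebra, hence $F=\alpha(1+\epsilon)$ with $|\epsilon|_{\sup}<1$, so every image point $y$ satisfies $|t(y)|=\rho:=|\alpha|$ while $|q(y)|<\eta_\rho(q)$ for a suitable test function $q$ vanishing at $\alpha$; since $\eta_\rho$ is a seminorm and these evaluations distinguish $y$ from $\eta_\rho$, the image misses the skeleton. Over an algebraically closed field one can take $q=t-\alpha$; your write-up gestures at the containment ``image $\subset$ a disc of radius $<\rho$ about $\alpha$'' but does not verify it or explain why such a disc excludes $\eta_\rho$.

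Your reduction to an algebraically closed field $K$ also has a gap, and it is precisely the point the paper's minimal-polynomial argument is designed to handle. If $f(x)\in\Sigma_{\fS(a)}$, an arbitrary lift $x_K$ of $x$ only gives a point $f_K(x_K)$ lying in the fiber of $S(a)_K\to S(a)$ over $f(x)$; that fiber contains many points \emph{not} on $\Sigma_{\fS(a)_K}$ (e.g.\ type-1 points with generic reduction when $f(x)$ has type~2), so ``surjectivity of the projection on skeletons'' does not produce a point of $U_K$ landing on $\Sigma_{\fS(a)_K}$. One can repair this with a fiber-product argument ($f_K(U_K)\supset\pi\inv(f(U))$), but as written the reduction does not go through. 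The paper avoids base change altogether: it stays over the finite separable extension $k'$ furnished by the definition of ``potentially isomorphic to the unit disc'', writes $F=\alpha(1+\epsilon)$ with $\alpha\in k'^\times$, and uses the minimal polynomial $p$ of $\alpha$ over $k$ as the test function, computing $|p(\pi\circ g(x))|=|p(\alpha+\alpha\epsilon(x))-p(\alpha)|<\eta_\rho(p)$ term by term. That descent step --- distinguishing image points from the $k$-rational skeleton point $\eta_\rho$ when $\alpha$ is only defined over $k'$ --- is the part of the proof your proposal does not address.
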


\begin{proof}
  To say that $U$ is potentially isomorphic to the unit disc means that there exists a finite, separable extension $k'/k$ such that $U\tensor_k k'$ is isomorphic to a disjoint union of copies of the unit disc $B = \sM(k'\angles T)$.  Let $S(a)_{k'} = S(a)_\eta\tensor_k k'$, and let $\pi\colon S(a)_{k'}\to S(a)$ denote the structure morphism.  We have a commutative square
  \[ \xymatrix @R=.3in{
      {\llap{$\Djunion B\cong$ } U\tensor_k k'} \ar[r]^{f_{k'}} \ar@{->>}[d] & {S(a)_{k'}} \ar[d]^\pi \\
      {U} \ar[r]_(.4)f & {S(a)_\eta,}
    } \]
  so it suffices to show that if $g\colon B\to S(a)_{k'}$ is a morphism, then $\pi\circ g(B)\cap\Sigma_{\fS(a)} = \emptyset$.

  We identify $S(a)$ with the affinoid subdomain $\{x\colon|a|\leq|t(x)|\leq 1\}$ of the affine line $\A^{1,\an} = \Spec(k[t])^\an$, and likewise with $S(a)_{k'}$.  Let $F = g^*(t) = (\pi\circ g)^*(t)\in k'\angles{T}$.  Since $t$ is a unit on $S(a)_{k'}$, we have $F\in k'\angles{T}^\times$, so we can write $F = \alpha(1+\epsilon)$ for $\alpha\in k'^\times$ and $\epsilon\in k'\angles T$ with $|\epsilon|_{\sup} < 1$.  Set $\rho = |\alpha|$.  We have $|t(\pi\circ g(x))| = |t(g(x))| = |F(x)| = |\alpha| = \rho$ for all $x\in B$, so $\pi\circ g(B)\subset\tau_{\fS(a)}\inv(\eta_\rho)$, where $\eta_\rho(\sum a_it^i) = \max\{|a_i|\,\rho^i\}$ is the point on the skeleton~\eqref{eq:eta_rho}.  Hence it suffices to show that $\eta_\rho\notin\pi\circ g(B)$.

  Let $p\in k[t]$ be the (monic) minimal polynomial of $\alpha$, and let $n = \deg(p)$.  For $x\in B$, we have
  \[ \bigl|p(\pi\circ g(x))\bigr| = \bigl|p(F(x))\bigr|
    = \bigl|p(\alpha + \alpha \epsilon(x))\bigr|
    = \bigl|p(\alpha+\alpha\epsilon(x))-p(\alpha)\bigr|. \]
  Writing $p(t) = b_0 + b_1t + \cdots + b_{n-1}t^{n-1} + t^n$, we have
  \[ p(\alpha+\alpha\epsilon) - p(\alpha) = \sum_{i=1}^n b_i\alpha^i\sum_{j=1}^i\binom ij\epsilon^j. \]
  Since $|\epsilon(x)| < 1$, each outer summand has absolute value less than $|b_i\alpha^i| = |b_i|\rho^i$ when evaluated at $x$, so $|p(\pi\circ g(x))| < \max\{|b_i|\rho^i\colon i=0,\ldots,n\} = \eta_\rho(p)$.  Hence $\pi\circ g(x)\neq\eta_\rho$, so $\eta_\rho\notin \pi\circ g(B)$.
\end{proof}

\begin{proof}[Proof of Proposition~\ref{prop:inverse.image.skeleton}]
  Let $\fU\subset\fX$ be a formal affine open admitting an \'etale morphism $p\colon\fU\to\fS(a) = \Spf(k^\circ\{T_0,T_1\}/(T_0T_1-a))$ for some $a\in k^\circ\setminus\{0\}$ and  $\fU' \coloneqq  f\inv(\fU)$.  Then $\Sigma_\fX\cap\fU_\eta = \Sigma_\fU = p_\eta\inv(\Sigma_{\fS(a)})$ and $\Sigma_{\fX'}\cap\fU'_\eta = \Sigma_{\fU'}$ by~\cite[Th\'eor\`eme~2.2.10(ii), Lemme~2.2.14]{thuillier05:thesis}, so we may replace $\fX$ by $\fS(a)$ and $\fX'$ by $\fU'$ to assume $\fX = \fS(a)$ is a formal annulus.

  Let $x'\in \fX'_\eta$, and assume that $x'\notin\Sigma_{\fX'}$.  We must show that $x = f_\eta(x')\notin\Sigma_{\fS(a)}$.  This is immediate from Lemma~\ref{lem:map.from.pot.unit.disc} since $x'$ admits an affinoid neighborhood that is potentially isomorphic to the unit disc.
\end{proof}

We will use the following elementary lemma.

\begin{lem}\label{lem:totally.ramified}
	Let $r\in\sqrt\Gamma$.  There exists a finite, separable extension $k'/k$ such that $r\in\val(k'^\times)$ and $\td k' = \td k$.
\end{lem}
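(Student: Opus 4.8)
The plan is to reduce to the case where the least positive integer $n$ with $nr\in\Gamma=\val(k^\times)$ is prime, and then to build the extension by hand: by Kummer theory in the tame cases and by Artin--Schreier theory in the single wild case. Recall that $r\in\sqrt\Gamma$ means exactly that $nr\in\Gamma$ for some $n\geq1$. I would induct on the least such $n$. If $n=1$ then $r\in\Gamma$ and $k'=k$ works. If $n>1$, pick a prime $\ell\mid n$ and set $s=(n/\ell)\,r$; minimality of $n$ forces $\ell$ to be the least positive integer with $\ell s\in\Gamma$ (if $js\in\Gamma$ with $1\leq j<\ell$ then $j(n/\ell)\,r\in\Gamma$ and $j(n/\ell)<n$). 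In particular $s\notin\Gamma$, and writing $\ell s=\val(a)$ for $a\in k^\times$ we have $a\notin(k^\times)^\ell$, since otherwise $s=\val(a)/\ell\in\Gamma$. Granting the prime case below, there is a finite separable $k_1/k$ with $s\in\val(k_1^\times)$ and $\td{k_1}=\td k$; since $(n/\ell)\,r=s\in\val(k_1^\times)$, the least positive integer with a multiple of $r$ in $\val(k_1^\times)$ divides $n/\ell<n$, so the induction hypothesis applied to $(k_1,r)$ yields a finite separable $k'/k_1$ with $r\in\val(k'^\times)$ and $\td{k'}=\td{k_1}=\td k$, and $k'/k$ is then finite separable with the desired properties.

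For the prime case, note that since $k$ is complete the valuation extends uniquely to every algebraic extension of $k$. Suppose first $\ell\neq\operatorname{char}(k)$ (this includes every field of characteristic $0$). Let $b$ be a root of $T^\ell-a$ and put $k'=k(b)$: since $\ell$ is prime and $a\notin(k^\times)^\ell$, the polynomial $T^\ell-a$ is irreducible, and it is separable because $\ell\neq\operatorname{char}(k)$, so $k'/k$ is finite separable of degree $\ell$; from $\ell\,\val(b)=\val(a)=\ell s$ we get $\val(b)=s$, so $s\in\val(k'^\times)$. Suppose instead $\ell=\operatorname{char}(k)=p$, so that $T^\ell-a$ is inseparable; here I would take the Artin--Schreier extension $k'=k(y)$ defined by $y^p-y=a^{-1}$. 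A root $y$ must satisfy $\val(y)<0$ (otherwise $\val(y^p-y)\geq0>\val(a^{-1})$), hence $p\,\val(y)=\val(y^p-y)=\val(a^{-1})=-\ell s$, so $\val(y)=-s$ and $s\in\val(k'^\times)$. Moreover $a^{-1}\notin\wp(k):=\{z^p-z:z\in k\}$, for a solution $z$ would satisfy $\val(z)=-s\notin\Gamma$; thus $T^p-T-a^{-1}$ is irreducible and $k'/k$ is Galois of degree $p$, in particular separable.

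Finally one must check $\td{k'}=\td k$ for both constructions. In the Kummer case the classes of $0,s,2s,\dots,(\ell-1)s$ in $\val(k'^\times)/\Gamma$ are pairwise distinct, so when an element of $k'$ is expanded in the $k$-basis $1,b,\dots,b^{\ell-1}$ its valuation is attained by a unique monomial; it follows that $\val(k'^\times)=\Gamma+\Z s$ and that any element of valuation $0$ is congruent modulo $k'^{\circ\circ}$ to a scalar in $k^\circ$, whence $\td{k'}=\td k$. (Equivalently, one combines the fundamental inequality $[\val(k'^\times):\Gamma]\cdot[\td{k'}:\td k]\leq[k':k]$ with $[\val(k'^\times):\Gamma]\geq\ell$.) The Artin--Schreier case is identical, using the basis $1,y,\dots,y^{p-1}$. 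The main obstacle is precisely the equal-characteristic-$p$ step: there $k(a^{1/p})$ is purely inseparable, so the Kummer construction is unavailable and has to be replaced by an Artin--Schreier extension, and the reduction to prime denominator is what isolates this single wild step so that everything else follows by elementary field theory.
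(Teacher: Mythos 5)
Your proof is correct, but it takes a genuinely different route from the one in the paper. The paper produces the extension in a single step: it takes $n$ minimal with $nr\in\Gamma$, picks $\lambda$ with $\val(\lambda)=nr$, and adjoins a root of $t^n-\lambda$ (or of the trinomial $t^n+\lambda t-\lambda$ when $\chr(k)=p$ divides $n$, to restore separability); a Newton polygon argument shows all roots have valuation $r$, whence irreducibility (a proper factor would have constant term of valuation $jr\notin\Gamma$ for some $0<j<n$), and then the inequality $[\val(k'^\times):\Gamma]\cdot[\td k':\td k]\le[k':k]$ forces $\td k'=\td k$. You instead reduce to prime denominator by induction and build a tower of prime-degree extensions, using the standard irreducibility criterion for $T^\ell-a$ with $\ell$ prime in the tame cases and an Artin--Schreier extension $y^p-y=a^{-1}$ in the single wild case $\ell=\chr(k)=p$; your valuation computations ($\val(b)=s$, resp.\ $\val(y)=-s$) and the concluding ramification/residue-degree count are all sound, and your final step is the same fundamental inequality the paper invokes. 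What the paper's argument buys is brevity and a single extension of degree exactly $n$, with the trinomial trick and the Newton polygon doing the work of both irreducibility and root valuations at once; what yours buys is the avoidance of Newton polygons in favour of textbook Kummer and Artin--Schreier theory, and a clean isolation of the one genuinely wild step. Either argument is acceptable.
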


\begin{proof}
	Let $n$ be the smallest positive integer such that $nr\in\Gamma$.  If $n=1$ then there is nothing to do.  Otherwise, choose $\lambda\in k^\times$ with $\val(\lambda) = nr$, and consider the polynomial $f(t) = t^n - \lambda\in k[t]$.  If $\chr(k) = p > 0$ and $p\mid n$ then $f$ is not separable; in this case, we use $f(t) = t^n + \lambda t - \lambda$, which is separable.  In either case, by a Newton polygon argument we see that all roots of $f$ (in an algebraic closure of $k$) have valuation $r$, so any product of a proper subset of the roots does not have valuation contained in $\Gamma$, and hence $f$ is irreducible over $k$.  Let $k' = k[t]/(f)$.  Then $[k':k] = n$ and $[|k'^\times|:|k^\times|] \geq n$, so $\td k'=\td k$ by~\cite[Proposition~3.6.2/5]{bosch_guntzer_remmert84:non_archimed_analysis}.
\end{proof}

\begin{lem} \label{lem:subdivison of skeletons and morphism}
Let $f\colon\fY\to\fX$ be a morphism of strictly semistable $k^\circ$-curves.  Define $\Sigma_f\colon\Sigma_{\fY}\to\Sigma_\fX$ by $\Sigma_f = \tau_\fX\circ f_\eta|_{\Sigma_{\fY}}$.  There is a subdivision $\Sigma_\fX'$ of $\Sigma_\fX$ with $\Gamma$-rational vertices and a subdivision $\Sigma_\fY'$ of $\Sigma_\fY$ with $\sqrt{\Gamma}$-rational vertices such that  $\Sigma_f$ induces a piecewise linear map $\Sigma_\fY' \to \Sigma_\fX$ of weighted metric graphs with boundary and such that
$$\Sigma_f\circ\tau_{\fY} = \tau_\fX\circ f_\eta.$$
Any edge of $\Sigma_\fY'$ is mapped either linearly onto an edge of $\Sigma_\fX'$ with integral expansion factor or to a vertex of $\Sigma_\fX'$.
\end{lem}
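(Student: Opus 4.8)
The plan is to proceed in four stages: establish the retraction identity $\Sigma_f\circ\tau_{\fY}=\tau_\fX\circ f_\eta$; show that $\Sigma_f$ is piecewise $\Z$-linear using the \'etale-local structure of $\fX$ and Proposition~\ref{prop:length.well.defined}; choose the two subdivisions so that the vertices have the required rationality; and finally verify condition~(3) of Definition~\ref{def:morphism}. For the first stage, fix $x\in\fY_\eta$; the identity $\tau_\fX(f_\eta(\tau_{\fY}(x)))=\tau_\fX(f_\eta(x))$ is trivial when $x\in\Sigma_{\fY}$, so suppose $x\notin\Sigma_{\fY}$ and let $D$ be its connected component in $\fY_\eta\setminus\Sigma_{\fY}$. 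By the structure theory of skeletons, $D$ is an open disc whose closure adds exactly its root $\tau_{\fY}(x)$, and $\tau_{\fY}$ collapses $\overline D$ to that root. By Proposition~\ref{prop:inverse.image.skeleton} we have $f_\eta(D)\subset\fX_\eta\setminus\Sigma_\fX$; since $\fX_\eta\setminus\Sigma_\fX$ is a disjoint union of open discs, each collapsed by $\tau_\fX$ to its root, the connected set $f_\eta(D)$ is sent by $\tau_\fX$ to a single point, hence so is $\overline D\ni\tau_{\fY}(x)$ by continuity. This gives the identity, and in particular shows $\Sigma_f$ is a well-defined continuous map $\Sigma_{\fY}\to\Sigma_\fX$.

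Next I would cover $\fX$ by finitely many formal affine opens $\fU_i$ equipped with \'etale morphisms $p_i\colon\fU_i\to\fS(a_i)$ and coordinate $t_i$ on $\fS(a_i)_\eta$. By \cite[Proposition~2.2.16]{thuillier05:thesis} the retraction $\tau_\fX$ is compatible with $p_i$, so for $y\in\fU_{i,\eta}$ the arc-length coordinate of $\tau_\fX(y)$ along its edge equals $-\log|t_i(p_{i,\eta}(y))|$. Let $\epsilon$ be an edge of $\Sigma_{\fY}$, the skeleton edge of a building block of $\fY$. For $v\in\epsilon$ with $f_\eta(v)\in\fU_{i,\eta}$, the pullback $F_i=f_\eta^*p_{i,\eta}^*t_i$ is a unit near $v$, and $\Sigma_f$ is given in arc-length coordinates by $-\log|F_i|$; by the local form of Proposition~\ref{prop:length.well.defined} (whose proof is already local along the edge) this is affine with integer slope on every maximal subinterval of $\epsilon$ that $\Sigma_f$ maps into a single edge of $\Sigma_\fX$, the endpoints of these subintervals lying over the vertices of $\Sigma_\fX$ — away from those, $f_\eta(v)$ reduces into the node of its own building block, where a single coordinate is available on a whole neighborhood. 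Thus $\Sigma_f$ is piecewise $\Z$-linear with break points mapping into $V(\Sigma_\fX)$. Moreover, by Lemma~\ref{lem:formal.analytic.variety} one has $|\sH(f_\eta(v))^\times|\subset|\sH(v)^\times|=|k^\times|$ for $v\in\rS_0(\fY)$, so $\Sigma_f$ sends vertices of $\Sigma_{\fY}$ to $\Gamma$-rational points of $\Sigma_\fX$.

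I would then take $\Sigma_\fX'$ to be the subdivision of $\Sigma_\fX$ with vertex set $V(\Sigma_\fX)\cup\Sigma_f(V(\Sigma_{\fY}))$, which is $\Gamma$-rational, and obtain $\Sigma_{\fY}'$ by successively adjoining to $\Sigma_{\fY}$ the break points of $\Sigma_f$ and then the preimages under $\Sigma_f$ of $V(\Sigma_\fX')$; at each step these new vertices lie at arc-lengths of the form (a $\Gamma$-rational or $\sqrt\Gamma$-rational length difference) divided by an integer slope, hence in $\sqrt\Gamma$. By construction each edge of $\Sigma_{\fY}'$ then maps either to a vertex of $\Sigma_\fX'$ or affinely and homeomorphically onto an edge of $\Sigma_\fX'$ with integral expansion factor, which gives conditions (1), (2), (4) of Definition~\ref{def:morphism}, with condition (4) identifying the expansion factor with the integer slope. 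For condition (3) I would argue via $\red_\fX\circ f_\eta=f_s\circ\red_\fY$, the stratum--face correspondence of Remark~\ref{rem:stratum face correspondence}, and the fact that $\tau_\fX$ collapses the residue disc of a smooth non-nodal point of $\fX_s$ to the vertex of the component through it: if $\Sigma_f(v')\in\del\Sigma_\fX=\del X$ while $v'$ reduces to a proper component $R$ of $\fY_s$ (equivalently $v'\notin\del\Sigma_{\fY}$), then $f_s$ sends $R$ either to the generic point of a proper component of $\fX_s$ — forcing $\Sigma_f(v')$ to be a non-boundary vertex — or to a closed point of $\fX_s$, whose residue disc then absorbs the building blocks at $v'$, so that $\Sigma_f$ is constant near $v'$; a similar reduction argument handles $v'$ in the interior of an edge.

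I expect the main obstacle to be the rationality bookkeeping of the third stage together with condition (3): one must verify carefully that the images of vertices of $\Sigma_{\fY}$ and the break points of $\Sigma_f$ actually lie in $\Gamma$ (using that $\rS_0$-points have value group $|k^\times|$), that dividing a $\Gamma$-rational length difference by an integral expansion factor yields an element of $\sqrt\Gamma$ and not of a larger group, and that the Berkovich boundary interacts correctly with the construction through the reduction map. By contrast the retraction identity is an immediate consequence of Proposition~\ref{prop:inverse.image.skeleton}, and the integral piecewise linearity of $\Sigma_f$ is essentially a local restatement of Proposition~\ref{prop:length.well.defined}.
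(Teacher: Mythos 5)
Your proof is correct, but it takes a substantially more self-contained route than the paper's. The paper reduces to connected components, disposes of the constant case, invokes \cite[3.5.8]{ducros14:structur_des_courbes_analytiq} to get finite fibers, and then cites \cite[Proposition~2.2.27]{thuillier05:thesis} ``and its proof'' for everything except condition~(3) of Definition~\ref{def:morphism} --- the retraction identity, the existence of the subdivisions, and the integral piecewise linearity are all outsourced --- so the only argument actually written out is essentially the one you give last: if $\Sigma_f$ is non-constant near a vertex $w$ with $\Sigma_f(w)=v\in\del\Sigma_\fX$, then $f_s$ induces a dominant morphism $D\to C$ of the corresponding components, and $C$ affine forces $D$ affine, i.e.\ $w\in\del\Sigma_\fY$. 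You instead rebuild Thuillier's statement from ingredients internal to the paper (Proposition~\ref{prop:inverse.image.skeleton} plus the collapsing of complementary components for the retraction identity, the slope formula behind Proposition~\ref{prop:length.well.defined} for integral linearity, Lemma~\ref{lem:formal.analytic.variety} for $\Gamma$-rationality of images of vertices). This buys independence from the external reference at the cost of length, and your condition-(3) argument coincides with the paper's up to a contrapositive reformulation.

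Two spots in your sketch are looser than they should be, though neither is fatal. First, Proposition~\ref{prop:length.well.defined} is stated for global units on $\fX_\eta$; what you need is the slope formula for a unit on an open annulus, which is what its proof actually uses (via \cite[Proposition~2.10]{baker_payne_rabinoff13:analytic_curves}) --- cite that directly rather than a ``local form'' of the proposition. Second, the bookkeeping of ``maximal subintervals'' and ``break points'' along an edge $\epsilon$ of $\Sigma_\fY$, and the implicit assumption that there are only finitely many of them, is unnecessary: since $\tau_\fY^{-1}(\mathring{\epsilon})=\red_\fY^{-1}(\td y)$ for the node $\td y$ corresponding to $\epsilon$, compatibility of reduction with $f$ gives $f_\eta(\tau_\fY^{-1}(\mathring{\epsilon}))\subset\red_\fX^{-1}(f_s(\td y))$, so by Remark~\ref{rem:stratum face correspondence} the whole open edge $\mathring{\epsilon}$ is carried by $\Sigma_f$ into a single open edge or a single vertex of $\Sigma_\fX$. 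There are no interior break points, the relevant coordinate $F_i$ is a unit on the entire annulus $\tau_\fY^{-1}(\mathring{\epsilon})$, and your subdivision only needs to adjoin $\Sigma_f(V(\Sigma_\fY))$ to $V(\Sigma_\fX)$ and the $\Sigma_f$-preimages of the new vertices to $V(\Sigma_\fY)$; the $\sqrt\Gamma$-rationality computation you give then goes through as written.
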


\begin{proof} 
By restriction to connected components, we may assume that $\fX_\eta$ and $\fY_\eta$ are connected. Then $f_\eta$ is either constant or has finite fibers~\cite[3.5.8]{ducros14:structur_des_courbes_analytiq}. If $f_\eta$ is constant, then the claim is obvious and so we may assume that $f_\eta$ has finite fibers. Then the claim follows from~\cite[Proposition~2.2.27]{thuillier05:thesis} and its proof except that we have to check the boundary condition (3) in Definition~\ref{def:morphism}. 	Let $w$ be a vertex of $\Sigma_\fY$, let $v = \Sigma_f(w)$, and suppose that $v\in\del\Sigma_\fX=\partial X$.  Let $C$ (resp.\ $D$) be the irreducible component of $\fX_s$ (resp.\ $\fY_s$) corresponding to $v$ (resp.\ $w$).  We assume that $\varphi \coloneqq \Sigma_f$ is not constant in a neighborhood of $w$. Then there is an edge $e'$ of $\Sigma_\fY'$ with vertex $w$ mapping linearly onto an edge $e$ of $\Sigma_\fX'$ with vertex $v$. For an interior point $p'$ of $e'$, let $p \coloneqq f_\eta(p')$. By construction, we have $\varphi(p')=\tau_\fX(p)$ is in the interior of $e$ and hence $\red_\fX(p)=f_s(\red_\fY(p'))$ is a singular point of $\fX_s$ contained in $C$ by Remark~\ref{rem:stratum face correspondence}. Since $v=\varphi(w)=\tau_\fX(f_\eta(w))$ is a vertex of $\Sigma_\fX$, it follows from \eqref{eq:stratum face correspondence for vertices} that $\red_\fX(f_\eta(w))$ is a smooth point of $\fX_s$ contained in $C$. Since $\red_\fX(f_\eta(w))=f_s(\red_\fY(w))$ and $\red_\fY(w)$ is the generic point of $D$, we conclude that $f_s|_D$ is non-constant and hence induces a dominant  morphism $D\to C$.  

Up to now, we only have used that $v$ is a vertex of $\Sigma_\fX$, but now we use $v \in \partial \Sigma_\fX$.  
Then $C$ is affine, and dominance of the above map shows that the  same must be true of $D$, so $w\in\del Y =\del\Sigma_\fY$.  This finishes the proof that $\Sigma_f$ is a piecewise linear map of weighted metric graphs with boundary.
\end{proof}

\begin{lem} \label{lem:subdivison of skeletons and blow up}
	Let $f\colon\fY\to\fX$ be a morphism of strictly semistable $k^\circ$-curves and let $\Sigma_\fY'$ (resp.~$\Sigma_\fY'$) be a subdivision of the skeleton $\Sigma_\fX$ (resp.~$\Sigma_\fY$) as in Lemma~\ref{lem:subdivison of skeletons and morphism}. Then there is a finite separable extension $k'/k$ such that $\td k'=\td k$ satisfying the following properties.
	\begin{enumerate}
		\item The skeletons of $\fX$ (resp.~$\fY$) and $\fX\otimes_{k^\circ} k'^\circ$ (resp.~$\fY\otimes_{k^\circ} k'^\circ$) are equal as weighted metric graphs with boundary.
		\item There are admissible formal blowups $\pi:\fX' \to \fX$ (resp.~ $\pi':\fY' \to \fY\otimes_{k^\circ} k'^\circ$) of strictly semistable curves such that $\Sigma_{\fX'\otimes_{k^\circ} k'^\circ}=\Sigma_{\fX'}=\Sigma_\fX'$ (resp.~$\Sigma_{\fY'}=\Sigma_\fY'$) such that
		\[ \xymatrix @R=.3in {
			{\fY'} \ar[r]^{\pi'} \ar[d]_{f'} &
			{\fY\otimes_{k^\circ} k'^\circ} \ar[d]_{f_{k'^\circ}} \\
			{\fX'\otimes_{k^\circ} k'^\circ} \ar[r]_{\pi_{k'^\circ}} & {\fX\otimes_{k^\circ} k'^\circ}
		} \]
	    is  commutative for a unique morphism $f':\fY' \to \fX'\otimes_{k^\circ} k'^\circ$ over $k'^\circ$.
\end{enumerate}
\end{lem}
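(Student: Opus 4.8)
The plan is to realize the two prescribed subdivisions as skeletons of admissible formal blowups and then to obtain $f'$ from the universal property of blowing up. I would first fix the field extension: $\Sigma_\fX'$ has $\Gamma$-rational vertices and $\Sigma_\fY'$ has $\sqrt\Gamma$-rational vertices, and only finitely many values of $\sqrt\Gamma$ occur as edge parameters of the vertices of $\Sigma_\fY'$, so repeated application of Lemma~\ref{lem:totally.ramified} yields a finite separable extension $k'/k$ with $\td k'=\td k$ such that $\val(k'^\times)$ contains $\Gamma$ together with all of these values. Since $\td k'=\td k$, the special fibres of $\fX\otimes_{k^\circ}k'^\circ$ and $\fY\otimes_{k^\circ}k'^\circ$ are canonically $\fX_s$ and $\fY_s$, so Proposition~\ref{prop:skeleton.basechange}(4) shows that the base-change maps identify $\Sigma_{\fX\otimes_{k^\circ}k'^\circ}$ with $\Sigma_\fX$ and $\Sigma_{\fY\otimes_{k^\circ}k'^\circ}$ with $\Sigma_\fY$ as weighted metric graphs with boundary; this is assertion~(1).

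Next I would build the blowups. Inserting a vertex at a $\val(k^\times)$-rational interior point $t_e(r)$ of an edge $e$ of $\Sigma_\fX$ is realized formally: on a building block $p\colon\fU\to\fS(a)$ at the singular point corresponding to $e$, one blows up $\fS(a)=\Spf\bigl(k^\circ\{T_0,T_1\}/(T_0T_1-a)\bigr)$ along the ideal $(T_0,\lambda)$ with $\val(\lambda)=r\in(0,\val(a))$; a standard computation shows the result is again a strictly semistable formal curve whose skeleton is $\Sigma_{\fS(a)}$ subdivided at $t_e(r)$, and this globalises over $\fX$ via the \'etale morphisms $p$. Performing this at each vertex of $\Sigma_\fX'$ produces an admissible formal blowup $\pi\colon\fX'\to\fX$ of strictly semistable curves with $\Sigma_{\fX'}=\Sigma_\fX'$; applying Proposition~\ref{prop:skeleton.basechange}(4) to $\fX'$ (again using $\td k'=\td k$) gives $\Sigma_{\fX'\otimes_{k^\circ}k'^\circ}=\Sigma_{\fX'}=\Sigma_\fX'$. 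The same construction carried out over $k'^\circ$ — where the enlarged value group is needed, the vertices of $\Sigma_\fY'$ being only $\sqrt\Gamma$-rational — produces an admissible formal blowup $\pi'\colon\fY'\to\fY\otimes_{k^\circ}k'^\circ$ of strictly semistable curves with $\Sigma_{\fY'}=\Sigma_\fY'$.

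To construct $f'$, observe that $\pi_{k'^\circ}\colon\fX'\otimes_{k^\circ}k'^\circ\to\fX\otimes_{k^\circ}k'^\circ$ is the admissible formal blowup along the ideal $\mathcal I$ obtained from the ideal blown up by $\pi$ by extension of scalars. By the universal property of blowing up, the composite $g\coloneqq f_{k'^\circ}\circ\pi'\colon\fY'\to\fX\otimes_{k^\circ}k'^\circ$ factors as $g=\pi_{k'^\circ}\circ f'$ for a unique morphism $f'$ over $k'^\circ$ precisely when $g^{-1}\mathcal I\cdot\sO_{\fY'}$ is invertible, and this $f'$ is then the unique morphism making the square in the statement commute. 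To check invertibility I would work locally on $\fY'$ and reduce, through the building-block \'etale morphisms, to a morphism $g$ of formal annuli; writing $s$ for the coordinate on the source, Proposition~\ref{prop:length.well.defined} gives $g^*T_0=c\,s^{m}u$ with $c\in k'^\circ\setminus\{0\}$, $u$ a unit, and $m\in\Z_{>0}$ the integral expansion factor of the relevant edge of $\Sigma_\fY'$ over an edge of $\Sigma_\fX'$ — here one uses the conclusion of Lemma~\ref{lem:subdivison of skeletons and morphism} that every edge of $\Sigma_\fY'$ maps linearly onto a whole edge of $\Sigma_\fX'$ or to a vertex, so that no vertex of $\Sigma_\fX'$ has a preimage in the interior of an edge of $\Sigma_\fY'$. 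Then $g^{-1}\mathcal I\cdot\sO_{\fY'}$ is locally $(s^{m},\lambda)$ up to units, where $\val(\lambda)$ is the parameter of a subdivision vertex of $\Sigma_\fX'$; since the preimage of that vertex is a vertex of $\Sigma_\fY'$, whose parameter lies in $\val(k'^\times)$ by the choice of $k'$, the blowup of the source annulus realising it is already part of $\fY'$, and there $(s^{m},\lambda)$ is locally principal by a direct valuation computation.

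The hard part will be this last step — showing that $g^{-1}\mathcal I\cdot\sO_{\fY'}$ is invertible, i.e.\ that the prescribed subdivisions are already fine enough for $f_\eta$ to extend to a morphism of the blown-up formal models without any further refinement. This is where one must carefully match the combinatorial data supplied by Lemma~\ref{lem:subdivison of skeletons and morphism} — integral expansion factors and edges mapping onto whole edges — with the monomial ideals cutting out subdivision points on formal annuli, and track how those ideals pull back under a map of integral expansion factor. The remaining ingredients (compatibility of blowups with the flat base change $k^\circ\to k'^\circ$, the universal property of blowups, and the identification of skeletons when $\td k'=\td k$) are routine.
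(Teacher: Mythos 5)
Your first two steps --- producing $k'/k$ with $\td k'=\td k$ via iterated use of Lemma~\ref{lem:totally.ramified}, invoking Proposition~\ref{prop:skeleton.basechange}(4) for assertion~(1), and realizing the subdivisions by admissible formal blowups supported on building blocks --- match the paper's argument (the paper simply cites the proof of \cite[Lemme~2.2.22]{thuillier05:thesis} for the blowup realizing a $\Gamma$-rational subdivision, which is the computation you reconstruct). The divergence, and the problem, is in the construction of $f'$. You reduce it to invertibility of $g^{-1}\mathcal I\cdot\sO_{\fY'}$ via the universal property of admissible blowups, and you correctly identify this as the hard part; but the sketch you give does not close it. Concretely: (a) the non-invertibility locus of $\mathcal I$ consists of the subdivided singular points $\td x$ of $\fX_s$, so you must check principality of $g^{-1}\mathcal I\cdot\sO_{\fY'}$ at \emph{every} closed point of $\fY'_s$ lying over such $\td x$, including smooth closed points whose formal fibres are open discs mapping into the open annulus $\red_\fX^{-1}(\td x)$, and points on edges of $\Sigma_\fY'$ that are contracted --- your analysis only treats edges mapping linearly onto edges; (b) even there, turning the pointwise inequality $|T_0\circ g|\lessgtr|\lambda_j|$ into divisibility in the formal structure sheaf requires the maximality statement of Lemma~\ref{lem:formal.analytic.variety} (power-bounded $=$ integral sections), and the borderline case $|T_0\circ g|\equiv|\lambda_j|$ on a residue disc needs a separate argument; (c) ``the blowup of the source annulus realising it is already part of $\fY'$'' presupposes exactly what is to be proved, namely that the subdivision points of $\Sigma_\fY'$ sit over the subdivision points of $\Sigma_\fX'$ in a way compatible with the ideals.

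The paper avoids all of this with a criterion you should know: since $\fX'\otimes_{k^\circ}k'^\circ$ is maximal (reduced special fibre), \cite[Proposition~2.1.15]{thuillier05:thesis} says that $f_{\eta,k'}$ extends to a (necessarily unique) morphism $\fY'\to\fX'\otimes_{k^\circ}k'^\circ$ if and only if $f_{\eta,k'}^{-1}(\rS_0(\fX'))\subset\rS_0(\fY')$, after reducing to the case where $f_{\eta,k'}$ has finite fibres (constant maps extend trivially). This set-theoretic condition is then immediate from Proposition~\ref{prop:inverse.image.skeleton} (preimages of skeleton points lie in the skeleton) together with the defining property of the subdivision $\Sigma_\fY'$ from Lemma~\ref{lem:subdivison of skeletons and morphism}, namely that any point of $\Sigma_\fY$ mapping to a vertex of $\Sigma_\fX'$ under $\Sigma_f$ is a vertex of $\Sigma_\fY'$. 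I would recommend replacing your ideal-theoretic verification by this argument; alternatively, if you want to keep the universal-property route, you must supply the missing local analysis at all closed points over the blown-up centres, not just along edges of the skeleton.
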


\begin{proof}
Since the vertices of the subdivision $\Sigma_\fX'$ are $\Gamma$-rational, there is an admissible formal blowup $\pi:\fX' \to \fX$ such that $\fX'$ is a strictly semistable curve over $k^\circ$ with $\Sigma_{\fX'}=\Sigma_\fX'$, see the proof of Lemma 2.2.22 in~\cite{thuillier05:thesis}. By Lemma~\ref{lem:totally.ramified}, there is a finite separable extension $k'/k$ with $\td k'=\td k$ such that all vertices of the subdivision $\Sigma_\fY'$ are $\Gamma'$-rational for the value group $\Gamma'$ of $k'$. By Proposition~\ref{prop:skeleton.basechange}(4), skeletons do not change after base change to $k'^\circ$. Applying the above to $\fY_{k'^\circ}$, there is an admissible formal blowup $\pi':\fY' \to \fY \otimes_{k^\circ} k'^\circ$ such that $\fY'$ is  a strictly semistable curve over $k^\circ$ with $\Sigma_{\fY'}=\Sigma_\fY'$.

We have to show that the base change $f_{\eta,k'}$ of $f_\eta$ to $k'$ is the generic fiber of a morphism $f':\fY' \to \fX' \otimes_{k^\circ} k'^\circ$. Note that such a morphism $f'$ is always unique. For this, we may assume as in the proof of Lemma~\ref{lem:subdivison of skeletons and morphism} that $\fX_\eta$ and $\fY_\eta$ are connected. If $f_{\eta,k'}$ is constant, then it extends to all formal models of $\fX_\eta \otimes_k k'$ and hence we may assume that $f_{\eta,k'}$ has finite fibers using again~\cite[3.5.8]{ducros14:structur_des_courbes_analytiq}. Then it follows from~\cite[Proposition~2.1.15]{thuillier05:thesis} that the morphism $f'$ exists if and only if $f_{\eta,k'}^{-1}(S_0(\fX'))\subset S_0(\fY')$. 

Let $v$ be a vertex of $\Sigma_{\fX'\otimes_{k^\circ} k'^\circ}=\Sigma_{\fX'}=\Sigma_\fX'$ and let $y \in \fY_\eta'$ with $f_{\eta,k'}(y)=v$. Note that $v$ is in the skeleton of $\fX\otimes_{k^\circ} k'^\circ$ and hence Proposition~\ref{prop:inverse.image.skeleton} shows that $y$ is in the skeleton of $\fY\otimes_{k^\circ} k'^\circ$ which agrees also with the skeleton of $\fY$. Since $\Sigma_f(y)=  v$ is a vertex, we conclude that $y$ is a vertex of $\Sigma_\fY'$ by the choice of this subdivision and hence $y \in S_0(\fY')$. Using the above, this shows that the morphism $f'$ exists. 
\end{proof}

\begin{rem} \label{rem:vertices and subdivison}
In the situation of Lemma~\ref{lem:subdivison of skeletons and blow up}, let $e'$ be an edge of $\Sigma_{\fY'}=\Sigma_\fY'$ mapping linearly onto an edge $e$ of $\Sigma_\fX'$ by the map $\Sigma_f$. For any vertex $v'$ of $e'$, we have that $f_\eta(v')$ is the vertex $v\coloneqq \Sigma_f(v')$ of $\Sigma_\fX'$. 

Let $D$ be the  irreducible component of $\fY'$ with generic point $\red_{\fY'}(v')$. Then $f'$ induces a morphism $D \to C$ for the irreducible component $C$ of the special fiber of $\fX \otimes_{k^\circ} k'^\circ$ corresponding to the vertex $v\coloneqq \Sigma_f(v')$ of $\Sigma_\fX'$. The proof of Lemma~\ref{lem:subdivison of skeletons and morphism} shows that the morphism $D \to C$ is dominant. In particular, the generic point $\red_{\fY'}(v')$ of $D$ is mapped to the generic point $\red_{\fX'}(v)$ of $C$. On the other hand, we have $$\red_{\fX'}(v) = \red_{\fX'}(\tau_{\fX'}(f_\eta(v')))=\red_{\fX'}(f_\eta\tau_{\fY'}(v'))=\red_{\fX'}(f_\eta(v'))$$
The generic point of $C$ has a unique preimage with respect to $\red_{\fX'}$ and hence $v=f_\eta(v')$.
\end{rem}

\begin{rem} \label{rem:expansion factor}
In the above setting with $\Sigma_f(e')=e$, let $p,p'$ be the singular points of the special fibers of $\fX'\otimes_{k^\circ} k'^\circ,\fY'$ corresponding to $e$ and $e'$, respectively. Then $p$ (resp.~$p'$) is contained in the irreducible component $C$ (resp.~$D$) of the special fiber corresponding to $v$ (resp.~$v'$).
Remark~\ref{rem:vertices and subdivison} shows  that $f'$ induces a dominant map $D \to C$. It follows from Remark~\ref{rem:stratum face correspondence} that $p'$ maps to $p$. From the proof of~\cite[Proposition~2.2.27(iv)]{thuillier05:thesis}, we get that the expansion factor for the linear map $\Sigma_f:e' \to e$ is equal to the ramification index of the discrete valuation ring $\mathcal O_{D,p'}$ over the discrete valuation ring $\mathcal O_{C,p}$. 
\end{rem}

The main result of this subsection is the following proposition.  For metric graphs with trivial edge weights and empty boundary, it can be found in~\cite[Section~4]{abbr14:lifting_harmonic_morphism_I}; it also extends~\cite[Proposition~2.2.27]{thuillier05:thesis}. 

\begin{prop}\label{prop:skeleton.morphism}
  Let $f\colon\fX'\to\fX$ be a morphism of strictly semistable $k^\circ$-curves.  Define $\Sigma_f\colon\Sigma_{\fX'}\to\Sigma_\fX$ by $\Sigma_f = \tau_\fX\circ f_\eta|_{\Sigma_{\fX'}}$.  Then $\Sigma_f$ is a harmonic map of weighted metric graphs with boundary.
  If $f_\eta$ is finite, $\fX_\eta$ is connected, and $\Sigma_\fX$ has an edge, then $\Sigma_f$ has a well-defined degree $d(\Sigma_f) = \deg(f_\eta)$ in the sense of Definition~\ref{def:harmonic.degree}.
\end{prop}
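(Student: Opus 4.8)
The plan is to reduce everything to the combinatorial and local statements already established, mainly Lemma~\ref{lem:subdivison of skeletons and morphism}, Lemma~\ref{lem:subdivison of skeletons and blow up}, and Remark~\ref{rem:expansion factor}. First I would use Lemma~\ref{lem:subdivison of skeletons and morphism} to obtain subdivisions $\Sigma_\fX'$ of $\Sigma_\fX$ (with $\Gamma$-rational vertices) and $\Sigma_\fY'$ of $\Sigma_{\fX'}$ (with $\sqrt\Gamma$-rational vertices) so that $\Sigma_f$ becomes a piecewise linear map of weighted metric graphs with boundary, every edge of $\Sigma_\fY'$ mapping linearly with integral expansion factor onto an edge of $\Sigma_\fX'$ or collapsing to a vertex. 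Since harmonicity is insensitive to subdivision (Remark~\ref{rem:remark for harmonic maps}(1)), it suffices to check harmonicity at each interior vertex $v'$ of $\Sigma_\fY'$. Restricting to connected components, we may assume $\fX_\eta$ and $\fX'_\eta$ are connected, and the case where $f_\eta$ is constant is trivial, so we may assume $f_\eta$ has finite fibers.

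The heart of the argument is a degree computation at a vertex. Fix an interior vertex $v'$ of $\Sigma_\fY'$ with image $v = \Sigma_f(v')$, and fix an edge $e$ of $\Sigma_\fX'$ adjacent to $v$; I want to show $d_{v'}(\Sigma_f)$ (the sum of $\frac{w(e')}{w(e)}d_{e'}(\Sigma_f)$ over edges $e'$ at $v'$ mapping to $e$) is independent of $e$. Apply Lemma~\ref{lem:subdivison of skeletons and blow up} to pass to a finite separable extension $k'/k$ with $\td k' = \td k$ and to admissible formal blowups, so that after base change $v, v'$ correspond to irreducible components $C \subset \fX'_s$ and $D \subset \fY'_s$, and $e, e'$ correspond to nodes $p \in C$, $p' \in D$, with $f'$ inducing a dominant morphism $D \to C$ (Remark~\ref{rem:vertices and subdivison}). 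By Remark~\ref{rem:expansion factor}, the expansion factor $d_{e'}(\Sigma_f)$ equals the ramification index $e(\mathcal O_{D,p'}/\mathcal O_{C,p})$ of the induced extension of discrete valuation rings. Passing to unweightings (Remark~\ref{rem:remark for harmonic maps}(3)) so all $w(e) = w(e') = 1$, the quantity $d_{v'}(\Sigma_f)$ is then $\sum_{p' \mapsto p} e(\mathcal O_{D,p'}/\mathcal O_{C,p})$, summed over the nodes $p'$ of $D$ lying over $p$ that lie on the branch corresponding to $v'$. The key algebraic fact is the fundamental identity $\sum_{p' \mapsto p} e(\mathcal O_{D,p'}/\mathcal O_{C,p}) f(\kappa(p')/\kappa(p)) = [\text{function field of }D : \text{function field of }C]$ (degree of the dominant morphism $D \to C$), together with compatibility with the edge weights $w(e') = [\kappa(p'):\td k]$, $w(e) = [\kappa(p):\td k]$, which shows the weighted sum $d_{v'}(\Sigma_f)$ equals the degree of $D \to C$ — a number manifestly independent of the choice of node $p$, hence of $e$. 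This proves harmonicity.

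For the degree statement, suppose $f_\eta$ is finite, $\fX_\eta$ connected, and $\Sigma_\fX$ has an edge. Since $\Sigma_f$ is harmonic and $\del\Sigma_\fX$ plays no role once we know $\Sigma_f$ is harmonic (Remark~\ref{rem:remark for degree of harmonic maps}(3) gives that $e \mapsto d_e(\Sigma_f)$ is locally constant away from $\Sigma_f(\del\Sigma_{\fX'})$, but I need it everywhere), I would instead argue directly: for a node $p \in \fX'_s$ with edge $e$, $d_e(\Sigma_f) = \sum_{p' \mapsto p} e(\mathcal O_{D',p'}/\mathcal O_{C,p}) f(\kappa(p')/\kappa(p))$ summed over all nodes of $\fY'_s$ over $p$, which by the decomposition of $f^{-1}(p)$ in the fiber and the finiteness of $f_\eta$ equals $\deg(f_\eta)$ — this is the statement that $f$, being finite of degree $\deg f_\eta$ on generic fibers and flat over the node of $\fS(a)$, has total local degree $\deg f_\eta$ over each node, which can be extracted from the building-block étale-local description $\fU' \to \fU \to \fS(a)$ and the semistable structure. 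Since this is independent of $e$, $\Sigma_f$ has a well-defined degree equal to $\deg(f_\eta)$.

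The main obstacle I anticipate is the bookkeeping in the degree computation: carefully matching the graph-theoretic local degree $d_{v'}(\Sigma_f)$ — a sum of expansion factors weighted by residue-field degrees of \emph{nodes} — with the algebro-geometric degree of the induced map $D \to C$ of components, which is a sum of ramification indices times residue degrees of the \emph{function-field} places of $D$ over the places of $C$ lying over $p$. These two sums agree, but justifying this requires unwinding Remark~\ref{rem:expansion factor} (expansion factor $=$ ramification index of $\mathcal O_{D,p'}/\mathcal O_{C,p}$) and the definition of edge weights (residue degree of the node over $\td k$), then invoking the fundamental identity for the extension of function fields; I expect the subtlety to be in checking that every place of $D$ over the place $p$ of $C$ actually arises from a node $p'$ of $\fY'_s$ (i.e., that the semistable structure sees all ramification), which follows from Remark~\ref{rem:vertices and subdivison} and the characterization~\eqref{eq:stratum face correspondence for vertices} of strata. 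I would also need to double-check that no edge $e'$ adjacent to an interior $v'$ is collapsed while $e$ is not; this cannot happen by harmonicity of the linear structure established in Lemma~\ref{lem:subdivison of skeletons and morphism}, since a collapsed $e'$ would force $\Sigma_f$ constant near $v'$, contradicting that $e'$ maps onto $e$.
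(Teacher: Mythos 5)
Your proof of harmonicity takes a genuinely different route from the paper's. The paper disposes of harmonicity in one line: by Thuillier's Proposition~2.2.27(iv), the pullback along $\Sigma_f$ of a harmonic function on $\Sigma_\fX$ is harmonic on $\Sigma_{\fX'}$, and Proposition~\ref{prop:characterization of harmonic morphisms} then converts this into harmonicity of the map. You instead re-derive harmonicity by computing the local degree $d_{v'}(\Sigma_f)$ directly as $\sum_{p'\mapsto p}[\kappa(p'):\kappa(p)]\,\varepsilon_{p'}$ over the nodes of the component $D$ lying above the node $p$ of $C$, and identifying this with $[k(D):k(C)]$ via the fundamental identity. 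This is a legitimate alternative --- it is essentially the computation the paper carries out in~\eqref{eq:dv.phi.extend.scalars} for base change, and again (summed over all components above $C$) in the degree part of this very proposition --- and it has the virtue of making the local degrees explicit; what it costs is that you must supply the geometric input yourself instead of quoting Thuillier. Your identification of the fiber of $D\to C$ over $p$ with the set of edges at $v'$ mapping to $e$ is the right subtlety to isolate, and it does follow from Remark~\ref{rem:vertices and subdivison}, Remark~\ref{rem:expansion factor}, the compatibility $\tau_\fX\circ f_\eta=\Sigma_f\circ\tau$, and~\eqref{eq:stratum face correspondence for vertices}. For the degree statement your argument is essentially the paper's, which makes it precise via Temkin's properness of $f_s$, finiteness of $f_s\inv(\fU_s)\to\fU_s$ near $\td x$, and $\deg(f_C)=\deg(f_\eta)$ from~\cite[Proposition~3.25]{baker_payne_rabinoff13:analytic_curves}.

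Two points in your harmonicity argument need repair. First, applying the fundamental identity to $D\to C$ requires that the fiber over $p$ carry the full degree $[k(D):k(C)]$; for a merely dominant (quasi-finite) morphism of possibly non-proper smooth curves this can fail, since points of the fiber may escape. You must use that $v'$ is \emph{interior}, so that $D$ is proper, whence $D\to C$ is proper and quasi-finite, hence finite (and flat, being a dominant morphism of smooth curves), and every fiber has total degree $[k(D):k(C)]$. Second, your claim that a contracted edge at $v'$ ``would force $\Sigma_f$ constant near $v'$'' is false: individual edges at an interior vertex can be contracted while others are not. Fortunately this does not threaten the computation: contracted edges contribute $0$ to $d_{v'}(\Sigma_f)$ by definition, and their nodes lie over \emph{smooth} points of $C$ (interior points of a contracted edge retract to the vertex $v$, hence reduce into the smooth stratum by~\eqref{eq:stratum face correspondence for vertices}), so they never appear in the fiber of $D\to C$ over $p$ and the identity $d_{v'}(\Sigma_f)=[k(D):k(C)]$ is unaffected.
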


If $f_\eta$ is finite, then as a map between analytic curves, it is flat and hence  $(f_\eta)_*\sO_{\fX'_\eta}$ is locally free. If $\fX_\eta$ is connected, then $(f_\eta)_*\sO_{\fX'_\eta}$ is of finite (constant) rank; the degree $\deg(f_\eta)$ is by definition the rank of $(f_\eta)_*\sO_{\fX'_\eta}$. These facts can be deduced algebraically by passing to strictly affinoid domains and their associated (noetherian regular) schemes, see~\cite[3.2]{ducros14:structur_des_courbes_analytiq}.

\begin{proof}
	For brevity we write $X = \fX_\eta$, $X' = \fX'_\eta$, $\Sigma = \Sigma_\fX$, $\Sigma' = \Sigma_{\fX'}$, and $\phi = \Sigma_f\colon\Sigma'\to\Sigma$.  Restricting to connected components, we may assume that $X$ and $X'$ are connected.  If $f$ is constant then there is nothing to check, so we assume that $f$ is non-constant, which implies that $f$ has finite fibers. 
	We have seen in Remark~\ref{rem:vertices and subdivison} that $\varphi$ is a piecewise linear map of weighted metric graphs with boundary after a suitable subdivision of skeletons. It follows from~\cite[Proposition~2.2.27(iv)]{thuillier05:thesis} that the pull-back of harmonic functions on $\Sigma$ with respect to $\varphi$  are harmonic functions on $\Sigma'$ and hence $\varphi$ is a harmonic map by Proposition~\ref{prop:characterization of harmonic morphisms}.

	Finally, suppose that $f_\eta$ is finite, $X=\fX_\eta$ is connected, and $\Sigma=\Sigma_\fX$ has an edge. Passing to a finite separable extension $k'/k$ which does not change the skeletons and then to admissible formal blowups which only lead to subdivisions of the skeletons as in Lemma~\ref{lem:subdivison of skeletons and blow up}, we may assume that $\Sigma_f$ maps every edge of $\Sigma'$ either to a vertex or linearly onto an edge of $\Sigma$.  
 Since $f_\eta$ is finite, we see that $f_s\colon\fX_s'\to\fX_s$ is proper by~\cite[Corollary~4.4]{temkin00:local_properties}.  Let $e$ be an edge of $\Sigma_\fX$ with corresponding singular point $\td x\in\fX_s$, and let $v$ be an endpoint of $e$ with corresponding irreducible component $C\subset\fX_s$. 
 By Remark~\ref{rem:stratum face correspondence},
  the fiber $f_s\inv(\td x)$ consists of finitely many singular points $\td x'$ corresponding to the finitely many edges $e'$ in $\varphi^{-1}(e)$ and so there is a formal affine neighborhood $\fU$ of $\td x_s$ such that $f_s\inv(\fU_s)\to\fU_s$ has finite fibers.  Since $f_s$ is proper, the morphism $f_s\inv(\fU_s)\to\fU_s$ is finite, and by~\cite[Proposition~3.25]{baker_payne_rabinoff13:analytic_curves}, the finite morphism $f_C\colon f_s\inv(C\cap\fU_s)\to C\cap\fU_s$ has degree $\deg(f_C) = \deg(f_\eta|_{U_\eta}) = \deg(f_\eta)$.  We have seen in Remark~\ref{rem:expansion factor} that the expansion factor $d_{e'}(\varphi)$ agrees with the ramification index $\varepsilon_{\td x'}(f_C)$ in $\td x'$. The fundamental identity relating the degree of a finite map with the ramification indices and residue degrees shows
	\[ d_e(\phi) = \sum_{e'\mapsto e}\frac{w(e')}{w(e)}d_{e'}(\phi)
	= \sum_{\td x'\mapsto\td x}[\kappa(\td x'):\kappa(\td x)]\varepsilon_{\td x'}(f_C)
	= \deg(f_C) = \deg(f_\eta),
	\]
	which proves that $\phi$ has well-defined degree $d(\phi) = \deg(f_\eta)$.
	\end{proof}

\subsection{Functoriality With Respect to Modifications}
A \defi{morphism of formal models} of a curve $X$ is a morphism of formal schemes $\fX'\to\fX$ respecting the identifications $\fX'_\eta\cong X\cong\fX_\eta$.  The following proposition is a mild strengthening of~\cite[Proposition~2.2.26]{thuillier05:thesis}.

\begin{prop}\label{prop:skeleton.blowup}
  Let $f\colon\fX'\to\fX$ be a morphism of strictly semistable models of a curve $X$. Then we have $\tau_\fX\circ\tau_{\fX'} = \tau_\fX$ and 
  the retraction $\tau_{\fX}\colon\Sigma_{\fX'}\to\Sigma_\fX$ is a modification of $\Sigma_\fX$ in the sense of Definition~\ref{def:modification}.
\end{prop}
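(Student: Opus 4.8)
\medskip

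The plan is to reduce to the local situation of building blocks, where the morphism $f\colon\fX'\to\fX$ is an admissible formal blowup, and then to identify the skeleton $\Sigma_{\fX'}$ with $\Sigma_\fX$ together with finitely many trees attached at points of $\Sigma_\fX$. First I would recall (or re-derive, following~\cite[Proposition~2.2.26]{thuillier05:thesis}) the identity $\tau_\fX\circ\tau_{\fX'}=\tau_\fX$: since $\tau_{\fX'}$ is a deformation retraction onto $\Sigma_{\fX'}$ and, by Proposition~\ref{prop:inverse.image.skeleton} applied to the morphism of models $f$, we have $\Sigma_\fX = f_\eta\inv(\Sigma_\fX)\subset\Sigma_{\fX'}$, so that $\Sigma_\fX$ is a subgraph of $\Sigma_{\fX'}$. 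The retraction $\tau_\fX\colon X\to\Sigma_\fX$ then restricts to a retraction $\Sigma_{\fX'}\to\Sigma_\fX$, which by construction of the skeleton is piecewise linear, and it is a strong deformation retraction because the composition of two proper deformation retractions is one. The content is to show this retraction exhibits $\Sigma_{\fX'}$ as a \emph{modification} in the sense of Definition~\ref{def:modification}, i.e.\ that $\Sigma_{\fX'}$ is obtained from $\Sigma_\fX$ by attaching finitely many weighted metric trees, and that the retraction sends each attached tree to its attaching point.

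\medskip

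For this I would argue combinatorially using the stratum–face correspondence of Remark~\ref{rem:stratum face correspondence}. The morphism on special fibers $f_s\colon\fX'_s\to\fX_s$ is proper and birational (it is an admissible formal blowup composed with isomorphisms on the generic fiber). Each irreducible component of $\fX_s$ either maps isomorphically to a component of $\fX'_s$ or is the image of a chain of $\mathbb P^1$'s introduced by the blowup; more precisely, $f_s$ contracts a union of rational curves, and the combinatorial type of the exceptional locus over a point of $\fX_s$ is that of a tree (this is the standard structure of resolutions/blowups of ordinary double points and smooth points on a surface over a field). Translating via Remark~\ref{rem:stratum face correspondence}: the vertices of $\Sigma_{\fX'}$ not in $\Sigma_\fX$ correspond to components of $\fX'_s$ contracted by $f_s$, and the edges of $\Sigma_{\fX'}$ not in $\Sigma_\fX$ correspond to singular points of $\fX'_s$ lying in the exceptional locus. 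The key point is that the exceptional divisor over a fixed stratum of $\fX_s$ forms a tree of rational curves; hence the corresponding part of $\Sigma_{\fX'}$ is a tree, attached to $\Sigma_\fX$ at the single vertex or at an interior point of the single edge of $\Sigma_\fX$ corresponding to that stratum. I would also check that $\del\Sigma_{\fX'}=\del\Sigma_\fX$: the new components are proper (they are exceptional $\mathbb P^1$'s, whose closures are proper), and a component of $\fX_s$ is proper if and only if its strict transform in $\fX'_s$ is proper, so no boundary vertex is created or destroyed; this matches the convention $\del\Sigma'=\del\Sigma$ for modifications.

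\medskip

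To make the tree structure precise and local I would use building blocks. Around a singular point $\td x$ of $\fX_s$ with building block $\fU\to\fS(a)$, the preimage $\fU'=f\inv(\fU)$ is a strictly semistable model of the annulus $S(a)$, and by the classification of semistable models of an annulus (the skeleton is an interval, and any semistable model refines it by adding vertices in the interior and possibly attaching trees corresponding to blowups at smooth points of the chain), $\Sigma_{\fU'}$ is a subdivision of $\Sigma_\fU=e_{\td x}$ with trees attached. The same analysis near a vertex (a smooth point of the generic component) shows that blowing up produces trees attached at that vertex. Globalizing over the finitely many strata of $\fX_s$ gives the desired description of $\Sigma_{\fX'}$ as a modification of $\Sigma_\fX$. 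The retraction $\tau_\fX\colon\Sigma_{\fX'}\to\Sigma_\fX$ then coincides with the canonical retraction of the modification, since both are the identity on $\Sigma_\fX$ and collapse each attached tree to its attaching point.

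\medskip

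The main obstacle is the precise verification that the part of $\Sigma_{\fX'}$ lying over a fixed stratum of $\fX_s$ is a \emph{tree} and not a graph with cycles — i.e.\ that an admissible formal blowup does not create new loops in the dual graph. This is where one genuinely uses that $\fX'_s\to\fX_s$ is birational with reduced exceptional locus over a point, so the new components and the incidences among them are those of iterated blowups of smooth surface points, which are always tree-like. I expect to handle this by an induction on the number of blowup charts, reducing to a single blowup of a smooth point or a node, where the statement is immediate, and invoking~\cite[Proposition~2.2.26]{thuillier05:thesis} for the compatibility with retractions. Everything else (piecewise linearity, $\Z$-harmonicity of the retraction, preservation of boundary) is then formal from the general theory of modifications in Section~\ref{sec:modifications}.
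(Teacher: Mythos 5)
Your reduction of the first identity to Thuillier's \cite[Proposition~2.2.26]{thuillier05:thesis} (equivalently, Lemma~\ref{lem:subdivison of skeletons and morphism} applied with $f_\eta=\mathrm{id}_X$) is fine, as is the observation that $\Sigma_\fX=f_\eta\inv(\Sigma_\fX)\subset\Sigma_{\fX'}$ and that the boundary is preserved. The problem is the step you yourself flag as the main obstacle: showing that the part of $\Sigma_{\fX'}$ lying over a single stratum of $\fX_s$ is a \emph{tree}. As written, this is asserted by analogy with ``the standard structure of resolutions/blowups of ordinary double points and smooth points on a surface over a field,'' but $\fX'\to\fX$ is a morphism of admissible formal schemes over the valuation ring $k^\circ$, which need not be discrete; an admissible formal blowup is a blowup of an open coherent ideal, not of a point, and a general morphism of strictly semistable models is not presented to you as a composition of elementary point blowups. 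So the proposed ``induction on the number of blowup charts, reducing to a single blowup of a smooth point or a node'' has no factorization theorem underpinning it, and the ``classification of semistable models of an annulus'' you invoke is essentially equivalent to the statement you are trying to prove. The gap is real: the tree-ness of the exceptional locus is exactly the content of the proposition, and it is not established by your argument.

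The paper closes this gap with a one-line topological argument that avoids the special fiber entirely: both $\tau_\fX$ and $\tau_{\fX'}$ are deformation retractions of $X$, so $\Sigma_\fX$ and $\Sigma_{\fX'}$ are homotopy equivalent and have the same first Betti number; since $\tau_\fX$ contracts each connected component $T$ of $\Sigma_{\fX'}\setminus\Sigma_\fX$ to a single point (this uses Remark~\ref{rem:vertices and subdivison} with $f_\eta=\mathrm{id}$, so all new edges are contracted), the closure of each $T$ meets $\Sigma_\fX$ in one point, and equality of Betti numbers forces each such closure to be a tree. If you want to keep your algebro-geometric route, you would need to supply the missing input, e.g.\ that $f_s$ is proper with connected fibers and that the dual graph of the fiber over a closed point has trivial $H^1$ — which is most easily seen by exactly the homotopy-equivalence argument above. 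I would recommend replacing the special-fiber analysis by that Betti-number comparison.
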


\begin{proof}
	  The identity $\tau_\fX\circ\tau_{\fX'} = \tau_\fX$ is a special case of Lemma~\ref{lem:subdivison of skeletons and morphism} showing also that the restriction of $\tau_{\fX'}$ to the skeleton $\Sigma_{\fX'}$ is a piecewise linear map of weighted metric graphs with boundary. Using that $f_\eta={\rm id}_X$, we deduce from Remark~\ref{rem:vertices and subdivison} that $\tau_\fX$ contracts all edges of $\Sigma_{\fX'}\setminus \Sigma_{\fX}$. In particular, for every $x\in \Sigma_{\fX'\setminus \fX}$, the connected component $T$ of $\Sigma_{\fX'\setminus \fX}$ containing $x$ is contracted to $\tau_\fX(x)$. 
	 Since the skeletons $\Sigma_\fX$ and $\Sigma_{\fX'}$ are homotopy equivalent to $X$, they have the same first Betti number and hence $T$ is a tree. We conclude that $\tau_\fX$ induces a modification $\Sigma_{\fX'}\to \Sigma_\fX$ using  $\Sigma_\fX=\partial X = \Sigma_{\fX'}$.
	 \end{proof}


\section{Weakly Smooth Forms on Curves}
\label{Sec:forms-curves}

In this section we use the previous sections to characterize the weakly smooth forms on a Berkovich curve, and to compute its Dolbeault cohomology groups.  Essentially, a weakly smooth form on a curve is the pullback of a weakly smooth form on a skeleton under the retraction map.

\subsection{Pullback of Forms on Skeletons}\label{sec:pullback-forms-skel}
Let $X$ be a curve with strictly semistable model $\fX$.  For $p,q\in\{0,1\}$ we will define ``pullback'' maps
\[ \tau_\fX^*\colon \cA^{p,q}(\Sigma_\fX,\del\Sigma_\fX)\To\cA^{p,q}(X) \]
that will induce isomorphisms on Dolbeault cohomology groups.  The construction is as follows.  Let $\omega\in\cA^{p,q}(\Sigma_\fX,\del\Sigma_\fX)$, and let $\cU$ be the set of subgraphs $U\subset\Sigma_\fX$ with $\bar\Gamma$-rational vertices (Definition~\ref{def:gamma.ratl.pts}, Notation~\ref{notn:barGamma}), with the property that there exists a $(\Z,\Gamma)$-harmonic tropicalization $h_U\colon U\to\R^{n_U}$ of $(U,\del U)$ (Definition~\ref{def:rational.subgraph}) and a Lagerberg form $\alpha_U\in\cA^{p,q}(\R^{n_U})$ such that $h_U^*\alpha_U = \omega|_U$.  Every point of $\Sigma_\fX$ has a neighborhood of this form by Proposition~\ref{prop:local.pullbacks.graphs}.

The key fact connecting the algebraic geometry and combinatorics in this situation is the following criterion for a function on a curve to be $(\Z,\Gamma)$-harmonic; it can be found in~\otherfile{Proposition~\ref*{I-simple neighbourhoods and harmonic functions on skeletons}}. We refer to~\cite{gubler_rabinoff_jell:harmonic_trop} for the definition of harmonic tropicalizations and weakly smooth forms on good strictly analytic spaces.

\begin{prop}\label{prop:compose.with.retraction}
  The map $g_U = h_U\circ\tau_\fX\colon\tau_\fX\inv(U)\to\R^{n_U}$ is a 
  harmonic tropicalization.
\end{prop}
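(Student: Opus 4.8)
The plan is to reduce the statement to the already-cited criterion from the companion paper, namely~\otherfile{Proposition~\ref*{I-simple neighbourhoods and harmonic functions on skeletons}}, which characterizes when a function on $\tau_\fX\inv(U)$ is $(\Z,\Gamma)$-harmonic in terms of its restriction to the skeleton $U$. Concretely, $g_U = h_U\circ\tau_\fX$ is built coordinate-wise from the functions $(h_{U,i}\circ\tau_\fX)_{i=1}^{n_U}$, where each $h_{U,i}\colon U\to\R$ is a $(\Z,\Gamma)$-harmonic function on $(U,\del U)$ in the sense of Definition~\ref{def:rational.subgraph}. So it suffices to show that for a single $(\Z,\Gamma)$-harmonic function $h\colon U\to\R$, the composition $h\circ\tau_\fX\colon\tau_\fX\inv(U)\to\R$ is a harmonic function on the strictly analytic domain $\tau_\fX\inv(U)$ whose slopes and values satisfy the integrality and $\Gamma$-rationality conditions required of a $(\Z,\Gamma)$-harmonic tropicalization. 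A harmonic tropicalization of $\tau_\fX\inv(U)$ is then just the $n_U$-tuple of these functions.

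First I would recall from Remark~\ref{rem:non-rational-subgraph} that $V \coloneqq \tau_\fX\inv(U)$ is a compact strictly $k$-analytic domain in $X$ with $\del V = \del U$, hence a curve, and that after a finite separable extension $k'/k$ (which by Lemma~\ref{lem:totally.ramified} and Proposition~\ref{prop:skeleton.basechange}(4) can be taken with $\td k' = \td k$, so skeletons and the combinatorics are unchanged) the curve $V$ admits a strictly semistable model with skeleton exactly $U$ — the point of enlarging $\Gamma$ to $\bar\Gamma$ and introducing $\bar\Gamma$-rational vertices was precisely to make the edge lengths of $U$ lie in the value group of a suitable extension. Then $\tau_\fX|_V$ is (up to the identification of skeletons) the canonical retraction $\tau$ of $V$ onto its skeleton $U$, and $h\circ\tau$ is a function on $V$ that is constant on the fibers of the retraction and piecewise affine with integral slopes along $U$. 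This is exactly the shape of function to which the companion paper's criterion applies: a function on a curve that factors through the retraction to a skeleton and is harmonic with integral slopes on that skeleton, taking values in $\Gamma$ at the relevant points, is a $(\Z,\Gamma)$-harmonic function in the sense of~\cite{gubler_rabinoff_jell:harmonic_trop}. The $\Gamma$-rationality of the values at endpoints is guaranteed by the defining condition in Definition~\ref{def:rational.subgraph}, using Proposition~\ref{prop:length.well.defined} to see that the affine extension of $h|_e$ to the full edge $e'$ of $\Sigma_\fX$ corresponds to $-\log|u|$ for a unit $u$ on the curve, so its endpoint values lie in $\Gamma$.

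The main obstacle I expect is bookkeeping rather than a deep difficulty: one must carefully match up the two notions of ``$(\Z,\Gamma)$-harmonic'' — Definition~\ref{def:rational.subgraph} on the graph side and the analytic notion in~\cite{gubler_rabinoff_jell:harmonic_trop} on the curve side — and verify that the condition ``the affine extension $h'$ of $h|_e$ to the ambient edge $e'$ of $\Sigma_\fX$ takes values in $\Gamma$ at the endpoints of $e'$'' is exactly what the analytic criterion needs in order to produce a genuine harmonic tropicalization (as opposed to one defined only on some further extension). This is precisely where the distinction between $\Gamma$ and $\bar\Gamma$ matters, and where one has to be careful that even though $U$ may only have $\bar\Gamma$-rational vertices, the function $h$ still extends to a $(\Z,\Gamma)$-harmonic function on a neighborhood in $\Sigma_\fX$ with values in $\Gamma$ at $\Gamma$-rational points. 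Once the translation is set up correctly, the proof is a direct appeal to~\otherfile{Proposition~\ref*{I-simple neighbourhoods and harmonic functions on skeletons}} applied componentwise to $h_U = (h_{U,1},\dots,h_{U,n_U})$, together with the compatibility $\Sigma_\pi\circ\tau_{\fX'} = \tau_\fX\circ\pi_\eta$ and $\tau_\fX\circ\tau_{\fX'} = \tau_\fX$ from Propositions~\ref{prop:skeleton.basechange}(2) and~\ref{prop:skeleton.blowup} to handle the passage to the semistable model of $V$.
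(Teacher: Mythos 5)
Your proposal is correct and follows essentially the same route as the paper: the paper gives no argument at all for Proposition~\ref{prop:compose.with.retraction} beyond citing the companion paper's criterion (\otherfile{Proposition~\ref*{I-simple neighbourhoods and harmonic functions on skeletons}}), which is exactly the reduction you carry out componentwise. The additional bookkeeping you supply (the semistable model of $\tau_\fX\inv(U)$ after a finite separable extension, and the matching of the two notions of $(\Z,\Gamma)$-harmonicity) is consistent with Remark~\ref{rem:non-rational-subgraph} and is simply left implicit in the paper.
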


It follows that $(g_U, \alpha_U)$ is a weakly smooth preform of type $(p,q)$ on $\tau_\fX\inv(U)$.

\begin{lem}\label{lem:preforms-glue}
  The preforms $(g_U,\alpha_U)$ glue to a weakly smooth differential form on $X$.
\end{lem}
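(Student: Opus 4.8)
The plan is to verify that the preforms $(g_U, \alpha_U)$ satisfy the compatibility condition required to define a global weakly smooth form, and this amounts to checking agreement on overlaps $\tau_\fX^{-1}(U) \cap \tau_\fX^{-1}(U') = \tau_\fX^{-1}(U \cap U')$ for $U, U' \in \cU$. Since weakly smooth forms on $X$ form a sheaf (by definition in \cite{gubler_rabinoff_jell:harmonic_trop}), and since the sets $\tau_\fX^{-1}(U)$ for $U \in \cU$ cover $X$ (every point of $\Sigma_\fX$ has a neighborhood in $\cU$ by Proposition~\ref{prop:local.pullbacks.graphs}, and $\tau_\fX$ is continuous and surjective), it suffices to show that for $U, U' \in \cU$ the preforms $(g_U, \alpha_U)$ and $(g_{U'}, \alpha_{U'})$ restrict to the same weakly smooth form on $\tau_\fX^{-1}(U \cap U')$.

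First I would reduce to the case of a single edge or small neighborhood: refining the cover, we may assume $U \cap U'$ is itself a subgraph with $\bar\Gamma$-rational vertices, and by taking a common refinement it is enough to treat the case where $U' \subset U$, i.e.\ to show that the weakly smooth form associated to $(g_U, \alpha_U)$ restricted to $\tau_\fX^{-1}(U')$ agrees with the one associated to $(g_{U'}, \alpha_{U'})$. Now both preforms have the same underlying form on the skeleton, namely $\omega|_{U'}$, since $h_U^* \alpha_U|_{U'} = \omega|_{U'} = h_{U'}^* \alpha_{U'}$. The point is that two weakly smooth preforms on $\tau_\fX^{-1}(U')$ whose associated Lagerberg data pull back to the same smooth form on the graph $U'$ actually define the same weakly smooth form on $\tau_\fX^{-1}(U')$. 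This is where the structure of weakly smooth forms enters: by the results of \cite{gubler_rabinoff_jell:harmonic_trop}, a weakly smooth form is determined by its restriction to the skeleton together with its vanishing transverse to the skeleton (on the open discs that retract to points), and here both preforms are pulled back along $\tau_\fX$, so they are constant along the retraction fibers in the appropriate sense. Concretely, I would compare the two harmonic tropicalizations $g_U|_{\tau_\fX^{-1}(U')}$ and $g_{U'}$: both factor through $\tau_\fX$, and their compositions with $\tau_\fX$ restricted to $U'$ are $h_U|_{U'}$ and $h_{U'}$ respectively, which pull back the \emph{same} smooth form on $U'$; invoking the comparison result for weakly smooth forms arising from different harmonic tropicalizations that agree on the skeleton (again from \cite{gubler_rabinoff_jell:harmonic_trop}, where weakly smooth forms glue and are locally pullbacks under harmonic tropicalizations), we conclude the two preforms define the same weakly smooth form.

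The main obstacle is making precise the claim that a weakly smooth form on $\tau_\fX^{-1}(U')$ which is pulled back via $\tau_\fX$ from data on the skeleton is determined by that data on $U'$ alone — i.e.\ the injectivity of $\tau_\fX^*$ on the level of forms, or at least the independence of the glued form from the choices $(h_U, \alpha_U)$. This requires knowing that if $h, h' \colon U' \to \R^n, \R^m$ are $(\Z,\Gamma)$-harmonic tropicalizations with $h^*\alpha = h'^*\alpha'$ as smooth forms on $U'$, then $(h \circ \tau_\fX)^* \alpha = (h' \circ \tau_\fX)^* \alpha'$ as weakly smooth forms on $\tau_\fX^{-1}(U')$. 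One can reduce this to the product tropicalization $(h, h') \colon U' \to \R^{n+m}$ and its composition with $\tau_\fX$, using that both $\alpha$ and $\alpha'$ pull back along $\R^{n+m} \to \R^n$ and $\R^{n+m} \to \R^m$ to forms with the same pullback to $U'$, hence (by injectivity of pullback to the polytopal complex $\mathrm{Trop}(U')$, Lemma~\ref{lem:pullbacks.are.smooth}) to the same form on $\mathrm{Trop}_{(h,h')}(U')$; compatibility of the tropicalization of the curve with that of the skeleton (Proposition~\ref{prop:compat.integration} and the construction in \cite{gubler_rabinoff_jell:harmonic_trop}) then gives equality of the resulting weakly smooth forms on $\tau_\fX^{-1}(U')$. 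Granting this, the gluing is formal: the locally defined weakly smooth forms agree on overlaps, hence patch to a global weakly smooth form on $X$.
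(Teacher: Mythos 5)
Your proposal is correct and takes essentially the same route as the paper: the heart of the matter in both is that two preforms $(h\circ\tau_\fX,\alpha)$ and $(h'\circ\tau_\fX,\alpha')$ with $h^*\alpha=h'^*\alpha'$ on the subgraph agree as weakly smooth forms, which is checked by passing to the product tropicalization $h\times h'$ and invoking the injectivity of $(h\times h')^*$ on forms on the image polytopal complex from Lemma~\ref{lem:pullbacks.are.smooth}. The only stylistic difference is that the paper verifies the preform-equality condition $\pi^*\alpha=\pi'^*\alpha'$ on $(h\times h')(U)$ directly rather than phrasing it as determinacy of $\tau_\fX^*$ on the skeleton, but the substance is identical.
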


\begin{proof}
  We will first show that for $U$ and $U'$ as above, then $(g_U,\alpha_U) = (g_{U'},\alpha_{U'})$ on $\tau_\fX^{-1}(U \cap U')$.  This amounts to showing that if $U\subset\Sigma_\fX$ is a subgraph with $\bar\Gamma$-rational endpoints that admits two $(\Z,\Gamma)$-harmonic tropicalizations $h\colon U\to\R^n$ and $h'\colon U\to\R^{n'}$, and if $\alpha\in\cA^{p,q}(\R^n)$ and $\alpha'\in\cA^{p,q}(\R^{n'})$ are Lagerberg forms such that $h^*\alpha = h'^*\alpha'$, then $\pi^*\alpha = \pi'^*\alpha'$ on $(h\times h')(U)$, where $\pi,\pi'\colon\R^n\times\R^{n'}$ are the projections onto the two factors.  This is true by functoriality and injectivity of $(h\times h')^*$ on $\cA^{p,q}((h\times h')(U))$: see Lemma~\ref{lem:pullbacks.are.smooth}. 
  
  For every $x \in X$, there is a subgraph $U \in \cU$ of $\Gamma_\fX$ such that $\tau_\fX(x)$ has $U$ as a neighbourhood in $\Sigma_\fX$. Then $\tau_\fX^{-1}(U)$ is a compact strictly analytic domain in $X$ which is a neighbourhood of $x$ in $X$ and the above shows that the preforms  $(g_U,\alpha_U)$ glue to a weakly smooth differential form on $X$.
\end{proof}

\begin{defn}\label{def:skeleton-pullback}
  We denote by $\tau_\fX^*\omega$ the weakly smooth form that restricts to each $(g_U,\alpha_U)$.  We call $\tau_\fX^*\omega$ the \defi{pullback from the skeleton}.
\end{defn}

Note that we can define $\tau_\fX^*\omega$ by forms $(g_U,\alpha_U)_{U\in\cU'}$ for any $\cU'\subset\cU$ such that $\Sigma_\fX$ is covered by the interiors of the subgraphs in $\cU'$.

The pullback from the skeleton satisfies the expected compatibility properties.

\begin{lem}\label{lem:pullback-form-properties}
  The pullback $\tau_\fX^*\colon \cA^{p,q}(\Sigma_\fX,\del\Sigma_\fX)\to\cA^{p,q}(X)$ is a homomorphism of differential bigraded algebras: that is,
  \begin{enumerate}
  \item $\tau_\fX^*\d'\omega = \d'\tau_\fX^*\omega$ and $\tau_\fX^*\d''\omega = \d''\tau_\fX^*\omega$ for any smooth form $\omega$, and
  \item $\tau_\fX^*(\omega\wedge\eta) = \tau_\fX^*\omega\wedge\tau_\fX^*\eta$ for any smooth forms $\omega$ and $\eta$.
  \end{enumerate}
\end{lem}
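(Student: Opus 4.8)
The plan is to reduce everything to statements that are already known for pullbacks of Lagerberg forms and for the graph-theoretic pullback along harmonic maps. Recall from Definition~\ref{def:skeleton-pullback} that $\tau_\fX^*\omega$ is the weakly smooth form which on each compact strictly analytic domain $\tau_\fX\inv(U)$ (for $U$ ranging over the covering $\cU$ of $\Sigma_\fX$) is represented by the preform $(g_U,\alpha_U)$, where $g_U = h_U\circ\tau_\fX$ and $h_U^*\alpha_U=\omega|_U$. Since $\d',\d''$ and $\wedge$ on weakly smooth forms are local, and since the interiors of the $\tau_\fX\inv(U)$ cover $X$, it suffices to check both identities after restricting to each such domain. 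So fix $U\in\cU$ and work on $\tau_\fX\inv(U)$ throughout.

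First I would treat~(1). By definition of the differentials on weakly smooth forms pulled back along a harmonic tropicalization, $\d'$ applied to the weakly smooth form represented by $(g_U,\alpha_U)$ is represented by $(g_U,\d'\alpha_U)$, where $\d'\alpha_U$ is the Lagerberg differential on $\R^{n_U}$ (and similarly for $\d''$); this is part of the formalism of weakly smooth forms in~\cite{gubler_rabinoff_jell:harmonic_trop}. On the other hand, by Lemma~\ref{lem:pullbacks.are.smooth} the map $h_U^*\colon\cA^{\bullet,\bullet}(\R^{n_U})\to\cA^{\bullet,\bullet}(U,\del U)$ is a homomorphism of differential bigraded algebras, so $\d'\omega|_U = \d'(h_U^*\alpha_U) = h_U^*(\d'\alpha_U)$, and likewise for $\d''$. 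Hence the preform representing $\d'\tau_\fX^*\omega$ on $\tau_\fX\inv(U)$, namely $(g_U,\d'\alpha_U)$, is exactly the preform obtained by the construction of Definition~\ref{def:skeleton-pullback} applied to $\d'\omega$ with the same tropicalization $h_U$ and Lagerberg form $\d'\alpha_U$. Therefore $\d'\tau_\fX^*\omega = \tau_\fX^*\d'\omega$ on $\tau_\fX\inv(U)$, and the same argument gives the $\d''$ identity.

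For~(2), let $\omega\in\cA^{p,q}(\Sigma_\fX,\del\Sigma_\fX)$ and $\eta\in\cA^{r,s}(\Sigma_\fX,\del\Sigma_\fX)$; after refining the covering we may assume both are represented on $\tau_\fX\inv(U)$ using the \emph{same} subgraph $U$ but possibly different tropicalizations $h_U\colon U\to\R^n$, $h_U'\colon U\to\R^{n'}$ with $h_U^*\alpha=\omega|_U$ and $h_U'^*\alpha'=\eta|_U$. Replacing both by the combined tropicalization $(h_U,h_U')\colon U\to\R^{n+n'}$ and the Lagerberg forms $\pi^*\alpha$, $\pi'^*\alpha'$ (pulled back along the two coordinate projections), which by functoriality of Lagerberg pullbacks still represent $\omega|_U$ and $\eta|_U$, we may assume $h_U=h_U'$. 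Then $\alpha\wedge\alpha'$ is a Lagerberg form on $\R^n$, and $h_U^*(\alpha\wedge\alpha') = h_U^*\alpha\wedge h_U^*\alpha' = \omega|_U\wedge\eta|_U = (\omega\wedge\eta)|_U$ because $h_U^*$ is a homomorphism of algebras (Lemma~\ref{lem:pullbacks.are.smooth}) and wedge product restricts to subgraphs. So the construction of Definition~\ref{def:skeleton-pullback} applied to $\omega\wedge\eta$ with tropicalization $h_U$ and form $\alpha\wedge\alpha'$ produces on $\tau_\fX\inv(U)$ the preform $(g_U,\alpha\wedge\alpha')$, which is precisely the wedge product of the preforms $(g_U,\alpha)$ and $(g_U,\alpha')$ in the algebra of weakly smooth forms. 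Hence $\tau_\fX^*(\omega\wedge\eta) = \tau_\fX^*\omega\wedge\tau_\fX^*\eta$ on $\tau_\fX\inv(U)$, and gluing over $\cU$ finishes the proof.

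The only mild subtlety — and the main thing to be careful about — is the independence-of-choices issue: one must know that the preform $(g_U,\alpha_U)$ defining $\tau_\fX^*\omega$ does not depend on which tropicalization and Lagerberg form are used to represent $\omega|_U$, so that the "combined tropicalization" trick in~(2) is legitimate and so that matching up the construction for $\d'\omega$, $\omega\wedge\eta$ with the differentiated/multiplied preforms makes sense. But this is exactly what was already established in the proof of Lemma~\ref{lem:preforms-glue}, using functoriality and injectivity of $(h\times h')^*$ from Lemma~\ref{lem:pullbacks.are.smooth}; I would simply invoke that. Everything else is the bookkeeping of transporting the algebra structure through the two compatible pullback operations $h_U^*$ (Lagerberg forms to graph forms) and $\tau_\fX^*$ (graph forms to weakly smooth forms), each of which is already known to respect $\d',\d'',\wedge$.
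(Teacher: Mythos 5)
Your proof is correct and takes essentially the same route as the paper, which simply observes that the identities are automatic because $\d'\tau_\fX^*\omega$ is represented by the preforms $(g_U,\d'\alpha_U)$ and $h_U^*\d'\alpha_U=\d'h_U^*\alpha_U=\d'\omega|_U$ by Lemma~\ref{lem:pullbacks.are.smooth}. Your more detailed treatment of the wedge product via the combined tropicalization, and your appeal to the well-definedness established in Lemma~\ref{lem:preforms-glue}, are exactly the bookkeeping the paper leaves implicit.
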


\begin{proof}
  These are automatic: for example, $\d'\tau_\fX^*\omega$ is defined by the preforms $(g_U,\d'\alpha_U)$, and $h_U^*\d'\alpha_U = \d'h_U^*\alpha_U = \d'\omega|_U$.
\end{proof}

\begin{lem}\label{lem:skeleton-pullback-inj}
  The pullback from the skeleton $\tau_\fX^*\colon \cA^{p,q}(\Sigma_\fX,\del\Sigma_\fX)\to\cA^{p,q}(X)$ is injective.
\end{lem}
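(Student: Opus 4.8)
The plan is to exploit the fact that the retraction $\tau_\fX$ restricts to the identity on the skeleton $\Sigma_\fX \subset X$, so that a form pulled back from the skeleton can be recovered by restricting it back to the skeleton. More precisely, suppose $\omega \in \cA^{p,q}(\Sigma_\fX,\del\Sigma_\fX)$ satisfies $\tau_\fX^*\omega = 0$; I want to conclude $\omega = 0$. Since $\cA^{\bullet,\bullet}$ is a sheaf and a smooth form on a graph is determined by its restrictions to open edges (indeed it suffices to check locally, by Proposition~\ref{prop:intrinsic approach to forms}\eqref{item:forms and restriction}), it is enough to show that $\omega$ vanishes in a neighborhood of each point $x \in \Sigma_\fX$. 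So fix $x$, and choose a subgraph $U \in \cU$ which is a neighborhood of $x$, together with the $(\Z,\Gamma)$-harmonic tropicalization $h_U\colon U \to \R^{n_U}$ and the Lagerberg form $\alpha_U$ with $h_U^*\alpha_U = \omega|_U$, as in the construction of $\tau_\fX^*\omega$.

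First I would record the key compatibility: the composition of the canonical embedding $\iota\colon\Sigma_\fX \inject X$ with the retraction is the identity, $\tau_\fX \circ \iota = \mathrm{id}_{\Sigma_\fX}$, hence $\tau_\fX \circ \iota = \mathrm{id}$ on $U$, and in particular $g_U \circ \iota = h_U \circ \tau_\fX \circ \iota = h_U$ on $U$. The weakly smooth form $\tau_\fX^*\omega$ restricts on $\tau_\fX^{-1}(U)$ to the preform $(g_U, \alpha_U)$, i.e.\ to $g_U^*\alpha_U$; restricting further along the closed analytic domain $\iota(U) \subset \tau_\fX^{-1}(U)$ (which is again a one-dimensional strictly analytic space, namely the graph $U$ with its skeleton structure), one gets $\iota^*(g_U^*\alpha_U) = (g_U\circ\iota)^*\alpha_U = h_U^*\alpha_U = \omega|_U$. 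Here I use that the restriction of weakly smooth forms to the analytic domain $\iota(U)$, identified with the graph $U$, recovers exactly the pullback of Lagerberg forms via the harmonic tropicalization $h_U$ — this is the defining compatibility of the constructions in Section~\ref{Sec:lagerberg-forms} (Lemma~\ref{lem:pullbacks.are.smooth}) together with the fact (from the companion paper) that on a skeleton the weakly smooth structure agrees with the graph structure of Section~\ref{Sec:forms-graphs}. Since $\tau_\fX^*\omega = 0$ by hypothesis, its restriction $\omega|_U$ vanishes; as $x$ was arbitrary, $\omega = 0$.

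The main obstacle is the bookkeeping in the middle step: one has to be careful that ``restriction of a weakly smooth form on $X$ to the analytic subdomain $\iota(U)$'' is literally the same operation as ``restriction of the smooth form on $\Sigma_\fX$ to the subgraph $U$'' under the identification of $\iota(U)$ with the graph $U$. This is precisely the content of the injectivity and functoriality statement in Lemma~\ref{lem:pullbacks.are.smooth} (that $h_U^*$ factors through an injection $\cA^{p,q}(h_U(U)) \hookrightarrow \cA^{p,q}(U,\del U)$, and that Lagerberg-form restriction to $\iota(U)$ agrees with pullback by the affine parameterizations $p_e = h_U\circ t_e$), combined with the description of the weakly smooth structure on skeletons from~\cite{gubler_rabinoff_jell:harmonic_trop}. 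Once that identification is granted, the argument is a one-line diagram chase: $0 = \iota^*\tau_\fX^*\omega = (\tau_\fX\circ\iota)^*\omega = \omega$, checked locally on $\Sigma_\fX$.
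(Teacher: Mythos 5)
There is a genuine gap at the central step of your argument: the skeleton $\iota(U)\subset\tau_\fX\inv(U)$ is \emph{not} a (closed) analytic domain of $X$, and it is not a one-dimensional strictly $k$-analytic space. It is a compact subset of $X$ consisting only of type-$2$ and type-$3$ points (it contains no rigid points at all), so there is no operation ``restrict a weakly smooth form on $X$ to $\iota(U)$'' available in the theory: weakly smooth forms are only defined on (and restricted to) strictly analytic domains. Consequently the identity $\iota^*\tau_\fX^*\omega=(\tau_\fX\circ\iota)^*\omega=\omega$ that your plan rests on is not meaningful as stated, and the asserted compatibility ``restriction of weakly smooth forms to $\iota(U)$ recovers the graph-theoretic restriction'' is not something established in Section~\ref{Sec:lagerberg-forms} or in the companion paper. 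Making such a restriction map rigorous and well defined (independent of the presentation of $\tau_\fX^*\omega$ by preforms) is essentially the whole content of the lemma, so the argument as written is circular at this point.

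The missing ingredient is an injectivity statement on the analytic side. The paper's proof runs through the tropical image $h_U(U)\subset\R^{n_U}$ rather than through the skeleton: from $\tau_\fX^*\omega=0$ one gets $g_U^*\alpha_U=0$ on $\tau_\fX\inv(U)$, and the key input from the companion paper~\cite{gubler_rabinoff_jell:harmonic_trop} is that $g_U^*\colon\cA^{p,q}(h_U(U))\to\cA^{p,q}(\tau_\fX\inv(U))$ is \emph{injective}; hence $\alpha_U$ restricts to zero on $h_U(U)$. You correctly invoke the other half, namely that $h_U^*$ factors through an injection $\cA^{p,q}(h_U(U))\hookrightarrow\cA^{p,q}(U,\del U)$ (Lemma~\ref{lem:pullbacks.are.smooth}), which then yields $\omega|_U=h_U^*\alpha_U=0$. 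So your plan can be repaired by replacing ``restrict to the skeleton'' with ``use injectivity of $g_U^*$ on forms on the tropicalization,'' but without that input the conclusion $\alpha_U|_{h_U(U)}=0$, and hence $\omega|_U=0$, does not follow.
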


\begin{proof}
  Choose $\omega\in\cA^{p,q}(\Sigma,\del\Sigma)$ with $\tau_\fX^*\omega = 0$.
Suppose that $\tau_\fX^*\omega$ is defined by a collection $(g_U,\alpha_U)$ as in~\secref{sec:pullback-forms-skel}.  Since $g_U^*\colon\cA^{p,q}(h_U(U))\to\cA^{p,q}(\tau_\fX\inv U)$ is injective by~\otherfile{Proposition~\ref*{I-harmonic lift to weakly smooth forms}}, the form $\alpha_U$ restricts to zero on $h_U(U)$.  Since the map $h_U^*\colon\cA^{p,q}(\R^{n_U})\to\cA^{p,q}(U,\del U)$ factors through $\cA^{p,q}(h_U(U))$ by Lemma~\ref{lem:pullbacks.are.smooth}, this shows that $\omega = 0$.
\end{proof}

The next lemma relates pullback from the skeleton, pullback of Lagerberg forms to graphs, and pullback of Lagerberg forms to curves.  Let $U\subset\Sigma_\fX$ be a subgraph with $\bar\Gamma$-rational endpoints, and consider the retraction $\tau_\fX\colon\tau_\fX\inv(U)\to U$.  The strictly analytic domain $\tau_\fX\inv(U)$ is again a curve, but as noted in Remark~\ref{rem:non-rational-subgraph}, it need not be the case that $U$ is a skeleton of $\tau_\fX\inv(U)$ under our definitions, and hence we have not defined a pullback map $\tau_\fX^*\colon\cA^{p,q}(U,\del U)\to\cA^{p,q}(\tau_{\fX}\inv(U))$.  However, our definition extends to this situation in the obvious way, and we use this in the statement of the lemma.

\begin{lem}\label{lem:pullback.lagerberg}
  Let $X$ be a curve with strictly semistable model $\fX$, let $U\subset\Sigma_\fX$ be a subgraph with $\bar\Gamma$-rational endpoints, let $h\colon U\to\R^n$ be a  $(\Z,\Gamma)$-harmonic tropicalization of $(U,\del U)$, and let $\alpha\in\cA^{p,q}(\R^n)$.  Then $h\circ\tau_\fX\colon\tau_\fX\inv(U)\to\R^n$ is a $(\Z,\Gamma)$-harmonic tropicalization by Proposition~\ref{prop:compose.with.retraction}, and
  \[ (h\circ\tau_\fX)^*\alpha = \tau_\fX^*h^*\alpha. \]
\end{lem}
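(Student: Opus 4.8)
The statement is a naturality assertion: pulling back a Lagerberg form $\alpha$ first to the graph (via $h$) and then to the curve (via $\tau_\fX$) gives the same weakly smooth form as pulling back $\alpha$ directly to the curve via the composed tropicalization $h\circ\tau_\fX$. The plan is to unwind both sides through the definitions and reduce to the defining compatibility built into Definition~\ref{def:skeleton-pullback}.

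\textbf{Step 1: Reduce to a neighborhood of each point.} Both $(h\circ\tau_\fX)^*\alpha$ and $\tau_\fX^*h^*\alpha$ are weakly smooth forms on $\tau_\fX\inv(U)$, so it suffices to check equality locally. Fix a point $x\in\tau_\fX\inv(U)$, set $u = \tau_\fX(x)\in U$, and by Proposition~\ref{prop:local.pullbacks.graphs} choose a subgraph $V\subset U$ with $\bar\Gamma$-rational vertices that is a neighborhood of $u$ in $U$ (hence in $\Sigma_\fX$), together with a $(\Z,\Gamma)$-harmonic tropicalization $h_V\colon V\to\R^{m}$ and a Lagerberg form $\alpha_V\in\cA^{p,q}(\R^m)$ such that $h_V^*\alpha_V = (h^*\alpha)|_V$. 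Here the restriction of $h^*\alpha$ to $V$ is the pullback of $\alpha$ under $h|_V$, so $V$ lies in the covering family $\cU$ used to define $\tau_\fX^*(h^*\alpha)$, and by Definition~\ref{def:skeleton-pullback} the form $\tau_\fX^*(h^*\alpha)$ is given on $\tau_\fX\inv(V)$ by the weakly smooth preform $(h_V\circ\tau_\fX,\alpha_V)$.

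\textbf{Step 2: Compare the two preforms on $\tau_\fX\inv(V)$.} On the other side, $(h\circ\tau_\fX)^*\alpha$ is by definition the weakly smooth form given on $\tau_\fX\inv(V)$ by the preform $(h\circ\tau_\fX|_{\tau_\fX\inv(V)},\alpha)$, where $h\circ\tau_\fX$ restricted to $\tau_\fX\inv(V)$ factors through $h|_V\circ\tau_\fX$. Now both preforms are weakly smooth preforms of type $(p,q)$ on $\tau_\fX\inv(V)$ defined via harmonic tropicalizations (Proposition~\ref{prop:compose.with.retraction}), and they become equal after the common refinement: consider the map $(h|_V\times h_V)\circ\tau_\fX\colon\tau_\fX\inv(V)\to\R^n\times\R^m$. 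Because $h|_V^*\alpha = h_V^*\alpha_V$ on $V$ — both equal $(h^*\alpha)|_V$ — functoriality and injectivity of $(h|_V\times h_V)^*$ on $\cA^{p,q}((h|_V\times h_V)(V))$ from Lemma~\ref{lem:pullbacks.are.smooth} show that $\pi^*\alpha$ and $\pi'^*\alpha_V$ agree on $(h|_V\times h_V)(V)$, where $\pi,\pi'$ are the two projections. Composing with $\tau_\fX$ and using functoriality of the pullback of weakly smooth preforms, the two preforms $(h\circ\tau_\fX,\alpha)$ and $(h_V\circ\tau_\fX,\alpha_V)$ represent the same weakly smooth form on $\tau_\fX\inv(V)$. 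This is precisely the gluing argument already carried out in the proof of Lemma~\ref{lem:preforms-glue}, applied here on the curve rather than the graph.

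\textbf{Step 3: Conclude.} Since every point of $\tau_\fX\inv(U)$ lies in some such $\tau_\fX\inv(V)$, and on each we have shown $(h\circ\tau_\fX)^*\alpha = \tau_\fX^*(h^*\alpha)$, the two weakly smooth forms agree globally on $\tau_\fX\inv(U)$, as claimed.

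\textbf{Main obstacle.} There is no serious obstacle: the content of the lemma is entirely bookkeeping, matching the definition of $\tau_\fX^*$ (Definition~\ref{def:skeleton-pullback}), which is designed so that $h^*\alpha$ on a subgraph pulls back to $(h\circ\tau_\fX,\alpha)$ on the curve. The one point requiring care is that $h$ need not be one of the distinguished tropicalizations appearing in a chosen covering family $\cU$ for the form $h^*\alpha$ — this is exactly why Step 1 invokes Proposition~\ref{prop:local.pullbacks.graphs} to pass to a finer $V$ and $h_V$, and why Step 2 invokes the injectivity/functoriality statement of Lemma~\ref{lem:pullbacks.are.smooth} to identify the two resulting preforms. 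Everything else is formal, using that $\tau_\fX^*\omega$ may be computed from any covering family (as noted after Definition~\ref{def:skeleton-pullback}).
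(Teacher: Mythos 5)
Your proof is correct, and in essence it is the definition-unwinding that the paper intends -- the paper's own proof is the single sentence ``This is obvious from the definition of $\tau_\fX^*$.'' That said, Steps 1 and 2 are unnecessary detours: by the remark immediately following Definition~\ref{def:skeleton-pullback}, the form $\tau_\fX^*(h^*\alpha)$ may be computed from \emph{any} admissible covering family, and the single-element family $\{U\}$ with tropicalization $h$ and Lagerberg form $\alpha$ is already admissible (trivially $h^*\alpha = (h^*\alpha)|_U$, and $U$ is covered by its own interior). Hence $\tau_\fX^*(h^*\alpha)$ is by definition the weakly smooth form given by the preform $(h\circ\tau_\fX,\alpha)$ on $\tau_\fX\inv(U)$, which is literally $(h\circ\tau_\fX)^*\alpha$; no localization via Proposition~\ref{prop:local.pullbacks.graphs} and no injectivity argument are needed. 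Your longer route is sound -- the comparison in Step 2 is exactly the well-definedness argument of Lemma~\ref{lem:preforms-glue} -- but it re-proves a compatibility that the definition already grants for free.
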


\begin{proof} This is obvious from the definition of $\tau_\fX^*$.
	\end{proof}

\subsection{Functoriality of Pullbacks}
Now we show that pullbacks are compatible with morphisms of semistable curves and extension of scalars.  We begin with morphisms of semistable curves.

\begin{lem}\label{lem:pullback.morphism}
  Let $f\colon\fX'\to\fX$ be a morphism of strictly semistable $k^\circ$-curves, let $X = \fX_\eta$ and $X' = \fX'_\eta$, and let $\phi = \Sigma_f\colon\Sigma_{\fX'}\to\Sigma_\fX$ be the induced harmonic map from Proposition~\ref{prop:skeleton.morphism}.  Then the following square is commutative:
  \[ \xymatrix{
      {\cA^{p,q}(\Sigma_\fX,\del\Sigma_\fX)} \ar[r]^{\phi^*} \ar[d]_{\tau_\fX^*} &
      {\cA^{p,q}(\Sigma_{\fX'},\del\Sigma_{\fX'})} \ar[d]^{\tau_{\fX'}^*} \\
      {\cA^{p,q}(X)} \ar[r]_{f_\eta^*} & {\cA^{p,q}(X').}
    } \]
\end{lem}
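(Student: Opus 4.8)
The statement is local on $X'$: a weakly smooth form is determined by its restrictions to a cover by strictly analytic domains, so it suffices to check the commutativity after restricting both sides to $\tau_{\fX'}^{-1}(U')$ for $U'$ ranging over a cover of $\Sigma_{\fX'}$ by subgraphs with $\bar\Gamma$-rational vertices on which the relevant Lagerberg-form descriptions are available. The idea is to reduce everything to the very concrete description of $\tau^*$ in terms of $(\Z,\Gamma)$-harmonic tropicalizations given in Section~\ref{sec:pullback-forms-skel}, together with Lemma~\ref{lem:pullback.lagerberg} and the compatibility of pullbacks of Lagerberg forms along harmonic maps of graphs (Lemma~\ref{lem:pullback.lagerberg.harmonic}).

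\textbf{Step 1: set-up and reduction to a local picture.} Fix $\omega\in\cA^{p,q}(\Sigma_\fX,\del\Sigma_\fX)$ and a point $x'\in\Sigma_{\fX'}$; let $x=\phi(x')$. Using Lemma~\ref{lem:subdivison of skeletons and morphism}, after subdividing we may assume $\phi$ is a piecewise linear map of weighted metric graphs, mapping edges of $\Sigma_{\fX'}$ either linearly onto edges of $\Sigma_\fX$ or to vertices. Choose a subgraph $U\subset\Sigma_\fX$ with $\bar\Gamma$-rational vertices that is a neighbourhood of $x$, admitting a $(\Z,\Gamma)$-harmonic tropicalization $h\colon U\to\R^n$ and a Lagerberg form $\alpha\in\cA^{p,q}(\R^n)$ with $h^*\alpha=\omega|_U$ (Proposition~\ref{prop:local.pullbacks.graphs}). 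Set $U'=\phi^{-1}(U)\subset\Sigma_{\fX'}$, a subgraph that is a neighbourhood of $x'$. By Remark~\ref{rem:remark for harmonic maps}(3) (harmonicity is insensitive to unweighting) and since $\phi$ has integral expansion factors on these subgraphs (Remark~\ref{rem:expansion factor}), the composite $h\circ\phi|_{U'}\colon U'\to\R^n$ is a $(\Z,\Gamma)$-harmonic tropicalization of $(U',\del U')$, and by Lemma~\ref{lem:pullback.lagerberg.harmonic} we have $(h\circ\phi)^*\alpha=\phi^*(h^*\alpha)=\phi^*\omega|_{U'}$. Thus $\phi^*\omega$ is, locally near $x'$, given by the pair $(h\circ\phi,\alpha)$ in the sense of Section~\ref{sec:pullback-forms-skel}.

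\textbf{Step 2: comparison on $\tau_{\fX'}^{-1}(U')$.} Unwinding Definition~\ref{def:skeleton-pullback}, $\tau_{\fX'}^*(\phi^*\omega)$ restricted to $\tau_{\fX'}^{-1}(U')$ is the weakly smooth form given by the harmonic tropicalization $(h\circ\phi)\circ\tau_{\fX'}\colon\tau_{\fX'}^{-1}(U')\to\R^n$ and the Lagerberg form $\alpha$; this is Lemma~\ref{lem:pullback.lagerberg} applied on $\fX'$. On the other hand, $f_\eta^*(\tau_\fX^*\omega)$ restricted to $\tau_{\fX'}^{-1}(U')$: by Lemma~\ref{lem:pullback.lagerberg} on $\fX$, $\tau_\fX^*\omega$ restricted to $\tau_\fX^{-1}(U)$ is given by $h\circ\tau_\fX$ and $\alpha$; pulling back along $f_\eta$ (and using that pullback of weakly smooth forms on curves is functorial, so that the pullback of the preform $(h\circ\tau_\fX,\alpha)$ is the preform $\bigl((h\circ\tau_\fX)\circ f_\eta,\alpha\bigr)$) we get the harmonic tropicalization $h\circ\tau_\fX\circ f_\eta$ together with $\alpha$. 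So it remains to identify the two tropicalization maps $\tau_{\fX'}^{-1}(U')\to\R^n$, namely
\[ h\circ\phi\circ\tau_{\fX'} \quad\text{and}\quad h\circ\tau_\fX\circ f_\eta. \]
By Lemma~\ref{lem:subdivison of skeletons and morphism} we have $\Sigma_f\circ\tau_{\fX'}=\tau_\fX\circ f_\eta$, i.e.\ $\phi\circ\tau_{\fX'}=\tau_\fX\circ f_\eta$ as maps $X'\to\Sigma_\fX$; composing with $h$ gives the required equality of tropicalizations. Since both sides of the square are then represented by the same preform on $\tau_{\fX'}^{-1}(U')$, they agree there, and since such $U'$ (for varying $x'$) cover $\Sigma_{\fX'}$, the square commutes.

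\textbf{Main obstacle.} The conceptual content is light; the one point demanding care is the compatibility of the expansion-factor and integrality bookkeeping with the passage between $\fX'$, $\fX$ and their subgraphs — specifically, checking that $h\circ\phi|_{U'}$ is genuinely a $(\Z,\Gamma)$-harmonic tropicalization (integral slopes, values in $\Gamma$ on endpoints of the containing edges of $\Sigma_{\fX'}$) rather than merely $(\Q,\R)$-harmonic, which is exactly where Remark~\ref{rem:expansion factor} and the subdivision of Lemma~\ref{lem:subdivison of skeletons and blow up} are needed; if $f_\eta$ is constant on a component the verification is trivial, so one may restrict to the finite-fibre case. Everything else is a direct unwinding of Definition~\ref{def:skeleton-pullback} together with Lemmas~\ref{lem:pullback.lagerberg} and~\ref{lem:pullback.lagerberg.harmonic} and the set-theoretic identity $\Sigma_f\circ\tau_{\fX'}=\tau_\fX\circ f_\eta$.
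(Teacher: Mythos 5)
Your proposal is correct and follows essentially the same route as the paper's proof: localize to subgraphs $U'$ mapping into a subgraph $U$ on which $\omega$ is given by a $(\Z,\Gamma)$-harmonic tropicalization, rewrite both sides as preforms via Lemma~\ref{lem:pullback.lagerberg} and Lemma~\ref{lem:pullback.lagerberg.harmonic}, and conclude from the identity $\phi\circ\tau_{\fX'}=\tau_\fX\circ f_\eta$ of Lemma~\ref{lem:subdivison of skeletons and morphism}. The only cosmetic difference is that the paper takes $U'$ to be any $\bar\Gamma$-rational subgraph with $\phi(U')\subset U$ rather than $\phi^{-1}(U)$ itself, which sidesteps having to check that the full preimage is again such a subgraph.
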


\begin{proof}
  Let $\omega\in\cA^{p,q}(\Sigma_\fX,\del\Sigma_\fX)$.  We need to show that $f_\eta^*\tau_\fX^*\omega = \tau_{\fX'}^*\phi^*\omega$, which is a local question on $X'$.  Suppose that $\tau_\fX^*\omega$ is defined by a collection $(g_U,\alpha_U)_{U\in\cU}$ as in~\secref{sec:pullback-forms-skel}.  Let $U'\subset\Sigma_{\fX'}$ be a subgraph with $\bar\Gamma$-rational endpoints such that $\phi(U')$ is contained in some $U\in\cU$, and note that $\Sigma_{\fX'}$ is covered by the interiors of such $U'$.  Then $\tau_\fX^*\omega|_{\tau_{\fX}\inv(U)} = \tau_\fX^*h_U^*\alpha_U = (h_U\circ\tau_\fX)^*\alpha_U$ by Lemma~\ref{lem:pullback.lagerberg}, so
  \[\begin{split}
      f_\eta^*\tau_\fX^*\omega|_{\tau_{\fX'}\inv(U')}
      &= f_\eta^*(h_U\circ\tau_\fX)^*\alpha_U|_{\tau_{\fX'}\inv(U')} \\
      &= (h_U\circ\tau_\fX\circ f_\eta)^*\alpha_U|_{\tau_{\fX'}\inv(U')}
      = (h_U\circ\phi\circ\tau_{\fX'})^*\alpha_U|_{\tau_{\fX'}\inv(U')},
  \end{split}\]
  where the last equality uses Lemma~\ref{lem:subdivison of skeletons and morphism}.  Since $h_U\circ\phi|_{U'}\colon U'\to\R^{n_U}$ is a $(\Z,\Gamma)$-harmonic tropicalization, this last form is equal to $\tau_{\fX'}^*(h_U\circ\phi|_{U'})^*\alpha_U$, again by Lemma~\ref{lem:pullback.lagerberg}.  Finally, $(h_U\circ\phi|_{U'})^*\alpha_U= \phi^*h_U^*\alpha_U|_{U'} = \phi^*\omega|_{U'}$, where the first equality is Lemma~\ref{lem:pullback.lagerberg.harmonic}.
\end{proof}

Now we prove functoriality with respect to extension of scalars and Galois actions.

\begin{lem}\label{lem:pullback.basechange}
  Let $k'/k$ be a non-Archimedean extension, let $\fX$ be a strictly semistable model of a curve $X$, and let $\fX' = \fX\hat\tensor_{k^\circ}k'^\circ$ and $X' = \fX'_\eta = X\hat\tensor_k k'$.  Let $\pi\colon\fX'\to\fX$ be the structure morphism, and let $\phi = \Sigma_\pi\colon\Sigma_{\fX'}\to\Sigma_\fX$ be the restriction of $\pi_\eta$ to $\Sigma_{\fX'}$, as in Proposition~\ref{prop:skeleton.basechange}.  Then the following square is commutative:
  \[ \xymatrix{
      {\cA^{p,q}(\Sigma_\fX,\del\Sigma_\fX)} \ar[r]^{\phi^*} \ar[d]_{\tau_\fX^*} &
      {\cA^{p,q}(\Sigma_{\fX'},\del\Sigma_{\fX'})} \ar[d]^{\tau_{\fX'}^*} \\
      {\cA^{p,q}(X)} \ar[r]_{\pi_\eta^*} & {\cA^{p,q}(X').}
    } \]
\end{lem}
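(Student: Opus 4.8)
The plan is to run the proof of Lemma~\ref{lem:pullback.morphism} essentially verbatim, with the morphism $f$ there replaced by the structure morphism $\pi\colon\fX'\to\fX$, and with the compatibility $\Sigma_f\circ\tau_{\fY}=\tau_\fX\circ f_\eta$ from Lemma~\ref{lem:subdivison of skeletons and morphism} replaced by its base-change analogue $\Sigma_\pi\circ\tau_{\fX'}=\tau_\fX\circ\pi_\eta$, which is Proposition~\ref{prop:skeleton.basechange}(2). Fix $\omega\in\cA^{p,q}(\Sigma_\fX,\del\Sigma_\fX)$. The asserted identity $\pi_\eta^*\tau_\fX^*\omega=\tau_{\fX'}^*\phi^*\omega$ is local on $X'$, so it suffices to verify it on a suitable cover.

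Concretely, I would fix a collection $(g_U,\alpha_U)_{U\in\cU}$ defining $\tau_\fX^*\omega$ as in~\secref{sec:pullback-forms-skel}, with $g_U=h_U\circ\tau_\fX$ for $(\Z,\Gamma)$-harmonic tropicalizations $h_U\colon U\to\R^{n_U}$ and $h_U^*\alpha_U=\omega|_U$. Using continuity of $\phi$ together with density of the $\bar\Gamma'$-rational points (where $\Gamma'=\val(k'^\times)$), I would choose subgraphs $U'\subset\Sigma_{\fX'}$ with $\bar\Gamma'$-rational vertices whose interiors cover $\Sigma_{\fX'}$ and each of which maps into some $U\in\cU$ under $\phi$. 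On $\tau_{\fX'}^{-1}(U')$ the computation is then, using Lemma~\ref{lem:pullback.lagerberg} over $k$ to rewrite $\tau_\fX^*\omega|_{\tau_\fX^{-1}(U)}=(h_U\circ\tau_\fX)^*\alpha_U$:
\[
\pi_\eta^*\tau_\fX^*\omega|_{\tau_{\fX'}^{-1}(U')}
=(h_U\circ\tau_\fX\circ\pi_\eta)^*\alpha_U
=(h_U\circ\phi\circ\tau_{\fX'})^*\alpha_U,
\]
where the second equality is Proposition~\ref{prop:skeleton.basechange}(2). Since by Proposition~\ref{prop:skeleton.basechange}(1) every edge of $\Sigma_{\fX'}$ maps onto an edge of $\Sigma_\fX$ with expansion factor $1$, the restriction $h_U\circ\phi|_{U'}$ is a $(\Z,\Gamma')$-harmonic tropicalization of $(U',\del U')$, so Lemma~\ref{lem:pullback.lagerberg} applies over $k'$ to give $(h_U\circ\phi\circ\tau_{\fX'})^*\alpha_U=\tau_{\fX'}^*(h_U\circ\phi|_{U'})^*\alpha_U$, and finally $(h_U\circ\phi|_{U'})^*\alpha_U=\phi^*(h_U^*\alpha_U)|_{U'}=\phi^*\omega|_{U'}$ by Lemma~\ref{lem:pullback.lagerberg.harmonic}. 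Stringing these equalities together yields $\pi_\eta^*\tau_\fX^*\omega|_{\tau_{\fX'}^{-1}(U')}=\tau_{\fX'}^*\phi^*\omega|_{\tau_{\fX'}^{-1}(U')}$, which proves the lemma since the $\tau_{\fX'}^{-1}(U')$ cover $X'$.

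I do not expect a genuine obstacle here: once Proposition~\ref{prop:skeleton.basechange} is available, this is pure bookkeeping, formally identical to Lemma~\ref{lem:pullback.morphism}. The only point deserving a line of care is the claim that $h_U\circ\phi|_{U'}$ is a legitimate $(\Z,\Gamma')$-harmonic tropicalization in the sense of Definition~\ref{def:rational.subgraph} — in particular that the affine extension to each edge of $\Sigma_{\fX'}$ takes values in $\Gamma'$ at the endpoints. This follows because $d_{e'}(\phi)=1$ forces $h_U\circ\phi$ to coincide edgewise with $h_U$ under a length-preserving identification, so its slopes stay integral and its endpoint values stay in $\Gamma\subset\Gamma'$; alternatively, one may work with $\Gamma$ and the ambient skeleton $\Sigma_\fX$ directly, using the inclusion $\Gamma\subset\Gamma'$.
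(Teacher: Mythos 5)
Your proposal is correct and matches the paper's proof, which simply states that the argument of Lemma~\ref{lem:pullback.morphism} goes through verbatim once the identity $\phi\circ\tau_{\fX'}=\tau_\fX\circ\pi_\eta$ from Proposition~\ref{prop:skeleton.basechange}(2) is substituted for $\Sigma_f\circ\tau_{\fY}=\tau_\fX\circ f_\eta$. Your extra remark about the value group $\Gamma'$ and the rationality of the relevant harmonic tropicalizations is a reasonable point of care that the paper leaves implicit.
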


\begin{proof}
  This is proved in exactly the same way as Lemma~\ref{lem:pullback.morphism}, using the identity $\phi\circ\tau_{\fX'} = \tau_\fX\circ\pi_\eta$ of Proposition~\ref{prop:skeleton.basechange}(2) instead of the identity $\phi\circ\tau_{\fX'} = \tau_\fX\circ f_\eta$.
\end{proof}

\begin{lem}\label{lem:pullback.galois}
  Let $X$ be a $k$-curve, let $k'/k$ be a finite Galois extension with Galois group $G$, let $X' = X\tensor_k k'$ with structure morphism $\pi\colon X'\to X$, and let $\fX'$ be a strictly semistable model of $X'$.  Suppose that $\rS_0(\fX') = \pi\inv(\pi(\rS_0(\fX')))$, so that $G$ acts on $\Sigma_{\fX'}$ by harmonic maps by Lemma~\ref{lem:skeleton.galois}.  Then the pullback
  \[ \tau_{\fX'}^*\colon\cA^{p,q}(\Sigma_{\fX'},\del\Sigma_{\fX'}) \To \cA^{p,q}(X') \]
  is compatible with the action of $G$ on both sides.
\end{lem}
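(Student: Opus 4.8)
The plan is to verify the compatibility directly from the construction of $\tau_{\fX'}^*$ given in Section~\ref{sec:pullback-forms-skel}, using the Galois-equivariance of all the geometric data already established. First I would fix $\sigma \in G$ and a smooth form $\omega' \in \cA^{p,q}(\Sigma_{\fX'},\del\Sigma_{\fX'})$. Since $G$ acts on $\Sigma_{\fX'}$ by harmonic maps (Lemma~\ref{lem:skeleton.galois}), we have the pullback $\sigma^*\omega' \in \cA^{p,q}(\Sigma_{\fX'},\del\Sigma_{\fX'})$ from Lemma~\ref{lem:pullback.harmonic}, and $\sigma$ also acts on $X'$ with $\sigma^*$ on $\cA^{p,q}(X')$. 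The claim is that $\tau_{\fX'}^*(\sigma^*\omega') = \sigma^*(\tau_{\fX'}^*\omega')$. This is a local statement on $X'$.

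The key computational input is the identity $\tau_{\fX'} \circ \sigma = \sigma \circ \tau_{\fX'}$ on $X'$, which is exactly the last assertion of Lemma~\ref{lem:skeleton.galois}. The plan is then to trace through the glueing construction: suppose $\tau_{\fX'}^*\omega'$ is defined by preforms $(g_U, \alpha_U)_{U \in \cU}$ where $g_U = h_U \circ \tau_{\fX'}$ and $h_U^*\alpha_U = \omega'|_U$. For each such $U$, the subgraph $\sigma^{-1}(U)$ again has $\bar\Gamma$-rational vertices (as $\sigma$ preserves lengths and the vertex set), and $h_U \circ \sigma|_{\sigma^{-1}(U)}$ is a $(\Z,\Gamma)$-harmonic tropicalization of $(\sigma^{-1}(U), \del\sigma^{-1}(U))$ by Lemma~\ref{lem:pullback.harmonic} applied to the harmonic isomorphism $\sigma$ (expansion factors are $1$, so integrality of slopes is preserved); moreover $(h_U \circ \sigma)^*\alpha_U = \sigma^*h_U^*\alpha_U|_{\sigma^{-1}(U)} = \sigma^*\omega'|_{\sigma^{-1}(U)} = (\sigma^*\omega')|_{\sigma^{-1}(U)}$ by Lemma~\ref{lem:pullback.lagerberg.harmonic}. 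Hence $\sigma^{-1}(U) \in \cU$ relative to the form $\sigma^*\omega'$, and by Lemma~\ref{lem:pullback.lagerberg} (applied on both $X'$ and the subgraph $\sigma^{-1}(U)$, noting that $\sigma$ restricts to a harmonic isomorphism on the curves $\tau_{\fX'}^{-1}(\sigma^{-1}(U)) \to \tau_{\fX'}^{-1}(U)$) we compute
\[ \tau_{\fX'}^*(\sigma^*\omega')\big|_{\tau_{\fX'}^{-1}(\sigma^{-1}(U))}
   = \tau_{\fX'}^*(h_U \circ \sigma)^*\alpha_U
   = (h_U \circ \sigma \circ \tau_{\fX'})^*\alpha_U
   = (h_U \circ \tau_{\fX'} \circ \sigma)^*\alpha_U
   = \sigma^*(h_U \circ \tau_{\fX'})^*\alpha_U
   = \sigma^*(\tau_{\fX'}^*\omega'), \]
where the third equality is $\tau_{\fX'} \circ \sigma = \sigma \circ \tau_{\fX'}$. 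Since the subgraphs $\sigma^{-1}(U)$ cover $\Sigma_{\fX'}$ as $U$ ranges over $\cU$, this proves the desired equality.

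The main obstacle, such as it is, is bookkeeping: one must check that $\sigma$ genuinely restricts to a harmonic isomorphism on the relevant strictly analytic subdomains $\tau_{\fX'}^{-1}(U)$ of $X'$ so that the functoriality of pullback of Lagerberg forms along harmonic tropicalizations (Lemma~\ref{lem:pullback.lagerberg.harmonic}, Lemma~\ref{lem:pullback.lagerberg}) applies in the form used above, and that the equivariance $\tau_{\fX'}\circ\sigma = \sigma\circ\tau_{\fX'}$ restricts correctly to these domains (which follows since $\sigma$ permutes the $U$'s and commutes with the retraction). All the genuinely non-formal facts --- Galois equivariance of the skeleton and retraction, harmonicity of the $G$-action, and the compatibility of $\tau_{\fX'}^*$ with Lagerberg pullbacks --- are already in hand, so the proof is essentially a diagram chase. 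I would write it in three short paragraphs mirroring the above: reduce to a local statement, invoke $\tau_{\fX'}\circ\sigma = \sigma\circ\tau_{\fX'}$ together with Lemmas~\ref{lem:pullback.lagerberg} and~\ref{lem:pullback.lagerberg.harmonic}, and conclude by covering $\Sigma_{\fX'}$.
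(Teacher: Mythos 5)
Your argument is correct and is essentially the paper's proof: the paper disposes of this lemma by saying it is proved exactly as Lemma~\ref{lem:pullback.morphism}, substituting the identity $\tau_{\fX'}\circ\sigma=\sigma\circ\tau_{\fX'}$ from Lemma~\ref{lem:skeleton.galois} for $\phi\circ\tau_{\fX'}=\tau_\fX\circ f_\eta$, and your local computation through the preforms $(g_U,\alpha_U)$ is precisely that argument written out in full. The bookkeeping points you flag (that $\sigma^{-1}(U)$ is again an admissible subgraph and that $h_U\circ\sigma$ remains $(\Z,\Gamma)$-harmonic because $\sigma$ preserves lengths and weights) are handled correctly.
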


\begin{proof}
  This is also proved in the same way as Lemma~\ref{lem:pullback.morphism}, this time using the compatibility of the Galois action with the retraction $\tau_{\fX'}$ from Lemma~\ref{lem:skeleton.galois}.
\end{proof}

\subsection{Integration of Pullbacks}
In this subsection we prove that pullback from the skeleton respects integration of forms.  We will need the following compatibility of tropical multiplicities, which is a variant of~\cite[Theorem~5.8]{baker_payne_rabinoff16:tropical_curves}.

\begin{lem}\label{lem:compat.tropical.mults}
  Let $X$ be a curve with a strictly semistable model $\fX$, let $U\subset\Sigma$ be a subgraph with $\bar\Gamma$-rational vertices, let $h\colon U\to\R^n$ be a $(\Z,\Gamma)$-harmonic tropicalization, let $V = \tau_\fX\inv(U)$, and let $g = h\circ\tau_\fX\colon V\to\R^n$.  Then $\Trop_h(U) = \Trop_g(V)$ as weighted polytopal complexes.
\end{lem}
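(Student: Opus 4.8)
The plan is to reduce the statement to the case where $U$ is genuinely a skeleton of $V$ and then to the comparison between the tropicalization of an analytic curve and the tropicalization of its skeleton in the sense of Definition~\ref{def:trop.graph.mult}, which is the variant of \cite[Theorem~5.8]{baker_payne_rabinoff16:tropical_curves} referred to in the statement. Both $\Trop_h(U)$ and $\Trop_g(V)$ are supported on $h(U)=g(V)$ and assign a multiplicity to each edge $\sigma$ of this polytopal complex, so it suffices to compare these edge multiplicities; those of $\Trop_h(U)$ are given by the formula $m_\sigma=\sum_{e\mapsto\sigma}w(e)\gcd(a_1,\dots,a_n)$ of Definition~\ref{def:trop.graph.mult}, and those of $\Trop_g(V)$ are the tropical multiplicities of the curve $V$ along $\sigma$.

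First I would arrange that the vertices of $U$ are $\Gamma$-rational. If $\Gamma$ is dense this is automatic since $\bar\Gamma=\Gamma$; if $\Gamma$ is discrete, Lemma~\ref{lem:totally.ramified} provides a finite separable extension $k'/k$ with $\td k'=\td k$ such that all the finitely many vertices of $U$ become $\Gamma'$-rational, where $\Gamma'=\val(k'^\times)$. By Proposition~\ref{prop:skeleton.basechange}(4) this base change does not change $\Sigma_\fX$ and is compatible with $\tau_\fX$, and the tropicalization of an analytic curve as a weighted polytopal complex is insensitive to extension of the ground field (the map $g$ and its image are unchanged, and tropical multiplicities are preserved, for instance because degrees of finite morphisms are). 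So I may replace $k$ by $k'$ and assume the vertices of $U$ lie in $\Sigma_\fX(\Gamma)$. Next, as in the proof of \cite[Lemme~2.2.22]{thuillier05:thesis} (compare Lemma~\ref{lem:subdivison of skeletons and blow up}) there is an admissible formal blowup $\fX'\to\fX$ with $\fX'$ strictly semistable and $\Sigma_{\fX'}$ the subdivision of $\Sigma_\fX$ at the vertices of $U$; by Proposition~\ref{prop:skeleton.blowup} this does not change $\tau$, and now $U$ is a union of closed edges and vertices of $\Sigma_{\fX'}$. By Remark~\ref{rem:non-rational-subgraph} the curve $V=\tau_{\fX'}\inv(U)$ then has a strictly semistable model $\fV$ (a formal open of $\fX'$) with $\Sigma_\fV=U$ and $\tau_\fV=\tau_{\fX'}|_V$, so $g=h\circ\tau_\fV$. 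Since $h$ is $(\Z,\Gamma)$-harmonic on $U$, its slopes on the edges of $\Sigma_\fV$ are integral, so $g$ is a $\Z$-harmonic tropicalization of $V$ factoring through $\tau_\fV$ and restricting to $h$ on $\Sigma_\fV=U$.

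It then remains to prove: for a strictly semistable curve $\fV$ with skeleton $\Sigma_\fV$ and a $\Z$-harmonic tropicalization $g\colon V\to\R^n$ factoring through $\tau_\fV$, the weighted polytopal complex $\Trop_g(V)$ of the analytic curve coincides with $\Trop_{g|_{\Sigma_\fV}}(\Sigma_\fV)$ in the sense of Definition~\ref{def:trop.graph.mult}. This is \cite[Theorem~5.8]{baker_payne_rabinoff16:tropical_curves} when the ground field is algebraically closed, and the general case follows by the same argument (tracking the residue degrees $[\kappa(\td x):\td k]$ of the singular points, which are exactly the edge weights) or by base change to the completion of an algebraic closure together with the base-change invariance noted above. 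The essential input is the identification of the local tropical multiplicity of $\Trop_g(V)$ at a point interior to an edge $\sigma$ with $\sum_{e\mapsto\sigma}w(e)\gcd(a_1,\dots,a_n)$, where the $a_i$ are the slopes of the $g_i$ along an edge $e$ of $\Sigma_\fV$ mapping onto $\sigma$; this rests on the structure of $V$ near a type-$2$ point of $\Sigma_\fV$ as, étale-locally, an annulus, together with the known tropical multiplicities of annuli.

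The hard part is exactly this last step — the Baker--Payne--Rabinoff-type identification of tropical multiplicities with the gcd-of-slopes formula — which is the only non-formal ingredient and which requires the structure theory of analytic curves over a possibly non-algebraically-closed residue field, since that is where the weights $w(e)=[\kappa(\td x):\td k]$ genuinely enter. Everything else in the argument — the finite extension, the formal blowup, and the base-change compatibilities — is bookkeeping built on results already established in the paper (Lemmas~\ref{lem:totally.ramified} and~\ref{lem:subdivison of skeletons and blow up}, Propositions~\ref{prop:skeleton.basechange} and~\ref{prop:skeleton.blowup}, and Remark~\ref{rem:non-rational-subgraph}).
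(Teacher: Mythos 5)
Your proposal is correct and follows essentially the same route as the paper: both reduce, via base change and $\Gamma$-rational subdivision, to the local comparison over the open annulus $\tau_\fX\inv(e^\circ)$, where the graph multiplicity $w(e)\gcd(a_1,\dots,a_n)$ is matched against the degree of the unit $u_i$ with $-\log|u_i|=h_i\circ\tau_\fX$ via the lattice-index formula. The only cosmetic difference is the order of the reductions: the paper first extends scalars to an algebraically closed field (using the companion paper's base-change invariance of tropical multiplicities on the analytic side and Remark~\ref{rem:trop.and.degree} on the graph side, after which the weights trivialize and every $\Gamma$-rational subdivision is automatically a skeleton of a modification) and then carries out the Baker--Payne--Rabinoff computation explicitly, whereas you first realize $U$ as the skeleton of a semistable model over a finite extension and then invoke that computation as a citation.
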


\begin{proof}
  Let $k'$ be an algebraically closed non-Archimedean extension field, let $\fX' = \fX\hat\tensor_{k^\circ}k'^\circ$, let $\pi\colon\fX'\to\fX$ be the structure morphism, let $\phi = \Sigma_\pi\colon\Sigma_{\fX'}\to\Sigma_\fX$ be the harmonic map of Proposition~\ref{prop:skeleton.basechange}, let $h' = h\circ\phi\colon\Sigma_{\fX'}\to\R^n$, and let $U' = \phi\inv(U)$.  By Remark~\ref{rem:trop.and.degree} and Proposition~\ref{prop:skeleton.basechange}(1), we have $\Trop_h(U) = \Trop_{h'}(U')$ as weighted polytopal complexes.  Let $V' = V\hat\tensor_k k' = \tau_{\fX'}\inv(U')$ and let $g' = g\circ\pi\colon V'\to\R^n$.  Then $\Trop_g(V) = \Trop_{g'}(V')$ as weighted polytopal complexes by~\otherfile{Proposition~\ref*{I-base change and tropical multiplicities}}.  Thus we may extend scalars to assume $k$ is algebraically closed.  Note that all edge weights are now trivial.

  For brevity we set $\Sigma = \Sigma_\fX$ and $\tau=\tau_\fX$.  The sets underlying $\Trop_h(U)$ and $\Trop_g(V)$ coincide by surjectivity of retraction.  In this paragraph, all subdivisions are understood to be constructed by adding $\Gamma$-rational points of $\Sigma$; the corresponding subdivided skeleton is then the skeleton associated to a modification of $\fX$.  Choose subdivisions of $U$ and of $\Trop(U)$ as in~\secref{sec:integrate-pullback}.  We may assume that the edges of $\Trop_h(U)$ and of $\Trop_g(V)$ agree.  Let $\sigma\subset\Trop_h(U)$ be an edge, let $m_\sigma$ denote the multiplicity of $\sigma$ in $\Trop_h(U)$, and let $m_\sigma'$ denote the multiplicity of $\sigma$ in $\Trop_g(V)$.  We must show that $m_\sigma=m_\sigma'$.  After further subdivision, we can assume that $h\inv(\sigma)$ is a disjoint union of edges of $U$; replacing $U$ with one of these edges, we can assume $U$ consists of a single edge $e$.  Let $e^\circ$ be the interior of $e$.  Then $\tau\inv(e^\circ)$ is isomorphic to an open annulus $S(a)_+$ for $a\in k^{\circ\circ}\setminus\{0\}$ by~\cite[Proposition~2.3]{bosch_lutkeboh85:stable_reduction_I}.

  Let $h_1,\ldots,h_n$ be the coordinates of $h\colon e\to\R^n$, and let $a_i$ be the slope of $h_i$ on $e$, so $m_\sigma = \gcd(a_1,\ldots,a_n)$.  Let $g_i = h_i\circ\tau$.  By Proposition~\ref{prop:length.well.defined}, there exist units $u_1,\ldots,u_n$ on $V$ such that $g_i = -\log|u_i|$.  In other words, $h$ is the moment map associated to the morphism $f\colon V\to\bG_{\rm m}^n$ determined by $u_1,\ldots,u_n$.  Fix $i$ such that $h_i$ is not constant, and let $f_i\colon V\to\bG_{\rm m}$ be the $i$th coordinate of $f$.  By~\cite[Proposition~2.5]{baker_payne_rabinoff13:analytic_curves}, the restriction of $f_i$ to $\tau\inv(e^\circ)\cong S(a)_+$ is finite of degree $|a_i|$ onto its image.

  Let $L_\sigma$ be the linear subspace of $\R^n$ spanned by $\{x-y\colon x,y\in\sigma\}$, and let $N_\sigma = \Z^n\cap L_\sigma$. By~\otherfile{Proposition~\ref*{I-item:trop.mult.projection}},  a primitive lattice vector on the line $L_\sigma$ is $(a_1,\ldots,a_n)/m_\sigma$.  The projection onto the $i$th factor $p_i\colon\Z^n\to\Z$ takes $L_\sigma$ to the sublattice generated by $a_i/m_\sigma$.  Hence by the definition of $m_\sigma'$, we have
  \[ m_\sigma' = \frac 1{[\Z\colon p_i(N_\sigma)]}\,\deg(f_i|_{\tau\inv(e^\circ)})
    = \frac{m_\sigma}{|a_i|}\,|a_i| = m_\sigma
  \]
  proving the claim.
\end{proof}

\begin{prop}\label{prop:integration.pullback}
  Let $X$ be a curve with a strictly semistable model $\fX$.  Then
  \[ \int_X\tau_\fX^*\omega = \int_{\Sigma_\fX}\omega \qquad
    \int_{\del X}\tau_\fX^*\eta' = \int_{\del\Sigma_\fX}\eta' \qquad
    \int_{\del X}\tau_\fX^*\eta'' = \int_{\del\Sigma_\fX}\eta''
  \]
  for all $\omega\in\cA^{1,1}(\Sigma_\fX,\del\Sigma_\fX),~\eta'\in\cA^{1,0}(\Sigma_\fX,\del\Sigma_\fX),$ and $\eta''\in\cA^{0,1}(\Sigma_\fX,\del\Sigma_\fX)$.
\end{prop}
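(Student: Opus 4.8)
The plan is to reduce the statement to a purely tropical computation via a covering of the skeleton by subgraphs on which the pullback from the skeleton is, by construction, a pullback of a Lagerberg form through a harmonic tropicalization. First I would use Proposition~\ref{prop:local.pullbacks.graphs} to cover $\Sigma_\fX$ by finitely many subgraphs $U\in\cU$ with $\bar\Gamma$-rational vertices, each equipped with a $(\Z,\Gamma)$-harmonic tropicalization $h_U\colon U\to\R^{n_U}$ and a Lagerberg form $\alpha_U$ with $h_U^*\alpha_U=\omega|_U$; by Proposition~\ref{prop:compose.with.retraction}, $g_U=h_U\circ\tau_\fX$ is a harmonic tropicalization on $V_U=\tau_\fX\inv(U)$, and by Definition~\ref{def:skeleton-pullback}, $\tau_\fX^*\omega$ restricts to $(g_U,\alpha_U)$ on each $V_U$. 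The integration theory for weakly smooth forms on a curve (from the companion paper~\cite{gubler_rabinoff_jell:harmonic_trop}) computes $\int_{V_U}g_U^*\alpha_U$ as an integral over $\Trop_{g_U}(V_U)$ of $\alpha_U$, and likewise for the boundary integrals; this is the non-Archimedean analogue of Proposition~\ref{prop:compat.integration}.

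The key input that matches the two sides is Lemma~\ref{lem:compat.tropical.mults}, which asserts $\Trop_{h_U}(U)=\Trop_{g_U}(V_U)$ as weighted polytopal complexes. Granting this, on each subgraph we get
\[
  \int_{V_U}g_U^*\alpha_U = \int_{\Trop_{g_U}(V_U)}\alpha_U
  = \int_{\Trop_{h_U}(U)}\alpha_U = \int_U h_U^*\alpha_U = \int_U\omega,
\]
where the first equality is the integration theorem for weakly smooth forms on curves, the third is Proposition~\ref{prop:compat.integration}(1) applied to the graph $U$, and the last uses $h_U^*\alpha_U=\omega|_U$; the boundary statements follow in the same way using Proposition~\ref{prop:compat.integration}(2). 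The global identity is then obtained by a partition-of-unity argument: choose a smooth partition of unity $(\rho_U)$ on $\Sigma_\fX$ subordinate to the open cover by the interiors of the $U\in\cU$ (which exists by Remark~\ref{rem:partition of unity}), pull back to the partition of unity $(\rho_U\circ\tau_\fX)$ on $X$, and use additivity of the integral over the pieces $\rho_U\omega$, each supported in the interior of $U$, to reduce the global statement to the local one proved above. For the boundary integrals one uses that each boundary point of $X$ lies in exactly one $V_U$ with $\del U$ containing the corresponding vertex of $\Sigma_\fX$, so the boundary decomposes cleanly as well.

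The main obstacle is the bookkeeping in the partition-of-unity reduction rather than any deep new idea: one must check that $\int_{V_U}$ of a form supported in the interior of $V_U$ agrees with its integral computed on all of $X$ (this is standard for the integration theory of weakly smooth forms, but needs to be invoked correctly), and that the tropicalizations $\Trop_{g_U}$ and $\Trop_{g_{U'}}$ agree on overlaps so that the local contributions patch --- but this is subsumed by Lemma~\ref{lem:preforms-glue} together with Lemma~\ref{lem:compat.tropical.mults}, since the forms $(g_U,\alpha_U)$ already glue. A secondary point to be careful about: $U$ need not be a skeleton of $V_U=\tau_\fX\inv(U)$, only a subgraph with $\bar\Gamma$-rational vertices (Remark~\ref{rem:non-rational-subgraph}), so one works with the extended definition of $\tau_\fX^*$ and of $\Trop$ mentioned before Lemma~\ref{lem:pullback.lagerberg}; Lemma~\ref{lem:compat.tropical.mults} is stated precisely in that generality, so no extra work is needed. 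Everything else is a formal consequence of the already-established compatibilities.
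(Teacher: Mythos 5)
Your overall strategy --- cover $\Sigma_\fX$ by subgraphs $U$ on which $\omega|_U=h_U^*\alpha_U$, use the definition of the integral of weakly smooth forms to write $\int_{V_U}g_U^*\alpha_U=\int_{\Trop_{g_U}(V_U)}\alpha_U$, match this with $\int_U\omega=\int_{\Trop_{h_U}(U)}\alpha_U$ via Proposition~\ref{prop:compat.integration} and Lemma~\ref{lem:compat.tropical.mults}, and globalize with a partition of unity --- is exactly the paper's proof. But there is one genuine gap in the globalization step. The local identity you establish is only for forms of the special shape $h_U^*\alpha_U$ on $U$ (resp.\ $g_U^*\alpha_U$ on $V_U$), whereas the pieces you feed into it after localizing are $\rho_U\omega$ with $\rho_U$ an arbitrary smooth cutoff from Remark~\ref{rem:partition of unity}. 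For the tropical integration formula to apply to $\tau_\fX^*(\rho_U\omega)$, you need $\rho_U\omega|_U$ to again be the pullback of a Lagerberg form through the \emph{same} tropicalization $h_U$, so that its pullback to $V_U$ is again of the form $g_U^*(\cdot)$; an arbitrary smooth $\rho_U$ does not give you this, so ``additivity over the pieces'' does not reduce to the local statement as written.

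The fix is available from the tools you already cite, and it is how the paper proceeds: by the last sentence of Proposition~\ref{prop:local.pullbacks.graphs}, away from interior leaf vertices the pair $(U,h_U)$ can be chosen independently of the form, so the cutoff $\lambda_U$ is itself of the form $h_U^*f_U$ for a smooth function $f_U$ on $\R^{n_U}$; then $\lambda_U\omega|_U=h_U^*(f_U\alpha_U)$ and $\tau_\fX^*(\lambda_U\omega)=g_U^*(f_U\alpha_U)$ by multiplicativity of the pullbacks, and your local computation applies verbatim with $\alpha_U$ replaced by $f_U\alpha_U$. At an interior leaf vertex one cannot choose $h_U$ independently of $\omega$, but there $\omega$ vanishes near the vertex and the corresponding piece contributes nothing. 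The two bookkeeping issues you do flag (compatibility of $\int_{V_U}$ with $\int_X$ for compactly supported pieces, and agreement of the tropicalizations on overlaps) are indeed handled by the cited results; also, the boundary case does not require that each boundary point of $X$ lie in exactly one $V_U$ --- the same partition-of-unity argument combined with Proposition~\ref{prop:compat.integration}(2) works there.
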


\begin{proof}
  For brevity we set $\Sigma=\Sigma_\fX$ and $\tau=\tau_\fX$.  Let $\omega\in\cA^{1,1}(\Sigma,\del\Sigma)$, and suppose that $\tau^*\omega$ is defined by a collection $(g_U,\alpha_U)_{U\in\cU}$ as in~\secref{sec:pullback-forms-skel}.  By Proposition~\ref{prop:local.pullbacks.graphs} and its proof, we can choose a finite subset $\cU'\subset\cU$ whose topological interiors cover $\Sigma$ and such that for each $U\in\cU'$, either $U$ is a neighborhood of an interior leaf vertex on which $\omega=0$, or $U$ can be chosen independent of $\omega$.  We have seen in Remark~\ref{rem:partition of unity} that there exists a partition of unity $\{\lambda_U\}_{U\in\cU}$ consisting of smooth functions on $\Sigma$, such that $\supp(\lambda_U)$ is compact and contained in the topological interior of $U$.   Then $\omega = \sum_{U\in\cU}\lambda_U\omega$, so by linearity of $\int_X\tau_\fX^*$ and $\int_\Sigma$, it suffices to show that $\int_X\tau^*(\lambda_U\omega) = \int_\Sigma\lambda_U\omega$ for each subgraph $U$.  If $\omega=0$ on $U$ then there is nothing to prove, so we assume that $U$ was chosen independently of $\omega$.  Then $\lambda_U = h_U^*f_U$ for a smooth function $f_U$ on $\R^{n_U}$ by Proposition~\ref{prop:local.pullbacks.graphs}, so $\lambda_U\omega|_U = h_U^*(f_U\alpha_U)$.  Setting $\alpha = f_U\alpha_U$ and replacing $\omega$ by $\lambda_U\omega$, we may assume that $\omega|_U = h^*\alpha$ for $h = h_U\colon U\to\R^n$ and $\alpha\in\cA^{1,1}(\R^n)$, and that $\omega$ is supported on $U$.  Setting $V = \tau\inv U$ and $g=h\circ\tau\colon V\to\R^n$, we have
  \[ \int_X\tau^*\omega = \int_V\tau^*h^*\alpha = \int_V(h\circ\tau)^*\alpha = \int_V g^*\alpha = \int_{\Trop_g(V)}\alpha, \]
  where the second equality is Lemma~\ref{lem:pullback.lagerberg}, and the last equality is the definition of the integral from~\otherfile{Proposition~\ref*{I-integration of piecewise smooth forms}}.  On the other hand, we have \[ \int_\Sigma\omega = \int_U h^*\alpha = \int_{\Trop_h(U)}\alpha \]
  by Proposition~\ref{prop:compat.integration}(1), so the equality $\int_X\tau^*\omega = \int_\Sigma\omega$ follows from Lemma~\ref{lem:compat.tropical.mults}.

  The corresponding proofs for $(1,0)$- and $(0,1)$-forms proceed in exactly the same way, using Proposition~\ref{prop:compat.integration}(2).
\end{proof}

\subsection{Weakly Smooth Forms and Dolbeault Cohomology of Curves}
In this subsection we are able to compute all weakly smooth forms and Dolbeault cohomology groups of a curve.  More precisely, we prove that the pullback from a suitable skeleton induces an isomorphism of Dolbeault cohomology groups for a curve with semistable reduction, and we use Galois actions to compute the Dolbeault cohomology groups of a general curve.  Finally, we prove that Poincar\'e duality holds for proper curves.

Recall from the beginning of Section~\ref{Sec:forms-curves} that a \defi{curve} is a compact, rig-smooth strictly $k$-analytic space of pure dimension $1$.

\begin{thm}\label{thm:pullback-form-skeleton}
  Let $X$ be a curve, let $p,q\in\{0,1\}$, and let $\eta\in\cA^{p,q}(X)$ be a weakly smooth form.  There exists a finite Galois extension $k'/k$, a strictly semistable model $\fX'$ of $X' = X\tensor_k{k'}$, and a smooth form $\omega\in\cA^{p,q}(\Sigma_{\fX'},\del\Sigma_{\fX'})$ such that $\eta'=\tau_{\fX'}^*\omega$, where $\eta'$ is the pullback of $\eta$ to $X'$.
\end{thm}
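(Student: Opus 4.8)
The strategy is to descend the local structure of a weakly smooth form to a single skeleton, after a finite Galois base change. By the definition of weakly smooth forms in~\cite{gubler_rabinoff_jell:harmonic_trop}, there is a finite covering of $X$ by compact strictly analytic domains $W_1,\dots,W_m$, harmonic tropicalizations $h_i\colon W_i\to\R^{n_i}$, and Lagerberg forms $\alpha_i\in\cA^{p,q}(\R^{n_i})$ such that $\eta|_{W_i}=h_i^*\alpha_i$. Using Thuillier's structure theory of curves and~\cite{gubler_rabinoff_jell:harmonic_trop}, after shrinking the covering I would assume each $W_i=\tau_{\fX_i}^{-1}(U_i)$ for a strictly semistable model $\fX_i$ of a neighborhood of $W_i$ in $X$ and a subgraph $U_i\subset\Sigma_{\fX_i}$. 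First I would collect into a finite set $S$ of type-$2$ points of $X$: all vertices of all $\Sigma_{\fX_i}$, all boundary points of the $W_i$, and all (finitely many, type-$2$) points at which some coordinate $h_{i,j}$ of some $h_i$ fails to be linear on skeletons.

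Next I would apply Remark~\ref{rem:galois.stable.model} to produce a finite Galois extension $k'/k$ and a strictly semistable model $\fX'$ of $X'=X\tensor_k k'$ such that $\rS_0(\fX')$ contains the preimage of $S$; after a further totally ramified extension as in Lemma~\ref{lem:totally.ramified} and an admissible formal blowup as in Proposition~\ref{prop:skeleton.blowup}, I may arrange in addition that $\Sigma_{\fX'}$ refines each $\Sigma_{\fX_i}$ near $W_i$, so that $W_i'=W_i\tensor_k k'=\tau_{\fX'}^{-1}(\widetilde U_i)$ for subgraphs $\widetilde U_i\subset\Sigma_{\fX'}$ with $\Gamma'$-rational vertices, where $\Gamma'=\val(k'^\times)$. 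The heart of the matter, and the step I expect to be the main obstacle, is the claim that each base-changed harmonic tropicalization $h_i'\colon W_i'\to\R^{n_i}$ factors through the retraction: since the coordinates of $h_i'$ are bounded (being continuous on the compact $W_i'$) harmonic functions whose non-linearity locus is contained in the vertex set of $\Sigma_{\fX'}$, they are constant on the residue discs of $\fX'$, so $h_i'=g_i\circ\tau_{\fX'}$ for a $(\Z,\Gamma')$-harmonic tropicalization $g_i\colon\widetilde U_i\to\R^{n_i}$ of $(\widetilde U_i,\del\widetilde U_i)$ in the sense of Definitions~\ref{def:harmonic tropicalization} and~\ref{def:rational.subgraph}. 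This is the content of the harmonic-function theory of~\cite{gubler_rabinoff_jell:harmonic_trop} (and is the converse direction to Proposition~\ref{prop:compose.with.retraction}); the care needed here lies in verifying the valency and boundary conditions that make $g_i$ an honest harmonic tropicalization of the graph $(\widetilde U_i,\del\widetilde U_i)$, using that $\del\Sigma_{\fX'}=\del X'$ and that $h_i'$ is harmonic on $W_i'$ in Thuillier's sense.

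Granting the factorization, I would set $\omega_i=g_i^*\alpha_i\in\cA^{p,q}(\widetilde U_i,\del\widetilde U_i)$, which is a smooth form on the graph by Lemma~\ref{lem:pullbacks.are.smooth} (applied on $\widetilde U_i$); compare also Proposition~\ref{prop:local.pullbacks.graphs}. By Lemma~\ref{lem:pullback.lagerberg} we get $\eta'|_{\tau_{\fX'}^{-1}(\widetilde U_i)}=(g_i\circ\tau_{\fX'})^*\alpha_i=\tau_{\fX'}^*\omega_i$, and by Lemma~\ref{lem:pullback.lagerberg.harmonic} the $\omega_i$ are compatible with restriction to subgraphs. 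On an overlap $\widetilde U_i\cap\widetilde U_j$, which is again a subgraph of $\Sigma_{\fX'}$ after subdivision, we have $\tau_{\fX'}^*(\omega_i-\omega_j)=0$ on $\tau_{\fX'}^{-1}(\widetilde U_i\cap\widetilde U_j)$, hence $\omega_i=\omega_j$ there by the injectivity of pullback from a subgraph, which is Lemma~\ref{lem:skeleton-pullback-inj} applied to the curve $\tau_{\fX'}^{-1}(\widetilde U_i\cap\widetilde U_j)$. Since smoothness of forms on graphs is a local condition and the $\widetilde U_i$ cover $\Sigma_{\fX'}$ (because the $W_i$ cover $X$ and $\tau_{\fX'}$ is surjective), the $\omega_i$ glue to a form $\omega\in\cA^{p,q}(\Sigma_{\fX'},\del\Sigma_{\fX'})$. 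Then $\tau_{\fX'}^*\omega$ and $\eta'$ agree on each member of the covering $\{\tau_{\fX'}^{-1}(\widetilde U_i)\}$ of $X'$, so $\tau_{\fX'}^*\omega=\eta'$, as desired. Apart from the factorization step, which rests on~\cite{gubler_rabinoff_jell:harmonic_trop}, every step is a bookkeeping argument assembled from the lemmas already established.
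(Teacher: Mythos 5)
Your proposal is correct and follows essentially the same route as the paper: both start from the local description of $\eta$ by weakly smooth preforms, invoke the companion paper's result that harmonic tropicalizations on a curve factor through the retraction to a skeleton after a finite ground field extension (your ``constant on residue discs'' argument is precisely the content of that cited result), assemble the local semistable models into one global model via Ducros's triangulations, and glue the locally defined graph forms using injectivity of $\tau_{\fX'}^*$ (Lemma~\ref{lem:skeleton-pullback-inj}). The only differences are organizational (your explicit set $S$ of type-$2$ points versus the paper's direct appeal to a triangulation containing the generic points of each local model's special fiber), not mathematical.
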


If $k'/k$ is Galois then $\cA^{p,q}(X_{k'})^{\Gal(k'/k)} = \cA^{p,q}(X)$ by~\otherfile{Proposition~\ref*{I-Galois invariant forms}}, so Theorem~\ref{thm:pullback-form-skeleton} completely characterizes the smooth forms on a curve.

\begin{proof}[Proof of Theorem~\ref{thm:pullback-form-skeleton}]
  Fix $\eta\in\cA^{p,q}(X)$.  By definition, there exists a collection $\cV$ of strictly $k$-affinoid domains $V\subset X$ whose topological interiors cover $X$, and for each $V$ a weakly smooth preform $\eta_V = (g_V, \alpha_V)$ on $V$, which agree on overlaps.  By~\otherfile{Proposition~\ref*{I-comparison harmonic in general}}, after extending the ground field, we may assume that each $V$ admits a strictly semistable model $\fV$ such that $g_V$ factors as $g_V = h_\fV\circ\tau_\fV$, where $h_\fV\colon\Sigma_{\fV}\to\R^{n_V}$ is $(\Z,\Gamma)$-harmonic.  By~\cite[Theorem~5.1.14(iv), 6.4.3]{ducros14:structur_des_courbes_analytiq}, after potentially extending the ground field and passing to a modification of $\fV$, we may assume that the formal models $\fV$ glue to give a strictly semistable model $\fX$ of $X$.  (Start with a triangulation containing the inverse image under reduction of all of the generic points of each $\fV_s$.)

We define $\omega_\fV\in\cA^{p,q}(\Sigma_\fV,\del\Sigma_\fV)$ to be $h_\fV^*\alpha_V$.  Since the $\eta_V$ agree on overlaps, we see from  Lemma~\ref{lem:skeleton-pullback-inj}  that the $\omega_\fV$ glue to give a smooth form $\omega\in\cA^{p,q}(\Sigma_\fX,\del\Sigma_\fX)$.  By construction we have $\eta_V = \tau_\fV^*\omega_\fV$ for all $V$, so $\eta = \tau_\fX^*\omega$.
\end{proof}

\begin{thm}\label{thm:pullback-skeleton-isom}
  Let $X$ be a curve with a strictly semistable model $\fX$.  Suppose that $\fX_s$ contains no connected component that is both smooth and proper, or equivalently, that $\Sigma_\fX$ has no isolated interior vertices.  Then the pullback from the skeleton induces isomorphisms
  \[ \tau_\fX^*\colon H^{p,q}(\Sigma_\fX,\del\Sigma_\fX) \isom H^{p,q}(X) \]
  for all $p,q\in\{0,1\}$.
\end{thm}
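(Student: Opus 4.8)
The strategy is to reduce the cohomological statement for the curve $X$ to the combinatorial computation on the graph $\Sigma_\fX$ carried out in Proposition~\ref{prop:dolbeault.graphs}, using the pullback from the skeleton as the comparison map. I would proceed in three stages: first show $\tau_\fX^*$ is injective on cohomology, then show it is surjective, and finally handle the case of disconnected special fibers. For injectivity, suppose $\omega \in \cA^{p,q}(\Sigma_\fX,\del\Sigma_\fX)$ represents a class with $\tau_\fX^*[\omega] = 0$ in $H^{p,q}(X)$; this means $\tau_\fX^*\omega = \d''\xi$ for some weakly smooth $\xi$ on $X$ (if $q = 1$; if $q = 0$ there is nothing to prove since $H^{p,0}$ is a kernel and Lemma~\ref{lem:skeleton-pullback-inj} gives injectivity on forms directly). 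By Theorem~\ref{thm:pullback-form-skeleton}, after a finite Galois extension $k'/k$ we may write $\xi' = \tau_{\fX'}^*\zeta$ for a smooth form $\zeta$ on a semistable model $\fX'$ of $X'$. Using the functoriality Lemmas~\ref{lem:pullback.morphism} and~\ref{lem:pullback.basechange} together with Proposition~\ref{prop:skeleton.blowup} (the retraction $\Sigma_{\fX'}\to\Sigma_\fX$ is a modification, after pulling back to a common refinement) and Proposition~\ref{prop:skeleton.basechange}(5) (base change realizes $\Sigma_\fX$ as a quotient $\Sigma_{\fX'}/G$), one translates $\tau_{\fX'}^*(\text{pullback of }\omega) = \d''\tau_{\fX'}^*\zeta = \tau_{\fX'}^*\d''\zeta$ into the identity $(\text{pullback of }\omega) = \d''\zeta$ on the graph $\Sigma_{\fX'}$ by injectivity of $\tau_{\fX'}^*$ (Lemma~\ref{lem:skeleton-pullback-inj}). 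Then Lemma~\ref{lem:modif.isom.cohom} and Proposition~\ref{prop:quotient.graph.cohom} show the original class $[\omega]$ is zero in $H^{p,q}(\Sigma_\fX,\del\Sigma_\fX)$.

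\textbf{Surjectivity.} For surjectivity, take any weakly smooth $\eta \in \cA^{p,q}(X)$ representing a class in $H^{p,q}(X)$; when $q = 1$ we must exhibit $\eta$ as $\tau_\fX^*\omega$ plus a $\d''$-exact form. Again apply Theorem~\ref{thm:pullback-form-skeleton} to get $\eta' = \tau_{\fX'}^*\omega'$ for $\omega' \in \cA^{p,q}(\Sigma_{\fX'},\del\Sigma_{\fX'})$ after a Galois extension $k'/k$. The form $\eta'$ is $\Gal(k'/k)$-invariant since it is the base change of $\eta$, so by the injectivity of $\tau_{\fX'}^*$ together with its Galois-equivariance (Lemma~\ref{lem:pullback.galois}), the form $\omega'$ is $G$-invariant. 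By Proposition~\ref{prop:skeleton.basechange}(5) and Proposition~\ref{prop:quotient.graph.cohom}, the class $[\omega']$ comes from a class in $H^{p,q}(\Sigma_{\fX_?}, \del\Sigma_{\fX_?})$ where $\fX_?$ is the base model realized as quotient; combined with Proposition~\ref{prop:skeleton.blowup}, Lemma~\ref{lem:modif.isom.cohom}, and the functoriality squares, one descends $[\omega']$ to a class $[\omega] \in H^{p,q}(\Sigma_\fX,\del\Sigma_\fX)$ whose image $\tau_\fX^*[\omega]$ base-changes to $[\eta'] = [\eta']$; since the base-change map on curve cohomology is injective (this follows from Galois invariance of forms, \otherfile{Proposition~\ref*{I-Galois invariant forms}}, applied cohomologically — this is where one uses that taking $G$-invariants is exact on $\R$-vector spaces), we conclude $\tau_\fX^*[\omega] = [\eta]$.

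\textbf{Disconnected fibers and the main obstacle.} Since cohomology of a disjoint union is a direct sum and $\tau_\fX$ respects connected components, we reduce to $X$ connected; the hypothesis that $\fX_s$ has no connected component that is both smooth and proper is exactly the translation into the condition that $\Sigma_\fX$ has no isolated interior vertex, which is the hypothesis needed in Lemma~\ref{lem:modif.isom.cohom} (an isolated interior vertex of the graph would correspond to a good-reduction proper component, i.e.\ a Mumford-type obstruction where $H^{1,1}$ would jump — this is precisely the pathology the hypothesis rules out). The main obstacle I anticipate is the bookkeeping in the reduction step: passing from $\fX$ to $\fX'$ over $k'$ generally requires \emph{both} a base change \emph{and} a modification (blowup) to arrange a common semistable model, so one must chase a diagram combining Proposition~\ref{prop:skeleton.basechange} (base change, realizing $\Sigma$ as a quotient), Proposition~\ref{prop:skeleton.blowup} (modification), Lemma~\ref{lem:modif.isom.cohom}, Proposition~\ref{prop:quotient.graph.cohom}, and the functoriality Lemmas~\ref{lem:pullback.morphism}, \ref{lem:pullback.basechange}, \ref{lem:pullback.galois}, verifying at each stage that the relevant no-isolated-interior-vertex hypothesis is preserved (modifications and quotients of such graphs again have no isolated interior vertices, since attaching trees does not create them and a quotient of a graph with an edge has an edge). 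Once the diagram is assembled, each individual arrow is an isomorphism by an already-proven result, and the conclusion follows by pure diagram chasing.
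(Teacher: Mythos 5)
Your overall strategy is the paper's: use Theorem~\ref{thm:pullback-form-skeleton} to realize everything as a pullback from a skeleton over a finite Galois extension, then descend via the modification and quotient isomorphisms on graph cohomology together with injectivity of $\pi^*$ on curve cohomology. However, there is one concrete step in your argument that fails as written. In both your injectivity and surjectivity arguments you apply Lemma~\ref{lem:pullback.galois} and Proposition~\ref{prop:quotient.graph.cohom} to the skeleton $\Sigma_{\fX'}$ of the auxiliary model $\fX'$ produced by Theorem~\ref{thm:pullback-form-skeleton}, asserting for instance that ``$\omega'$ is $G$-invariant.'' But the Galois group $G$ does not automatically act on $\Sigma_{\fX'}$: by Lemma~\ref{lem:skeleton.galois} this requires $\rS_0(\fX') = \pi\inv(\pi(\rS_0(\fX')))$, i.e.\ that $\fX'$ is a Galois-stable model, and the model coming out of Theorem~\ref{thm:pullback-form-skeleton} has no reason to satisfy this. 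Without the action, ``$G$-invariance of $\omega'$'' is not even well-posed, and $\Sigma_{\fX'}/G$ is not available as a comparison object. One can repair this via Remark~\ref{rem:galois.stable.model} (pass to a further Galois-stable blowup) at the cost of yet another modification to track, but you have not done so, and this is precisely the bookkeeping that breaks the single-diagram approach.

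The paper sidesteps this by a two-stage structure that you should adopt. Stage one: assume all irreducible components of $\fX_s$ are geometrically irreducible and all singular points are $\td k$-rational, so that by Proposition~\ref{prop:skeleton.basechange}(3) the skeleton is unchanged under any finite separable base change; then the composite $\Sigma_{\fX'}\to\Sigma_{\fX\tensor_{k^\circ}k'^\circ}\cong\Sigma_\fX$ is just a modification, $\phi^*$ is an isomorphism on graph cohomology by Lemma~\ref{lem:modif.isom.cohom}, and the descent from $X'$ to $X$ uses \emph{only} injectivity of $\pi^*$ on curve cohomology (\otherfile{Corollary~\ref*{I-Galois action on cohomology}}) — no Galois action on $\Sigma_{\fX'}$ is ever invoked. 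Stage two: in general, choose one finite Galois extension $k'/k$ after which the \emph{literal base change} $\fX\tensor_{k^\circ}k'^\circ$ satisfies the stage-one hypotheses; its skeleton automatically carries the $G$-action (Proposition~\ref{prop:skeleton.basechange}(5)), and the quotient machinery (Proposition~\ref{prop:quotient.graph.cohom}, Lemma~\ref{lem:pullback.galois}) is applied only to this model in a final commutative square. Your remaining points — the reduction to connected $X$, the role of the no-isolated-interior-vertex hypothesis in Lemma~\ref{lem:modif.isom.cohom}, and the exactness of $G$-invariants over $\R$ — are all correct.
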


\begin{proof}
  By Lemma~\ref{lem:pullback-form-properties}, the map $\tau_\fX^*$ is well-defined on Dolbeault cohomology groups. 
  For a $\d''$-closed form $\omega\in\cA^{p,q}(\scdot)$, let $[\omega]$ denotes its cohomology class in $H^{p,q}(\scdot)$ and set $\Sigma = \Sigma_\fX$.

  First we suppose that all irreducible components of $\fX_s$ are geometrically irreducible and that all singular points of $\fX_s$ are $\td k$-rational.  Under these hypotheses, if $k'/k$ is a finite, separable extension then the induced morphism $\Sigma_{\fX\tensor_{k^\circ}k'^\circ}\to\Sigma$ is an isomorphism by Proposition~\ref{prop:skeleton.basechange}(3).

  Let us show that $\tau_\fX^*$ is injective.  Let $\omega\in\cA^{p,q}(\Sigma,\del\Sigma)$ be a $\d''$-closed form, and suppose that $\tau_\fX^*[\omega] = 0$, so $\tau_\fX^*\omega = \d''\eta$ for some $\eta\in\cA^{p,q-1}(X)$.  By Theorem~\ref{thm:pullback-form-skeleton}, there exists a finite, separable extension $k'/k$, a strictly semistable model $\fX'$ of $X' = X\tensor_k{k'}$, and a form $\omega'\in\cA^{p,q-1}(\Sigma_{\fX'},\del\Sigma_{\fX'})$ such that $\pi^*\eta = \tau_{\fX'}^*\omega'$, where $\pi\colon X'\to X$ is the structure morphism.  By~\cite[Proposition~2.2.27]{thuillier05:thesis}, after a further finite, separable field extension we may replace $\fX'$ with by a suitable admissible formal blowup to assume that there is a morphism of strictly semistable models $f\colon\fX'\to\fX\tensor_{k^\circ}{k'^\circ}$.  The composition $\pi\circ f\colon\fX'\to\fX\tensor_{k^\circ}{k'^\circ}\to\fX$ induces a harmonic map of skeletons $\phi\colon\Sigma_{\fX'}\to\Sigma_\fX$ by Propositions~\ref{prop:skeleton.blowup} and~\ref{prop:skeleton.basechange} and Remark~\ref{rem:functoriality of pull-back}.  Note that $\phi^*\colon H^{p,q}(\Sigma_\fX,\del\Sigma_\fX)\to H^{p,q}(\Sigma_{\fX'},\del\Sigma_{\fX'})$ is an isomorphism by Proposition~\ref{prop:skeleton.blowup}, Lemma~\ref{lem:modif.isom.cohom}, and the hypothesis that $\Sigma_{\fX} = \Sigma_{\fX\tensor_{k^\circ}{k'^\circ}}$.   By Lemmas~\ref{lem:pullback.morphism} and~\ref{lem:pullback.basechange}, we have $\pi^*\tau_\fX^* = \tau_{\fX'}^*\phi^*$, so
  \[ \tau_{\fX'}^*\phi^*\omega
    = \pi^*\tau_\fX^*\omega
    = \pi^*\d''\eta
    = \d''\pi^*\eta
    = \d''\tau_{\fX'}^*\omega'
    = \tau_{\fX'}^*\d''\omega'.
  \]
  By Lemma~\ref{lem:skeleton-pullback-inj} we know that $\tau_{\fX'}^*\colon\cA^{p,q}(\Sigma_{\fX'},\del\Sigma_{\fX'})\to\cA^{p,q}(X')$ is injective, so $\phi^*\omega = \d''\omega'$, and hence $\phi^*[\omega] = 0$.  But $\phi^*$ is an isomorphism, so $[\omega] = 0$, which proves injectivity.

  Now we show that $\tau_\fX^*$ is surjective.  Let $\eta\in\cA^{p,q}(X)$ be a $\d''$-closed form.  Using Theorem~\ref{thm:pullback-form-skeleton}, we find as above a finite, separable field extension $k'/k$, a strictly semistable model $\fX'$ of $X' = X\tensor_k{k'}$ with a morphism $f\colon\fX'\to\fX\tensor_{k^\circ}{k'^\circ}$, and $\omega'\in\cA^{p,q}(\Sigma_{\fX'},\del\Sigma_{\fX'})$ such that $\pi^*\eta=\tau_{\fX'}^*\omega'$, where $\pi\colon X'\to X$ is the structure morphism.  The form $\omega'$ is $\d''$-closed by injectivity of $\tau_{\fX'}^*$.  Using the above notation, since $\phi^*\colon H^{p,q}(\Sigma_{\fX},\del\Sigma_{\fX})\to H^{p,q}(\Sigma_{\fX'},\del\Sigma_{\fX'})$ is an isomorphism,  there exists a $\d''$-closed form $\omega\in\cA^{p,q}(\Sigma_\fX,\del\Sigma_\fX)$ such that $\phi^*[\omega] = [\omega']$, so $\phi^*\omega = \omega' + \d''\rho'$ for some $\rho'\in\cA^{p,q-1}(\Sigma_{\fX'},\del\Sigma_{\fX'})$.  We compute
  \[ \pi^*\tau_\fX^*\omega = \tau_{\fX'}^*\phi^*\omega
    = \tau_{\fX'}^*\omega' + \tau_{\fX'}^*\d''\rho'
    = \pi^*\eta + \d''\tau_{\fX'}^*\rho'.
  \]
  It follows that $\pi^*[\tau_{\fX}^*\omega] = \pi^*[\eta]$; since $\pi^*\colon H^{p,q}(X)\to H^{p,q}(X')$ is injective by~\otherfile{Corollary~\ref*{I-Galois action on cohomology}}, we have $\tau_\fX^*[\omega] = [\eta]$, as desired.

  Finally, we drop the hypothesis that all irreducible components of $\fX_s$ are geometrically irreducible and that all singular points of $\fX_s$ are $\td k$-rational. A base change of $\fX_s$ to a suitable finite separable extension $F$ of $\td k$ has geometrically irreducible components and all singular points become $F$-rational (using the \'etale morphism $\fU\to\fS(a_\fU)$ from Definition~\ref{def:strictly semistable over valuation ring}). Lifting the minimal polynomial of a primitive element of $F$ to a polynomial $p$ with coefficients in $k^\circ$, the decomposition field of $p$  is a finite Galois extension $k'/k$ such that the above hypotheses hold true for $\fX' = \fX\tensor_{k^\circ}{k'^\circ}$.  Consider the square
  \[ \xymatrix{
      {H^{p,q}(\Sigma_\fX,\del\Sigma_\fX)} \ar[r]^(.49){\Sigma_\pi^*} \ar[d]_{\tau_\fX^*} &
      {H^{p,q}(\Sigma_{\fX'},\del\Sigma_{\fX'})} \ar[d]^{\tau_{\fX'}^*} \\
      {H^{p,q}(X)} \ar[r]_{\pi^*} & {H^{p,q}(X'),}
    } \]
  where $X' = \fX'_\eta = X\tensor_k{k'}$ and $\pi\colon X'\to X$ is the structure morphism.  This square commutes by Lemma~\ref{lem:pullback.basechange}.  The right arrow is an isomorphism by the above, and it is Galois-equivariant by Lemma~\ref{lem:pullback.galois}.
  Consider the square
  \[ \xymatrix{
      {H^{p,q}(\Sigma_\fX,\del\Sigma_\fX)} \ar[r]^(.49){\Sigma_\pi^*} \ar[d]_{\tau_\fX^*} &
      {H^{p,q}(\Sigma_{\fX'},\del\Sigma_{\fX'})^{\rlap{$\scriptstyle\Gal(k'/k)$}}} \ar[d]^{\tau_{\fX'}^*} \\
      {H^{p,q}(X)} \ar[r]_{\pi^*} & {H^{p,q}(X')^{\rlap{$\scriptstyle\Gal(k'/k)$}}.}
    } \]
  It follows from~\otherfile{Corollary~\ref*{I-Galois action on cohomology}} that the bottom arrow is an isomorphism; by Propositions~\ref{prop:quotient.graph.cohom} and~\ref{prop:skeleton.basechange}(5), the top arrow is an isomorphism; and by the above, the right arrow is an isomorphism.  Thus $\tau_\fX^*$ is an isomorphism.
\end{proof}

\begin{thm}[Dolbeault Cohomology of Curves]\label{thm:dolbeault.cohom.curve}
  Let $X$ be a connected curve, and let $g = h^1_{\sing}(|X|)$ be the first Betti number of its underlying topological space.  Then $h^{i,j} = h^{i,j}(X)$ are equal to:
  \begin{center}
    \begin{tikzpicture}
      \node (a) at (0,0) {If $\del X=\emptyset$:};
      \node[below=5mm] (b) at (a) {$h^{1,1} = 1$};
      \node[below left=1mm] (c) at (b.south west) {$h^{1,0} = g$};
      \node[below right=1mm] (d) at (b.south east) {$h^{0,1} = g$};
      \node[below right=1mm] (e) at (c.south east) {$h^{0,0} = 1$};
      \coordinate (x) at ($(a.south east)!(current bounding box.north west)!(a.south west)$);
      \coordinate (y) at ($(a.south east)!(current bounding box.north east)!(a.south west)$);
      \draw[thick] (x) -- (y);
    \end{tikzpicture}\qquad
    \begin{tikzpicture}
      \node (a) at (0,0) {If $\del X\neq\emptyset$:};
      \node[below=5mm] (b) at (a) {$h^{1,1} = 0$};
      \node[below left=1mm] (c) at (b.south west) {$h^{1,0} = g+\#\del X-1$};
      \node[below right=1mm] (d) at (b.south east) {$h^{0,1} = g\phantom{+\#\del X-1}$};
      \node[below right=1mm] (e) at (c.south east) {$h^{0,0} = 1$};
      \coordinate (x) at ($(a.south east)!(current bounding box.north west)!(a.south west)$);
      \coordinate (y) at ($(a.south east)!(current bounding box.north east)!(a.south west)$);
      \draw[thick] (x) -- (y);
    \end{tikzpicture}
  \end{center}
  More precisely, if $\del X=\emptyset$ (i.e.\ if $X$ is proper) then $\d''\cA^{1,0}(X) = \ker\int_X$.
\end{thm}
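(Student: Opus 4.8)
The plan is to deduce Theorem~\ref{thm:dolbeault.cohom.curve} from the graph case (Proposition~\ref{prop:dolbeault.graphs}) via the isomorphism $\tau_\fX^*$ of Theorem~\ref{thm:pullback-skeleton-isom}, after arranging for a suitable semistable model. First I would reduce to a convenient semistable model: by the semistable reduction theorem (Remark~\ref{rem:galois.stable.model}, using~\cite{ducros14:structur_des_courbes_analytiq}), after a finite Galois extension $k'/k$ the curve $X' = X\tensor_k k'$ admits a strictly semistable model $\fX'$. However $\Sigma_{\fX'}$ may have isolated interior vertices (components of $\fX'_s$ that are smooth and proper), so Theorem~\ref{thm:pullback-skeleton-isom} does not apply directly. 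To fix this, one chooses a point on each such component and blows it up (equivalently, subdivides / modifies $\fX'$), producing a new strictly semistable model whose skeleton is a modification of $\Sigma_{\fX'}$ with no isolated interior vertices; by Lemma~\ref{lem:modif.isom.cohom} the modification does not change the Dolbeault cohomology of the graph, and of course it does not change $X'$. So we may assume $\Sigma_{\fX'}$ has no isolated interior vertices.

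Next, I would combine the isomorphisms. Since $X$ is connected and $X'\to X$ is finite, $X'$ has finitely many connected components permuted transitively by $G=\Gal(k'/k)$, so $h^{i,j}(X') = \#\{\text{components}\}\cdot h^{i,j}(\text{one component})$; applying~\otherfile{Corollary~\ref*{I-Galois action on cohomology}}, $H^{i,j}(X) = H^{i,j}(X')^G$. On the graph side, Theorem~\ref{thm:pullback-skeleton-isom} gives $H^{i,j}(\Sigma_{\fX'},\del\Sigma_{\fX'})\isom H^{i,j}(X')$ compatibly with $G$ (Lemma~\ref{lem:pullback.galois}), so $h^{i,j}(X) = \dim H^{i,j}(\Sigma_{\fX'},\del\Sigma_{\fX'})^G$. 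Now I invoke Proposition~\ref{prop:dolbeault.graphs}. The subtlety is that $\Sigma_{\fX'}$ is generally larger than "the" skeleton of $X$: it is a modification of $\Sigma_{\fX}$ extended over $k'$. But the relevant invariants are preserved. The genus: $\Sigma_{\fX'}$ is homotopy equivalent to $X'$, hence to $X$, so its first Betti number equals $g = h^1_{\sing}(|X|)$; and a modification does not change the Betti number. The boundary: $\del\Sigma_{\fX'} = \del X'$, and $\#\del X' = \#\del X$ when $X$ is connected and $\del X\neq\emptyset$—more carefully, $\del X'$ maps onto $\del X$ with $G$ acting transitively on fibers (Proposition~\ref{prop:skeleton.basechange}(5) / Lemma~\ref{lem:skeleton.galois}), so taking $G$-invariants of $H^{1,0}(\Sigma_{\fX'},\del\Sigma_{\fX'})$ recovers the formula with $\#\del X$ in place of $\#\del\Sigma_{\fX'}$. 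Here one should be slightly careful because $H^{1,0}$ involves $\#\del\Sigma$ rather than $g$ alone; I would check directly, using the description of $H^{1,0}$ in~\secref{sec:computation-h1-0} and the fact that $G$ acts transitively on the components of $X'$ and on the fibers of $\del X'\to\del X$, that $\dim H^{1,0}(\Sigma_{\fX'},\del\Sigma_{\fX'})^G = g + \#\del X - 1$ when $\del X\neq\emptyset$ and $=g$ when $\del X = \emptyset$ (and similarly $h^{0,0}=1$, $h^{0,1}=g$, $h^{1,1}\in\{0,1\}$ according to whether $\del X$ is nonempty). The identification $\del X = \emptyset \iff X$ proper is Berkovich's, as noted in the introduction.

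Finally, for the refined statement when $X$ is proper: here $\del X = \emptyset$, so $h^{1,1}(X) = 1$, and I must show $\d''\cA^{1,0}(X) = \ker\int_X$. The inclusion $\d''\cA^{1,0}(X)\subset\ker\int_X$ is Stokes' theorem for weakly smooth forms on the proper curve $X$ (no boundary term). For the reverse inclusion: $\int_X\colon\cA^{1,1}(X)\to\R$ is surjective (e.g.\ integrate $\tau_\fX^*$ of a bump function on an edge of the skeleton, using Proposition~\ref{prop:integration.pullback} and the corresponding fact on graphs), so $\ker\int_X$ has codimension $1$ in $\cA^{1,1}(X)$; since $h^{1,1}(X) = \dim\cA^{1,1}(X)/\d''\cA^{1,0}(X) = 1$ as well, and $\d''\cA^{1,0}(X)\subset\ker\int_X$, the two subspaces coincide. (Equivalently, transport the graph statement $\d''\cA^{1,0}(\Sigma_{\fX'}) = \ker\int_{\Sigma_{\fX'}}$ from Proposition~\ref{prop:dolbeault.graphs} across $\tau_{\fX'}^*$ using Proposition~\ref{prop:integration.pullback}, then take $G$-invariants and descend to $X$ via $\pi^*$.)

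I expect the main obstacle to be bookkeeping rather than any deep point: one must carefully track how the numerical invariants $g$ and $\#\del X$ behave under (i) passage to a Galois base change $k'/k$, which can disconnect the curve and multiply the fibers of the boundary, and (ii) the modification/blow-up needed to kill isolated interior vertices. The cleanest route is probably to establish the $G$-invariant dimension count directly from the explicit descriptions of $H^{p,q}$ in Section~\ref{Sec:forms-graphs}, using that $G$ acts transitively on $\pi_0(X')$ and on the fibers of $\del\Sigma_{\fX'}\to\del X$, rather than trying to argue "the skeleton of $X$ has the right genus and boundary" when $X$ may not even have a skeleton over $k$ itself.
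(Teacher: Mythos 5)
Your overall architecture is the same as the paper's: pass to a finite Galois extension $k'/k$ over which $X'=X\tensor_k k'$ has a strictly semistable model $\fX'$ with no smooth proper components in $\fX'_s$ and with $\rS_0(\fX')$ Galois-stable (Remark~\ref{rem:galois.stable.model}), use $H^{p,q}(X)=H^{p,q}(X')^G$ together with the $G$-equivariant isomorphism $\tau_{\fX'}^*$ of Theorem~\ref{thm:pullback-skeleton-isom} and Lemma~\ref{lem:pullback.galois} to reduce to computing $\dim H^{p,q}(\Sigma_{\fX'},\del\Sigma_{\fX'})^G$, and deduce the refined statement $\d''\cA^{1,0}(X)=\ker\int_X$ from Stokes and $h^{1,1}=1$ exactly as the paper does. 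Where you diverge is the last combinatorial step: the paper forms the quotient graph $\Sigma=\Sigma_{\fX'}/G$, invokes Proposition~\ref{prop:quotient.graph.cohom} to identify $H^{p,q}(\Sigma_{\fX'},\del\Sigma_{\fX'})^G$ with $H^{p,q}(\Sigma,\del\Sigma)$, and applies Proposition~\ref{prop:dolbeault.graphs} to the connected graph $\Sigma$, whose genus is pinned down to be $g$ by first computing $h^{0,1}(X)=h^1_{\sing}(|X|,\R)$ via the companion paper's comparison with singular cohomology. Your plan to compute the $G$-invariants directly from the descriptions in Section~\ref{Sec:forms-graphs} essentially reproves Proposition~\ref{prop:quotient.graph.cohom} in a special case, which is legitimate but redundant given that the lemma is available.

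There is, however, a genuine error in your numerical input. You assert that ``$\Sigma_{\fX'}$ is homotopy equivalent to $X'$, hence to $X$, so its first Betti number equals $g$.'' The second homotopy equivalence is false: $\pi\colon X'\to X$ realizes $|X|$ as the topological quotient $|X'|/G$, and quotienting by a finite group can lower the first Betti number. A standard example is an elliptic curve with non-split multiplicative reduction: over a quadratic extension the skeleton is a circle on which the Galois involution acts by a reflection, so $b_1(|X'|)=1$ while $g=b_1(|X|)=0$. Thus the genus of $\Sigma_{\fX'}$ need not equal $g$, and feeding ``genus $=g$'' into Proposition~\ref{prop:dolbeault.graphs} for $\Sigma_{\fX'}$ gives the wrong dimensions for $H^{0,1}$ and $H^{1,0}$ before taking invariants. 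The repair is to use that invariants under a finite group with real coefficients compute the cohomology of the quotient, so $H^{0,1}(\Sigma_{\fX'},\del\Sigma_{\fX'})^G\cong H^1_{\sing}(|X'|,\R)^G\cong H^1_{\sing}(|X|,\R)$ (or, as in the paper, to get $h^{0,1}(X)=g$ directly from the singular-cohomology comparison); for $H^{1,0}$ one must then still run the incidence-matrix argument of \secref{sec:computation-h1-0} $G$-equivariantly, which is exactly what Proposition~\ref{prop:quotient.graph.cohom} packages. A minor further point: your appeal to Lemma~\ref{lem:modif.isom.cohom} when blowing up the smooth proper components is both unnecessary (only the new model is used and $X'$ is unchanged) and inapplicable, since that lemma assumes the base graph has no isolated interior vertices, which is precisely what fails before the blowup.
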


\begin{proof}
  We have seen in Theorem~\ref{thm:pullback-form-skeleton} that the semistable reduction theorem gives a finite Galois extension $k'/k$ such that $X' = X\tensor_k k'$ admits a strictly semistable formal model $\fX'$.  We may assume that $\fX'_s$ has no smooth connected components by blowing up.  By Remark~\ref{rem:galois.stable.model}, after passing to an admissible formal blowup, we can find $\fX'$ such that $\rS_0(\fX') = \pi\inv(\pi(\rS_0(\fX')))$, where $\pi\colon X'\to X$ is the structure morphism.  In this case, the Galois group $G = \Gal(k'/k)$ acts on $\Sigma_{\fX'}$ by harmonic maps by Lemma~\ref{lem:skeleton.galois}; let $\Sigma = \Sigma'/G$ be the quotient.   Note that $\pi$ is finite and hence boundaryless. By~\cite[Proposition~3.1.3(ii)]{berkovic90:analytic_geometry}, we get $\pi\inv(\del X) = \del X'$. 
  Using the definition of the quotient $\Sigma=\Sigma'/G$, we have $\pi\inv(\del \Sigma) = \del \Sigma'$. By~\cite[Proposition~1.3.5]{berkovic90:analytic_geometry}, we have $X=X'/G$ 
   and so we have a canonical identification of $\del\Sigma$ with $\del X$. 
  For all $p,q$ we have
  \[ H^{p,q}(X) = H^{p,q}(X')^G = H^{p,q}(\Sigma',\del\Sigma')^G = H^{p,q}(\Sigma,\del\Sigma) \]
  by~\otherfile{Corollary~\ref*{I-Galois action on cohomology}}, Lemma~\ref{lem:pullback.galois}, Theorem~\ref{thm:pullback-skeleton-isom}, and Proposition~\ref{prop:quotient.graph.cohom}.

  By~\otherfile{Corollary~\ref*{I-identification with singular cohomology}}, the cohomology groups $H^{0,q}(X)$ are canonically isomorphic to the singular cohomology groups $H^q_{\sing}(|X|,\R)$ for $q\geq 0$; it follows that $h^{0,0}(X) = 1$ and $h^{0,1}(X) = g$.  This implies that $\Sigma$ is connected, and that $h^1_{\sing}(\Sigma) = g$ by Proposition~\ref{prop:dolbeault.graphs}.  Applying Proposition~\ref{prop:dolbeault.graphs} again, we have $h^{1,1}(X) = h^{1,1}(\Sigma) = 1$ and $h^{1,0}(X) = h^{1,0}(\Sigma) = g$ when $\del X=\emptyset$, and we have $h^{1,1}(X) = h^{1,1}(\Sigma,\del\Sigma) = 0$ and
  \[ h^{1,0}(X) = h^{1,0}(\Sigma,\del\Sigma) = g + \#\del\Sigma - 1 = g + \#\del X-1 \]
  when $\del\Sigma\neq\emptyset$.

  The final assertion follows from Stokes' theorem~\otherfile{Theorem~\ref*{I-theorem of Stokes}} and the fact that $h^{1,1}(X) = 1$.
\end{proof}

Let $X$ be a proper curve.  As in~\secref{sec:poincare-duality-graphs}, the wedge product followed by integration defines bilinear pairings
\[ \cA^{0,0}(X)\times\cA^{1,1}(X)\To\R, \qquad
  \cA^{1,0}(X)\times\cA^{0,1}(X)\To\R.
\]
By Stokes' theorem~\otherfile{Theorem~\ref*{I-theorem of Stokes}}, these descend to pairings
\begin{equation}\label{eq:poincare.pairing.curves}
  H^{0,0}(X)\times H^{1,1}(X)\To\R, \qquad
  H^{1,0}(X)\times H^{0,1}(X)\To\R.
\end{equation}

\begin{thm}[Poincar\'e Duality]\label{thm:poincare.duality}
  If $X$ is a proper $k$-curve, then the  pairings~\eqref{eq:poincare.pairing.curves} are perfect.
\end{thm}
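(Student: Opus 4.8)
The plan is to reduce Poincaré duality for a proper curve $X$ to the corresponding statement for a skeleton, which was already established in Proposition~\ref{prop:poincare.duality.graphs}. First I would invoke the semistable reduction machinery exactly as in the proof of Theorem~\ref{thm:dolbeault.cohom.curve}: there is a finite Galois extension $k'/k$ with $G = \Gal(k'/k)$, and a strictly semistable model $\fX'$ of $X' = X\tensor_k k'$ whose special fiber has no smooth proper connected component, chosen so that $\rS_0(\fX') = \pi\inv(\pi(\rS_0(\fX')))$, where $\pi\colon X'\to X$ is the structure morphism. Since $X$ is proper, $\del X = \emptyset$, and hence $\del X' = \emptyset$ and $\del\Sigma_{\fX'} = \emptyset$; write $\Sigma' = \Sigma_{\fX'}$ and $\Sigma = \Sigma'/G$.

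The key point is that all the relevant maps are compatible with the pairings. By Theorem~\ref{thm:pullback-skeleton-isom}, $\tau_{\fX'}^*\colon H^{p,q}(\Sigma')\isom H^{p,q}(X')$ is an isomorphism for all $p,q$, and by Proposition~\ref{prop:integration.pullback} we have $\int_{X'}\tau_{\fX'}^*\omega = \int_{\Sigma'}\omega$; combined with Lemma~\ref{lem:pullback-form-properties}(2) (multiplicativity of $\tau_{\fX'}^*$), this shows that $\tau_{\fX'}^*$ is an isometry for the two pairings~\eqref{eq:poincare.pairing} and~\eqref{eq:poincare.pairing.curves} (on $X'$). Since the pairings on $\Sigma'$ are perfect by Proposition~\ref{prop:poincare.duality.graphs}, the pairings on $X'$ are perfect. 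To descend to $X$, I would use the Galois invariance: $H^{p,q}(X) = H^{p,q}(X')^G$ by \otherfile{Corollary~\ref*{I-Galois action on cohomology}}, and the pairings on $X'$ are $G$-equivariant because integration is $G$-invariant (a Galois automorphism is an isomorphism of the curve, so preserves $\int_{X'}$, and it commutes with $\wedge$). A $G$-equivariant perfect pairing between $\R[G]$-modules restricts to a perfect pairing on the $G$-invariants, since taking $G$-invariants is exact on $\R$-vector spaces (higher group cohomology with $\R$-coefficients vanishes) and $(V^G)^* \cong (V^*)_G \cong (V^*)^G$. This yields perfectness of~\eqref{eq:poincare.pairing.curves}.

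Concretely the steps are: (1) produce $k'$, $\fX'$, $G$, $\Sigma'$ as above; (2) note $\del X = \emptyset$ forces everything boundaryless; (3) invoke Proposition~\ref{prop:poincare.duality.graphs} for $\Sigma'$; (4) transport via $\tau_{\fX'}^*$, using Theorem~\ref{thm:pullback-skeleton-isom}, Proposition~\ref{prop:integration.pullback}, and Lemma~\ref{lem:pullback-form-properties}, to get perfectness on $X'$; (5) check $G$-equivariance of the pairings on $X'$; (6) take $G$-invariants using \otherfile{Corollary~\ref*{I-Galois action on cohomology}} and exactness of $(-)^G$ on $\R$-vector spaces to conclude.

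I expect the only genuinely delicate step to be a bookkeeping point in (5)--(6): one must verify that the natural map $H^{p,q}(X)\times H^{1,1-q+\dots}(X)$... more precisely that the pairing on $X$ is literally the restriction of the pairing on $X'$ to the $G$-invariant subspaces, rather than that up to a factor of $\#G$ (coming from averaging). This is harmless — one can either track the factor or simply observe that a nonzero scalar multiple of a perfect pairing is still perfect — but it is the place where a careless argument could go wrong. Everything else is a direct assembly of results already proved: the graph-level Poincaré duality (Proposition~\ref{prop:poincare.duality.graphs}), the skeleton isomorphism (Theorem~\ref{thm:pullback-skeleton-isom}), compatibility of integration with pullback (Proposition~\ref{prop:integration.pullback}), and the Galois descent for cohomology from the companion paper.
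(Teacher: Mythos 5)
Your proposal is correct and follows essentially the same route as the paper: establish perfectness over a Galois extension $k'$ where a semistable model exists, by transporting graph-level Poincar\'e duality (Proposition~\ref{prop:poincare.duality.graphs}) through $\tau_{\fX'}^*$ using Theorem~\ref{thm:pullback-skeleton-isom}, Proposition~\ref{prop:integration.pullback}, and Lemma~\ref{lem:pullback-form-properties}, then descend to $X$ via $H^{p,q}(X)=H^{p,q}(X')^G$ and exactness of $(-)^G$ on real vector spaces. The bookkeeping issue you flag (the pairing on $X$ versus the restricted pairing on $X'$, up to a nonzero factor) is exactly the point the paper handles with the elementary fact that $\Hom(V,\R)^G\to\Hom(V^G,\R)$ is an isomorphism, and your resolution is sound.
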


\begin{proof}
  Suppose first that $X$ admits a strictly semistable model $\fX$.  After blowing up, we may assume that $\fX$ has no smooth connected components.  Let $\Sigma = \Sigma_\fX$ and $\tau = \tau_\fX$.  For $p,q\in\{0,1\}$, we have a square
  \[ \xymatrix{
      {H^{p,q}(\Sigma)\times H^{1-p,1-q}(\Sigma)} \ar[rr]^(.7){\int_\Sigma\circ\wedge} \ar[d]_{\tau^*}^{\cong} &&
      {\R} \ar @{=}[d] \\
      {H^{p,q}(X)\times H^{1-p,1-q}(X)} \ar[rr]_(.7){\int_X\circ\wedge} && {\R}
    }\]
  which is commutative by Lemma~\ref{lem:pullback-form-properties} and Proposition~\ref{prop:integration.pullback}, and in which the left vertical arrow is an isomorphism by Theorem~\ref{thm:pullback-skeleton-isom}.  Hence~\eqref{eq:poincare.pairing.curves} is perfect by Poincar\'e duality for graphs, Proposition~\ref{prop:poincare.duality.graphs}.

  In general, there exists a finite Galois extension $k'/k$ such that $X' = X\tensor_k k'$ admits a semistable model.  The pairing $H^{p,q}(X')\times H^{1-p,1-q}(X')\to\R$ is equivariant with respect to the Galois action on the left side and the trivial action on the right side by~\otherfile{Proposition~\ref*{I-integration and conjugation}}.  Since $H^{p,q}(X) = H^{p,q}(X')^{\Gal(k'/k)}$ for all $p,q\in\{0,1\}$ by~~\otherfile{Corollary~\ref*{I-Galois action on cohomology}}, showing that the pairings~\eqref{eq:poincare.pairing.curves} are perfect amounts to proving the elementary linear algebra fact claimed in the next paragraph.

  Let $G$ be a finite group acting on a finite-dimensional real vector space $V$.  Then $G$ acts on $\Hom(V,\R)$ by $(\sigma\cdot\lambda)(v) = \lambda(\sigma\inv v)$.  We claim that the natural map $\Hom(V,\R)^G\to\Hom(V^G,\R)$ is an isomorphism.  Consider the exact sequence
  \[ 0 \To V^G \To V \To V/V^G \To 0 \]
  of $\R[G]$-modules.  This gives rise to an exact sequence of $\R[G]$-modules
  \[ 0 \To \Hom(V/V^G,\R) \To \Hom(V,\R) \To \Hom(V^G,\R) \To 0. \]
  Since $G$ is a finite group, the higher cohomology groups of $G$ are torsion by~\cite[Proposition~VII.6]{serre68:corps_locaux}, so $W\mapsto W^G$ is an exact functor on the category of $\R$-vector spaces.  Hence it suffices to show that $\Hom(V/V^G,\R)^G = 0$.  In other words, if $\lambda\colon V\to\R$ is zero on $V^G$ and satisfies $\lambda(\sigma v) = \lambda(v)$ for all $\sigma\in G$ and $v\in V$, then we must show $\lambda=0$.  Indeed, for $v\in V$ we have $w = \sum_{\sigma\in G}\sigma v\in V^G$, so
  \[ 0 = \lambda(w) = \sum_{\sigma\in G}\lambda(\sigma v) = (\#G)\,\lambda(v) \]
  and hence $v=0$. This concludes the proof.
\end{proof}

\bibliographystyle{egabibstyle}
\bibliography{papers}

\end{document}